\title[The inverse limit topology and profinite descent on Picard groups in $\Sp_{K(n)}$]{The inverse limit topology and profinite descent on\protect\\ Picard groups in $K(n)$-local homotopy theory}
\author{Guchuan Li and Ningchuan Zhang}
\address{School  of Mathematical Sciences, Peking University, Beijing, China}
\address{Department of Mathematics, Indiana University Bloomington, Bloomington, IN 47405, USA}
\newcommand{\vResAr}[1][]{\dar[bend right, color=blue,thick][swap]{#1}}
\newcommand{\vTrAr}[1][]{\uar[bend right, color=brown,thick][swap]{#1}}
\newcommand{\fin}{\mathrm{fin}}
\newcommand{\pro}{\mathrm{pro}}
\newcommand{\Smash}{\wedge}
\newcommand{\bigsmash}{\bigwedge}
\newcommand{\hsmash}{\hat{\Smash}}
\newcommand{\bighsmash}{\wh{\bigsmash}}
\begin{document}
	\begin{abstract}
		In this paper, we study profinite descent theory for Picard groups in $K(n)$-local homotopy theory through their inverse limit  topology. Building upon Burklund's result on the multiplicative structures of generalized Moore spectra, we prove that the module category over a $K(n)$-local commutative ring spectrum is equivalent to the limit of its base changes by a tower of generalized Moore spectra of type $n$. As a result, the $K(n)$-local Picard groups are endowed with a natural inverse limit topology. This topology allows us to identify the entire $E_1$ and $E_2$-pages of a descent spectral sequence for Picard spaces of $K(n)$-local profinite Galois extensions. 
		
		Our main examples are $K(n)$-local Picard groups of homotopy fixed points $E_n^{hG}$ of the Morava $E$-theory $E_n$ for all closed subgroups $G$ of the Morava stabilizer group $\mathbb{G}_n$. The $G=\mathbb{G}_n$ case has been studied by Heard and Mor. At height $1$, we compute Picard groups of $E_1^{hG}$ for all closed subgroups $G$ of $\mathbb{G}_1=\mathbb{Z}_p^\times$ at all primes as a Mackey functor.
	\end{abstract}
	\subjclass[2020]{Primary 14C22, 55P43; Secondary  20E18, 55N22, 55T25}
	\maketitle
	\tableofcontents
	
	One of the most important invariants of a symmetric monoidal category is the Picard group, which has wide applications in algebraic geometry and number theory.  For a symmetric monoidal $\infty$-category $\Ccal$, one can define its Picard $\infty$-groupoid $\pic(\Ccal)$, viewed as a \emph{space}. The Picard group is then the zeroth homotopy group of the Picard space. The idea of studying Picard groups in stable homotopy theory was initiated by Hopkins \cite{HMS_picard,Strickland_interpolation}. For the symmetric monoidal $\infty$-category of spectra $\Sp$, its Picard group is isomorphic to $\Z$, generated by $S^1$. 
	
	From the chromatic perspective of stable homotopy theory, we study $\Sp$ by the height filtration of formal group laws at each prime $p$. One way to extract the monochromatic information of spectra is to localize them at the height $n$ Morava $K$-theory $K(n)$. 
	The Picard group of the category of $K(1)$-local spectra $\Sp_{K(1)}$ is: \cite{HMS_picard,Strickland_interpolation}
	\begin{equation*}
		\Pic\left(\Sp_{K(1)}\right)=\begin{cases}
			\Zp\oplus\Z/(2p-2), & p>2;\\
			\Z_2\oplus\Z/4\oplus\Z/2, &p=2.
		\end{cases}
	\end{equation*}
	
	One observation is that the $K(1)$-local Picard group $\Pic\left(\Sp_{K(1)}\right)$ is a \emph{profinite} abelian group. This holds for $\Pic\left(\Sp_{K(n)}\right)$ in general for any $n$ by \cite[Proposition 14.3.(d)]{Hovey-Strickland_1999} (see also \cite{Goerss-Hopkins}). The main purpose of this paper is to lift the inverse limit  topology\footnote{Not necessarily profinite. Recall the limit of (discrete) sets $X=\lim_i X_i$ can be modeled as a subset of $\prod X_i$, which has a non-trivial product topology. The limit $X$ then inherits a subspace topology.  This is also the weakest topology on $X$ to make the projection maps $X\to X_i$ continuous with each $X_i$ being discrete. It is called profinite if the $X_i$'s are all finite sets. } on Picard groups of $K(n)$-local ring spectra to the level of Picard spaces. This topology allows us to identify the entire $E_1$ and $E_2$-pages of a descent spectral sequence for Picard spaces of $K(n)$-local profinite Galois extensions. 
	
	In \cite{Hovey-Strickland_1999}, Hovey--Strickland described the inverse limit  topology on $\Pic\left(\Sp_{K(n)}\right)$ as follows. They first constructed a  tower of generalized Moore spectra $\{M_j\}$ of type $n$ such that $X\simeq \lim_j  L_{K(n)}(X\wedge M_j)$ for any $K(n)$-local spectrum $X$.  Then they defined a basis $\{V_j\}$ for closed neighborhoods of the identity element $S^0_{K(n)}$ in the Picard group by setting  $V_j=\left\{X\in \Pic\left(\Sp_{K(n)}\right)\mid L_{K(n)}(X\wedge M_j)\simeq L_{K(n)}M_j\right\}$. Burklund's recent work \cite{Burkland_mult_Moore}   implies that the $M_j$'s in this tower can be chosen to have compatible $\Ebb_{j}$-algebra structures (\Cref{prop:genMoore_mult}). This refines the result of Davis--Lawson in \cite{Davis-Lawson_2014}  that the tower $\{M_j\}$ admits an $\einf$-ring structure as a \emph{pro}-spectrum.  For a ring (spectrum) $R$, its Picard group and space are defined to be the Picard group and space, respectively, of its module category $\Mod(R)$. Our first main result in this paper is the following Grothendieck existence theorem (\Cref{rem:Grothendieck_existence}) for $K(n)$-local ring spectra in \Cref{sec:prodisc_topology}.
	\begin{thm*}[{\Cref{thm:Kn_Pic_limit}}]
		Let $R$ be a $K(n)$-local $\einf$-ring spectrum. Then the limit of base change functors induces an equivalence of symmetric monoidal $\infty$-categories
		\begin{equation*}
			\Mod_{K(n)}(R)\xrightarrow[\ref{thm:R-mod_moore}]{\sim}\lim_j\Mod_{K(n)}(R\Smash M_j) \xleftarrow[\ref{lem:Rk-mod}]{\cong} \lim_j\Mod(R\Smash M_j).
		\end{equation*}
		This yields an equivalence of group-like $\einf$-spaces and an isomorphism of abelian groups:
		\begin{align*}
			\pic_{K(n)}(R)\simto \lim_j\pic(R\Smash M_j),\quad 
			\Pic_{K(n)}(R)\simto \lim_j\Pic(R\Smash M_j).
		\end{align*}
	\end{thm*}
	The isomorphism above endows $\Pic_{K(n)}(R)$ with a natural \emph{inverse limit topology}. We show in \Cref{prop:choice_Mk} that this topology is independent of the choice of the tower of generalized Moore spectra of type $n$  in \Cref{prop:genMoore_mult}. 
	
	When $R=S^0_{K(n)}$, our result gives an $\infty$-categorical explanation of the profinite topology on $\Pic\left(\Sp_{K(n)}\right)$ described in \cite{Hovey-Strickland_1999,Goerss-Hopkins}. It is well-established that the Picard group functor $\Pic\colon \CAlg(\Sp)\to \Ab$ for $\einf$-ring spectra  commutes with filtered colimits and \emph{finite} products. In \Cref{prop:Pic_Kn_profin_prod} and \Cref{prop:Pic_Kn_colim}, we show that the $K(n)$-local Picard group functor $\Pic_{K(n)}\colon \CAlg\left(\Sp_{K(n)}\right)\to \Pro(\Ab)$ commutes with $K(n)$-local filtered colimits and \emph{profinite} products (see \Cref{defn:profin_prod_Kn}). 	Our computations at height $1$ in \Cref{sec:Pic_Mack} give concrete examples when the \emph{discrete} $K(1)$-local Picard group functor $\Pic_{K(1)}\colon \CAlg\left(\Sp_{K(1)}\right)\to\Pro(\Ab)\xrightarrow{\lim} \Ab$ does not commute with filtered colimits (\Cref{rem:Pic_colim_p_odd} and \Cref{rem:Pic_colim_p2}). 
	
	Building on the descent theory for $K(n)$-local module categories in \cite{Mathew_Galois}, the structure of units of a ring spectrum in \cite[\S5]{MS_Picard}, and \Cref{thm:Kn_Pic_limit}, we prove our second main result in \Cref{subsec:desc_ss}.
	\begin{thm*}[{\Cref{thm:pic_ss}}]
		Let $A\to B$ be a descendable $K(n)$-local profinite $G$-Galois extension of $\einf$-ring spectra. Suppose the inverse system $\{\pi_t(B\Smash M_j)\}_{j\ge1}$ satisfies the Mittag-Leffler condition for any $t\in \Z$. Then there are continuous homotopy fixed point spectral sequences:
		\begin{align*}
			\!^{\HFP} E_1^{s,t}=\map_c(G^{\times s}, \pi_t(B)),\qquad \!^{\HFP} E_2^{s,t}=H_c^s(G; \pi_t(B)) &\Longrightarrow \pi_{t-s}(A);\\
			\!^{\pic} E_1^{s,t}=\map_c(G^{\times s}, \pi_t\pic_{K(n)}(B)),\qquad \!^{\pic} E_2^{s,t}=H_c^s(G; \pi_t\pic_{K(n)}(B)) 
			&\Longrightarrow \pi_{t-s}\pic_{K(n)}(A), \qquad t-s\ge0. 
		\end{align*}
		
		In particular, the second spectral sequence abuts to $\Pic_{K(n)}\left(A\right)$ when $t = s$. 
		The differentials in both spectral sequences are of the form $d_r^{s,t}\colon E_r^{s,t} \to  E_r^{s+r,t+r-1}$. When either $t-s>0$ and $s>0$, or $2\le r\le t-1$, we have $^{\pic}d_r^{s,t}=\!^{\HFP}d_r^{s,t-1}$.
	\end{thm*}
	Having set up the general theory, we mainly focus on $K(n)$-local Picard groups of homotopy fixed points $E_n^{hG}$ of the height $n$ Morava $E$-theory, where $G$ is a \emph{closed} subgroup of the Morava stabilizer group $\Gbb_n$. These fixed points were explicitly constructed by Devinatz--Hopkins in \cite{fixedpt}. To apply \Cref{thm:pic_ss}, we first show in \Cref{cor:En_hG_descendable} that $E_n^{hG}\to E_n$ admits descent using the following fact. From descent theory, we know if $A\to B \to C$ is descendable, then so is $A\to B$. If in addition $A\to B \to C$ is a composition of profinite Galois extensions, we prove $B\to C$ is also descendable in \Cref{prop:BhH_descent}. 
	
	The descent spectral sequence for $\pic_{K(n)}\left(E_n^{hG}\right)$ has the $E_1$-page:
	\begin{equation*}
		E_1^{s,t}=\pi_t\pic_{K(n)}\left[L_{K(n)}\left(\bigsmash\nolimits_{E_n^{hG}}^{s+1}E_n\right)\right]\cong \pi_t\pic_{K(n)}\map_c\left(G^{\times s},E_n\right).
	\end{equation*}
	When $t\ge 1$,  we have $\pi_t\pic_{K(n)}\map_c\left(G^{\times s},E_n\right)\cong  \map_c\left(G^{\times s},\pi_t\pic_{K(n)}(E_n)\right)$, which was essentially computed in \cite{fixedpt}. When $t=0$, the $E_1^{s,0}$-term is the $K(n)$-local Picard group of a profinite product of $E_n$. This is the main technical difficulty in identifying the entire  $E_1$ and  $E_2$-pages of the profinite descent spectral sequence for $K(n)$-local Picard groups.  From the aforementioned property of the $K(n)$-local Picard groups in \Cref{prop:Pic_Kn_profin_prod}, we obtain:
	\begin{equation*}
		E_1^{s,0}= \Pic_{K(n)}\map_c\left(G^{\times s},E_n\right)\cong \map_c\left(G^{\times s},\Pic_{K(n)}(E_n)\right).
	\end{equation*}
	The case  when $G=\Gbb_n$ (note that $E_n^{h\Gbb_n}\simeq S^0_{K(n)}$) has been recently studied by Heard and Mor.  Heard computes the $E_1$ and $E_2$-pages in the range $t>0$ or $s=t=0$ in \cite[Example 6.18]{Heard_2021Sp_kn-local}. Mor uses the pro\'etale (condensed) method to identify the entire $E_2$-page in \cite{mor2023picard}.
	
	In \cite[\S3.3]{BBGHPS_exotic_h2_p2} and \cite[\S1.3]{CZ_exotic_Picard}, the authors described a descent filtration (see \Cref{const:desc_fil}) on $\Pic_{K(n)}\left(E_n^{h\Gbb_n}\right)$ using the homotopy fixed point spectral sequence. We next study this filtration using the descent spectral sequence \Cref{thm:pic_ss} for $\Pic_{K(n)}\left(E_n^{hG}\right)$  in \Cref{subsec:desc_fil}.
	\begin{prop*}[{\ref{prop:desc_fil_comparison}}]
		The descent filtration in \Cref{const:desc_fil} on $\Pic_{K(n)}\left(E_n^{hG}\right)$ agrees with the filtration associated to the $t-s=0$ stem of the descent spectral sequence for $\Pic_{K(n)}\left(E_n^{hG}\right)$ in \Cref{thm:pic_ss}. 
	\end{prop*}
	This comparison allows us to prove the following algebraicity result for $\Pic_{K(n)}\left(E_n^{hG}\right)$.
	\begin{thm*}[{\Cref{thm:descent_Pic_EnhG}}]
		Fix a prime $p>2$.  Let $G\le \Gbb_n$ be a closed subgroup such that  $G\cap \zpx$ is cyclic of order $m$, where $\zpx\le \Zpx=Z(\Gbb_n)$ is the torsion subgroup of the center $\Zpx$ of $\Gbb_n$. Denote the $p$-adic cohomological dimension of $G$ by $\cd_p G$. 
		\begin{enumerate}
			\item When $2m+1>\cd_p G$, the exotic Picard group $\kappa\left(E_n^{hG}\right)$ vanishes and the descent filtration on $K(n)$-local Picard group $\Pic_{K(n)}\left(E_n^{hG}\right)$ is:
			\begin{equation*}
				\begin{tikzcd}
					0\rar& H_c^{1}(G;\pi_{0}(E_n)^\times)\rar & \Pic_{K(n)}\left(E_n^{hG}\right)\rar["\phi_0"] &\Pic_{K(n)}(E_n)=\Z/2\rar & 0,
				\end{tikzcd}	 	
			\end{equation*}
			where $\phi_0$ is induced by base change along the ring extension $E_n^{hG}\to E_n$.
			\item When $2m+1=\cd_p G$, the map $\phi_1\colon\Pic_{K(n)}^0\left(E_n^{hG}\right)\to \Pic_{K(n)}^{alg,0}\left(E_n^{hG}\right)=H_c^{1}(G;\pi_{0}(E_n)^\times)$ is surjective. 
		\end{enumerate}
	\end{thm*}
	When $G=\Gbb_n$ and $(p-1)\nmid n$,  we have $\cd_p(\Gbb_n)=n^2$ and $m=|\zpx\cap \Gbb_n|=|\zpx|=p-1$. In this case, \Cref{thm:descent_Pic_EnhG} states that $\kappa_n=0$ if $2p-1=2m+1>\cd_p \Gbb_n=n^2$ and $(p-1)\nmid n$. This recovers \cite[Proposition 7.5]{HMS_picard}.
	
	As an application of \Cref{thm:pic_ss}, we compute $K(1)$-local Picard groups of  homotopy fixed points $E_1^{hG}$ for all closed subgroups $G\le \Zpx$ at all primes. Picard groups of Galois extensions of ring spectra are not only a collection of abelian groups -- they are connected by restriction and transfer maps. These data assemble into a Picard \emph{Mackey functor}. Beaudry--Bobkova--Hill--Stojanoska computed $\Pic_{K(2)}\left(E_2\right)$ at the prime $2$ as a $C_4$-Mackey functor in \cite{BBHS_PicE2hC4}.  When $G\le \Zpx$ is a pro-cyclic closed subgroup, the homotopy fixed points $E_1^{hG}$ are all $K(1)$-local algebraic $K$-theory spectra of some finite fields $\Fq$ (\Cref{prop:algK_finfld_p} and \Cref{rem:algK_finfld_2}). 
	\begin{thm*}[{\Cref{thm:Mackey_Pic_k1_p=odd}} in {\Cref{subsec:mackey_odd}}]
		Let $p>2$ be an odd prime number,  $k\ge 1$ and $m\mid (p-1)$ be some positive integers. The Picard groups of $E_1^{hG}$ for all closed subgroups $G\le \Zpx$ are listed below:
		\begin{equation*}
			\Pic_{K(1)}\left(E_1^{hG}\right)=\left\{\begin{array}{lll}
				\Z/(2m)\oplus \Zp, & G=\Z/m\times (1+p^k\Zp);& \textup{\cite{HMS_picard,Strickland_interpolation}}\text{ for $G=\Zpx$}\\
				\Z/(2m), & G=\Z/m. & \textup{\cite{MS_Picard, Baker-Richter_invertible_modules}}
			\end{array}\right.
		\end{equation*}
		Let $m_1\mid m_2\mid (p-1)$ and $1\le k_2\le k_1$ be some  positive integers. The formulas for restriction (left downwards) and transfer (right upwards) maps between subgroups of finite indices are:
		\begin{equation*}
			\begin{tikzcd}[column sep=tiny]
				\Pic_{K(1)}\left(E_1^{h\left(\Z/m_2\times  (1+p^{k_2}\Zp)\right)}\right)\rar[symbol=\cong]&\Zp\oplus \Z/2m_2\vResAr[p^{k_1-k_2}\oplus 1]&~&~&\Pic_{K(1)}\left(E_1^{h\Z/m_2}\right)\rar[symbol=\cong]& \Z/2m_2\vResAr[1]\\
				\Pic_{K(1)}\left(E_1^{h\left(\Z/m_1\times (1+p^{k_1}\Zp)\right)}\right)\rar[symbol=\cong]& \Zp\oplus \Z/2m_1,\vTrAr[1\oplus (m_2/m_1)]&~&~&
				\Pic_{K(1)}\left(E_1^{h\Z/m_1}\right)\rar[symbol=\cong]&  \Z/2m_1\vTrAr[m_2/m_1].
			\end{tikzcd}				
		\end{equation*}
	\end{thm*} 
	The $p=2$ case is more complicated. We manage to resolve the extension problems using the restriction maps between Picard groups. 
	\begin{thm*}[See full statement in {\Cref{thm:Mackey_Pic_k1_p=2}} in {\Cref{subsec:mackey_2}}]
		Let $p=2$ and   $\langle \alpha\rangle$ be the closed subgroup of $\Z_2^\times$ generated by $\alpha$.  The Picard groups of $E_1^{hG}$ for all closed subgroups $G\le \Z_2^\times$ are listed below:
		\begin{equation*}
			\Pic_{K(1)}\left(E_1^{hG}\right)=\left\{\begin{array}{lll}
				\Z_2\oplus\Z/4\oplus \Z/2, &G=\Z_2^\times;& \textup{\cite{HMS_picard,Strickland_interpolation}}\\
				\Z_2\oplus\Z/2,& G=\langle 5\rangle \text{ or } \langle 3\rangle;&\\
				\Z_2\oplus \Z/8\oplus \Z/2, &G=\{\pm 1\}\times \langle 2^k+1\rangle, k\ge 3;&\\
				\Z_2\oplus \Z/2\oplus \Z/2, &G=\langle 2^k+1\rangle \text{ or }\langle2^k-1\rangle, k\ge 3;&\\
				\Z/8, & G=\{\pm 1\};&\textup{\cite{MS_Picard,Gepner-Lawson_Brauer}}\\
				\Z/2, & G=\{1\}.& \textup{\cite{Baker-Richter_invertible_modules}}
			\end{array}\right.
		\end{equation*}
		There are seven cases of the restriction and transfer maps between $\Pic_{K(1)}\left(E_1^{hG_1}\right)$ and $\Pic_{K(1)}\left(E_1^{hG_2}\right)$ when $G_1$ is an index $2$ subgroup of $G_2$, according to the closed subgroup lattice of $\Gbb_1=\Z_2^\times$ in \Cref{lem:Z2x_lattice}. 
		\begin{equation*}
			\begin{tikzcd}[every arrow/.append style={dash,thick},column sep =small]
				\Z_2^\times \ar[dr]\rar \ar[ddr] &\{\pm 1\}\times \langle 9\rangle \rar\ar[dr]\ar[ddr] &  \{\pm 1\}\times \langle 17\rangle \rar\ar[dr]\ar[ddr] &\cdots \rar\ar[dr]\ar[ddr]& \{\pm 1\}\times \langle 2^{k}+1\rangle  \rar\ar[dr]\ar[ddr]& \{\pm 1\}\times \langle 2^{k+1}+1\rangle  \rar\ar[dr]\ar[ddr]& \cdots\rar &\{\pm 1\}\ar[ddr] & \\
				& \langle3\rangle \ar[dr] & \langle7\rangle \ar[dr]& \cdots\ar[dr]& \langle 2^{k-1}-1\rangle \ar[dr]& \langle 2^{k}-1\rangle \ar[dr]&\cdots\ar[drr] && \\
				&  \langle5\rangle \rar &  \langle9\rangle\rar & \cdots\rar &  \langle 2^{k-1}+1\rangle \rar & \langle 2^{k}+1\rangle \rar & \cdots\ar[rr] &&\{1\}.
			\end{tikzcd}
		\end{equation*}
	\end{thm*}
	When $p>2$, we observe in \Cref{rem:Pic_E1hG_topgen} that for an open subgroup $G=\Z/m\times (1+p^k\Zp)\le \Zpx$, the Picard group $\Pic_{K(1)}\left(E_1^{hG}\right)$ is topologically generated by $X=\Sigma^{1/p^{k-1}}E_1^{hG}$. This is the unique $K(1)$-local invertible spectrum over $E_1^{hG}$ whose $p^{k-1}$-st smash power over $E_1^{hG}$ is $\Sigma E_1^{hG}$. However at the prime $2$, the Picard groups $\Pic_{K(1)}\left(E_1^{hG}\right)$ are not topologically cyclic by \Cref{thm:Mackey_Pic_k1_p=2} for any open subgroup $G\le \Z_2^\times$. Moreover, the element $\Sigma E_1^{hG}$ is not divisible by $p=2$ in $\Pic_{K(1)}\left(E_1^{hG}\right)$, as observed in \Cref{rem:Pic_E1hG_topgen_p=2}. 
	\subsection*{Notation and conventions}
	\begin{itemize}
		\item $\Pic$ denotes the Picard \emph{group} of a  monoidal category and $\pic$ denotes the Picard \emph{space} ($\infty$-groupoid) of a  monoidal $\infty$-category. See more details in \Cref{subsec:Pic_gp_sp}. Please note our notation for Picard spaces is slightly different from that in \cite{MS_Picard} (see \Cref{rem:Pic_pic}).
		\item When $X$ and $Y$ are two $K(n)$-local spectra, we denote their \emph{$K(n)$-local} smash product by $X\hsmash Y:=L_{K(n)}(X\wedge Y)$. 
		\item All categories and $\infty$-categories in this paper are assumed to be presentable. 
		\item $\Cat_\infty$ is the $\infty$-category of $\infty$-categories and $\mathrm{Pr}^L$ is the $\infty$-category of presentable $\infty$-categories with small colimit preserving functors. 
		\item By modules over an $\Ebb_k$-algebra, we always mean \emph{left} modules. 
		\item 	
		Let $\Ccal$ be an $\infty$-category. The pro-objects in $\Ccal$ are defined as follows:
		\begin{enumerate}
			\item \cite[Definition 5.3.5.1]{HTT} The category of {ind-objects} $\Ind(\Ccal)$ in $\Ccal$ is the full subcategory of presheaves $\Pcal(\Ccal)=\Fun(\Ccal^\op, \sSet)$ on $\Ccal$ spanned by those maps classifying right fibrations $\wt{\Ccal}\to \Ccal$ where $\wt{\Ccal}$ is filtered.  
			\item 
			The category of {pro-objects} $\Pro(\Ccal)$ in $\Ccal$ is $(\Ind(\Ccal^\op))^\op$.  In particular, given two pro-objects $\underline{x}=(x_i)$ and $\underline{y}=(y_j)$ in $\Ccal$, we set
			\begin{equation*}
				\map_{\Pro(\Ccal)}(\underline{x}, \underline{y}):=\lim_j\colim_i\map_\Ccal(x_i,y_j).
			\end{equation*}
			This mapping space is abbreviated as $\map_c(x,y)$, where $c$ stands for "continuous". 
		\end{enumerate}
		\item We use the Lewis diagram to denote the restriction and the transfer maps in a Mackey functor. Let $\underline{M}$ be a $G$-Mackey functor and $H_1\le H_2$ be subgroups of $G$. Then we write
		\begin{equation*}
			\begin{tikzcd}
				\underline{M}(G/H_2)\vResAr[\res]\\ \underline{M}(G/H_1)\vTrAr[\tr],
			\end{tikzcd}
		\end{equation*} 
		where the left downwards arrow is the restriction map and right upwards arrow is the transfer map. 
	\end{itemize}
	\subsection*{Acknowledgements}
	We want to thank Ko Aoki, Venkata Sai Bavisetty, Agn\`es Beaudry, Mark Behrens, Irina Bobkova, Elden Elmanto, David Gepner, Paul Goerss, Ishan Levy, Svetlana Makarova, Mona Merling, Maximilien P\'eroux,  Charles Rezk, Tomer Schlank, Vesna Stojanoska, and Paul VanKoughnett for many helpful and supportive discussions and comments. We would also like to thank William Balderrama for suggesting a significant simplification of the proof of a key \Cref{lem:Rk-mod}; Tobias Barthel and Drew Heard for carefully explaining the original difficulty in identifying the entire $E_2$-page of the profinite descent sequence for $K(n)$-local Picard groups in \cite{Heard_2021Sp_kn-local}; Anish Chedalavada for pointing out that \Cref{thm:R-mod_moore} is a Grothendieck existence theorem type statement; Daniel Davis for  useful comments to improve the first arXiv version; Michael Mandell for careful comments on the final draft; Akhil Mathew for insightful discussions regarding \Cref{thm:R-mod_moore}; Itamar Mor for kindly explaining his recent related work \cite{mor2023picard}; and the anonymous referee for many helpful comments and suggestions to improve the expositions. 
	
	Some of the work and revision were done when the two authors were at the Max Planck Institute for Mathematics (at separate times). We would like to thank the MPIM for its hospitality and support.  G. Li would also like to thank the Hausdorff Research Institute for Mathematics for its hospitality and support during the trimester program "Spectral Methods in Algebra, Geometry, and Topology" in Fall 2022, funded by the Deutsche Forschungsgemeinschaft under Germany's Excellence Strategy -- EXC-2047/1 -- 390685813. N. Zhang was partially supported by the NSF Grant DMS-2348963/2304719 and the AMS-Simons Travel Grant.
	\section{Recollections on Picard groups and spaces of $K(n)$-local ring spectra}
	In this section, we review basic definitions and properties of Picard groups and spaces of $K(n)$-local ring spectra. These well-established materials can be found in sources such as  \cite{Mathew_Galois,Lurie_HA,MS_Picard}. We then conclude with a  summary of properties of algebra objects in $\infty$-monoidal categories in \Cref{subsec:Alg_O} for later use, following the treatments in \cite{Lurie_HA, Lurie_DAG_III}. While this material is largely expository, readers familiar with these topics may safely skip this section and return to it as needed.
	\subsection{Picard groups and spaces of monoidal $\infty$-categories}\label{subsec:Pic_gp_sp}
	\begin{defn}
		Let $(\Ccal,\otimes, 1_\Ccal)$ be a presentable  monoidal $1$-category. An object $X\in \Ccal$ is called \textbf{invertible} if there is an object $Y\in \Ccal$ such that $X\otimes Y\cong 1_\Ccal$. The \textbf{Picard group} of $\Ccal$ is defined to be:
		\begin{equation*}
			\Pic(\Ccal)=\{X\in \Ccal\mid X\text{ is invertible}\}/\cong,
		\end{equation*}
		where the group multiplication is given by $\otimes$ and the unit is the equivalence class of $1_\Ccal$.
	\end{defn}
	\begin{exmps} Let $R$ be a commutative ring. The module category of $R$ has a natural symmetric monoidal structure $(\Mod_R, \otimes_R, R)$. Write $\Pic(R)$ for $\Pic(\Mod_R)$.
		\begin{enumerate}
			\item When $R=\Fbb$ is a field, $\Pic(\Fbb)=\{\Fbb\}$ is the trivial group. 
			\item When $R$ is a Dedekind domain, $\Pic(R)\cong \mathrm{Cl}(R)$ is isomorphic to the ideal class group of (the fractional field of) $R$. This group is trivial iff $R$ is a unique factorization domain.
			\item \cite[Theorem 3.5]{Fausk_Pic} For the category $\Ch_R$ of chain complexes over $R$ and its derived category $\Dcal_R$, their Picard groups are isomorphic to $\Pic(R)\oplus C(\spec R,\Z)$ where $C(\spec R,\Z)$ is the additive group of $\Z$-valued continuous functions on $\spec R$. 
		\end{enumerate}
	\end{exmps}
	In stable homotopy theory, we study the category $\Sp$ of spectra. This is an example of a symmetric monoidal $\infty$-category. 
	\begin{defn}\label{defn:Pic}
		Let $(\Ccal,\otimes, 1_\Ccal)$ be a presentable $\Ebb_k$-monoidal $\infty$-category for some for $1\le k\le\infty$. The \textbf{Picard space} $\pic(\Ccal)$ of $\Ccal$ is the $\infty$-groupoid (space) of invertible objects in $\Ccal$. The Picard group of $\Ccal$ is the Picard group of its homotopy category $h\Ccal$. Equivalently, we have $\Pic(\Ccal):=\Pic(h\Ccal)\cong \pi_0\pic(\Ccal)$.
	\end{defn}
	One can check the following fact directly from the definition.
	\begin{lem}\label{lem:pic_Ccal_fib}
		When $\Ccal$ is $\Ebb_k$-monoidal, its Picard space $\pic(\Ccal)$ is a group-like $\Ebb_k$-space. Moreover, there is a fibration of group-like $\Ebb_k$-spaces,
		\begin{equation*}
			\begin{tikzcd}
				B\aut(1_\Ccal)\rar & {\pic}(\Ccal) \rar["\pi_0"] & {\Pic}(\Ccal),
			\end{tikzcd}
		\end{equation*}
		where $\aut(1_\Ccal)$ is the automorphism space of $1_\Ccal$ in $\Ccal$ and $\Pic(\Ccal)$ is a discrete group (abelian when $k\ge 2$). This fibration sequence of spaces splits. 
	\end{lem}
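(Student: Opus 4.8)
The plan is to unwind the definition of $\pic(\Ccal)$ as the maximal subgroupoid of $\Ccal$ spanned by the invertible objects and then read off each assertion either directly from the monoidal structure on $\Ccal$ or from a Postnikov-type decomposition in the $\infty$-category of grouplike $\Ebb_k$-spaces. For the first sentence I would observe that the invertible objects are closed under $\otimes$ (if $X\otimes Y\simeq 1_\Ccal$ and $X'\otimes Y'\simeq 1_\Ccal$, then $X\otimes X'$ is invertible with inverse $Y'\otimes Y$) and that $1_\Ccal$ is invertible, so the full subcategory of $\Ccal$ on invertible objects is $\Ebb_k$-monoidal; applying the maximal-subgroupoid functor, which as a right adjoint to the inclusion $\mathcal{S}\hookrightarrow\mathrm{Cat}_\infty$ is lax $\Ebb_k$-monoidal, equips $\pic(\Ccal)$ with an $\Ebb_k$-monoidal structure. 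It is grouplike because by construction every object admits a $\otimes$-inverse, so $\Pic(\Ccal)=\pi_0\pic(\Ccal)$ is a group, abelian for $k\ge 2$ by Eckmann--Hilton. This part is standard, and I would simply cite \cite{Lurie_HA,MS_Picard}.

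For the fiber sequence I would apply the $0$-truncation functor, which on grouplike $\Ebb_k$-spaces is an $\Ebb_k$-monoidal localization, to obtain a map of grouplike $\Ebb_k$-spaces $\pic(\Ccal)\to\tau_{\le 0}\pic(\Ccal)=\Pic(\Ccal)$ with discrete target. Its homotopy fiber over the basepoint $1_\Ccal$ is the path component of $1_\Ccal$, which inherits the residual grouplike $\Ebb_k$-structure and is connected. Because $\pic(\Ccal)$ is an $\infty$-groupoid, its loop space at $1_\Ccal$ is exactly the automorphism space $\aut(1_\Ccal)$ of the unit in $\Ccal$; hence this component is a delooping of $\aut(1_\Ccal)$, i.e.\ it is $B\aut(1_\Ccal)$, and we obtain the claimed fiber sequence of grouplike $\Ebb_k$-spaces.

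Finally, for the splitting I would use that for invertible $X$ the functor $(-)\otimes X\colon\Ccal\xrightarrow{\sim}\Ccal$ is an equivalence carrying $1_\Ccal$ to $X$, hence induces $\aut(1_\Ccal)\simeq\aut(X)$ and therefore $B\aut(1_\Ccal)\simeq B\aut(X)$. Since every path component of $\pic(\Ccal)$ is of the form $B\aut(X)$ for a representative $X$ of that class, this yields $\pic(\Ccal)\simeq\Pic(\Ccal)\times B\aut(1_\Ccal)$ as spaces; equivalently, a section is just a choice of one invertible representative $P_g\in\Ccal$ for each $g\in\Pic(\Ccal)$ with $P_e=1_\Ccal$, which — the target being discrete — is the same datum as a map of spaces $\Pic(\Ccal)\to\pic(\Ccal)$ splitting $\pi_0$. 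The only subtlety, and the reason one claims only a splitting of spaces, is that the $P_g$ cannot in general be chosen multiplicatively, so the section is not a map of $\Ebb_k$-spaces; this is the single point requiring care, and everything else is formal.
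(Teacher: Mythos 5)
Your proof is correct and is precisely the direct-from-the-definitions verification the paper has in mind (the paper omits the argument entirely, prefacing the lemma with ``one can check\dots directly from the definition''). You also correctly identify the one genuine subtlety --- that the section $\Pic(\Ccal)\to\pic(\Ccal)$ obtained by choosing representatives is only a splitting of spaces, not of $\Ebb_k$-spaces --- which is exactly why the lemma is phrased as ``this fibration sequence of \emph{spaces} splits.''
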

	\begin{rem}\label{rem:Pic_pic}
		In \cite{MS_Picard}, the authors denoted the Picard space of a \emph{symmetric} monoidal $\infty$-category $\Ccal$ by $\mathcal{P}ic(\Ccal)$ and its Picard \emph{spectrum} by $\pic(\Ccal)$. By \Cref{lem:pic_Ccal_fib}, $\mathcal{P}ic(\Ccal)$ is a group-like $\einf$-space and hence can be identified with a connective spectrum. However, when $\Ccal$ is only $\Ebb_k$-monoidal for some $k<\infty$, its Picard space cannot be identified with a connective spectrum. 
		
		We use the notation $\pic(\Ccal)$ for the Picard \emph{space} of an $\Ebb_1$-monoidal $\infty$-category $\Ccal$ to make a distinction between Picard spaces and groups. 
	\end{rem}
	When there is no ambiguity, we will write $\pic(R)$ and $\Pic(R)$ for $\pic\left(\Mod_R(\Ccal)\right)$ and $\Pic\left(\Mod_R(\Ccal)\right)$, respectively (see \Cref{notn:Pic_R}).
	\begin{lem}
		Let $R\in \Alg_{\Ebb_2}(\Sp)$ be an $\Ebb_2$-ring spectrum. Then $\aut_R(R)$, the automorphism space of $R$ as an $R$-module spectrum is equivalent to $\GL_1R$, the space of units in the ring spectrum $R$ in the sense of \cite{units_ring_spec}. More precisely, we have a pullback diagram of spaces: 
		\begin{equation*}
			\begin{tikzcd}
				\GL_1R\rar \dar\ar[dr,phantom,very near start,"\lrcorner"]& \Omega^\infty R\dar \\
				\pi_0(R)^\times \rar & \pi_0(R).
			\end{tikzcd}
		\end{equation*}
	\end{lem}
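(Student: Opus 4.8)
The plan is to identify both $\aut_R(R)$ and $\GL_1 R$ with the same union of path components of $\Omega^\infty R$. First I would recall that since $R$ is an $\Ebb_2$-ring, $\Mod_R$ is an $\Ebb_1$-monoidal $\infty$-category whose unit is $R$ itself (see \cite{Lurie_HA}), and that $R$ is the free $R$-module on the sphere, so for every $R$-module $M$ there is a natural equivalence $\map_R(R,M)\simeq \Omega^\infty M$. Taking $M=R$ and tracking the composition product on the left-hand side, this upgrades to an equivalence of $\Ebb_1$-ring spaces $\Omega^\infty\mathrm{End}_R(R)\simeq \Omega^\infty R$; equivalently, $\mathrm{End}_R(R)\simeq R$ as $\Ebb_1$-ring spectra, so that composition of $R$-linear self-maps of the unit corresponds to multiplication in $R$.

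Next I would determine which path components of $\map_R(R,R)\simeq \Omega^\infty R$ consist of automorphisms. Under this equivalence an $R$-linear self-map $f$ of $R$ corresponds to the element $x=\pi_0(f)(1)\in\pi_0(R)$, and $f$ is then multiplication by $x$ (left or right, which is immaterial since $R$ is $\Ebb_2$); as this map is $\pi_0(R)$-linear, it is an equivalence of spectra if and only if it is an isomorphism on $\pi_0$, which happens exactly when $x\in \pi_0(R)^\times$. Hence $\aut_R(R)$ is precisely the preimage of $\pi_0(R)^\times\subseteq\pi_0(R)$ under $\pi_0\colon \Omega^\infty R\to\pi_0(R)$, i.e. the union of those path components of $\Omega^\infty R$ lying over the units.

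Finally I would recall that in \cite{units_ring_spec} the space $\GL_1 R$ is \emph{defined} to be the pullback $\pi_0(R)^\times\times_{\pi_0(R)}\Omega^\infty R$. Since $\pi_0(R)^\times\hookrightarrow\pi_0(R)$ is an inclusion of discrete spaces, this pullback is again the union of the path components of $\Omega^\infty R$ over $\pi_0(R)^\times$, and it therefore agrees with $\aut_R(R)$ by the previous step; the displayed pullback square is then the defining square, and the grouplike $\Ebb_1$-structures on the two sides match because both are inherited from the multiplication on $R\simeq\mathrm{End}_R(R)$. I do not expect a genuine obstacle here: the only points needing care are the compatibility of the composition and multiplication $\Ebb_1$-structures (which is exactly the identification $\mathrm{End}_R(R)\simeq R$ as $\Ebb_1$-rings) and the harmless left/right-module bookkeeping, which the $\Ebb_2$-hypothesis renders irrelevant.
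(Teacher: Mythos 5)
Your argument is correct and is the standard one: identify $\mathrm{End}_R(R)\simeq R$ as $\Ebb_1$-rings via the free--forgetful adjunction, observe that a self-map corresponding to $x\in\pi_0(R)$ is an equivalence precisely when $x$ is a unit, and note that $\GL_1R$ is by definition the pullback in the displayed square, hence the union of the unit components of $\Omega^\infty R$. The paper states this lemma without proof as a standard recollection, and your write-up (including the remark that the $\Ebb_2$-hypothesis disposes of the left/right and $R$-versus-$R^{\mathrm{op}}$ bookkeeping) supplies exactly the argument it is implicitly relying on.
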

	\begin{cor}\label{cor:pic_E2_ring}
		Let $R$ be an $\Ebb_2$-ring spectrum.  The homotopy groups of its Picard space are:
		\begin{equation*}
			\pi_t\left(\pic(R)\right)=\begin{cases}
				\Pic(R), & t=0;\\
				\pi_0(R)^\times, & t=1;\\
				\pi_{t-1}(R), & t\ge 2. 
			\end{cases}
		\end{equation*}
	\end{cor}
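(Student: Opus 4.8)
The plan is to read off every homotopy group of $\pic(R)$ from the two immediately preceding lemmas: the split fibration of group-like $\Ebb_1$-spaces $B\aut_R(R)\to \pic(R)\to \Pic(R)$ of \Cref{lem:pic_Ccal_fib}, applied to the $\infty$-category $\Mod_R$, together with the identification $\aut_R(R)\simeq \GL_1R$ and the pullback square describing $\GL_1R$. Observe first that $\Mod_R$ is $\Ebb_1$-monoidal precisely because $R$ is assumed to be $\Ebb_2$, so that $\pic(R)$ is a genuine group-like $\Ebb_1$-space (equivalently, a based loop space) and the two preceding lemmas are available with unit $1_\Ccal=R$.

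Next, $\pi_0\pic(R)=\Pic(R)$ holds by \Cref{defn:Pic}, which is the $t=0$ case. Since $\Pic(R)$ is discrete, the long exact sequence in homotopy groups of the fibration $B\aut_R(R)\to\pic(R)\to\Pic(R)$ collapses to isomorphisms $\pi_t\pic(R)\cong\pi_tB\aut_R(R)\cong\pi_{t-1}\aut_R(R)$ for every $t\ge 1$ (the splitting is needed only to record the group structure on $\pi_0$).

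It then remains to compute $\pi_\ast\aut_R(R)\cong\pi_\ast\GL_1R$. From the displayed pullback square, $\GL_1R\to\Omega^\infty R$ is the inclusion of those path components of $\Omega^\infty R$ lying over the subset $\pi_0(R)^\times\subseteq\pi_0(R)$; hence $\pi_0\GL_1R=\pi_0(R)^\times$ while $\pi_k\GL_1R\cong\pi_k\Omega^\infty R=\pi_k(R)$ for all $k\ge 1$. Substituting into the previous paragraph gives $\pi_1\pic(R)=\pi_0\GL_1R=\pi_0(R)^\times$ and, for $t\ge 2$, $\pi_t\pic(R)=\pi_{t-1}\GL_1R=\pi_{t-1}(R)$, which is exactly the claimed formula.

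I do not anticipate a real obstacle here; the corollary is a formal consequence of \Cref{lem:pic_Ccal_fib} and the lemma identifying $\aut_R(R)$ with $\GL_1R$. The only points deserving care are (i) checking that $\Mod_R$ is honestly $\Ebb_1$-monoidal, so that $\pic(R)$ is a based loop space and \Cref{lem:pic_Ccal_fib} applies — this is precisely why one assumes $R$ is $\Ebb_2$ rather than merely $\Ebb_1$ — and (ii) noting that the pullback square modifies only the zeroth homotopy group, so that $\pi_k\GL_1R\cong\pi_k(R)$ for $k\ge 1$ and the truncation phenomenon is confined to the passage from $t=1$ to $t=0$.
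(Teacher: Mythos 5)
Your argument is correct and is exactly the (implicit) proof the paper intends: the corollary is stated as a formal consequence of \Cref{lem:pic_Ccal_fib} and the identification $\aut_R(R)\simeq\GL_1R$, with the fibration giving $\pi_t\pic(R)\cong\pi_{t-1}\GL_1R$ for $t\ge1$ and the pullback square giving $\pi_0\GL_1R=\pi_0(R)^\times$ and $\pi_k\GL_1R\cong\pi_k(R)$ for $k\ge1$. No gaps.
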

	\begin{exmps} 
		\begin{enumerate}
			\item When $R=S^0$, we have $\Mod_{S^0}(\Sp)=\Sp$, whose Picard space has homotopy groups:
			\begin{equation*}
				\pi_t(\pic(\Sp))=\begin{cases}
					\Z, & t=0; \qquad \textup{\cite{HMS_picard,Strickland_interpolation}}\\
					\Z^\times= \Z/2, & t=1; \\
					\pi_{t-1}(S^0) , & t\ge 2.
				\end{cases}
			\end{equation*}
			\item When $R=HA$ is the Eilenberg--MacLane spectrum for some classical (discrete)  commutative ring $A$, the Dold--Kan correspondence states that $\Mod_{HA}(\Sp)\simeq \Dcal_A$ as symmetric monoidal $\infty$-categories. This implies $\Pic(HA)\cong \Pic(\Dcal_A)\cong \Pic(A)\oplus C(\spec R,\Z)$ and 
			\begin{equation*}
				\pi_t(\pic(HA))=\begin{cases}
					\Pic(A)\oplus C(\spec R,\Z), & t=0;\\
					A^\times& t=1;\\
					0 & t\ge 2.
				\end{cases}
			\end{equation*}
		\end{enumerate}
	\end{exmps}
	\subsection{Picard spaces of $K(n)$-local ring spectra}
	The symmetric monoidal category we are studying in this paper is $\Sp_{K(n)}$, the $\infty$-category of $K(n)$-local spectra \cite{Hovey-Strickland_1999} \cite[\S 10.2]{Mathew_Galois}. 
	It admits the  structure of a presentably symmetric monoidal $\infty$-category $\left(\Sp_{K(n)}, \hsmash, L_{K(n)}S^0\right)$ such that: (see \cite[\S5.1.1]{CSY_ambiCHT})
	\begin{itemize}
		\item For all $X,Y\in \Sp_{K(n)}$, the $K(n)$-local monoidal product is defined by $X\hsmash Y= L_{K(n)}(X\Smash Y)$, where $\Smash$ is the smash product of spectra.   
		\item The localization functor $L_{K(n)}\colon \Sp\to \Sp_{K(n)}$ is strong symmetric monoidal.
		\item The inclusion $\Sp_{K(n)}\to\Sp$ is lax symmetric monoidal. 
	\end{itemize}  
	Suppose $R\in \Alg_{\Ocal}\left(\Sp_{K(n)}\right)$, where $\Ocal=\Ebb_k$ for some $k\ge 2$. By \cite[Lemma 3-6]{GGN_loop}, we have a fully faithful embedding $i\colon \Alg_{\Ocal}\left(\Sp_{K(n)}\right)\hookrightarrow \Alg_{\Ocal}(\Sp)$, with a left adjoint $L_{K(n)}\colon  \Alg_{\Ocal}(\Sp)\to  \Alg_{\Ocal}\left(\Sp_{K(n)}\right)$. A natural question is  to compare the Picard spaces $\pic\left(\Mod_R(\Sp)\right)$ and $\pic\left(\Mod_R\left(\Sp_{K(n)}\right)\right)$. 
	\begin{notn}\label{notn:Pic_R}
		Let $R\in \Alg_{\Ebb_k}\left(\Sp_{K(n)}\right)$ for some $2\le k\le \infty$. From now on, we will denote
		\begin{align*}
			\Mod(R)&:= \Mod_R(\Sp) &&& \Mod_{K(n)}(R)&:= \Mod_R\left(\Sp_{K(n)}\right)\\
			\pic(R)&:= \pic\left(\Mod_R\right) &&&\pic_{K(n)}(R)&:=\pic\left(\Mod_{K(n)}R\right)  \\
			\Pic(R)&:= \Pic\left(\Mod_R\right)&&&\Pic_{K(n)}(R)&:=\Pic\left(\Mod_{K(n)}R\right).
		\end{align*}
	\end{notn}
	As the localization functor $L_{K(n)}\colon \Sp\to \Sp_{K(n)}$ is strong symmetric monoidal, it induces a map of group-like $\Ebb_{k-1}$-spaces $\lambda\colon \pic(R)\to \pic_{K(n)}(R)$ for any $R\in \Alg_{\Ebb_k}\left(\Sp_{K(n)}\right)$.  For a group-like $\Ebb_k$-space $X$, its identity component $\tau_{\ge1} X$ is also a group-like $\Ebb_k$-space. 
	\begin{prop}\label{prop:pic_kn_truncation}
		For $R\in \Alg_{\Ebb_k}\left(\Sp_{K(n)}\right)$, the localization functor  induces an equivalence of connected group-like $\Ebb_{k-1}$-spaces \begin{equation*}
			(\tau_{\ge1}\lambda)\colon \tau_{\ge1}\pic(R)\simto \tau_{\ge1}\pic_{K(n)}(R).
		\end{equation*} 
	\end{prop}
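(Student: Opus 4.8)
The plan is to reduce the statement to an elementary assertion about the automorphism space of $R$. Since both $\tau_{\ge1}\pic(R)$ and $\tau_{\ge1}\pic_{K(n)}(R)$ are connected by construction, and the forgetful functor from group-like $\Ebb_{k-1}$-spaces to spaces is conservative, it suffices to show that the underlying map of spaces $\tau_{\ge1}\lambda$ is an equivalence; as both sides are connected, this is in turn equivalent to $\tau_{\ge1}\lambda$ inducing an equivalence on based loop spaces at the basepoint $R$. By \Cref{lem:pic_Ccal_fib} (or \Cref{cor:pic_E2_ring}) these loop spaces are $\aut_R(R)$ and its $K(n)$-local counterpart $\aut_{\Mod_{K(n)}(R)}(R)$, and $\Omega(\tau_{\ge1}\lambda)$ gets identified with the map $\aut_R(R)\to\aut_{\Mod_{K(n)}(R)}(R)$ obtained by applying the localization functor $L_{K(n)}\colon\Mod(R)\to\Mod_{K(n)}(R)$ to automorphisms of the unit. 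So the whole proof comes down to showing this map is an equivalence.

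Next I would record that, because $R$ is $K(n)$-local, $\Mod_{K(n)}(R)$ is the reflective full subcategory of $\Mod(R)$ spanned by the $R$-modules whose underlying spectrum is $K(n)$-local: since $L_{K(n)}$ is lax symmetric monoidal and the unit $R\to L_{K(n)}R$ is an equivalence, $K(n)$-localizing the underlying spectrum of an $R$-module again produces an $R$-module, and this assembles into a Bousfield localization on $\Mod(R)$ whose local objects are precisely $\Mod_{K(n)}(R)$ and whose reflector is the functor $L_{K(n)}$ above. On the subcategory $\Mod_{K(n)}(R)$ the reflector is the identity; in particular $R$ lies in it, and $L_{K(n)}$ induces an equivalence $\map_{\Mod(R)}(R,R)\simto\map_{\Mod_{K(n)}(R)}(R,R)$ --- equivalently, the endomorphism ring spectrum of the unit is $R$ in both categories.

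Finally I would restrict this equivalence to the subspaces of invertible self-maps. On $\pi_0$ it is the identity of $\pi_0(R)$, and in either of the two symmetric monoidal $\infty$-categories a self-map of the unit is an equivalence exactly when its class in $\pi_0(R)$ is a unit (cf.\ the pullback square for $\GL_1R$ preceding \Cref{cor:pic_E2_ring}); hence it restricts to an equivalence of the corresponding unions of path components, namely $\aut_R(R)\simto\aut_{\Mod_{K(n)}(R)}(R)$, and this restriction is exactly the map induced by $L_{K(n)}$. Combined with the first paragraph, this gives the claim. I expect the only genuinely technical point to be the identification in the second paragraph of $\Mod_{K(n)}(R)$ as a reflective subcategory of $\Mod(R)$ whose reflector computes $K(n)$-localization on underlying spectra; this is standard once one knows $R$ is $K(n)$-local, and the rest is a formal consequence of the fibration sequence in \Cref{lem:pic_Ccal_fib}.
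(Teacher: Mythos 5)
Your proposal is correct and follows essentially the same route as the paper: reduce via the fibration in \Cref{lem:pic_Ccal_fib} to comparing automorphism spaces of the unit, observe that the endomorphism spaces of $R$ in $\Mod(R)$ and $\Mod_{K(n)}(R)$ agree because the latter is a full subcategory (the paper phrases this simply as ``any endomorphism of $R$ is automatically $K(n)$-local''), and then pass to invertible components. Your detour through the reflective-localization description of $\Mod_{K(n)}(R)$ is a slightly more elaborate packaging of the same fullness observation, but the argument is the same in substance.
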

	\begin{proof}
		By \Cref{lem:pic_Ccal_fib}, this is equivalent to showing $\aut_{\Mod(R)}(R)\simeq \aut_{\Mod_{K(n)}R}(R)$.  As  $\Sp_{K(n)}\to \Sp$ is a full subcategory,  any endomorphism of $R$ is automatically $K(n)$-local. As a result, the localization functor induces an equivalence of monoids: 
		\begin{equation*}
			\mathrm{End}_{\Mod(R)}(R)\simto \mathrm{End}_{\Mod_{K(n)}(R)}(R). 
		\end{equation*} 
		The claim follows by taking invertible elements in the endomorphism monoids. 
	\end{proof}
	One central object in $\Sp_{K(n)}$ is the height $n$ \textbf{Morava $E$-theory} $E_n$.  This is a Landweber exact spectrum defined using the Lubin--Tate deformation theory of the height $n$ Honda formal group $\Gamma_n$. The homotopy groups of $E_n$ are given by
	\begin{equation*}
		\pi_*(E_n)=\W\Fbb_{p^n}\llb u_1,\cdots, u_{n-1}\rrb[u^{\pm 1}], \qquad |u_i|=0, |u|=-2. 
	\end{equation*}
	The graded Lubin--Tate deformation ring $\pi_*(E_n)$ admits an action by the \textbf{Morava stabilizer group} $\Gbb_n:=\aut(\Gamma_n/\Fbb_{p^n})\rtimes \gal(\Fbb_{p^n}/\Fp)$, where $\aut(\Gamma_n/\Fbb_{p^n})$ is the automorphism group of the Honda formal group $\Gamma_n$ (extended to the field $\Fbb_{p^n}$). 
	\begin{thm}[Goerss--Hopkins--Miller, Lurie, {\cite{Lurie_Elliptic2,Goerss-Hopkins_moduli_spaces,Rezk_Hopkins-Miller}}]\label{thm:GHML}
		The spectrum $E_n$ is a $K(n)$-local $\einf$-ring spectrum. The $\Gbb_n$-action on $\pi_*(E_n)$ lifts to $\einf$-ring automorphisms of $E_n$.
	\end{thm}
	Following the discussion above, the two Picard spaces $\pic(E_n)$ and $\pic_{K(n)}(E_n)$ of $E_n$ can only differ in their path components. By  \cite[Lemma 6.7]{Heard_2021Sp_kn-local}, we have an isomorphism $\Pic(E_n)\cong \Pic_{K(n)}(E_n)$ of Picard groups. This follows from \cite[Proposition 10.11]{Mathew_Galois}. 
	\begin{thm}[Baker--Richter, {\cite{Baker-Richter_invertible_modules}}]
		The Picard group of $E_n$ in $\Sp$ is algebraic in the sense of an isomorphism 
		\begin{equation*}
			\Pic(\textup{graded $(E_n)_*$-}\Mod)\simto \Pic(E_n).
		\end{equation*}
		The former is isomorphic to $\Z/2$ since $E_n$ is even-periodic and $\pi_0(E_n)$ is a complete local ring. 
	\end{thm}
	By \Cref{cor:pic_E2_ring}, the homotopy groups of the $K(n)$-local Picard space of $E_n$ are:
	\begin{equation}\label{eqn:Pic_En}
		\pi_t\left(\pic_{K(n)} (E_n)\right)\cong \pi_t\left(\pic (E_n)\right)=\begin{cases}
			\Z/2, & t=0;\\
			\pi_0(E_n)^\times, & t=1;\\
			\pi_{t-1}(E_n), & t\ge 2.
		\end{cases}
	\end{equation}
	\subsection{Algebra objects in monoidal $\infty$-categories}\label{subsec:Alg_O}    
	We conclude this section with a brief  discussion of algebra objects in monoidal $\infty$-categories  following \cite{Lurie_DAG_III,Lurie_HA}. Let $\Ccal$ be a symmetric monoidal $\infty$-category. This data is encoded in a co-Cartesian fibration $p\colon \Ccal^{\otimes}\to N(\mathsf{Fin}_*)\simeq \einf^{\otimes}$ of $\infty$-operads. 
	\begin{defn}\label{defn:OAlg}
		Let $\Ocal$ be an $\infty$-operad with a map $p'\colon \Ocal^\otimes\to \einf^\otimes$.  An $\Ocal$-algebra in $\Ccal$ is a map of $\infty$-operads $\alpha\colon \Ocal^{\otimes}\to \Ccal^{\otimes}$ such that $p'\circ \alpha\simeq p$. Denote the $\infty$-category of $\Ocal$-algebras in $\Ccal$ by $\Alg_\Ocal(\Ccal)$. When $p=\id\colon \einf^\otimes\to \einf^\otimes$, the corresponding algebra object in $\Ccal$ is called commutative and we denote $\CAlg:=\Alg_{\einf}$. 
	\end{defn} 
	\begin{prop}[{\cite[Corollaries 3.2.2.5 and 3.2.3.2]{Lurie_HA}}]\label{prop:alg_O_lim}
		If a symmetric monoidal $\infty$-category $\Ccal$ has all limits and filtered colimits, then so does $\Alg_\Ocal (\Ccal)$. In that case, the forgetful functor $U\colon \Alg_\Ocal(\Ccal)\to\Ccal$ preserves and detects limits and filtered colimits. 
	\end{prop}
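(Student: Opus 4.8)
Following \Cref{defn:OAlg}, the plan is to realize $\Alg_\Ocal(\Ccal)$ as the full subcategory of the functor $\infty$-category $\Fun_{N(\mathsf{Fin}_*)}\!\left(\Ocal^\otimes,\Ccal^\otimes\right)$ of functors over $N(\mathsf{Fin}_*)$ spanned by the maps of $\infty$-operads, i.e.\ those functors carrying inert morphisms to inert morphisms. The argument then splits into three steps: (i) $\Fun_{N(\mathsf{Fin}_*)}\!\left(\Ocal^\otimes,\Ccal^\otimes\right)$ admits all limits and filtered colimits, computed ``pointwise'' in $\Ccal^\otimes$; (ii) the full subcategory $\Alg_\Ocal(\Ccal)$ is closed under these pointwise limits and filtered colimits; (iii) since $U$ is the composite of this inclusion with an evaluation functor, the statement for $U$ follows.

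For step (i), I would observe that the fibers $\Ccal^\otimes_{\langle n\rangle}\simeq\Ccal^n$ inherit all limits and filtered colimits from $\Ccal$, and that the coCartesian fibration $\Ccal^\otimes\to N(\mathsf{Fin}_*)$ admits the corresponding \emph{relative} limits and colimits; the standard theory of relative $(\mathrm{co})$limits in section $\infty$-categories then yields (i), with the evaluation functors at objects of $\Ocal^\otimes$ preserving them. No hypothesis beyond that on $\Ccal$ is needed for limits; for filtered colimits one additionally uses that the tensor product of $\Ccal$ preserves filtered colimits in each variable — automatic in the presentable symmetric monoidal setting of this paper — so that the pushforward functors along the active morphisms of $\mathsf{Fin}_*$ preserve filtered colimits. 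For step (ii), the key point is that being a map of $\infty$-operads only constrains $F$ along the \emph{inert} morphisms of $N(\mathsf{Fin}_*)$, and for an inert $\alpha\colon\langle m\rangle\to\langle k\rangle$ the pushforward $\alpha_!\colon\Ccal^\otimes_{\langle m\rangle}\simeq\Ccal^m\to\Ccal^k\simeq\Ccal^\otimes_{\langle k\rangle}$ is a product of coordinate projections, hence preserves \emph{all} limits and colimits. Thus, if $\{F_i\}$ is a diagram of $\Ocal$-algebras with pointwise limit $F$ and $\phi$ is an inert morphism over such an $\alpha$, then
\begin{equation*}
	\alpha_!\,F(X)\;\simeq\;\alpha_!\bigl(\lim\nolimits_i F_i(X)\bigr)\;\simeq\;\lim\nolimits_i\alpha_!\,F_i(X)\;\xrightarrow{\ \sim\ }\;\lim\nolimits_i F_i(Y)\;\simeq\;F(Y),
\end{equation*}
the second equivalence because $\alpha_!$ is a projection and the third because each $F_i$ is an $\Ocal$-algebra; hence $F(\phi)$ is inert and $F\in\Alg_\Ocal(\Ccal)$. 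The same computation with $\colim$ in place of $\lim$ handles filtered colimits.

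For step (iii), $U$ factors as $\Alg_\Ocal(\Ccal)\hookrightarrow\Fun_{N(\mathsf{Fin}_*)}\!\left(\Ocal^\otimes,\Ccal^\otimes\right)$ followed by evaluation at the relevant object of $\Ocal^\otimes_{\langle1\rangle}$; both functors preserve pointwise limits and filtered colimits, so $U$ does too. Moreover $U$ is conservative — by the Segal condition for $\Ocal$, a map of $\Ocal$-algebras that becomes an equivalence after $U$ is already an equivalence — and a conservative functor out of an $\infty$-category that possesses, and preserves, limits (resp.\ filtered colimits) necessarily detects them. The step I expect to be the main obstacle is (i): carefully constructing relative limits and filtered colimits in the section $\infty$-category and tracking exactly where compatibility of $\otimes$ with filtered colimits is invoked — which is precisely what is packaged in \cite[Corollaries 3.2.2.5 and 3.2.3.2]{Lurie_HA}.
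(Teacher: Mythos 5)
The paper offers no proof of this proposition — it is quoted verbatim from Lurie's \emph{Higher Algebra} (Corollaries 3.2.2.5 and 3.2.3.2) — and your sketch is a correct reconstruction of the argument behind those corollaries (Propositions 3.2.2.1 and 3.2.3.1 there), hinging as it should on the observation that the operad-map condition only constrains a section along inert morphisms, whose pushforwards $\Ccal^m\to\Ccal^k$ are coordinate projections and hence preserve all limits and colimits. You also correctly flag the one hypothesis the paper's phrasing elides: for the filtered-colimit half one needs $\otimes$ to preserve filtered colimits separately in each variable, which does hold in every instance where the paper invokes the result ($\Sp$, $\Sp_{K(n)}$, and the Cartesian structures on $\Cat_\infty$ and $\Top$).
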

	\begin{prop}[{\cite[Proposition 7.1.2.6]{Lurie_HA}}]\label{prop:monoidal_mod_cat}
		Let $R\in \Alg_{\Ebb_k}(\Ccal)$, where $2\le k\le \infty$.  Then the $\infty$-category $\Mod_R(\Ccal)$ of its left modules in $\Ccal$ is $\Ebb_{k-1}$-monoidal. 
	\end{prop}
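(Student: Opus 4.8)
The plan is to reduce to the case $k=2$ via Dunn additivity and then invoke the relative tensor product machinery for modules over an associative algebra. Since every $\infty$-category in this paper is presupposed to be presentable, $\Ccal$ admits geometric realizations of simplicial objects and $\otimes$ preserves them separately in each variable; this is precisely the colimit hypothesis needed to form relative tensor products, so no further assumption on $\Ccal$ is necessary.

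First I would recall Dunn additivity in its $\infty$-categorical form (\cite[\S5.1.2]{Lurie_HA}): there is an equivalence of $\infty$-operads $\Ebb_k\simeq \Ebb_{k-1}\otimes \Ebb_1$, with the convention $\Ebb_\infty\simeq \Ebb_\infty\otimes\Ebb_1$ when $k=\infty$. Combined with the compatibility of the Boardman--Vogt tensor product with iterated algebra formation in a symmetric monoidal $\infty$-category (\cite[\S2.2.5, \S3.2.4]{Lurie_HA}), this identifies $R\in\Alg_{\Ebb_k}(\Ccal)$ with an $\Ebb_{k-1}$-algebra object in the symmetric monoidal $\infty$-category $\Alg_{\Ebb_1}(\Ccal)$; forgetting the $\Ebb_{k-1}$-structure recovers the underlying associative algebra $R$, over which $\Mod_R(\Ccal)$ is defined, and the relative tensor product $(M,N)\mapsto M\otimes_R N$ (the geometric realization of the two-sided bar construction) is available on it.

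The second step is to feed this $\Ebb_{k-1}$-algebra into a lax symmetric monoidal functor. Under the above colimit hypothesis on $\Ccal$, the assignment $A\mapsto \Mod_A(\Ccal)$, with functoriality given by extension of scalars $B\otimes_A(-)$, upgrades to a lax symmetric monoidal functor
\[
\Mod_{(-)}(\Ccal)\colon \Alg_{\Ebb_1}(\Ccal)\longrightarrow \mathrm{Cat}_\infty,
\]
where the source carries the pointwise tensor product and the target the cartesian symmetric monoidal structure; its lax structure maps are $\Mod_A(\Ccal)\times\Mod_{A'}(\Ccal)\to\Mod_{A\otimes A'}(\Ccal)$, $(M,N)\mapsto M\otimes_\Ccal N$ (this is \cite[\S4.8.5]{Lurie_HA}). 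A lax symmetric monoidal functor preserves algebras over any $\infty$-operad, so it sends the $\Ebb_{k-1}$-algebra $R$ in $\Alg_{\Ebb_1}(\Ccal)$ to an $\Ebb_{k-1}$-algebra in $(\mathrm{Cat}_\infty,\times)$, i.e. to an $\Ebb_{k-1}$-monoidal $\infty$-category whose underlying $\infty$-category is $\Mod_R(\Ccal)$. Unwinding the construction, its tensor product is the relative tensor product $(M,N)\mapsto M\otimes_R N$ — indeed $M\otimes_R N\simeq R\otimes_{R\otimes R}(M\otimes_\Ccal N)$ — with unit $R$. When $k=\infty$ the same argument with $\Ebb_\infty\simeq\Ebb_\infty\otimes\Ebb_1$ yields the familiar symmetric monoidal structure on $\Mod_R(\Ccal)$, and we arrive at \cite[Proposition 7.1.2.6]{Lurie_HA}.

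The real work is hidden in the second step: producing the lax symmetric monoidal refinement of $A\mapsto\Mod_A(\Ccal)$ — equivalently, organizing $\otimes_R$ into a coherently associative and unital map of $\infty$-operads rather than a pairing defined only up to homotopy — requires Lurie's module and bimodule $\infty$-operads together with the analysis of the relevant simplicial resolutions, and is exactly the content we are citing. Everything else, namely the additivity equivalence $\Ebb_k\simeq\Ebb_{k-1}\otimes\Ebb_1$, the iterated-algebra identification $\Alg_{\Ebb_k}(\Ccal)\simeq\Alg_{\Ebb_{k-1}}(\Alg_{\Ebb_1}(\Ccal))$, and the bookkeeping recognition of $\otimes_R$ as $R\otimes_{R\otimes R}(-\otimes_\Ccal-)$, is formal.
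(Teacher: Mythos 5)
This proposition is quoted from \cite[Proposition 7.1.2.6]{Lurie_HA} without proof in the paper, and your argument is a correct reconstruction of Lurie's own proof: Dunn additivity identifies $R$ with an $\Ebb_{k-1}$-algebra in $\Alg_{\Ebb_1}(\Ccal)$, and the lax symmetric monoidal functor $A\mapsto\Mod_A(\Ccal)$ then carries it to an $\Ebb_{k-1}$-monoidal structure on $\Mod_R(\Ccal)$ with product $\otimes_R$. The only point worth flagging is that the hypothesis you need is not bare presentability of $\Ccal$ but compatibility of $\otimes$ with geometric realizations in each variable, which holds here because the paper works with presentably symmetric monoidal $\infty$-categories.
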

	We will work very closely with limits of $\infty$-categories in this paper. There are several different settings where those limits can take place. The default is $\Cat_\infty$, the  $\infty$-category of $\infty$-categories. Another natural setting is $\mathrm{Pr}^L$, the $\infty$-category of presentable $\infty$-categories with small colimit preserving functors. By \cite[Proposition 5.5.3.13]{HTT}, the inclusion $\mathsf{Pr}^L\subseteq \Cat_\infty$ preserves all limits. One very useful fact is: 
	\begin{prop}\label{prop:einf}
		Let $\Ccal$ be a symmetric monoidal $\infty$-category whose underlying $\infty$-category is complete. 
		\begin{enumerate}
			\item Consider a sequence of algebra objects $\cdots \to R_{k+1}\to R_k\to \cdots \to R_1$ in $\Ccal$. If $R_k$ is an $\Ebb_k$-algebra over $R_{k+1}$ for all $k$, then $R:=\lim_k R_k$ admits a structure of an $\einf$-algebra in $\Ccal$ such that $R\to R_k$ is a morphism of $\Ebb_k$-algebras in $\Ccal$. 
			\item Let $R, R'\in \CAlg(\Ccal)$. Suppose $f\colon R\to R'$ is a morphism in $\Alg_{\Ebb_k}(\Ccal)$ for all $k$, then $f$ is a morphism in $\CAlg(\Ccal)$. 
		\end{enumerate}
	\end{prop}
	\begin{proof}
		By \cite[Corollary 3.2.2.5]{Lurie_HA} (\Cref{prop:alg_O_lim}), the $\infty$-categories $\Alg_{\Ebb_k}(\Ccal)$ are complete for all $k$ when the underlying $\infty$-category $\Ccal$ is complete. It follows that the limit $R$ is an $\Ebb_k$-algebra for all $k$. This implies that $R$ is $\einf$ as explained below. Recall from \Cref{defn:OAlg}, an $\Ocal$-algebra in $\Ccal$ is a lax monoidal functor of $\infty$-operads $\Ocal^{\otimes}\to \Ccal^\otimes$. Denote by $\Ebb_k^\otimes$ the $\infty$-operad associated to $\Ebb_k$-algebras. The colimit of the $\infty$-operads $\Ebb_k^\otimes$  is equivalent to the commutative $\infty$-operad $\einf^\otimes$ by \cite[Corollary 5.1.1.5]{Lurie_HA}.  This implies: 
		\begin{equation*}
			\CAlg\left(\Ccal\right)= \Fun^{\lax}\left(\einf^\otimes,\Ccal^\otimes\right)\simeq \Fun^{\lax}\left(\colim_k\Ebb_k^\otimes,\Ccal^\otimes\right)\simeq \lim_k \Fun^{\lax}\left(\Ebb_k^\otimes,\Ccal^\otimes\right)=\lim_k \Alg_{\Ebb_k}\left(\Ccal\right). 
		\end{equation*}
		Both claims then follow from the descriptions of objects and morphisms in the limit category on the right hand side. 
	\end{proof}
	\begin{cor}\label{cor:lim_Ek_monoidal}
		Let $\{\cdots\to \Ccal_{k+1}\to \Ccal_k\to \cdots\to \Ccal_1\}$ be an inverse system of monoidal $\infty$-categories such that $\Ccal_k$ is $\Ebb_{k}$-monoidal and the functor $\Ccal_{k+1}\to \Ccal_k$ is $\Ebb_{k}$-monoidal. Then $\lim_k\Ccal_k$ has a natural $\einf$-monoidal structure such that $\lim_k\Ccal_k\to \Ccal_j$ is $\Ebb_{j}$-monoidal for any $j$.  Moreover, let $f\colon \Ccal\to \Dcal$ be a functor such that $\Ccal$ and $\Dcal$ are both symmetric monoidal. If $f$ is $\Ebb_{k}$ for any $k$, then it is a symmetric monoidal functor. 
	\end{cor}
	\begin{proof}
		Let $\Ocal^\otimes$ be an $\infty$-operad. By \cite[Corollary 1.4.15]{Lurie_DAG_III}, an $\Ocal$-monoidal $\infty$-category is an $\Ocal$-algebra object in $\Cat_\infty$ with  Cartesian  symmetric monoidal product.  The claim now follows from \Cref{prop:einf} since $\Cat_\infty$ is closed under small limits. 
	\end{proof}
	\section{The inverse limit  topology on $K(n)$-local Picard groups}\label{sec:prodisc_topology}
	The Picard group of the $K(n)$-local category $\Sp_{K(n)}$ has a profinite topology indexed by a tower of generalized Moore spectra.  This has been described in \cite{Hovey-Strickland_1999,Goerss-Hopkins}. In this section, we show that the $K(n)$-local Picard functors $\pic_{K(n)}$ and $\Pic_{K(n)}$ can be lifted to \emph{pro}-spaces and \emph{pro}-groups, respectively. The inverse limit  topology on $\Pic\left(\Sp_{K(n)}\right)$ then recovers the profinite topology in those earlier works. Moreover, it plays an essential role in our proof of the profinite descent spectral sequence for Picard spaces in \Cref{thm:pic_ss}, which is in turn the foundation of all other main results of this paper. 
	
	Here are more details. In \Cref{prop:genMoore_mult}, we incorporate Burklund's result in \cite{Burkland_mult_Moore} to refine the tower of generalized Moore spectra $\{M_j\}$ of type $n$ in \cite{Hovey-Strickland_1999}.  Then we show in \Cref{thm:R-mod_moore} that the module category of a $K(n)$-local $\einf$-ring spectrum is equivalent to the limit of its base changes by the generalized Moore spectra of type $n$ with some $\Ebb_k$-algebra structure:
	\begin{equation*}
		\Mod_{K(n)}(R)\simto \lim_j \Mod(R\Smash M_j). 
	\end{equation*}
	This can be viewed as a Grothendieck existence theorem (see \Cref{rem:Grothendieck_existence}) for $K(n)$-local ring spectra. Once the equivalence is established, it is a formal argument to prove it is symmetric monoidal and hence induces an equivalence of Picard spaces and isomorphism of Picard groups: (\Cref{thm:Kn_Pic_limit})
	\begin{equation*}
		\pic_{K(n)}(R)\simto \lim_j \pic(R\Smash M_j),\qquad \Pic_{K(n)}(R)\cong \lim_j \Pic(R\Smash M_j).
	\end{equation*}
	This isomorphism endows $K(n)$-local Picard groups with a natural  inverse limit  topology. In \Cref{prop:Pic_Kn_profin_prod} and \Cref{prop:Pic_Kn_colim}, we prove that the $K(n)$-local Picard group functor $\Pic_{K(n)}\colon \CAlg\left(\Sp_{K(n)}\right) \to \Pro(\Ab)$ commutes with profinite products (see \Cref{defn:profin_prod_Kn}) and filtered colimits. 
	
	\subsection{Generalized Moore spectra of type $n$}
	A generalized Moore spectrum $M$ of type $n$ is a finite CW spectrum constructed inductively as follows:
	\begin{enumerate}
		\item a type $0$  generalized Moore spectrum is simply the sphere spectrum $S^0$;
		\item a type $1$ generalized Moore spectrum is the cofiber of   the multiplication-by-$p^k$ self-map on the type $0$ spectrum $S^0$;
		\item  a type $n$ generalized Moore spectrum is the cofiber of a $v_{n-1}$-self map on a generalized Moore spectrum of type $n-1$. The existence of such self-maps was proved by Hopkins--Smith in \cite[Theorem 9]{DHS_nilp2}. 
	\end{enumerate}
	See the precise definition in  \cite[Definition 4.12]{Hovey-Strickland_1999}. Generalized Moore spectra are the building blocks of the $K(n)$-local category. This is illustrated by the following results of Hovey--Strickland.
	\begin{thm}[Hovey--Strickland, {\cite[Propositions 4.22 and 7.10.(e)]{Hovey-Strickland_1999}}]\label{thm:genMoore}
		For each height $n$, there is a tower of generalized Moore spectra
		\begin{equation*}
			\cdots \xrightarrow{g_3} M_2\xrightarrow{g_2} M_1\xrightarrow{g_1}M_0,
		\end{equation*}
		such that:
		\begin{enumerate}
			\item All the $M_j$'s are finite complexes of type $n$.
			\item $(E_n)_*(M_j)\cong (E_n)_*/J_j$ for some open invariant ideal $J_j\trianglelefteq (E_n)_*$.
			\item $\bigcap_j J_j=\{0\}$.
			\item $M_j$'s are $\mu$-spectra, i.e. spectra with a left unital multiplication (see \cite[Definition 4.8]{Hovey-Strickland_1999}). Their unit maps $\eta_j\colon S^0\to M_j$ satisfy $\eta_{j+1}=\eta_j\circ g_j$ for all $j$.
		\end{enumerate}
		Denote the Bousfield localization at the height $n$ Morava $E$-theory $E_n$ by $L_n$. For any  spectrum $X$, we have a natural equivalence:
		\begin{equation}\label{eqn:Moore_limit}
			L_{K(n)}X\simeq \holim_{j}(L_nX\Smash M_j).
		\end{equation}
		In particular, if $X$ is $E_n$-local, then $L_{K(n)}X\simeq \holim_{j}(X\Smash M_j)$. When $X\in \Sp_{K(n)}\subseteq \Sp_{E_n}$, we further have $X\simeq \holim_{j}(X\Smash M_j)$.
	\end{thm}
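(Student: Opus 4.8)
The plan is to build the tower by hand out of iterated cofibers of $v_k$-self maps and then verify the listed properties one at a time. First I would invoke the Hopkins--Smith periodicity theorem: for each $0\le k\le n-1$ there is a finite type $k$ complex carrying a $v_k$-self map, and a suitable power of any such self map is asymptotically unique. Starting from $S^0$ and forming the iterated cofiber $S^0/(p^{i_0},v_1^{i_1},\dots,v_{n-1}^{i_{n-1}})$ — at each stage taking the cofiber of a sufficiently divisible $v_k$-self map on the complex produced so far — yields, via the thick subcategory theorem, a finite complex of type exactly $n$; this is item (1). To assemble the tower I would fix one cofinal sequence of exponent vectors $\mathbf{i}^{(0)}\le\mathbf{i}^{(1)}\le\cdots$ with every coordinate tending to $\infty$, and use the standard cofiber sequences relating $M/v_k^{i}$ and $M/v_k^{i+1}$ to produce structure maps $g_j\colon M_j\to M_{j-1}$. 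Here the asymptotic uniqueness of $v_k$-self maps is exactly what guarantees $v_k^{a+b}\simeq v_k^{a}\circ v_k^{b}$ on the relevant complexes, so that the $g_j$ compose coherently.

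For items (2) and (3) I would pass to $E_n$-homology. A $v_k$-self map induces multiplication by a unit multiple of $u_k$ on $(E_n)_*$-homology (with $u_0=p$), so a Künneth / long-exact-sequence induction gives $(E_n)_*(M_j)\cong (E_n)_*/J_j$ with $J_j=(p^{a_0},u_1^{a_1},\dots,u_{n-1}^{a_{n-1}})$ for $\mathbf a=\mathbf{i}^{(j)}$. Since $(p,u_1,\dots,u_{n-1})$ is a regular sequence generating the maximal ideal $\mathfrak m$ of the Noetherian, complete, local ring $(E_n)_*$, each $J_j$ is an open invariant ideal — invariance is Landweber's theorem, as $(E_n)_*/J_j$ is the $E_n$-homology of a spectrum — giving (2); and as the exponents go to infinity, $\bigcap_j J_j=\bigcap_m\mathfrak m^m=\{0\}$ by completeness, giving (3).

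The genuinely delicate step is item (4): endowing each $M_j$ with a left-unital multiplication together with compatible unit maps. I would attack this by obstruction theory in the (Adams or $E_n$-based) spectral sequence computing $[M_j\Smash M_j,M_j]$: for exponent vectors that are large enough coordinatewise, the relevant obstruction groups — which are controlled, in a range, by the size of $\pi_*$ of the finite complex, hence by the exponents — vanish, so a unit $S^0\to M_j$ extends to a map $M_j\Smash M_j\to M_j$. This is precisely where the prime $2$ forces larger exponents (one cannot multiply $S^0/2$), and where one must arrange the units $\eta_j$ to be compatible with the $g_j$; I would choose the multiplications and units inductively along the tower so that each $\eta_j$ factors through $\eta_{j+1}$ composed with $g_{j+1}$. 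I expect this to be the main obstacle.

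Finally, for the limit formula \eqref{eqn:Moore_limit}: because each $M_j$ is a finite type $n$ complex it is $L_{n-1}$-acyclic, so $L_nX\Smash M_j\simeq L_{K(n)}X\Smash M_j$ and hence $\holim_j(L_nX\Smash M_j)\simeq\holim_j(L_{K(n)}X\Smash M_j)$, a homotopy limit of $K(n)$-local spectra and therefore itself $K(n)$-local. It then suffices to show the natural map $L_{K(n)}X\to\holim_j(L_{K(n)}X\Smash M_j)$ is a $K(n)$-equivalence, i.e. an equivalence after smashing with any $M_{j_0}$; using that the $J_j$ are cofinal among open invariant ideals and that the pro-system $\{(E_n)_*/J_j\}$ computes $\mathfrak m$-adic completion, this reduces to Hovey--Strickland's $E_n$-homological characterization of $\Sp_{K(n)}$ (the $E_n$-homology of a $K(n)$-local spectrum is already appropriately $\mathfrak m$-complete). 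The last two assertions of the theorem follow immediately, using $L_{K(n)}X\simeq L_{K(n)}L_nX$ and $L_{K(n)}X\simeq X$ for $X\in\Sp_{K(n)}$.
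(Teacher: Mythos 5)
This theorem is quoted from Hovey--Strickland (\cite{Hovey-Strickland_1999}) and the paper gives no proof of it, so there is no internal argument to compare against; what you have written is a reconstruction of the original proof, and in outline it is the right one. Items (1)--(4) follow the standard construction: iterated cofibers of powers of $v_k$-self maps via Hopkins--Smith, asymptotic uniqueness to obtain the structure maps $g_j$, the long exact sequence in $E_n$-homology for (2), and exponents tending to infinity for (3) (the correct justification there is the Krull intersection theorem, i.e.\ separatedness of the $\mathfrak{m}$-adic topology, rather than completeness --- a minor point; likewise invariance of $J_j$ is simply the fact that $(E_n)_*/J_j$ is a comodule quotient of $(E_n)_*$, not Landweber's theorem). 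For (4), Hovey--Strickland's actual mechanism (their Proposition 4.11, which this paper invokes later in the proof of \Cref{prop:genMoore_mult}) is that the cofiber of a sufficiently large power of a self map of a $\mu$-spectrum is again a $\mu$-spectrum; this is an elementary cofiber-sequence obstruction argument of the sort you describe, and compatibility of the units with the $g_j$ is automatic because each $\eta_j$ is the inclusion of the bottom cell, which the $g_j$ preserve by construction.

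The one genuinely thin step is the end of your argument for \eqref{eqn:Moore_limit}. Your reduction is correct up to the point of showing that $M_{j_0}\Smash L_{K(n)}X \to \holim_j\left(M_{j_0}\Smash L_{K(n)}X \Smash M_j\right)$ is an equivalence, but the route you propose --- passing to $E_n$-homology and invoking $\mathfrak{m}$-completeness --- requires commuting $E_n$-homology past the homotopy limit, and for a general $X$ the relevant inverse systems need not satisfy Mittag--Leffler, so the $\lim^1$-terms are not obviously under control. The clean argument is that the pro-spectrum $\{M_{j_0}\Smash M_j\}_j$ is pro-constant with value $M_{j_0}$: by the splitting $M_{j_0}\Smash M_j\simeq M_{j_0}\vee\bigvee_i\Sigma^{d_{j,i}}M_{j_0}$ the structure maps are the identity on the first summand and act through elements of $J_j$ on the remaining summands, so only the first summand survives in the limit; smashing with $L_{K(n)}X$ then gives the claim directly. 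This is exactly the argument the paper runs in \eqref{eqn:Mk_Aj_splitting} inside the proof of \Cref{thm:R-mod_moore}. With that substitution your outline closes.
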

	This shows any $K(n)$-local spectrum is a \emph{pro-spectrum} indexed by a tower of generalized Moore spectra. In the last decade, significant progress  has been made in our understanding of multiplicative structures on generalized Moore spectra, for example \cite{BK_2022, Davis-Lawson_2014,Devinatz2017}. Recently, Burklund proved: 
	\begin{thm}[Burklund,  {\cite{Burkland_mult_Moore}}]\label{thm:genMoore_mult} Let $R$ be an $\Ebb_{m+1}$-ring spectrum  where $m\ge 2$. Suppose $v\in \pi_{\text{even}}(R)$ is an element such that the cofiber $R/v$ is a $\mu$-spectrum. Then the tower of $v^i$-cofibers of $R$
		\begin{equation*}
			\cdots \longrightarrow R/v^{i+1}\longrightarrow R/v^i\longrightarrow \cdots \longrightarrow R/v
		\end{equation*}
		satisfies that $R/v^q$ is an $\Ebb_j$-$R/v^{q+1}$-algebra when $j\le m$ and $q>j$. Moreover, we have: 
		\begin{enumerate}
			\item The Moore spectrum $S^0/2^q$ is an $\Ebb_j$-$S^0/2^{q+1}$-algebra when $q\ge \frac{3}{2}(j+1)$.
			\item For an odd prime $p$, the Moore spectrum $S^0/p^q$ is an $\Ebb_j$-$S^0/p^{q+1}$-algebra when $q\ge j+1$. 
			\item For any prime $p$, height $n$, and natural number $j$, there is a $p$-local generalized Moore spectrum $M$ of type $n$ that admits an $\Ebb_j$-ring spectrum structure. 
		\end{enumerate} 
	\end{thm}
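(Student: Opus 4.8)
The plan is to take the first, structural assertion as the real content and to deduce (1)--(3) from it. The first step is a reduction to a universal example: the class $v\in\pi_{2d}(R)$, with $2d=|v|$, is classified by a map of $\Ebb_{m+1}$-rings $F\to R$ out of the free $\Ebb_{m+1}$-ring $F=\mathrm{Free}_{\Ebb_{m+1}}(S^{2d})$ on one generator $x$ of degree $2d$, and under the resulting symmetric monoidal base-change functor on module categories the tower $\{F/x^q\}$ goes to $\{R/v^q\}$ and an $\Ebb_j$-ring refinement of the former is carried to one of the latter. So it suffices to prove, for $F$, that the tower $\cdots\to F/x^{q+1}\to F/x^q\to\cdots$ refines, for $q\ge j$, to a tower of $\Ebb_j$-rings (each $F/x^q$ then being in particular an $\Ebb_j$-$F/x^{q+1}$-algebra). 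The $\mu$-spectrum hypothesis on $R/v$ enters only to anchor the $\Ebb_1$-structure in the non-free case; in the free case it is automatic.

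Next I would run an obstruction-theoretic induction on $j$, under the standing constraint $q\ge j$, proving simultaneously that each $R/v^q$ is an $\Ebb_j$-ring and that the tower maps $R/v^{q+1}\to R/v^q$ are maps of $\Ebb_j$-rings. The case $j=1$ produces a coherent associative unital multiplication on each $R/v^q$, compatibly along the tower: for $R/v$ this is the given $\mu$-structure, and one transports it up the finite filtration of $R/v^q$ whose associated graded is a sum of shifts of $R/v$, the obstructions being $v$-multiples that die modulo the next power of $v$. For the step from an $\Ebb_{j-1}$- to an $\Ebb_j$-structure, one chooses an $\Ebb_{j-1}$-refinement and asks whether it lifts; the lift exists exactly when a sequence of obstruction classes vanishes, and these live in the homotopy of $\Ebb_j$-Andr\'e--Quillen cohomology (equivalently topological $\Ebb_j$-Hochschild cohomology) of $R/v^{q+1}\to R/v^q$ --- i.e. in groups assembled from mapping spectra $\map_{R/v^{q+1}}(-,R/v^q)$ on cellular approximations of $R/v^q$.

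The technical heart, which is the step I expect to be genuinely difficult, is forcing these obstruction classes to vanish when $q\ge j$. The mechanism: multiplication by $v$ is nilpotent of exponent $q+1$ on $R/v^{q+1}$, so the homotopy of $R/v^{q+1}$ and of the relative $\Ebb_j$-cotangent complex of $R/v^{q+1}\to R/v^q$ is $v$-power torsion of bounded exponent, while the obstruction to the $j$-th multiplicative layer is an iterated composite (of Toda-bracket type) all of whose inputs are the class $v^q$ defining the $v^q$-attaching map of $R/v^q$. Tracing this through the base change to the free $\Ebb_j$-algebra on a single even generator --- where the relevant homotopy is governed by the $\Ebb_j$-Dyer--Lashof operations --- one checks that a $j$-fold bracket of powers $x^q$ lands in a range that is annihilated once $q\ge j$. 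At the prime $2$ each bracket drags an extra factor of $\eta$ along, which forces the coarser estimate $q\ge\frac{3}{2}(j+1)$ of part (1). Carrying out this bracket and obstruction computation precisely is exactly Burklund's main calculation.

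Finally, deducing (1)--(3). For (2), apply the structural statement to the $p$-local sphere $S^0$ (an $\einf$-ring, so $m$ may be taken arbitrarily large) with $v=p\in\pi_0$; the classical multiplication shows $S^0/p$ is a $\mu$-spectrum, so $S^0/p^q$ is an $\Ebb_j$-$S^0/p^{q+1}$-algebra, and the sharp bound $q\ge j+1$ comes from inserting Toda's computations of the attaching maps of mod-$p^q$ Moore spectra into the obstruction count; part (1) is the analogue at $p=2$ with the $\eta$-corrected bound. For (3), realize a type-$n$ generalized Moore spectrum as an iterated cofiber $S^0/(p^{i_0},v_1^{i_1},\dots,v_{n-1}^{i_{n-1}})$ via the $v_t$-self maps of Hopkins--Smith, and apply the structural statement one stage at a time over the previous quotient: choosing each exponent $i_t$ large enough relative to $j$ and to the multiplicative level surviving from the previous stage makes each successive quotient an $\Ebb_j$-algebra over the one before it, and since $\Ebb_j$-algebra structures compose, the top of the tower is the desired $\Ebb_j$-ring.
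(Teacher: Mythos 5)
This statement is imported verbatim from Burklund's paper \cite{Burkland_mult_Moore}; the present paper offers no proof of it, so there is no internal argument to compare against. Judged on its own terms, your proposal reconstructs the broad architecture one would expect — base change from a universal example, induction on $j$ with the obstruction to lifting an $\Ebb_{j-1}$-structure to an $\Ebb_j$-structure living in Andr\'e--Quillen-type cohomology groups, and identification of that obstruction with a bracket built out of $v^q$ — but it is not a proof. The decisive step, the vanishing of those obstruction classes in the ranges $q\ge j$ (resp.\ $q\ge\tfrac{3}{2}(j+1)$ at $p=2$), is where all of the content of the theorem sits, and you explicitly defer it to ``Burklund's main calculation.'' Asserting that ``a $j$-fold bracket of powers $x^q$ lands in a range that is annihilated once $q\ge j$'' without exhibiting the bracket, the group it lives in, or the mechanism of annihilation is a restatement of the theorem, not an argument for it.

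Two further concrete problems. First, your deduction of parts (1) and (2) from the structural assertion does not go through as written. At $p=2$ the hypothesis fails outright: $S^0/2$ is \emph{not} a $\mu$-spectrum (since $2\cdot\mathrm{id}_{S^0/2}\neq 0$), so you cannot feed $v=2$ into the first assertion, and part (1) needs a genuinely separate argument — it is not ``the analogue with an $\eta$-corrected bound.'' Relatedly, your claim that the $\mu$-spectrum condition is automatic for the free $\Ebb_{m+1}$-ring on an even generator is unjustified (the same $\eta$-phenomenon can obstruct a unital multiplication on $F/x$), which undermines the reduction to the universal case as you have set it up. Even at odd primes, naive specialization of the structural statement to $v=p$ would give the bound $q\ge j$ rather than the stated $q\ge j+1$, a signal that (1) and (2) are independent statements in Burklund's development rather than formal corollaries. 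Second, your deduction of (3) by iterating quotients along powers of $v_i$-self maps is sound in outline; it is essentially the same induction on height that the paper itself performs in \Cref{prop:genMoore_mult} when it rigidifies the Hovey--Strickland tower.
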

	Burklund's result allows us to lift the equivalence in \eqref{eqn:Moore_limit} to the level of $K(n)$-local ring spectra in \Cref{thm:Kn_ring_limit}. We begin by rigidifying the tower of generalized Moore spectra in \Cref{thm:genMoore} as follows.
	\begin{prop}\label{prop:genMoore_mult}
		The generalized Moore spectrum $M_j$ in \Cref{thm:genMoore} can be chosen so that it is an $\mathbb{E}_{j}$-algebra over $M_k$ for any $k\ge j\ge 1$. 
	\end{prop}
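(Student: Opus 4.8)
The plan is to combine Burklund's theorem (\Cref{thm:genMoore_mult}) with the flexibility built into the Hovey–Strickland tower of \Cref{thm:genMoore}. First I would recall that a generalized Moore spectrum of type $n$ is obtained by iteratively taking cofibers of a regular sequence of $v_i$-self maps: one starts with $S^0$, forms $S^0/p^{i_0}$, then a cofiber by a $v_1^{i_1}$-self map, and so on up to a $v_{n-1}^{i_{n-1}}$-self map. The key point from \Cref{thm:genMoore_mult} is its part (3): for any prescribed $j$ there is a $p$-local type $n$ generalized Moore spectrum $M$ carrying an $\Ebb_j$-structure; the proof of that statement proceeds by choosing the exponents $i_0,\dots,i_{n-1}$ large enough, using the quantitative bounds in the first part of the theorem (each $R/v^q$ is an $\Ebb_j$-$R/v^{q+1}$-algebra once $q\ge j$, together with the base cases for $S^0/p^q$).

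The main step is to do this \emph{compatibly in $j$}, i.e. to arrange not just individual $\Ebb_j$-structures but a tower in which the transition maps $g_j\colon M_j\to M_{j-1}$ are maps of $\Ebb_{j-1}$-algebras (indeed, to get that $M_j$ is an $\Ebb_j$-algebra \emph{over} $M_k$ for all $k\ge j\ge 2$). To achieve this I would build the tower by a diagonal/interleaving construction: apply \Cref{thm:genMoore_mult} to each of the $n$ stages of the Moore spectrum construction in turn. Concretely, fix an $\Ebb_{m+1}$-ring $R$ (e.g.\ $R=S^0$, or an intermediate $S^0/(p^{i_0},\dots,v_{\ell}^{i_\ell})$ once one knows it is $\Ebb_{m+1}$); the tower of $v^q$-cofibers $\cdots\to R/v^{q+1}\to R/v^q\to\cdots$ then has $R/v^q$ an $\Ebb_j$-$R/v^{q+1}$-algebra whenever $q\ge j$. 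Reindexing this tower by $q\mapsto q+c$ for a large constant $c$, and truncating, yields a tower $\{N_j\}$ with $N_j = R/v^{q(j)}$ for a suitably increasing function $q(j)\ge j$, in which each $N_j$ is an $\Ebb_j$-$N_{j+1}$-algebra, hence (by composing) an $\Ebb_j$-$N_k$-algebra for all $k\ge j$. Iterating this over the $n$ self-maps $v_0=p, v_1,\dots,v_{n-1}$ — at each stage feeding the $\Ebb_j$-ring produced at the previous stage into \Cref{thm:genMoore_mult} — produces the desired tower $\{M_j\}$ of type $n$ Moore spectra with $M_j$ an $\Ebb_j$-algebra over $M_k$ for $k\ge j\ge 2$. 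I would then check that the resulting tower still satisfies properties (1)–(4) of \Cref{thm:genMoore}: (1) holds since each $M_j$ is built from a length-$n$ regular sequence; (2)–(3) hold because the ideals $J_j=(p^{i_0(j)},\dots, v_{n-1}^{i_{n-1}(j)})$ are open and invariant and the exponents $i_\ell(j)\to\infty$ as $j\to\infty$, so $\bigcap_j J_j=0$; and (4) follows since an $\Ebb_j$-algebra for $j\ge 1$ is in particular a $\mu$-spectrum with unit map $\eta_j\colon S^0\to M_j$, and these units are compatible with the (unital, algebra) transition maps $g_j$ because $g_j$ is a map of $\Ebb_{j-1}\supseteq\Ebb_1$-algebras under $S^0$.

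The main obstacle I anticipate is the bookkeeping of the quantitative bounds across the $n$ iterated stages: at stage $\ell$ one is applying Burklund's theorem to a ring $R_\ell=S^0/(p^{i_0},\dots,v_{\ell-1}^{i_{\ell-1}})$ whose $\Ebb_{m+1}$-structure, and whose relevant even-degree element $v_\ell$, are themselves only produced once the earlier exponents $i_0,\dots,i_{\ell-1}$ are large enough; so one must choose the exponents in the right order (outermost first, i.e.\ $i_0$ first) and with enough slack at each stage that the $\Ebb_j$-hypothesis "$q\ge j$" can be met simultaneously for the entire tower. Making this precise amounts to defining the functions $i_\ell(j)$ recursively so that $i_\ell(j)$ dominates both $j$ and whatever threshold is imposed by the $\Ebb_{j+?}$-structure on $R_\ell$; this is routine once organized, but it is where the real content of the argument lives. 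A secondary, purely formal point is the passage from "$\Ebb_j$-$M_{j+1}$-algebra" to "$\Ebb_j$-$M_k$-algebra for all $k\ge j$", which follows by restricting along the composite $M_k\to\cdots\to M_{j+1}$ together with the fact that an $\Ebb_j$-algebra structure over a base can be pulled back along any $\Ebb_j$-ring map of bases.
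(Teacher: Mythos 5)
Your overall strategy is the right one and matches the paper's: interleave Burklund's quantitative bounds (\Cref{thm:genMoore_mult}) with the Hovey--Strickland construction of the tower, inducting over the stages of the Moore spectrum and choosing the exponents with enough slack. You also correctly locate where the work is (the bookkeeping of exponents) and correctly handle the formal passage from ``$\Ebb_j$ over $M_{j+1}$'' to ``$\Ebb_j$ over $M_k$ for all $k\ge j$.''

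However, there is a genuine gap in your mechanism for producing the tower beyond the first stage. Burklund's theorem gives a tower of quotients $R/v^q$ of a \emph{single} ring $R$ by powers of a \emph{single} element $v$, and your ``reindex and truncate'' step only makes sense in that setting. But in a type-$n$ tower satisfying condition (3) of \Cref{thm:genMoore} ($\bigcap_j J_j=0$), \emph{all} the exponents $i_0(j),\dots,i_{n-1}(j)$ must tend to infinity with $j$, so at stage $\ell\ge 1$ the base ring $S^0/(p^{i_0(j)},\dots,v_{\ell-1}^{i_{\ell-1}(j)})$ itself varies with $j$. Consequently the transition map $M_j\to M_{j-1}$ is a map between quotients of \emph{different} rings by powers of (a priori) \emph{different} self-maps, and two things must be established that your proposal does not address: (a) that such a transition map exists at all --- this is the Hovey--Strickland argument from \cite[Proposition 4.22]{Hovey-Strickland_1999} that after raising the self-maps to sufficiently high powers the relevant square commutes, producing a map of cofiber sequences; and (b) that the resulting map $f_j\colon M_j\to M_{j-1}$ is an $\Ebb_{j-1}$-ring map, which does not follow from Burklund's theorem applied to a fixed ring. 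The paper handles (b) by factoring $f_j$ through the pushout $M_j\Smash_{W_{j+1}}W_j$: the first factor is a base change of the $\Ebb_j$-ring map $W_{j+1}\to W_j$ supplied by the inductive hypothesis, and the second factor is again a quotient-by-a-power map to which Burklund's criterion applies. Without some version of this argument your tower has no multiplicative transition maps, which is precisely the content of the proposition.
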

	\begin{proof}
		The strategy is to incorporate the criterion in Burklund's  \Cref{thm:genMoore_mult} into Hovey--Strickland's proof of the first half of \Cref{thm:genMoore} in \cite[Proposition 4.22]{Hovey-Strickland_1999}. We will construct the tower $\{\cdots \to M_{j+1}\to M_j\to \cdots \to M_1\}$ by induction on the height $n$.
		
		For $n=1$, this is already stated in \Cref{thm:genMoore_mult}. Suppose that a tower of type-($n-1$) generalized Moore spectra $\{\cdots\to W_3\to W_2\}$ has been constructed such that $W_j$ is an $\Ebb_{j}$-algebra over $W_k$ for any $k\ge j\ge 2$.  We will construct a type-$n$ tower $\{M_j\}$ as cofibers of $v_n$-self maps of $\{W_{j+1}\}$. It is necessary to start with an $\Ebb_{j+1}$-algebra $W_{j+1}$ since we need $\Ebb_{j+1}$-algebras to produce $\Ebb_j$-quotients by \Cref{thm:genMoore_mult}. 
		
		Set $M_2=W_3/v^2$ where $v$ is some type-$n$ self map of $W_3$ such that $W_3/v$ is a $\mu$-spectrum. The existence of such a map $v$ is guaranteed by \cite[Proposition 4.11]{Hovey-Strickland_1999}. Then \Cref{thm:genMoore_mult} implies that $M_2$ is an $\Ebb_2$-$W_3$-algebra.   Suppose we have constructed $M_{j-1}\to\cdots\to M_2$ with desired properties and $M_{j-1}$ is an $\Ebb_{j-1}$-$W_{j}$-algebra. The induction step in the proof of \cite[Proposition 4.22]{Hovey-Strickland_1999} produces a map between cofiber sequences.
		\begin{equation*}
			\begin{tikzcd}
				\Sigma^{|w_{j+1}|}W_{j+1}\rar["w_{j+1}"]\ar[dd,"w_{j+1}^{N}\circ g_{j+1}"'] & W_{j+1} \ar[rr,"q_{j+1}"]\ar[dd,"g_{j+1}"]\ar[dr,phantom,very near end,"\ulcorner"] &[15pt]&[-15pt] M_{j}\ar[dd,"f_j"]\ar[dl,"\bar{g}_{j+1}"']\\
				[10pt]&&P\ar[dr,dashed,"f'_j"]&\\
				[-10pt]\Sigma^{|w_{j}|} W_{j}\rar["w_{j}"']&  W_{j}\ar[rr,"q_{j}"']\ar[ur,"\bar{q}_{j+1}"]&& M_{j-1} 
			\end{tikzcd}
		\end{equation*}
		A priori, the right square above consists of $\mu$-spectrum maps. By our inductive hypothesis, $g_{j+1}$ is a map of $\Ebb_{j}$-rings and $q_{j}$ is a map of $\Ebb_{j-1}$ rings. We need to find a self map $w_{j+1}$ of $W_{j+1}$ so that the map $q_{j+1}$ is an $\Ebb_{j}$-ring map and $f_{j}$ is an $\Ebb_{j-1}$-ring map. \Cref{thm:genMoore_mult} implies that $q_{j+1}$ can be made $\Ebb_{j}$ when we replace $w_{j+1}$ by its $j$-th power. Replacing $N$ with a larger power accordingly, we can make the left square commute as in the proof of \cite[Proposition 4.22]{Hovey-Strickland_1999}. Then we obtain a new $f_j$ as a map between cofibers. 
		
		To prove $f_j$ is $\Ebb_{j-1}$, consider the pushout $P$ of the $\Ebb_{j}$-ring maps $q_{j+1}$ and $g_{j+1}$ is the diagram above. Then $\bar{g}_{j+1}$ and $\bar{q}_{j+1}$ are both $\Ebb_{j}$-ring maps. So it suffices to show $f'_j\colon P=M_j\Smash_{W_{j+1}}W_{j}\to M_{j-1}$ is $\Ebb_{j-1}$. One can check that $f'_j$ sits in a cofiber sequence:
		\begin{equation*}
			M_j\Smash_{W_{j+1}}\Sigma^{N|w_{j}|}W_{j}  \xrightarrow{1\Smash w_{j}^N}  M_j\Smash_{W_{j+1}}W_{j}\xrightarrow{f_j'} M_{j-1}. 
		\end{equation*} 
		As the quotient-by-$w_{j}$ map $q_{j}$ is already $\Ebb_{j-1}$, the quotient-by-$w_{j}^N$ map $f_j'$ must also be $\Ebb_{j-1}$ by \Cref{thm:genMoore_mult}. Consequently, we have constructed a complex $M_j$ such that $q_{j+1}\colon W_{j+1}\to M_j$ is an $\Ebb_{j}$-ring map and $f_j\colon M_{j}\to M_{j-1}$ is an $\Ebb_{j-1}$-ring map. This finishes the inductive step. 
	\end{proof}
	\begin{thm}\label{thm:Kn_ring_limit}
		There is an equivalence of $K(n)$-local $\einf$-rings $S_{K(n)}^0\to \lim_j L_n M_j$. 
	\end{thm}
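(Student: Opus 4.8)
The underlying equivalence of spectra is already in hand: since the Bousfield localization $L_n$ is smashing we have $L_nS^0\Smash M_j\simeq L_nM_j$, so \eqref{eqn:Moore_limit} applied to $X=S^0$ gives an equivalence of spectra $S^0_{K(n)}=L_{K(n)}S^0\simeq \lim_j L_nM_j$. The task left is to promote this to an equivalence of $\einf$-rings, and the plan is to build an $\einf$-ring structure on $\lim_j L_nM_j$ out of the multiplicative tower of \Cref{prop:genMoore_mult} via the identification $\CAlg(\Sp)\simeq\lim_k\Alg_{\Ebb_k}(\Sp)$, and then to recognize the result as the initial $K(n)$-local $\einf$-ring. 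The multiplicative input is the following: by \Cref{prop:genMoore_mult} the tower of \Cref{thm:genMoore} can be chosen so that each $M_j$ is an $\Ebb_j$-ring, each transition map $M_j\to M_{j-1}$ is a map of $\Ebb_{j-1}$-rings, and these structures are mutually compatible (the $\Ebb_j$-structure on $M_j$ restricting to its $\Ebb_k$-structure for each $k\le j$), with compatible units from $S^0$. As $L_n$ is smashing, it is strong symmetric monoidal, hence preserves $\Ebb_k$-rings and $\Ebb_k$-ring maps; so after applying $L_n$, for every fixed $k\ge2$ the truncated tower $\cdots\to L_nM_{k+1}\to L_nM_k$ is a tower in $\Alg_{\Ebb_k}(\Sp)$, and the forgetful functor $\Alg_{\Ebb_{k+1}}(\Sp)\to\Alg_{\Ebb_k}(\Sp)$ identifies the $(k+1)$-th of these towers with the truncation of the $k$-th one beyond level $k+1$.

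The heart of the argument is to assemble these towers into a single $\einf$-ring. By \Cref{prop:alg_O_lim}, for each $k\ge2$ the limit $R^{(k)}:=\lim_{j\ge k}L_nM_j$ exists in $\Alg_{\Ebb_k}(\Sp)$ and is computed on underlying spectra, where, since the sub-tower $\{j\ge k\}$ is cofinal, it agrees with $\lim_j L_nM_j\simeq S^0_{K(n)}$. The same proposition gives that $\Alg_{\Ebb_{k+1}}(\Sp)\to\Alg_{\Ebb_k}(\Sp)$ preserves limits, so it carries $R^{(k+1)}$ to the limit over $\{j\ge k+1\}$ of the $k$-th tower, which is $R^{(k)}$ again by cofinality. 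Hence $\{R^{(k)}\}_{k\ge2}$ defines a point of $\lim_k\Alg_{\Ebb_k}(\Sp)$, and this limit is $\CAlg(\Sp)$ because $\einf\simeq\colim_k\Ebb_k$ as $\infty$-operads \cite[\S 5.1]{Lurie_HA}. This yields an $\einf$-ring $R$ whose underlying spectrum is $S^0_{K(n)}$; in particular $R\in\CAlg(\Sp_{K(n)})$.

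Finally I would identify $R$ with $S^0_{K(n)}$. Since $S^0_{K(n)}$ is the initial object of $\CAlg(\Sp_{K(n)})$, there is an essentially unique $\einf$-ring map $u\colon S^0_{K(n)}\to R$. Each $L_nM_j$ is $K(n)$-local --- being a finite complex of type $n$, $M_j$ satisfies $L_nM_j\simeq L_{K(n)}M_j$ --- and $S^0_{K(n)}$ is likewise the initial object of $\Alg_{\Ebb_j}(\Sp_{K(n)})$, of which $L_nM_j$ is an object; so the composite of $u$ with the projection $R\to L_nM_j$ is the unique $\Ebb_j$-ring map $S^0_{K(n)}\to L_nM_j$, whose underlying spectrum map is the $j$-th structure map of the Hovey--Strickland equivalence $e$ of \eqref{eqn:Moore_limit} (each being the unique factorization of the ring unit $L_n\eta_j\colon L_nS^0\to L_nM_j$ through the $K(n)$-localization $L_nS^0\to S^0_{K(n)}$). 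By the universal property of $\lim_j L_nM_j$ in $\Sp$, this forces $u$ to agree with $e$ on underlying spectra, so $u$ is an equivalence of $K(n)$-local $\einf$-rings.

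I expect the main obstacle to be the assembly in the second paragraph: no individual $L_nM_j$ is an $\einf$-ring, only $\Ebb_j$, so the limit cannot be taken directly in $\CAlg$, and the commutativity of the limit must be extracted from the fact that commutativity along the tower grows without bound, repackaged through $\CAlg(\Sp)\simeq\lim_k\Alg_{\Ebb_k}(\Sp)$. The delicate point is coherence, namely that the limits of the truncated towers are formed compatibly as $k$ grows; this rests on the repeated cofinality of $\{j\ge k+1\}\subseteq\{j\ge k\}$ and on the compatibility, built into \Cref{prop:genMoore_mult}, of the $\Ebb_j$-structures on the $M_j$ themselves.
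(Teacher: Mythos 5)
Your proposal is correct and follows essentially the same route as the paper: the paper's proof also obtains the underlying equivalence from \eqref{eqn:Moore_limit} and then upgrades it to $\einf$ via \Cref{lem:einf}, whose proof is precisely the identification $\CAlg(\Sp)\simeq\lim_k\Alg_{\Ebb_k}(\Sp)$ together with completeness of each $\Alg_{\Ebb_k}(\Sp)$ that you spell out by hand. The only cosmetic difference is that you identify the resulting map with the unit via initiality of $S^0_{K(n)}$ in $\CAlg(\Sp_{K(n)})$, where the paper instead invokes part (2) of \Cref{lem:einf}.
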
	
	\begin{proof}
		The equivalence on the level of $K(n)$-local spectra follows from Hovey--Strickland's \eqref{eqn:Moore_limit}. By \Cref{prop:alg_O_lim}, it remains to show that this is a map of $\einf$-rings. \Cref{prop:genMoore_mult} implies that the map $S^0_{K(n)}\to L_nM_j$ is a map of $\Ebb_{j}$-rings for any $j\ge 1$.  Then it is a map of $\einf$-rings by  \Cref{prop:einf}. 
	\end{proof}
	\begin{rem}
		Davis--Lawson proved in \cite{Davis-Lawson_2014} that any tower of generalized Moore spectra $\{M_j\}$ of type $n$ in \Cref{thm:genMoore} is an $\einf$-algebra in the category of \emph{pro}-spectra. In view of \Cref{prop:genMoore_mult}, Burklund's \Cref{thm:genMoore_mult} in \cite{Burkland_mult_Moore} is a refinement of Davis--Lawson's result. 
	\end{rem}
	\subsection{Limits of module categories}
	In the remaining of the paper, fix a tower of generalized Moore spectra $\{M_j\}$ satisfying both \Cref{thm:genMoore} and \Cref{prop:genMoore_mult}. For $R\in \CAlg\left(\Sp_{K(n)}\right)$, we will lift the equivalence $R\simto \lim_j R\Smash M_j$ in \eqref{eqn:Moore_limit} to the level of module categories, first as $\infty$-categories in \Cref{thm:R-mod_moore}, then as symmetric monoidal $\infty$-categories in \Cref{prop:symm_equiv}. This lifting is not a formal result, as explained in \Cref{rem:lim_mod}. \Cref{thm:Kn_Pic_limit} is then obtained by applying the Picard space functor to the equivalence in \Cref{thm:R-mod_moore}.  
	\begin{thm}\label{thm:R-mod_moore}
		Let $R$ be a $K(n)$-local $\einf$-ring spectrum. Denote by $R_j$ the $\Ebb_{j}$-ring $R\Smash M_j$. Then the limit of base change functors is an equivalence of $\infty$-categories:
		\begin{equation*}
			F\colon \Mod_{K(n)}R\simto \lim_j\Mod_{K(n)} {R_j}\overset{\cong}{\longleftarrow} \lim_{j} \Mod(R_j).
		\end{equation*}
	\end{thm}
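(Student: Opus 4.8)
The plan is to first dispose of the second, easier equivalence $\lim_j \Mod_{K(n)} R_j \xleftarrow{\cong} \lim_j \Mod(R_j)$, and then to concentrate on the main equivalence $F\colon \Mod_{K(n)} R \to \lim_j \Mod_{K(n)} R_j$. For the easy part: since $M_j$ is a finite type-$n$ complex, $R_j = R\Smash M_j$ is already $K(n)$-local (its homotopy is a finitely generated module over the complete local ring $\pi_*R$), so any $R_j$-module in $\Sp$ is automatically $K(n)$-local; this identifies $\Mod(R_j)$ with $\Mod_{K(n)}(R_j)$ levelwise and hence after taking the limit. (This is exactly the content of the cited \Cref{lem:Rk-mod}.) So the real work is the first arrow.

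For the main equivalence, I would set up $F$ as the functor $M\mapsto (M\Smash_R R_j)_j = (L_{K(n)}(M\Smash M_j))_j$ into the limit. The strategy is to produce an explicit right adjoint $G$ and check the unit and counit are equivalences. The right adjoint sends a compatible system $(N_j)_j$ to the limit $\lim_j N_j$ computed in $\Mod_{K(n)}R$ (each $N_j$ is an $R_j$-module, hence an $R$-module via restriction along $R\to R_j$, and the limit is $K(n)$-local since $\Sp_{K(n)}$ is closed under limits). The counit $F G (N_j)_j \to (N_j)_j$ at level $k$ is the map $(\lim_j N_j)\Smash_R R_k \to N_k$; to see this is an equivalence one uses that $R_k = R\Smash M_k$ with $M_k$ a finite spectrum, so $(-)\Smash_R R_k = (-)\Smash M_k$ commutes with the limit over $j$, and that the tower $(N_j)$ is \emph{pro-constant} after smashing with any fixed $M_k$ — more precisely, for $j\ge$ some bound depending on $k$, the transition maps $N_{j+1}\Smash M_k \to N_j \Smash M_k$ are equivalences, because $M_k$ being type $n$ and the ideals $J_j$ being cofinal forces $M_j \Smash M_k \simeq \bigvee (\text{shifts of } M_k)$ eventually, exactly as in Hovey–Strickland's analysis of smash products of generalized Moore spectra. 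This gives $\lim_j (N_j\Smash M_k)\simeq N_k\Smash(\lim_j M_j \Smash\text{-stuff})$, and the Hovey–Strickland equivalence \eqref{eqn:Moore_limit} then collapses it to $N_k$. The unit $M\to \lim_j (M\Smash_R R_j) = \lim_j L_{K(n)}(M\Smash M_j)$ is an equivalence by \eqref{eqn:Moore_limit} applied to the $K(n)$-local (in particular $E_n$-local) spectrum $M$, since $M$ is a module over the $K(n)$-local ring $R$.

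Alternatively — and this is probably the cleaner route, matching the hint that \Cref{thm:R-mod_moore} is a Grothendieck-existence statement — I would invoke a general descent/completion principle: the tower $\{M_j\}$ exhibits $R\simeq \lim_j R_j$ as a limit of $\einf$-rings (by \Cref{thm:Kn_ring_limit}, suitably relativized over $R$ using \Cref{prop:genMoore_mult}), and one wants to know this limit of rings induces a limit of module $\infty$-categories. This does not hold for limits of rings in general, so one must use the special geometry here: the maps $R\to R_j$ are, up to the $v_n$-tower, "nilpotent thickenings" in the $K(n)$-local world, and the key input is that $\{R_j\}$ generates $\Mod_{K(n)}R$ under colimits — equivalently, that each $R_j$ is a compact generator's worth of data and the unit $R\Smash M_j$ detects equivalences in the limit, which again reduces to $\bigcap_j J_j = 0$ from \Cref{thm:genMoore}(3). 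With generation in hand, one applies the Barr–Beck–Lurie / Lurie's limit-of-module-categories criterion (e.g. the $\infty$-categorical "descendability implies $\Mod$ is the limit" package, as used by Mathew) to conclude $F$ is an equivalence.

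The main obstacle, as flagged in \Cref{rem:lim_mod}, is precisely that this lift is \emph{not} formal: $\lim_j R_j \simeq R$ alone does not give $\lim_j \Mod(R_j)\simeq \Mod(R)$, so one genuinely needs the finiteness of the $M_j$'s (to commute $(-)\Smash M_k$ past limits) together with the pro-nilpotence encoded in $\bigcap_j J_j = 0$ and the Hovey–Strickland tower equivalence \eqref{eqn:Moore_limit}. Concretely, the delicate point is checking the counit is an equivalence: one must control the pro-system $\{N_j \Smash M_k\}_j$ uniformly, i.e. establish the appropriate Mittag–Leffler / eventual-constancy behavior of smash products of type-$n$ complexes, and verify that the compatibility data in an object of $\lim_j \Mod(R_j)$ is exactly enough to reconstruct $N_k$ from $\lim_j N_j$. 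Everything else — that $F$ is symmetric monoidal, the passage to Picard spaces — is then formal and is deferred to \Cref{prop:symm_equiv} and \Cref{thm:Kn_Pic_limit}.
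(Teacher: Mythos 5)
Your first, explicit argument is essentially the paper's own proof: the easy identification via \Cref{lem:Rk-mod}, the inverse functor $G=\lim_j$, the unit handled by Hovey--Strickland's \eqref{eqn:Moore_limit}, and the counit handled by commuting $-\Smash M_k$ past the limit and invoking the splitting $M_k\Smash M_j\simeq M_k\vee\bigvee_i\Sigma^{d_{j,i}}M_k$ for $j>k$ from \cite[Proposition 4.16]{Hovey-Strickland_1999}. One small correction: the transition maps $N_{j+1}\Smash M_k\to N_j\Smash M_k$ do \emph{not} become equivalences for large $j$ (the extra wedge summands persist for every $j$); rather, the tower is pro-constant because the structure maps are the identity on the distinguished $N_k$-summand and multiplication by nilpotent elements of $R_k$ on the remaining, positively shifted summands, so only $N_k$ survives the limit.
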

	Before the proof, we make some observations. 
	\begin{rem}\label{rem:lim_mod}
		Limits of (discrete) commutative rings do not necessarily commute with taking module (derived) categories. One  counterexample is  $\Dcal\Zp\not\simeq \lim\limits_{k} \Dcal\Z/p^k$, where $\Qp$ is a $\Zp$-module but not a limit of $\Z/p^k$-modules/complexes. This discrepancy is fixed if we consider $p$-complete (in the derived sense) $\Zp$-chain complexes instead. Indeed, there is an equivalence of categories:
		\begin{equation*}
			\left(\Dcal \Zp\right)^\wedge_p\simto \lim_k \Dcal \Z/p^k. 
		\end{equation*}
		This is essentially the height $n=1$ case of \Cref{thm:R-mod_moore}, since the Moore spectrum $S^0/p^k$ is a finite complex of type $1$.
		
		Alternatively, we can restrict to \emph{coherent} modules over $\Zp$. (Note $\Qp$ is not a coherent $\Zp$-module.) The equivalence then follows from the Grothendieck existence theorem below.
	\end{rem}
	\begin{rem}
		\label{rem:Grothendieck_existence} 	\Cref{thm:R-mod_moore} can be viewed as a Grothendieck existence theorem for $K(n)$-local commutative ring spectra.  In algebraic geometry, the Grothendieck existence theorem states:
		\begin{enumerate}
			\item \textup{(Coherent sheaf version, \cite[Theorem 5.1.4]{EGA_III1})}\quad Let $R$ be a Noetherian ring complete with respect to an ideal $I$, and $X$ be a proper $R$-scheme. Write $X_j:=X\times_R \spec R/I^j$ and $\mathfrak{X}=\colim_j X_j$ for the completion of $X$ at $I$. Then the restriction functor induces an equivalence of categories of coherent sheaves:
			\begin{equation*}
				\Coh(\mathfrak{X})\simto \lim_j\Coh(X_j). 
			\end{equation*}
			In \cite[Theorem 1.3]{Alper-Hall-Rydh}, Alper--Hall--Rydh proved this statement for certain quotient stacks. 
			\item \textup{(Functor points of view version, \cite[Theorems 2.20 and 2.22]{Alper2015artin})} \quad Let $\mathfrak{X}$ be an algebraic stack locally of finite type over a field $k$. Then for every complete local Noetherian $k$-algebra $(R,\mfrak)$, the restriction functor induces an equivalence of categories (groupoids):
			\begin{equation*}
				\mathfrak{X}(\spec R)\simto \lim_j \mathfrak{X}(\spec R/\mfrak^j).
			\end{equation*}
		\end{enumerate}
	\end{rem}
	\begin{lem}\label{lem:Rk-mod}
		Let $R$ be a $K(n)$-local ring spectrum. Then any $R_k=R\Smash M_k$-module spectrum $A$ in $\Sp$ is $K(n)$-local. Consequently, $K(n)$-localization induces an equivalence of $\Ebb_{k-1}$-monoidal $\infty$-categories:
		\begin{equation*}
			L_{K(n)}\colon \Mod(R_k)\simto \Mod_{K(n)}(R_k). 
		\end{equation*}
	\end{lem}
	\begin{proof}
		Note that $R_k$ is $K(n)$-local because $R$ is $K(n)$-local and $M_k$ is a finite complex. As an $M_k$-module spectrum, $R_k$ is a retract of $M_k\Smash R_k$, since the composition $R_k\xrightarrow{\eta_k\Smash 1} M_k\Smash R_k\to R_k$ is the identity map.  \cite[Theorem 13.1]{Hovey-Strickland_1999} then implies that $R_k\Smash A\in \Sp_{K(n)}$. From this, we conclude $A$ is $K(n)$-local as a retract of $R_k\Smash A$. 
		
		Now consider the monoidal functors $\Sp\xrightarrow{L_{K(n)}}\Sp_{K(n)}\xrightarrow{i} \Sp$. By \cite[Remark 1.1.11]{Ergus_thesis}, the two functors induce $\Ebb_{k-1}$-monoidal functors on the (left) module categories:
		\begin{equation*}
			\Mod(R_k)\longrightarrow \Mod_{K(n)}(R_k)\longrightarrow \Mod(R_k). 
		\end{equation*}
		As any  $R_k$-module is automatically $K(n)$-local, the two functors on module categories are inverse to each other. Hence, $L_{K(n)}\colon \Mod(R_k)\simto \Mod_{K(n)}(R_k)$ is an equivalence of $\Ebb_{k-1}$-monoidal $\infty$-categories.
	\end{proof}
	\begin{proof}[Proof of \Cref{thm:R-mod_moore}] 
		The isomorphism $\lim_j\Mod_{K(n)} {(R_j)}\cong \lim_j\Mod (R_j)$ follows from \Cref{lem:Rk-mod}. To establish the first equivalence $\Mod_{K(n)}R\simto \lim_j\Mod_{K(n)} {R_j}$, we prove the equivalence by defining two functors $F$ and $G$ inverse to each other
		\begin{equation*}
			\begin{tikzcd}
				F \colon \Mod_{K(n)}R\rar[shift left=1 ex,""{name=F}]& \lim\limits_{j} \Mod_{K(n)}R_j\lar[shift left=1 ex,""{name=G}] \ar[from=F,to=G,phantom,"{\perp}"] \rcolon G.
			\end{tikzcd}
		\end{equation*}
		By \cite[Remark 4.5.3.2 and  Proposition 4.6.2.17]{Lurie_HA}, we have a base change functor $\Mod_{K(n)}R\to \Mod_{K(n)}R_j$ which is left adjoint to a forgetful functor. This is a map in $\mathsf{Pr}^L$. The functor $F$ is then obtained using the universal property of limits in $\mathsf{Pr}^L$, which can be computed in $\Cat_\infty$ by \cite[Proposition 5.5.3.13]{Lurie_HA}. As the limit of left adjoints is still a left adjoint, we obtain the functor $G$ as a right adjoint of $F$. 
		
		Next, we give a more concrete description of the functors $F$ and $G$ on objects. Note that objects in the limit category are towers of $R_j$-module spectra $\{A_j\}$ with equivalences of $R_{j-1}$-modules $\alpha_j\colon R_{j-1} \Smash_{R_j}A_j  \simto  A_{j-1}$. The functor $F$ sends $A$ to $\{A_j = R_j\Smash_{R}A\simeq M_j\Smash A\}$. The functor $G$ sends a tower of $R_j$-modules $\{A_j\}$ to $L_{K(n)}\lim_{j} A_j$ in \emph{$K(n)$-local} $R$-modules, where structure maps are given by:
		\begin{equation*}
			\begin{tikzcd}
				A_j\simeq R_j\Smash_{R_j} A_j\rar& R_{j-1}\Smash_{R_j} A_j\rar["\sim","\alpha_j"']& A_{j-1}. 
			\end{tikzcd}
		\end{equation*}
		Note that $\lim_{j} A_j$ is already $K(n)$-local, since $\Sp_{K(n)}$ is closed under limits. 
		The $R$-module structure on $G(\{A_j\})=L_{K(n)}\lim_{j} A_j\simeq \lim_{j} A_j$ is given by the limit of the maps 
		\begin{equation*}
			R\Smash \left( \lim_{j} A_j\right)\longrightarrow R_k\Smash A_k\longrightarrow A_k. 
		\end{equation*}
		As $\lim_{j} A_j\in \Sp_{K(n)}$, the structure map of $\lim_{j} A_j$ as an $R$-module has a canonical factorization:
		\begin{equation*}
			\begin{tikzcd}[row sep=0]
				R\Smash \left( \lim\limits_{j} A_j\right)\ar[rr] \ar[dr,end anchor=west]&&\lim\limits_{j} A_j\\
				&R\hsmash \left( \lim\limits_{j} A_j\right)\ar[ur,dashed,start anchor=east]&
			\end{tikzcd}
		\end{equation*}
		This shows $\lim_{j} A_j\in \Mod_{K(n)}R$ and the functor $G$ is well-defined.  Next we prove that $F$ is an equivalence of $\infty$-categories with inverse $G$. Note that any $K(n)$-local spectrum $A$ is $E_n$-local, \eqref{eqn:Moore_limit} implies that the composition $G\circ F$ is naturally homotopic to $\id$. The harder direction is to show the composition $F\circ G$ is also homotopic to $\id$.  For any $ \{A_j\} \in \lim_{j} \Mod_{K(n)}R_j$, we have 
		\begin{equation*}
			F\circ G (\{A_j\}) = \left\{R_k\Smash_R\left(\lim_{j}A_j\right)\right\}\simeq \left\{\lim_{j}\left(M_k\Smash A_j\right)\right\}.
		\end{equation*}
		We now construct a map $\{f_j\}$ between the two towers $F\circ G (\{A_j\})$ and $\{A_j\}$ such that each $f_j$ is an equivalence. By \cite[Proposition 4.16]{Hovey-Strickland_1999}, there are splittings when $j> k$. 
		\begin{equation}\label{eqn:Mk_Aj_splitting}
			M_k\Smash A_j\simeq M_k\Smash M_j\Smash_{M_j} A_j\simeq   (M_k\Smash M_j)\Smash_{M_j} A_j\simeq \left(M_k\vee \bigvee_{i} \Sigma^{d_{j,i}}M_k\right)\Smash_{M_j} A_j\simeq A_k\vee \bigvee_{i} \Sigma^{d_{j,i}}A_k
		\end{equation}
		where $d_{j,i}>0$  and the last equivalence is a wedge of suspensions of the following equivalences:
		\begin{equation*}
			M_k\Smash_{M_j} A_j\simeq R_k\Smash_{R_j} A_j\simeq R_k\Smash_{R_{j-1}}R_{j-1}\Smash_{R_j} A_j\xrightarrow[1\Smash \alpha_j]{\sim} R_k\Smash_{R_{j-1}}A_{j-1}\simto\cdots \simto R_k\Smash_{R_{k+1}}A_{k+1}\xrightarrow[\alpha_{k+1}]{\sim}A_k. 
		\end{equation*}
		Taking the limit of $M_k\Smash A_j$ as $j\to \infty$, the structure map on the $A_k$-summand is the identity, and are multiplications by nilpotent elements in $R_k$ on all other summands. This yields an equivalence $f_k\colon A_k\simto \lim_j M_k\Smash A_j$, which is induced by the inclusion of the first summand in \eqref{eqn:Mk_Aj_splitting}.  The compatibility of $f_k$ then follows from the commutative diagram when $j\ge k$:
		\begin{equation*}
			\begin{tikzcd}
				A_k\rar\dar& M_k\Smash A_j\dar\\ 
				A_{k-1}\rar & M_{k-1}\Smash A_j.
			\end{tikzcd}
		\end{equation*}
		The maps between towers $\{f_j\colon A_j\to R_j\Smash_{R} \left(\lim_{k}A_k\right)\}$ then assemble into an equivalence in $\lim\limits_{j} \Mod_{K(n)}R_j$. Consequently, we have proved the composition $F\circ G$ is homotopic to $\id$. 
	\end{proof}
	Next we will upgrade \Cref{thm:R-mod_moore} to an equivalence of symmetric monoidal $\infty$-categories. By \Cref{prop:monoidal_mod_cat}, each of the categories $\Mod({R_j})$ in the limit system is $\Ebb_{j-1}$-monoidal. 
	\begin{prop}\label{prop:symm_equiv}
		The equivalence in \Cref{thm:R-mod_moore} is symmetric monoidal. 
	\end{prop}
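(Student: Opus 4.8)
The plan is to show that the functor $F$ constructed in \Cref{thm:R-mod_moore} is $\Ebb_k$-monoidal for every $k$, and then invoke the second half of \Cref{lem:lim_Ek_monoidal}. Since $F$ is already known to be an equivalence of underlying $\infty$-categories, once we know it is $\Ebb_k$-monoidal for all $k$ it is automatically a symmetric monoidal equivalence. So the content is entirely about promoting $F$ to a compatible family of $\Ebb_k$-monoidal functors.

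The key observation is that $F$ is the limit of the base change functors $-\hsmash_R R_j \colon \Mod_{K(n)}(R)\to \Mod(R_j)$. Base change along a map of $\Ebb_{j}$-rings $R\to R_j$ is lax $\Ebb_{j-1}$-monoidal in general, and is in fact \emph{strong} $\Ebb_{j-1}$-monoidal: for a map $A\to B$ of $\Ebb_{m}$-ring spectra, the relative smash product gives a strong $\Ebb_{m-1}$-monoidal functor $\Mod_A\to \Mod_B$, $N\mapsto B\Smash_A N$, since $B\Smash_A(N\Smash_A N')\simeq (B\Smash_A N)\Smash_B(B\Smash_A N')$ (this is \cite[\S3.4, \S4.5]{Lurie_HA}; the $K(n)$-localization in the source is harmless because $R\to R_j$ factors through $\Sp_{K(n)}$ and localization is strong symmetric monoidal). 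Concretely, for each $k\ge 2$ the diagram
\begin{equation*}
	\begin{tikzcd}
		\Mod_{K(n)}(R)\rar["F"]\ar[dr,"-\hsmash_R R_k"'] & \lim_j \Mod(R_j)\dar\\
		& \Mod(R_k)
	\end{tikzcd}
\end{equation*}
commutes, the vertical projection $\lim_j\Mod(R_j)\to \Mod(R_k)$ is $\Ebb_{k-1}$-monoidal by \Cref{lem:lim_Ek_monoidal} (applied with $\Ccal_j=\Mod(R_j)$, which is $\Ebb_{j-1}$-monoidal by \Cref{prop:monoidal_mod_cat}, shifting indices), and the diagonal base change functor is strong $\Ebb_{k-1}$-monoidal. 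Since the $\Ebb_{k-1}$-monoidal structure on $\lim_j\Mod(R_j)$ is detected by the projections, and these detect $\Ebb_{k-1}$-monoidality of functors into the limit, $F$ is $\Ebb_{k-1}$-monoidal for every $k$, hence $\Ebb_m$-monoidal for every $m$.

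The main obstacle is bookkeeping the operadic coherence: one must check that the strong monoidality constraints of the various base change functors $-\hsmash_R R_j$ are compatible with the structure maps $R_{j-1}\Smash_{R_j}(-)$ of the tower, so that $F$ genuinely lifts to a functor of $\Ebb_{k-1}$-monoidal $\infty$-categories rather than just a functor that is $\Ebb_{k-1}$-monoidal after each projection. This is handled by noting that $\Ebb_{k-1}$-monoidal structures are algebra objects in $\Cat_\infty$ (as recalled in the proof of \Cref{lem:lim_Ek_monoidal}) and that the whole diagram of base change functors and tower maps can be assembled into a single diagram of $\Ebb_{k-1}$-algebras in $\Cat_\infty$, whose limit computes the $\Ebb_{k-1}$-monoidal structure on $\lim_j\Mod(R_j)$; the universal property of this limit then produces the required $\Ebb_{k-1}$-monoidal lift of $F$. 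Finally, applying \Cref{lem:einf}(2) (equivalently the last sentence of \Cref{lem:lim_Ek_monoidal}) upgrades the compatible family to a genuine symmetric monoidal equivalence. As an immediate consequence, passing to Picard spaces yields \Cref{thm:Kn_Pic_limit}.
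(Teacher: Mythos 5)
Your proposal is correct and follows essentially the same route as the paper: the equivalence is exhibited as a limit of base change functors, each of which is strong $\Ebb_{j-1}$-monoidal by \cite[Remark 7.1.3.7]{Lurie_HA}, and the conclusion follows from \Cref{lem:lim_Ek_monoidal} (equivalently \Cref{lem:einf}). Your extra paragraph on assembling the cone of base change functors into a diagram of $\Ebb_{k-1}$-algebras in $\Cat_\infty$ makes explicit the coherence point that the paper leaves implicit, but it is the same argument.
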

	\begin{proof}
		Recall from \Cref{prop:genMoore_mult} that the generalized Moore spectrum $M_j$ is an $\Ebb_{j}$-ring over $M_{j+1}$. By \cite[Proposition 7.1.2.6]{Lurie_HA} and \Cref{lem:Rk-mod}, the base change and $K(n)$-localization functors are both $\Ebb_{j-1}$-monoidal:
		\begin{align*}
			\Mod_{K(n)}R&\xrightarrow{R_j\Smash_R-}  \Mod_{K(n)}R_j\xleftarrow[\simeq]{L_{K(n)}} \Mod({R_j}),\\
			\Mod_{K(n)} {R_{j+1}}&\xrightarrow{R_{j}\Smash_{R_{j+1}}-} \Mod_{K(n)}R_j\xleftarrow[\simeq]{L_{K(n)}} \Mod({R_j}).
		\end{align*}
		As equivalences in \Cref{thm:R-mod_moore} are  limits of $\Ebb_{j}$-monoidal maps, we conclude from \Cref{cor:lim_Ek_monoidal} that it is also a map of $\einf$-monoidal categories. 
	\end{proof}
	\begin{lem}
		Let $\{\cdots \to X_n\to X_{n-1}\to\cdots\to X_1\}$ be an inverse system of group-like spaces such that $X_k$ is a group-like $\Ebb_{k}$-space and $X_{k+1}\to X_k$ is a map of group-like $\Ebb_{k}$-spaces. Then $X=\holim X_k$ is a group-like $\einf$-space and the map $X\to X_k$ is a morphism of group-like $\Ebb_k$-spaces. 
	\end{lem}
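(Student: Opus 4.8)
The plan is to run the proof of \Cref{lem:einf} in the symmetric monoidal $\infty$-category $(\mathcal{S},\times)$ of spaces with its Cartesian product. Recall that an $\Ebb_k$-space is an $\Ebb_k$-algebra object in $(\mathcal{S},\times)$, and a group-like one is such an object whose underlying $\Ebb_1$-monoid is group-like; since group-likeness is the condition that the shear map $(\mathrm{pr}_1,\mu)\colon Y\times Y\to Y\times Y$ be an equivalence, the group-like $\Ebb_k$-spaces span a full subcategory of $\Alg_{\Ebb_k}(\mathcal{S})$ which is stable under limits (limits of $\Ebb_1$-spaces are computed on underlying spaces by \Cref{prop:alg_O_lim}, and equivalences in $\mathcal{S}$ are limit-stable).

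For the main argument, fix $j$ and pass to the cofinal sub-tower $\{X_k\}_{k\ge j}$. Since $\Ebb_k\to\Ebb_j$ for $k\ge j$, this sub-tower lies in $\Alg_{\Ebb_j}(\mathcal{S})$ with $\Ebb_j$-algebra transition maps, so by \Cref{prop:alg_O_lim} its limit computed in $\Alg_{\Ebb_j}(\mathcal{S})$ has underlying space $\lim_{k\ge j}X_k\simeq X$ (by cofinality) and comes with a projection to $X_j$ that is a map of $\Ebb_j$-algebras. As $j$ varies, these are exactly the data of an object of $\lim_j\Alg_{\Ebb_j}(\mathcal{S})\simeq\CAlg(\mathcal{S})$ --- the equivalence coming from $\colim_j\Ebb_j^\otimes\simeq\einf^\otimes$, exactly as in the proof of \Cref{lem:einf} --- whose underlying space is $X$. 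Hence $X$ inherits an $\einf$-structure for which each projection $X\to X_j$ is a map of $\Ebb_j$-spaces. Finally, $X$ is group-like because each $X_k$ is and group-likeness is limit-stable by the first paragraph.

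I do not anticipate a genuine obstacle: the statement is the ``spaces'' analogue of \Cref{lem:einf} and \Cref{lem:lim_Ek_monoidal}, which the excerpt has already proven. The two points worth spelling out are the cofinality reduction, which turns the abstract identification $\CAlg(\mathcal{S})\simeq\lim_j\Alg_{\Ebb_j}(\mathcal{S})$ into a concrete $\Ebb_j$-monoid structure on $X$ together with the $\Ebb_j$-map $X\to X_j$, and the stability of group-likeness under limits.
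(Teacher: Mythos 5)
Your proposal is correct and follows essentially the same route as the paper, which simply observes that group-like $\Ebb_k$-spaces are group-like $\Ebb_k$-algebras in $\Top$ and invokes \Cref{lem:einf} (whose proof is exactly your identification $\CAlg(\mathcal{S})\simeq\lim_j\Alg_{\Ebb_j}(\mathcal{S})$ via $\colim_j\Ebb_j^\otimes\simeq\einf^\otimes$). Your added details --- the cofinality reduction and the observation that group-likeness, phrased via invertibility of the shear map, is preserved under limits --- are correct and fill in points the paper leaves implicit.
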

	\begin{proof}
		Recall that a group-like $\Ebb_{k}$-space is a group-like $\Ebb_{k}$-algebra in $\Top$. The claim then follows from \Cref{prop:einf}. Alternatively, this statement can be proved using the Recognition Principle that $X_k$ is a group-like $\Ebb_{k}$-space iff it is equivalent to $\Omega^kY_k$ for some $Y_k$. 
	\end{proof}
	\begin{prop}[Mathew--Stojanoska, {\cite[Proposition 2.2.3]{MS_Picard}}]\label{prop:pic_lim_colim}
		The Picard space functor $\Cat^{\Ebb_k}_\infty\to \Alg^\mathrm{gp}_{\Ebb_k}(\Top)$ commutes with limits and filtered colimits. \footnote{While \cite[Proposition 2.2.3]{MS_Picard} is a statement about the Picard spectrum and space functors of  $\einf$-ring spectra, its proof works for $\Ebb_k$-ring spectra as well.}
	\end{prop}
	Applying the Picard space functor to the monoidal equivalence in \Cref{thm:R-mod_moore}, we finally conclude:
	\begin{mainthm}\label{thm:Kn_Pic_limit}
		Let $R$ be a $K(n)$-local $\einf$-ring spectrum.  The limit of base change maps of Picard spaces is an equivalence of group-like $\einf$-spaces:
		\begin{equation*}
			\pic_{K(n)}(R)\simto \lim_{j}\pic_{K(n)}(R_j)\overset{\cong}{\longleftarrow} \lim_{j}\pic(R_j).
		\end{equation*} 
		This induces an isomorphism of Picard groups:
		\begin{equation*}
			f\colon \Pic_{K(n)}(R)\simto\lim_{j}\Pic_{K(n)}(R_j)\cong\lim_{j}\Pic(R_j).
		\end{equation*}
	\end{mainthm}
	\begin{proof}
		We have a sequence of equivalences of group-like $\einf$-spaces: 
		\begin{equation*}
			\pic_{K(n)}(R):=\pic\left(\Mod_{K(n)}(R)\right)\xrightarrow[{\ref{thm:R-mod_moore},\ref{prop:symm_equiv}}]{\sim}  \pic\left(\lim_{j} \Mod_{K(n)}(R_j)\right)\xrightarrow[\ref{prop:pic_lim_colim}]{\sim} \lim_{j}\pic_{K(n)}(R_j)\xleftarrow[\ref{lem:Rk-mod}]{\cong} \lim_{j}\pic(R_j).
		\end{equation*}
		It remains to compute the homotopy groups of  $\lim_{j}\pic(R_j)$. The Milnor sequence of the homotopy limit of Picard spaces 
		\begin{equation*}
			\begin{tikzcd}
				0\rar & \lim\limits_j\!^1 \pi_{1}\pic(R_j) \rar & \Pic_{K(n)}(R)\rar["f"] &  \lim\limits_j \Pic(R_j)\rar &0,
			\end{tikzcd}
		\end{equation*}
		implies that $f$ is surjective. To show it is injective, let $X$ be an element in $\ker f$. Then for any $j$, we have $X\Smash M_j\simeq X\Smash_{R} R_j
		\simeq R_j$. It follows from Hovey--Strickland's \eqref{eqn:Moore_limit} that $X\simeq\lim_j X\Smash M_j\simeq \lim_j R_j\simeq R$. This proves the injectivity of $f$. 
	\end{proof}
	\begin{rem}
		We note that statements in this subsection work for a $K(n)$-local $\Ebb_k$-ring $R$ for $k\ge 2$ as well. Recall from \Cref{prop:monoidal_mod_cat}, the module category $\Mod_{K(n)}(R)$ is $\Ebb_{k-1}$-monoidal. The limit of base change maps induces an equivalence of $\Ebb_{k-1}$-monoidal $\infty$-categories:
		\begin{equation*}
			\Mod_{K(n)}(R)\simto \lim_{j}\Mod(R_j).
		\end{equation*}
		On the level of Picard spaces, this gives rise to an equivalence of group-like $\Ebb_{k-1}$-spaces $\pic_{K(n)}(R)\simeq\lim_{j}\pic(R_j)$, which induces an isomorphism of groups $\Pic_{K(n)}(R)\cong\lim_{j}\Pic(R_j)$.
	\end{rem}
	\subsection{Properties of $K(n)$-local Picard groups}
	In \Cref{thm:Kn_Pic_limit}, we have lifted the Picard group functor to the pro-abelian groups indexed by a tower of generalized Moore spectra. 
	\begin{prop}\label{prop:Pic_top}
		Let $R$ be a $K(n)$-local $\einf$-ring spectrum. 
		\begin{enumerate}
			\item \textup{(\cite[Theorem 14.3.(d)]{Hovey-Strickland_1999})} The Picard group $\Pic_{K(n)}(R)$ is a topological abelian group, where a basis for closed neighborhoods of $R$ as the unit in the Picard group is given by
			\begin{equation*}
				V_j=\{X\in \Pic_{K(n)}(R)\mid X\Smash M_j\simeq R\Smash M_j\}.
			\end{equation*}
			\item Let $f\colon R_1\to R_2$ be a morphism in $\CAlg\left(\Sp_{K(n)}\right)$. Then its induced map on $K(n)$-local Picard groups is continuous with respect to the inverse limit  topology described above.
		\end{enumerate}
	\end{prop}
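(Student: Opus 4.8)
The plan is to deduce both parts directly from \Cref{thm:Kn_Pic_limit}, which identifies $\Pic_{K(n)}(R)$ with $\lim_j\Pic(R_j)$ where $R_j=R\Smash M_j$, and then to read off the topology from this identification. By the footnote convention in the introduction, the \emph{inverse limit topology} on $\Pic_{K(n)}(R)$ is by definition the one transported from $\lim_j\Pic(R_j)$ with each $\Pic(R_j)$ discrete; so part (1) amounts to checking that this topology is a group topology with the stated neighborhood basis, and part (2) amounts to checking functoriality of the construction.

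For part (1), I would first note that a limit of topological abelian groups is again a topological abelian group, and that the inverse limit topology on $\lim_j\Pic(R_j)$ has, as a neighborhood basis of the identity, the kernels of the projections $\rho_j\colon\Pic_{K(n)}(R)\to\Pic(R_j)$. The essential point is that $\rho_j$ is induced by the symmetric monoidal base change functor $R_j\Smash_R-\colon\Mod_{K(n)}(R)\to\Mod(R_j)$, so $\rho_j$ is a group homomorphism and $\ker\rho_j$ is an open subgroup, hence also closed (its complement is a union of open cosets). Since $X\Smash_R R_j\simeq X\Smash M_j$ for an $R$-module $X$, one has $\rho_j(X)=[X\Smash M_j]$, whence $\ker\rho_j=\{X\mid X\Smash M_j\simeq R\Smash M_j\}=V_j$. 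This recovers \cite[Theorem 14.2.(d)]{Hovey-Strickland_1999}.

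For part (2), given $f\colon R_1\to R_2$ in $\CAlg\left(\Sp_{K(n)}\right)$, the induced homomorphism $f_\ast\colon\Pic_{K(n)}(R_1)\to\Pic_{K(n)}(R_2)$ is $[X]\mapsto[R_2\Smash_{R_1}X]$. I would prove continuity by exhibiting $f_\ast$ as the map on limits induced by a morphism of the two towers of discrete groups: for $j$ large enough that $M_j$ is an $\Ebb_j$-ring, write $(R_i)_j=R_i\Smash M_j$; computing base change along $R_1\to R_2\to(R_2)_j$ in the two possible orders yields a natural equivalence $(R_2\Smash_{R_1}X)\Smash M_j\simeq(R_2)_j\Smash_{(R_1)_j}(X\Smash M_j)$, and together with the analogous identity for the structure maps this makes the squares relating $f_\ast$ to the base changes along $f\Smash M_j$ commute. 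Since a morphism of inverse systems of discrete groups induces a continuous map on inverse limits, $f_\ast$ is continuous; concretely, the commuting squares give $f_\ast(V_j(R_1))\subseteq V_j(R_2)$, which is exactly the continuity statement for a group homomorphism.

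I do not expect a genuine obstacle: the entire analytic content is already packaged in \Cref{thm:Kn_Pic_limit}. The only steps requiring a line of justification are that base change is symmetric monoidal --- so that the $V_j$ are subgroups and not merely neighborhoods of the identity --- and that the various base-change squares over the fixed tower $\{M_j\}$ commute coherently; both are routine, since $\{M_j\}$ is a single tower of finite $S^0$-modules along which one base-changes throughout.
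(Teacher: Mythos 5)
Your argument is correct and matches the route the paper intends: the proposition is stated immediately after \Cref{thm:Kn_Pic_limit} precisely because both parts are read off from the identification $\Pic_{K(n)}(R)\cong\lim_j\Pic(R_j)$, with part (1) reducing to the observation that the $V_j$ are the kernels of the (symmetric monoidal, hence group-homomorphic) projections $\rho_j$ and part (2) to functoriality of the tower under base change along $f$. The paper supplies no further proof beyond the citation to Hovey--Strickland, so your write-up simply makes explicit the standard inverse-limit-topology facts the paper leaves implicit.
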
	
	\begin{rem}
		When $R=S^0_{K(n)}$, this inverse limit  topology on  $\Pic_{K(n)}:=\Pic_{K(n)}\left(S^0_{K(n)}\right)$ has been described in \cite{Hovey-Strickland_1999} (see also \cite[\S 1.2]{Goerss-Hopkins}). 
	\end{rem}
	\begin{prop}\label{prop:choice_Mk}
		Let $R$ be a $K(n)$-local $\einf$-ring spectrum. The inverse limit  topology on $\Pic_{K(n)}(R)$ described in \Cref{prop:Pic_top} does not depend on the choice of the tower of generalized Moore spectra $\{M_j\}$ in \Cref{thm:genMoore}.
	\end{prop}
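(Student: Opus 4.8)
The plan is to compare the two inverse limit topologies through the neighborhood bases of the identity furnished by \Cref{prop:Pic_top}. Fix two towers of generalized Moore spectra of type $n$, say $\{M_j\}$ and $\{M'_m\}$, both as in \Cref{thm:genMoore} and \Cref{prop:genMoore_mult}, and write $(E_n)_*M_j=(E_n)_*/J_j$ and $(E_n)_*M'_m=(E_n)_*/J'_m$ for the associated open invariant ideals; by the compatibility of the tower maps with the unit maps these form decreasing chains with $\bigcap_jJ_j=\bigcap_mJ'_m=0$. By \Cref{prop:Pic_top}, the topology on $\Pic_{K(n)}(R)$ attached to $\{M_j\}$ is the group topology whose basic open neighborhoods of the unit are $V_j=\ker\big(\Pic_{K(n)}(R)\to\Pic(R\Smash M_j)\big)=\{X\mid X\Smash M_j\simeq R\Smash M_j\}$, and similarly $\{V'_m\}$ for $\{M'_m\}$. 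It therefore suffices to show that the two bases are mutually refining; by the symmetry in the two towers it is enough to prove that for every $m$ there is a $j$ with $V_j\subseteq V'_m$.

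The first ingredient is purely algebraic. Since $\pi_0(E_n)=\W\Fbb_{p^n}\llb u_1,\dots,u_{n-1}\rrb$ is a complete Noetherian local ring and the $J_j$ form a decreasing chain with $\bigcap_jJ_j=0$, Chevalley's theorem shows that $\{J_j\}$ is cofinal among all open invariant ideals of $(E_n)_*$ (as is already implicit in \cite[\S4]{Hovey-Strickland_1999}). Hence, given $m$, we may choose $j$ --- and, by the same cofinality, take $j$ as large as we please --- with $J_j\subseteq J'_m$.

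The crux is a comparison of generalized Moore spectra: \emph{if $J_j\subseteq J'_m$, then after possibly enlarging $j$ there is a map of (homotopy) ring spectra $\sigma\colon M_j\to M'_m$ lying over $S^0$ (that is, with $\sigma\circ\eta_j=\eta'_m$) realizing the quotient $(E_n)_*/J_j\twoheadrightarrow(E_n)_*/J'_m$ on $E_n$-homology.} The key point enabling this is that the extra relations cutting $M'_m$ out of $M_j$ lie in the nilpotent ideal $J'_m/J_j\trianglelefteq(E_n)_*/J_j$, so the obstructions to building and rigidifying $\sigma$ are governed exactly by the inductive, obstruction-theoretic machinery Hovey--Strickland use in \cite[\S4]{Hovey-Strickland_1999} to produce the structure maps inside a single tower; by \Cref{prop:genMoore_mult} there is enough multiplicative room at finite stages to make $\sigma$ multiplicative. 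I expect the existence of $\sigma$ to be the main obstacle, the delicate case being $p=2$, where (as reflected in the bounds of Burklund's \Cref{thm:genMoore_mult}) the relevant self-maps act only nilpotently rather than by zero and one must increase exponents accordingly.

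Granting $\sigma$, the rest is formal base change. The map $\sigma$ exhibits $M'_m$ as an algebra over $M_j$ with $M_j\Smash_{M_j}M'_m\simeq M'_m$, so smashing with $R$ over $S^0$ identifies $R\Smash M'_m\simeq(R\Smash M_j)\Smash_{M_j}M'_m$ as algebras under $R\Smash M_j$. Consequently, for any $R$-module $X$, the $(R\Smash M'_m)$-module $X\Smash M'_m$ is the base change of the $(R\Smash M_j)$-module $X\Smash M_j$ along $R\Smash M_j\to R\Smash M'_m$. Thus if $X\in V_j$, i.e.\ $X\Smash M_j\simeq R\Smash M_j$ as $(R\Smash M_j)$-modules, then $X\Smash M'_m\simeq R\Smash M'_m$ as $(R\Smash M'_m)$-modules, i.e.\ $X\in V'_m$. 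This proves $V_j\subseteq V'_m$; exchanging the roles of the two towers and using the cofinality of $\{J'_m\}$ to find, for any $j$, an $m$ with $J'_m\subseteq J_j$, the same argument gives $V'_m\subseteq V_j$. Hence the two neighborhood bases are mutually refining, and the two topologies on $\Pic_{K(n)}(R)$ coincide.
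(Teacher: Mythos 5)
Your overall strategy --- showing that the two neighborhood bases $\{V_j\}$ and $\{V'_m\}$ are mutually refining via comparison maps between the two towers --- is the right one, and it is essentially what the paper's one-line proof unwinds to. But your argument has a genuine gap exactly where you flag one: the existence of the comparison map $\sigma\colon M_j\to M'_m$ under $S^0$ is asserted, not proved. You appeal to ``the inductive, obstruction-theoretic machinery Hovey--Strickland use'' and say you ``expect the existence of $\sigma$ to be the main obstacle,'' but that obstacle \emph{is} the content of the proposition. The statement you need is precisely \cite[Proposition 4.22]{Hovey-Strickland_1999}, restated as \cite[Theorem 6.1]{Davis-Lawson_2014}: any two towers of generalized Moore spectra of type $n$ are isomorphic in the homotopy category of $\Pro(\Sp)$, compatibly with the unit maps. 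The paper's proof consists of citing this fact and nothing more; had you done the same, the cofinality and base-change parts of your argument would carry the rest. As written, the crux is missing.

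A secondary point: your base-change step requires $\sigma$ to be a map of ring spectra, which is strictly more than the cited pro-uniqueness provides (a priori the pro-isomorphism is one of $\mu$-spectra under $S^0$ in the homotopy category, not of $\Ebb_k$-algebras). Either the comparison must be upgraded to be suitably multiplicative at each finite stage --- plausible given \Cref{prop:genMoore_mult}, but again not automatic and not something you establish --- or the refinement $V_j\subseteq V'_m$ should be proved without module base change, e.g.\ by using a splitting of $M'_m\Smash M_j$ into suspensions of $M'_m$ as in \cite[Proposition 4.16]{Hovey-Strickland_1999} (compare the argument in the proof of \Cref{thm:R-mod_moore}) to transfer a trivialization of $X\Smash M_j$ to one of $X\Smash M'_m$. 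Either route requires more than the formal ``smashing with $R$'' you give.
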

	\begin{proof}
		This follows from the fact that any two towers of generalized Moore spectra of type $n$ are equivalent in the homotopy category of $\Pro(\Sp)$ by \cite[Proposition 4.22]{Hovey-Strickland_1999} (see restatement in  \cite[Theorem 6.1]{Davis-Lawson_2014}).  
	\end{proof}
	One important property of Picard spaces and groups in $\CAlg$ is that they commute with finite products and filtered colimits of rings.  We next show that the $K(n)$-local Picard functors $\pic_{K(n)}\colon \CAlg\left(\Sp_{K(n)}\right)\to \Pro(\Top_*)$ and $\Pic_{K(n)}\colon \CAlg\left(\Sp_{K(n)}\right)\to \Pro(\Ab)$ preserve \emph{profinite} products (see definition below) and filtered colimits. This is a key step in our study of profinite descent spectral sequences for $K(n)$-local Picard groups in \Cref{sec:profin_desc}. 
	\begin{defn}\label{defn:profin_prod_Kn}
		For any cofiltered limit of finite sets $X=\lim_{\alpha} X_\alpha$ and $R\in \CAlg\left(\Sp_{K(n)}\right)$, define the $K(n)$-local \emph{profinite} product of $R$ indexed by $X$ as:
		\begin{equation*}
			\map_c(X,R):=L_{K(n)}\colim_\alpha \map(X_{\alpha},R).
		\end{equation*}
	\end{defn}
	By construction, $\map_c(X,R)$ is a $K(n)$-local $\einf$-ring spectrum. We also have equivalences:
	\begin{align}
		\map_c(X,R)&\simeq \lim_j L_n\colim_\alpha\map(X_\alpha,R)\Smash M_j&&\text{\eqref{eqn:Moore_limit}}\nonumber\\
		&\simeq \lim_j \colim_\alpha\map(X_\alpha,R)\Smash M_j&&L_n\text{ is smashing}\nonumber\\
		&\simeq \lim_j \colim_\alpha\map(X_\alpha,R\Smash M_j).&&\label{eqn:profin_prod}
	\end{align}
	The last equivalence justifies the notation $\map_c(X,R)$. 
	\begin{prop}\label{prop:Pic_Kn_profin_prod}
		There are natural equivalences of group-like $\einf$-spaces and isomorphisms of pro-abelian groups:
		\begin{align*}
			\pic_{K(n)}\map_c(X,R)&\simeq \map_c\left(X,\pic_{K(n)}(R)\right):=\lim_j\colim_\alpha\left(X_\alpha, \pic(R\Smash M_j)\right),\\
			\Pic_{K(n)}\map_c(X,R)&\cong \map_c\left(X,\Pic_{K(n)}(R)\right):=\lim_j\colim_\alpha\left(X_\alpha, \Pic(R\Smash M_j)\right).
		\end{align*}
	\end{prop}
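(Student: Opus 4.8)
The plan is to feed the description of $\map_c(X,R)$ from \eqref{eqn:profin_prod} into \Cref{thm:Kn_Pic_limit} and then commute the (non-local) Picard space functor past the filtered colimit and the finite products, using \Cref{prop:pic_lim_colim}. Throughout write $R_j:=R\Smash M_j$, an $\Ebb_j$-ring, and note that $\map_c(X,R)=L_{K(n)}\colim_\alpha\map(X_\alpha,R)$ is a $K(n)$-local $\einf$-ring, being $L_{K(n)}$ of a filtered colimit of the $\einf$-rings $\map(X_\alpha,R)$, so that \Cref{thm:Kn_Pic_limit} applies to it.

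The first, and only non-formal, step is the $\Ebb_j$-ring equivalence
\[
\map_c(X,R)\Smash M_j\;\simeq\;\colim_\alpha\map(X_\alpha,R_j).
\]
To prove it I would argue as follows. The finite complex $M_j$ commutes with filtered colimits and with the finite products $\map(X_\alpha,-)$, so $M_j\Smash\colim_\alpha\map(X_\alpha,R)\simeq\colim_\alpha\map(X_\alpha,R_j)$. On the other hand $\colim_\alpha\map(X_\alpha,R)$ is a filtered colimit of $K(n)$-local, in particular $E_n$-local, spectra, hence $E_n$-local since $L_n$ is smashing; and the localization map $\colim_\alpha\map(X_\alpha,R)\to L_{K(n)}\colim_\alpha\map(X_\alpha,R)=\map_c(X,R)$ is a $K(n)_*$-equivalence. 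Smashing this map with the type-$n$ complex $M_j$ yields an equivalence: both sides are $K(n)$-local (by Hovey--Strickland, the input to \Cref{lem:Rk-mod}) and the map is a $K(n)_*$-isomorphism by the Künneth theorem for Morava $K$-theory. This gives the displayed equivalence (of $\Ebb_j$-rings, since the localization map is $\einf$ and $M_j$ is $\Ebb_j$); the right-hand side is visibly a filtered colimit of $R_j$-algebras along $R_j$-algebra maps, consistent with \Cref{lem:Rk-mod}, and no further localization is needed.

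Now apply \Cref{thm:Kn_Pic_limit} to $\map_c(X,R)$ and then \Cref{prop:pic_lim_colim} twice, together with the first step:
\[
\pic_{K(n)}\map_c(X,R)\simeq\lim_j\pic\!\left(\map_c(X,R)\Smash M_j\right)\simeq\lim_j\pic\!\left(\colim_\alpha\map(X_\alpha,R_j)\right)\simeq\lim_j\colim_\alpha\pic\!\left(\map(X_\alpha,R_j)\right),
\]
where the last equivalence uses that $\pic$ commutes with filtered colimits of $\Ebb_{j-1}$-monoidal $\infty$-categories and that $\Mod(-)$ carries a filtered colimit of $\Ebb_j$-rings to the corresponding filtered colimit of module categories. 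For a finite set $X_\alpha$ one has $\map(X_\alpha,R_j)\simeq\prod_{X_\alpha}R_j$ with $\Mod\!\left(\prod_{X_\alpha}R_j\right)\simeq\prod_{X_\alpha}\Mod(R_j)$, and $\pic$ commutes with finite limits, so $\pic\!\left(\map(X_\alpha,R_j)\right)\simeq\prod_{X_\alpha}\pic(R_j)=\map(X_\alpha,\pic(R_j))$. Assembling these identifications, each of which is natural in $X$ and $R$ and compatible with the tower in $j$, gives $\pic_{K(n)}\map_c(X,R)\simeq\lim_j\colim_\alpha\map(X_\alpha,\pic(R_j))=:\map_c(X,\pic_{K(n)}(R))$ as group-like $\einf$-spaces. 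Taking $\pi_0$: since $\Pic=\pi_0\pic$ commutes with filtered colimits, the first step identifies the tower $\{\Pic(\map_c(X,R)\Smash M_j)\}_j$ term-by-term with $\{\colim_\alpha\map(X_\alpha,\Pic(R_j))\}_j$, and \Cref{thm:Kn_Pic_limit} applied once more to $\map_c(X,R)$ identifies $\Pic_{K(n)}\map_c(X,R)$ with the limit of this tower (so that the $\lim^1$-term is handled exactly as in the proof of \Cref{thm:Kn_Pic_limit}); this is the asserted isomorphism of pro-abelian groups.

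The main obstacle is the first step: one must check that $\colim_\alpha\map(X_\alpha,R_j)$ is already $K(n)$-local — which is where \Cref{lem:Rk-mod} (equivalently the Hovey--Strickland result behind it) does the work — and that smashing a type-$n$ finite complex with $L_{K(n)}(-)$ agrees with smashing with the underlying colimit. Once that identification is in hand, the rest is transporting the standard limit/colimit properties of $\pic$ through \Cref{thm:Kn_Pic_limit}.
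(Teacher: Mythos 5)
Your proposal is correct and follows essentially the same route as the paper: the identification $\map_c(X,R)\Smash M_j\simeq\colim_\alpha\map(X_\alpha,R\Smash M_j)$ that you establish as your ``first step'' is exactly the paper's displayed equivalence \eqref{eqn:profin_prod} (proved there from \eqref{eqn:Moore_limit} and the fact that $L_n$ is smashing, rather than via a Künneth argument, but with the same content), and the remainder of your argument --- \Cref{thm:Kn_Pic_limit}, then commuting $\pic$/$\Pic$ past the filtered colimit and the finite products --- is the paper's chain of equivalences verbatim.
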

	\begin{proof}
		The statement for $K(n)$-local Picard groups follows from \cite[Proposition 2.4.1]{MS_Picard} and \Cref{thm:Kn_Pic_limit}. 
		\begin{align*}
			\Pic_{K(n)}\map_c(X,R)&\simeq \Pic_{K(n)}\left[\lim_j\colim_\alpha \map(X_{\alpha},R\Smash M_j)\right] && \textup{\eqref{eqn:profin_prod}}\\
			&\simeq \lim_j\Pic\left[\colim_\alpha \map(X_{\alpha},R\Smash M_j)\right]&&\textup{\Cref{thm:Kn_Pic_limit}}\\
			&\simeq \lim_j\colim_\alpha \Pic\left[\map(X_{\alpha},R\Smash M_j)\right]&&\textup{\cite[Proposition 2.4.1]{MS_Picard}}\\
			& \simeq \lim_j\colim_\alpha \map(X_{\alpha},\Pic\left(R\Smash M_j\right))& &\Pic \textup{ preserves finite products}\\
			& =\map_c\left(\lim_\alpha X_\alpha, \lim_j \Pic\left(R\Smash M_j\right)\right)&&\\
			& \simeq \map_c\left(X, \Pic_{K(n)}(R)\right). && \textup{\Cref{thm:Kn_Pic_limit}}
		\end{align*}
		The proof for $\pic_{K(n)}$ is entirely parallel. 
	\end{proof}
	By \cite[Proposition 2.4.1]{MS_Picard}, the Picard group functor $\Pic\colon \CAlg(\Sp)\to \Ab $ preserves filtered colimits. The $K(n)$-local Picard groups satisfy a similar property, but only as \emph{pro}-abelian groups.  
	\begin{prop}\label{prop:Pic_Kn_colim}
		The $K(n)$-local Picard group functor $\Pic_{K(n)}\colon \CAlg\left(\Sp_{K(n)}\right)\to \Pro(\Ab)$ preserves filtered colimits. 
	\end{prop}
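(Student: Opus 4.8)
The plan is to reduce everything to \Cref{thm:Kn_Pic_limit}, which presents $\Pic_{K(n)}(R)$ as the pro-abelian group $\bigl(\Pic(R\Smash M_j)\bigr)_j$, and then to argue level by level in $j$, followed by an assembly step carried out inside $\Pro(\Ab)$. So let $\{R_i\}_{i\in I}$ be a filtered diagram in $\CAlg\left(\Sp_{K(n)}\right)$ with colimit $R=\colim_i R_i$, and consider the canonical comparison map $\colim_i\Pic_{K(n)}(R_i)\to\Pic_{K(n)}(R)$ in $\Pro(\Ab)$; the goal is to show it is an isomorphism. Note at the outset that a filtered colimit in $\CAlg\left(\Sp_{K(n)}\right)$ is \emph{not} the one computed in $\CAlg(\Sp)$: by \Cref{prop:alg_O_lim} and the fact that $\Sp_{K(n)}\hookrightarrow\Sp$ is a localization, one has $R\simeq L_{K(n)}\bigl(\colim_i^{\Sp}R_i\bigr)$, and it is precisely this $L_{K(n)}$ that makes the statement non-formal.

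The first and main step I would carry out is the level statement: for each fixed $j$, the functor $R\mapsto\Pic(R\Smash M_j)$ on $\CAlg\left(\Sp_{K(n)}\right)$ preserves filtered colimits. Since $M_j$ is a finite complex of type $n$, the map $X\Smash M_j\to L_{K(n)}X\Smash M_j$ is a $K(n)$-equivalence with $K(n)$-local target, so smashing with $M_j$ commutes with $L_{K(n)}$ and with colimits; hence $R\Smash M_j\simeq L_{K(n)}Z$ where $Z:=\colim_i^{\Sp}(R_i\Smash M_j)$. Each $R_i\Smash M_j$ is $L_n$-local (being $K(n)$-local) and satisfies $L_{n-1}(R_i\Smash M_j)=0$ (type $n$), hence so does $Z$, and hence so does every $Z$-module $P$, since $P$ is a module over the smashing localization $L_n S^0$ while $L_{n-1}P\simeq P\otimes_Z L_{n-1}Z\simeq 0$. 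Thus $\Mod_Z(\Sp)$ coincides with the category of $Z$-modules in the monochromatic category $\mathcal{M}_n$ of $L_n$-local, $L_{n-1}$-acyclic spectra, on which $L_{K(n)}\colon\mathcal{M}_n\xrightarrow{\sim}\Sp_{K(n)}$ is a symmetric monoidal equivalence \cite{Hovey-Strickland_1999}. This equivalence carries $\Mod_Z(\mathcal{M}_n)$ to $\Mod_{L_{K(n)}Z}\left(\Sp_{K(n)}\right)$, which agrees with $\Mod(L_{K(n)}Z)$ by \Cref{lem:Rk-mod} (as $L_{K(n)}Z$ is an $\bigl(S^0_{K(n)}\Smash M_j\bigr)$-algebra). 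Passing to Picard groups and invoking \cite[Proposition 2.4.1]{MS_Picard} (that $\Pic\colon\CAlg(\Sp)\to\Ab$ preserves filtered colimits) together with \Cref{prop:alg_O_lim} (that the forgetful functor preserves filtered colimits), one gets $\Pic(R\Smash M_j)\cong\Pic(L_{K(n)}Z)\cong\Pic(Z)\cong\colim_i\Pic(R_i\Smash M_j)$.

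It then remains to assemble these levelwise isomorphisms into an isomorphism of pro-abelian groups, and this is the step I expect to be the main obstacle; it is also where the restriction to $\Pro(\Ab)$-valued (rather than $\Ab$-valued) functors is essential, since filtered colimits in $\Pro(\Ab)$ are \emph{not} computed levelwise in general. The plan is to verify that the comparison map $\colim_i\Pic_{K(n)}(R_i)\to\Pic_{K(n)}(R)$ is a pro-isomorphism directly from the mapping-space formula $\map_{\Pro(\Ab)}(\underline{x},\underline{y})=\lim\colim\map$, using that each tower $\bigl(\Pic(R_i\Smash M_j)\bigr)_j$ is pro-isomorphic to the tower of quotients $\bigl(\Pic_{K(n)}(R_i)/V_j\bigr)_j$ of \Cref{prop:Pic_top}, whose transition maps are surjective, so that the relevant interchange of the filtered colimit over $i$ with the inverse limit over $j$ is controlled. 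Finally I would remark that post-composing with $\lim_j\colon\Pro(\Ab)\to\Ab$ destroys the conclusion, because a filtered colimit need not commute with that cofiltered limit — this is exactly the failure of the discrete functor $\Pic_{K(1)}$ exhibited later in \Cref{rem:Pic_colim_p_odd} and \Cref{rem:Pic_colim_p2}.
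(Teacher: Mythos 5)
Your level-$j$ statement and its proof are correct, and this part coincides with the paper's route: the paper likewise reduces to the chain $\Pic_{K(n)}(R)\cong\lim_j\Pic(R\Smash M_j)\cong\lim_j\colim_i\Pic(R_i\Smash M_j)$ using \eqref{eqn:Moore_limit}, the fact that $L_n$ is smashing, \Cref{thm:Kn_Pic_limit}, and \cite[Proposition 2.4.1]{MS_Picard}. Your detour through the monochromatic category is harmless but unnecessary: because $L_n$ is smashing, $\colim_i^{\Sp}R_i$ is already $E_n$-local, so $Z=(\colim_i^{\Sp}R_i)\Smash M_j$ is already $K(n)$-local (an $E_n$-local spectrum smashed with a type-$n$ finite complex is $K(n)$-local, which is the fact underlying \eqref{eqn:Moore_limit} and \eqref{eqn:profin_prod}); thus $L_{K(n)}Z\simeq Z$ and no comparison of module categories across the localization is needed. (Also, $\mathcal{M}_n$ has no unit, so calling $L_{K(n)}\colon\mathcal{M}_n\to\Sp_{K(n)}$ a symmetric monoidal equivalence is not quite right, though nothing essential depends on it.)

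The genuine gap is in the assembly step, which you rightly single out as the crux but whose proposed resolution does not work. First, the towers $\bigl(\Pic(R_i\Smash M_j)\bigr)_j$ and $\bigl(\Pic_{K(n)}(R_i)/V_j\bigr)_j$ need not be pro-isomorphic: $\Pic_{K(n)}(R_i)/V_j$ is the image of the inverse limit inside $\Pic(R_i\Smash M_j)$, and the inclusion of the tower of these images into the full tower is a pro-isomorphism only under a Mittag-Leffler hypothesis on $\bigl(\Pic(R_i\Smash M_j)\bigr)_j$ that you neither assume nor establish. Second, the interchange you invoke is both the wrong one and false as stated. Writing $A_{i,j}=\Pic(R_i\Smash M_j)$ and $\underline{B}=(B_l)$ for a test pro-object, unwinding $\map_{\Pro(\Ab)}=\lim\colim\map$ shows that verifying the universal property of the levelwise colimit requires commuting the filtered colimit over the Moore-tower index $j$ with the cofiltered limit over $I^{\op}$ that computes $\hom(\colim_iA_{i,j},B_l)$ — not a $\colim_i$/$\lim_j$ interchange. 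And surjectivity of the tower maps does not control $\colim_i\lim_j$ versus $\lim_j\colim_i$ in any case: for $A_{i,j}=\Z/p^j$ with the canonical surjections as tower maps and multiplication by $p$ in the $i$-direction, one gets $\colim_i\lim_jA_{i,j}\cong\Qp$ while $\lim_j\colim_iA_{i,j}=0$; this is exactly the diagram underlying \Cref{rem:Pic_colim_p_odd}, i.e.\ precisely the failure that passing to $\Pro(\Ab)$ is meant to sidestep. What actually has to be supplied is the assertion the paper makes in its final sentence — that for a filtered diagram of towers the levelwise colimit $\bigl(\colim_i\Pic(R_i\Smash M_j)\bigr)_j$ satisfies the universal property of the colimit in $\Pro(\Ab)$ — and your plan does not establish this.
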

	\begin{proof}
		Let $L_{K(n)}\colim_k R_k$ be a filtered colimit of $K(n)$-local $\einf$-ring spectra $R_k$.  Similar to the proof of \Cref{prop:Pic_Kn_profin_prod} above, we have isomorphisms:
		\begin{align*}
			\Pic_{K(n)}\left(L_{K(n)}\colim_k R_k\right)\cong \Pic_{K(n)}\left(\lim_j\colim_k R_k\Smash M_j\right)\cong \lim_j\colim_{k} \Pic(R_k\Smash M_j).
		\end{align*}
		From the inverse limit  topology on $\Pic_{K(n)}(R_k)$ in \Cref{thm:Kn_Pic_limit},  we can see the right hand side of the isomorphism is  the colimit  of $\Pic_{K(n)}(R_k)$ in $\Pro(\Ab)$. 
	\end{proof}
	\begin{rem}\label{rem:Pic_Kn_colim}
		The \emph{discrete} $K(n)$-local Picard group functor $\Pic_{K(n)}\colon \CAlg\left(\Sp_{K(n)}\right)\to \Pro(\Ab)\to \Ab$ does not preserve filtered colimits, since the limit functor $\Pro(\Ab) \to \Ab$ does not. We will also see explicit counterexamples in our computations of $K(1)$-local Picard groups of Galois extensions of the $K(1)$-local spheres in \Cref{rem:Pic_colim_p_odd} and \Cref{rem:Pic_colim_p2}. 
	\end{rem}
	\section{Profinite descent for $K(n)$-local Picard groups}\label{sec:profin_desc}
	In this section, we study $K(n)$-local Picard groups in the context of profinite descent in $\Sp_{K(n)}$.  We begin by reviewing the theory of $K(n)$-local (pro)finite Galois extensions in \cite{galext}. Our main examples are maps between homotopy fixed points $E_n^{hG}$ of $E_n$ constructed by Devinatz--Hopkins in \cite{fixedpt}.  In  \Cref{subsec:descent}, we connect profinite Galois extensions with descent theory in \cite{Mathew_Galois,Mathew_desc_nilp}. In particular, we show in \Cref{prop:BhH_descent} that if $A\to B\to C$ is a composition of two profinite Galois extensions such that $A\to C$ is descendable, then so are $A\to B$ and $B\to C$. In particular, this implies that $E_n^{hG}\to E_n$ admits descent for any closed subgroup $G\le \Gbb_n$. 
	
	Building on the preparations above, we set up the profinite descent spectral sequence for $K(n)$-local Picard groups and identify its entire $E_1$ and $E_2$-pages in \Cref{thm:pic_ss}. A key step is to  commute the $K(n)$-local Picard groups with profinite products in $\CAlg\left(\Sp_{K(n)}\right)$ (\Cref{prop:Pic_Kn_profin_prod}),  which allows us to compute the $s=0$ line on the $E_1$-page of the spectral sequence.  Specializing to our main examples of descendable $K(n)$-local profinite Galois extensions, we obtain profinite descent spectral sequences between Picard groups of the homotopy fixed points $E_n^{hG}$ in \Cref{cor:desc_EnhG1G2}. This will be our main computational tool in \Cref{sec:Pic_Mack}. By analyzing this spectral sequence, we obtain some algebraicity result for $\Pic_{K(n)}\left(E_n^{hG}\right)$ in \Cref{thm:descent_Pic_EnhG}. 
	\subsection{Review on $K(n)$-local profinite Galois extensions}\label{subsec:profin_gal}
	First, we recall the descent spectral sequences for \emph{finite} $G$-Galois extensions of $\einf$-ring spectra. 
	\begin{defn}[Rognes, {\cite[Definitions 4.1.3]{galext}}]\label{defn:fin_gal_extn}
		Let $G$ be a finite group. A map of ring spectra $A\to B$ is called $G$-Galois if there is an $A$-linear $G$-action on $B$ such that:
		\begin{equation*}
			A\simeq B^{hG},\qquad B\Smash_A B\simeq \map(G,B).
		\end{equation*}
		The extension is called faithful if the base change functor $B\Smash_A -\colon \Mod(A)\to \Mod(B)$ is conservative, i.e. $N\in \Mod(A)$ is zero iff $B\Smash_A N\simeq *$. 
	\end{defn}
	When $A\to B$ is a finite Galois extension, there is a \textbf{homotopy fixed point spectral sequence} (HFPSS) to compute $\pi_*(A)$ from $\pi_*(B)$ together with the $G$-actions on it. From the  definition of homotopy fixed points, we have
	\begin{equation}\label{eqn:HFP_tot}
		A\simeq B^{hG}:=\map(EG, B)^G\simeq \map(|EG_\bullet|, B)^G\simeq \Tot\left[\map(G^{\times \bullet+1}, B)^G\right]\simeq \Tot\left[\map(G^{\times \bullet}, B)\right].
	\end{equation}
	Associated to this totalization is a \textbf{Bousfield--Kan spectral sequence} (BKSS) whose $E_1$-page is
	\begin{equation}\label{eqn:HFPSS_fin_E1}
		\!^{\HFP} E_1^{s,t}=\pi_t\map(G^{\times s}, B)\cong \map(G^{\times \bullet}, \pi_t(B)) \Longrightarrow \pi_{t-s}\left(B^{hG}\right)\cong \pi_{t-s}(A).
	\end{equation}
	One can further check the $d_1$-differentials in this spectral sequence are the same as the cobar differentials to compute the group cohomology of $G$. This identifies the $E_2$-page of the spectral sequence as:
	\begin{equation}\label{eqn:HFPSS_fin_E2}
		\!^{\HFP}E_2^{s,t}=H^s(G;\pi_t(B))\Longrightarrow \pi_{t-s}(A). 
	\end{equation}
	In \cite{MS_Picard}, Mathew--Stojanoska studied Picard groups of finite Galois extensions of ring spectra. 
	\begin{thm}[{\cite[3153]{MS_Picard}}]
		Let $G$ be a finite group.  If $A\to B$ is a faithful $G$-Galois extension of $\einf$-ring spectra, then there is a natural equivalence of symmetric monoidal $\infty$-categories:
		\begin{equation*}
			\Mod(A)\simeq \left(\Mod(B)\right)^{hG}:= \Tot\left[\map(G^{\times \bullet},\Mod(B))\right]. 
		\end{equation*}
		This induces an equivalence of Picard spaces:
		\begin{equation*}
			\pic(A)\simeq \tau_{\ge 0} \Tot\left[\map(G^{\times \bullet},\pic(B))\right]. 
		\end{equation*}
		Similar to \eqref{eqn:HFPSS_fin_E1} and \eqref{eqn:HFPSS_fin_E2}, we obtain a homotopy fixed point spectral sequence:
		\begin{equation}\label{eqn:Picard_desc_fin}
			\!^{\pic}E_1^{s,t}=\map(G^{\times s},\pi_t\pic(B)),\qquad \!^{\pic}E_2^{s,t}=H^s(G;\pi_t\pic(B))\Longrightarrow \pi_{t-s}\pic(A). 
		\end{equation}
	\end{thm}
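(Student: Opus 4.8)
The plan is to obtain the statement in three moves: (1) establish Galois descent for the module $\infty$-category as a symmetric monoidal equivalence; (2) apply the Picard space functor, which commutes with limits and finite products; (3) read off the spectral sequence as the Bousfield-Kan spectral sequence of the resulting cosimplicial space. None of the individual steps is deep, so the work is in assembling them correctly while keeping track of symmetric monoidality and of where limits are formed.

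First I would record that a faithful finite $G$-Galois extension $A\to B$ is descendable in the sense of Mathew, so that base change $\Mod(A)\to\Mod(B)$ satisfies $\otimes$-descent along the Amitsur (cobar) cosimplicial $\einf$-ring $B^{\otimes_A\bullet+1}$; concretely $\Mod(A)\simto\Tot\!\left[\Mod(B^{\otimes_A\bullet+1})\right]$. This is an equivalence of symmetric monoidal $\infty$-categories: the totalization carries the symmetric monoidal structure induced on a limit of symmetric monoidal $\infty$-categories taken along symmetric monoidal functors (as in the argument for \Cref{lem:lim_Ek_monoidal}), and the comparison functor is symmetric monoidal because base change is. Next I would identify the cosimplicial object: iterating the Galois condition $B\otimes_A B\simeq\map(G,B)=\prod_G B$ through the usual shearing equivalences gives $B^{\otimes_A(n+1)}\simeq\map(G^{\times n},B)$ compatibly with cofaces and codegeneracies, so the Amitsur complex is identified with the cosimplicial $\einf$-ring $\map(G^{\times\bullet},B)$ computing $B^{hG}$ as in \eqref{eqn:HFP_tot}. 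Applying $\Mod(-)$ and using that $\Mod$ sends finite products of rings to products of module categories, $\Mod(\map(G^{\times n},B))\simeq\map(G^{\times n},\Mod(B))$, hence $\Mod(A)\simeq\Tot\!\left[\map(G^{\times\bullet},\Mod(B))\right]=:\Mod(B)^{hG}$.

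For the Picard spaces I would invoke \Cref{prop:pic_lim_colim}: the Picard space functor commutes with limits, so $\pic(A)\simeq\lim_\Delta\pic\!\left(\map(G^{\times\bullet},\Mod(B))\right)$. Since $\pic$ commutes with finite products and each $G^{\times s}$ is finite, $\pic\!\left(\map(G^{\times s},\Mod(B))\right)\simeq\map(G^{\times s},\pic(B))$. Because a limit of group-like $\einf$-spaces -- equivalently, of connective spectra -- is the connective cover of the limit formed in spectra, this yields $\pic(A)\simeq\tau_{\ge 0}\Tot\!\left[\map(G^{\times\bullet},\pic(B))\right]$, the truncation accounting for the possible non-connectivity of the spectral-level totalization; passing to $\pi_0$ recovers $\Pic(A)\cong\lim_\Delta\Pic(\map(G^{\times\bullet},B))$.

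Finally, the spectral sequence is the Bousfield-Kan spectral sequence of the cosimplicial space $\map(G^{\times\bullet},\pic(B))$, set up exactly as in \eqref{eqn:HFPSS_fin_E1} and \eqref{eqn:HFPSS_fin_E2}: the $s$-th term is a finite product of copies of $\pic(B)$, so its $\pi_t$ is $\map(G^{\times s},\pi_t\pic(B))$, giving the $E_1$-page; the $d_1$-differential is the alternating sum of coface maps, which -- after tracing the cosimplicial structure back to the $G$-action via the identification above -- are the inhomogeneous group-cochain differentials, so $E_2^{s,t}=H^s(G;\pi_t\pic(B))$. Strong convergence to $\pi_{t-s}\pic(A)$, for $t-s\ge 0$ since $\pic(A)$ is connective, follows from descendability of $A\to B$ together with finiteness of $G$, which bounds the relevant filtration. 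I expect the main obstacle to be step two: making the identification of the Amitsur complex with $\map(G^{\times\bullet},B)$ precise as cosimplicial objects (the shearing combinatorics), and then carrying the $G$-action through the descent equivalence far enough to recognize the $d_1$ as the cobar differential of group cohomology -- all while keeping every comparison symmetric monoidal so that the final output is genuinely an equivalence of symmetric monoidal $\infty$-categories.
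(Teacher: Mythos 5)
Your proposal is correct and follows essentially the same route as the source the paper cites for this result (Mathew--Stojanoska; the paper itself states the theorem without proof): descendability of a faithful finite Galois extension, the shearing identification of the Amitsur complex with $\map(G^{\times\bullet},B)$, commutation of $\pic$ with limits and finite products, the connective-cover explanation of the $\tau_{\ge 0}$, and the Bousfield--Kan spectral sequence with cobar $d_1$. No gaps.
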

	By \Cref{cor:pic_E2_ring}, the two spectral sequences \eqref{eqn:HFPSS_fin_E1} and \eqref{eqn:Picard_desc_fin} have very similar $E_1$ and $E_2$-pages. It is a natural question to compare their differentials. Mathew--Stojanoska showed:
	\begin{thm}[{\cite[Comparison Tool 5.2.4]{MS_Picard}}]\label{thm:compare_diff}
		Denote by $\!^{\HFP}d_r^{s,t}$ and $\!^{\pic}d_r$ the differentials in the two spectral sequences \eqref{eqn:HFPSS_fin_E1} and \eqref{eqn:Picard_desc_fin}, respectively. When $t-s>0$ and $s\ge 0$, or $2\le r\le t-1$, we have $^{\pic}d_r^{s,t-1}=\!^{\HFP}d_r^{s,t}$. 
	\end{thm}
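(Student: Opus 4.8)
The plan is to reproduce the Mathew--Stojanoska comparison argument: realize both spectral sequences as Bousfield--Kan spectral sequences of $\Tot$-towers of cosimplicial connective spectra, show that those two cosimplicial spectra agree --- up to a single suspension --- after a Postnikov truncation that discards internal degrees $\le1$, and then use that a $d_r$-differential of such a spectral sequence is detected by a finite window of the Postnikov filtration. First I would put both spectral sequences on a common footing. Since $A\to B$ is a faithful $G$-Galois extension, the Mathew--Stojanoska descent equivalence gives $\Mod(A)\simeq\Tot\left[\map(G^{\times\bullet},\Mod(B))\right]$; applying the Picard-space functor levelwise and using \Cref{lem:pic_Ccal_fib} together with the fact that $\pic$ commutes with limits (\Cref{prop:pic_lim_colim}) yields
\[
	\tau_{\ge0}\,\pic(A)\;\simeq\;\Tot\left[\map(G^{\times\bullet},\pic(B))\right],
\]
while \eqref{eqn:HFP_tot} gives $A\simeq\Tot\left[\map(G^{\times\bullet},B)\right]$. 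Writing $C^\bullet=\map(G^{\times\bullet},B)$ for the cobar cosimplicial $\einf$-ring, the homotopy fixed point spectral sequence \eqref{eqn:HFPSS_fin_E1} is the Bousfield--Kan spectral sequence of the $\Tot$-tower of $C^\bullet$, and the Picard descent spectral sequence \eqref{eqn:Picard_desc_fin} is that of the cosimplicial connective spectrum $\pic(C^\bullet)=\map(G^{\times\bullet},\pic(B))$; both carry differentials $d_r^{s,t}\colon E_r^{s,t}\to E_r^{s+r,t+r-1}$.

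The second step is the structural comparison of these two cosimplicial spectra in a range of internal degrees. Truncating the natural fiber sequence of \Cref{lem:pic_Ccal_fib} and using that $\aut_R(R)=\GL_1 R$ has the same identity component as $\Omega^\infty R$, one obtains a natural equivalence of spectra $\tau_{\ge2}\,\pic(R)\simeq\Sigma\,\tau_{\ge1}R$ (equivalently $\tau_{\ge1}\,\pic(R)\simeq\Sigma\,\mathrm{gl}_1(R)$ and $\tau_{\ge1}\,\mathrm{gl}_1(R)\simeq\tau_{\ge1}R$); in particular $\pi_t\,\pic(R)\cong\pi_{t-1}(R)$ for $t\ge2$, recovering \Cref{cor:pic_E2_ring} and accounting for the internal-degree shift appearing in the statement. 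The discrepancy between $\pic(R)$ and a suspension of $R$ is concentrated in internal degrees $0$ and $1$, where $\pi_0\,\pic$ records the genuinely new Picard-group information and $\pi_1\,\pic=\pi_0(-)^\times$ differs from $\pi_0(-)$. Applying this levelwise and compatibly with the cosimplicial structure maps, after discarding internal degrees $\le1$ the cosimplicial spectrum $\pic(C^\bullet)$ is naturally identified with a single suspension of the $1$-connective cover of $C^\bullet$, so the two $\Tot$-towers agree above that truncation.

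The third step is to exploit the locality of the differentials: a $d_r^{s,t}$ in the Bousfield--Kan spectral sequence of a cosimplicial spectrum $X^\bullet$ is an operation determined by the Postnikov $k$-invariants in the two-stage window $\tau_{[t,\,t+r-1]}X^\bullet$, hence depends only on that finite piece of the Postnikov filtration (the standard obstruction-theoretic description of the $\Tot$-tower). Therefore, whenever the relevant window of $\pic(C^\bullet)$ agrees --- up to the single suspension of the second step --- with the corresponding window of $C^\bullet$, i.e.\ whenever that window lies in internal degrees $\ge2$, the two $d_r$'s coincide under the induced isomorphism of $E_r$-pages. Translating ``the window avoids the exceptional degrees $0$ and $1$'' into the bidegree at hand produces exactly the hypothesis ``$t-s>0$ and $s>0$, or $2\le r\le t-1$'', and yields the stated coincidence of the $\pic$- and $\HFP$-differentials in that range, with internal degrees matched by the isomorphism $\pi_t\,\pic(B)\cong\pi_{t-1}(B)$.

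The step I expect to be the main obstacle is the third one: making the statement ``$d_r$ is the $k$-invariant of a finite Postnikov stage of the $\Tot$-tower'' precise and natural enough that the levelwise equivalence $\tau_{\ge2}\,\pic(C^\bullet)\simeq\Sigma\,\tau_{\ge1}C^\bullet$ of cosimplicial spectra genuinely transports differentials, together with the careful bookkeeping of which internal degrees a given $d_r^{s,t}$ touches --- in particular verifying the precise inequalities that rule out any interference from the exceptional internal degrees $0$ and $1$ (this is exactly where the dichotomy ``$t-s>0,\ s>0$'' versus ``$2\le r\le t-1$'' originates). Granting the Mathew--Stojanoska treatment of the $\Tot$-tower and its $k$-invariants, the remainder is formal.
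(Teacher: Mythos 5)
The paper offers no proof of \Cref{thm:compare_diff}: it is quoted directly from \cite[Comparison Tool 5.2.4]{MS_Picard}, so your sketch should be measured against Mathew--Stojanoska's argument, on which it is clearly modelled. Your overall architecture --- both spectral sequences as Bousfield--Kan spectral sequences of $\Tot$-towers, a levelwise comparison of truncations of the two cosimplicial objects, and locality of $d_r$ in a finite Postnikov window --- is the correct one, and your first step and the identification $\tau_{\ge1}\pic(R)\simeq\Sigma\gl_1(R)$ are fine.

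The pivotal claim in your second step, however, is false: there is no natural equivalence of \emph{spectra} $\tau_{\ge1}\gl_1(R)\simeq\tau_{\ge1}R$ (equivalently, $\tau_{\ge2}\pic(R)\not\simeq\Sigma\,\tau_{\ge1}R$). What is true is that the basepoint components of the \emph{spaces} $\GL_1(R)$ and $\Omega^\infty R$ are equivalent, so the homotopy groups agree in degrees $\ge1$; this equivalence does not deloop, and the spectra $\gl_1(R)$ and $R$ genuinely differ (already for $R=S^0$). The actual input in \cite{MS_Picard} is the far more delicate ``stable range'' statement: for each $n\ge1$ there is a natural equivalence of spectra $\tau_{[n,2n-1]}\gl_1(R)\simeq\tau_{[n,2n-1]}R$, valid only in a window whose length grows linearly in $n$. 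That finite window is exactly what produces the upper bound $r\le t-1$: the window of internal degrees touched by $d_r^{s,t}$ must fit inside some interval $[n,2n-1]$ on which $\gl_1$ and the ring agree. Your third step instead attributes the hypotheses to ``avoiding the exceptional internal degrees $0$ and $1$,'' which would only require $t\ge2$ and would impose no upper bound on $r$; combined with your (false) step two, your argument would yield $^{\pic}d_r^{s,t-1}=\!^{\HFP}d_r^{s,t}$ for every $r$ once $t$ is at least $2$. That stronger statement is known to fail --- Mathew--Stojanoska compute that the first differential beyond the stated range acquires an explicit (quadratic) correction term, which is precisely what makes their later $\Pic(TMF)$ computations nontrivial. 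So the proposal proves too much, and the error sits at the exact point where the content of the theorem lies. (The separate branch ``$t-s>0$ and $s>0$,'' which carries no bound on $r$, also cannot be recovered from any window bookkeeping and requires its own argument in \cite{MS_Picard}.)
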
 
	We make some observations on the cosimplicial spectra \eqref{eqn:HFP_tot}. The second assumption $B\Smash_A B\simeq \map(G,B)$ in \Cref{defn:fin_gal_extn} implies that for any $s\ge 0$, we have equivalences of ring spectra:
	\begin{equation*}
		\bigsmash\!_A^{s+1} B\simeq \bigsmash\!_B^s(B\Smash_A B)\simeq \bigsmash\!_B^s\map(G,B) \simeq \map(G^{\times s},B).
	\end{equation*}
	As a result, we have a level-wise equivalence of cosimplicial ring spectra $\left[\map(G^{\times \bullet}, B)\right]\simeq \left[\bigsmash\!_A^{\bullet+1}B\right]$. The right hand side of this equivalence is the cobar complex for the Galois extension $A\to B$. The first assumption $A\simeq B^{hG}$ in \Cref{defn:fin_gal_extn} then translates to an equivalence:
	\begin{equation*}
		A\simeq \Tot\left[\bigsmash\!_A^{\bullet+1}B\right]. 
	\end{equation*}
	
	We are now ready to study \emph{profinite} Galois extensions of $K(n)$-local $\einf$-ring spectra. 
	\begin{defn}[Rognes, {\cite[Definitions 8.1.1]{galext}}]\label{defn:profin_gal_ext}
		Let $G=\lim_{k}G/U_k$ be a cofiltered limit of finite groups. Consider a directed system of maps of $K(n)$-local $\einf$-ring spectra $A\to B_k$. Suppose each $A\to B_k$ is a faithful $G/U_k$-Galois extension in the sense of \Cref{defn:fin_gal_extn} such that there are natural equivalences $B_k\simeq B_j^{h(U_k/U_j)}$ when $U_j$ is an open normal subgroup of $U_k$. Then $A\to B:=L_{K(n)}\colim_k B_k$ is a $K(n)$-local \emph{pro}-$G$-Galois extension of $A$. 
	\end{defn}
	\begin{rem}
		A special case of profinite Galois extensions is when $G$ is \emph{countably based} (second countable). By \cite[Corollary 1.1.13]{Ribes-Zalesskii}, $G$ is countably based iff there is a sequence of open normal subgroups $U_1\supseteq U_2\supseteq \cdots$ of $G$ such that $G\cong \lim_k G/U_k$. For such a profinite group $G$, a profinite $G$ Galois extension is a sequence of extensions $A\to B_1\to B_2\to \cdots \to B$ such that $A\to B_k$ is a $G/U_k$-extension and $B\cong \colim_k B_k$. By \cite[Example 1.10]{davis2023homotopy}, any compact analytic $p$-adic group is countably based. This includes the Morava stabilizer group $\Gbb_n$ and its closed subgroups.
	\end{rem}
	Recall $\hsmash$ is the monoidal product in $\Sp_{K(n)}$.  In \cite{fixedpt}, Devinatz--Hopkins constructed "continuous homotopy fixed points" $E_n^{hG}$ for all closed subgroups $G\le \Gbb_n$.  This is our main example of $K(n)$-local profinite $G$-Galois extensions. 
	\begin{const}[Devinatz--Hopkins]\label{const:EnhG}
		For an \emph{open} subgroup $G\le \Gbb_n$, $E_n^{hG}$ is defined as 
		\begin{equation*}
			E_n^{hG}:= \Tot\left[ \map\left(\Gbb_n/G, E_n^{\hsmash\bullet+1}\right)\right].
		\end{equation*}
		In particular, $E_n^{hG}$ satisfies 
		\begin{equation*}
			E_n\hsmash E_n^{hG}\simeq \map(\Gbb_n/G,E_n). 
		\end{equation*}
		For a \emph{closed} subgroup $G$ of $\Gbb_n$, the continuous homotopy fixed points is defined as \cite[Definition 1.5]{fixedpt}:
		\begin{equation*}
			E_n^{hG}:=L_{K(n)}\colim_{k} E_{n}^{h(U_kG)},
		\end{equation*}
		where $\Gbb_n=U_0\supsetneq U_1 \supsetneq  U_2  \supsetneq \cdots$ is a decreasing sequence of open normal subgroups of $\Gbb_n$ such that $\bigcap_k U_k=\{e\}$.
	\end{const}
	
	\begin{thm}[Devinatz--Hopkins, {\cite{fixedpt}}]\label{thm:DH_fixedpt}
		The spectra $E_n^{hG}$ constructed above satisfy:
		\begin{enumerate}
			\item $E_n^{hG}$ is a $K(n)$-local $\einf$-ring spectrum. In particular, there is an equivalence $E_n^{h\Gbb_n}\simeq S^0_{K(n)}$.
			\item When $G\le \Gbb_n$ is a finite subgroup, there is an equivalence:
			\begin{equation*}
				E_n^{hG}\simto E_n^{h'G}:=\map\left(EG_+, E_n\right)^G,
			\end{equation*}
			where the right hand side is the $G$-homotopy fixed point spectrum for finite group actions. 
			\item Suppose $G\le \Gbb_n$ is a closed subgroup, $U\trianglelefteq G$ is a closed normal subgroup such that $G/U$ is finite. Then $E_n^{hG}\simeq \left(E_n^{hU}\right)^{h(G/U)}$, where the outer homotopy fixed point is the categorical homotopy fixed point for finite group actions. 
			\item There is a \emph{profinite} homotopy fixed point spectral sequence (HFPSS) with $E_2$-page:
			\begin{equation*}
				^\HFP\! E^{s,t}_2=H_c^s(G;\pi_t(E_n))\Longrightarrow \pi_{t-s}(E_n^{hG}). 
			\end{equation*} 
		\end{enumerate} 
	\end{thm}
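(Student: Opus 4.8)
This theorem is due to Devinatz--Hopkins \cite{fixedpt}; since it is already cited, the plan is to recall the architecture of their argument, phrased in the language of descendable extensions where that streamlines things. The three parts are logically intertwined, so I would organize the work as: (1) the $\einf$-structure together with the base case $E_n^{h\Gbb_n}\simeq S^0_{K(n)}$; (2) the finite-group comparison; (3) transitivity. For (1), the $\einf$-structure is formal from \Cref{const:EnhG}: for $G\le\Gbb_n$ open, $\Gbb_n/G$ is finite, so each term $\map(\Gbb_n/G, E_n^{\hsmash\bullet+1})$ in the defining totalization is a finite product of copies of the $K(n)$-local cobar (Amitsur) complex $E_n^{\hsmash\bullet+1}$, which is a cosimplicial object of $\CAlg(\Sp_{K(n)})$ because $E_n$ is $\einf$ by \Cref{thm:GHML}, and a totalization of $\einf$-rings is $\einf$ by \Cref{prop:alg_O_lim}; for $G$ closed, $E_n^{hG}=L_{K(n)}\colim_k E_n^{h(U_kG)}$ is a $K(n)$-localization of a filtered colimit of $\einf$-rings, hence $\einf$, since $L_{K(n)}$ is symmetric monoidal and filtered colimits of $\einf$-rings are $\einf$. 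The equivalence $E_n^{h\Gbb_n}\simeq S^0_{K(n)}$ is the real content: it is the convergence of the $K(n)$-local $E_n$-Adams tower, $S^0_{K(n)}\simeq\Tot\bigl[E_n^{\hsmash\bullet+1}\bigr]$. First I would establish this after smashing with each finite type-$n$ complex $M_j$ of \Cref{thm:genMoore}, where it is the classical convergence of the $E_n$-Adams spectral sequence for a finite spectrum (Landweber exactness of $E_n$ plus finiteness of $M_j$), and then pass to the limit over $j$ using $X\simeq\holim_j(X\Smash M_j)$ from \eqref{eqn:Moore_limit}, commuting this limit past the totalization via the vanishing of the relevant $\lim^1$-terms for type-$n$ complexes guaranteed by \Cref{thm:genMoore}.

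For (2), fix a finite subgroup $G\le\Gbb_n$; the goal is $E_n^{hG}\simeq E_n^{h'G}:=\map(EG_+,E_n)^G$. I would exhibit the natural comparison map out of the colimit defining $E_n^{hG}$, coming from the $G$-equivariant collapses $\Gbb_n/(U_kG)\to\ast$ together with the $G$-action on $E_n$, and verify it is an equivalence by comparing the two descent spectral sequences. Both are conditionally convergent: the source has $E_2=H^*_c(G;(E_n)_*)$ and the target has $E_2=H^*(G;(E_n)_*)$, and these coincide because continuous and ordinary group cohomology agree for a finite group acting on a complete module. Since the comparison map realizes this isomorphism compatibly with the spectral sequences and both abutments are $K(n)$-local, it is an equivalence. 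A cleaner packaging: show $E_n^{hG}\to E_n$ is a faithful finite $K(n)$-local $G$-Galois extension in the sense of \Cref{defn:fin_gal_extn} --- equivalently that $E_n\hsmash_{E_n^{hG}}E_n\simeq\map(G,E_n)$ --- after which $E_n^{h'G}=E_n^{hG}$ is automatic, the base of a finite Galois extension being recovered as its naive homotopy fixed points.

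For (3), with $U\trianglelefteq G$ closed and $G/U$ finite, I would deduce $E_n^{hG}\simeq(E_n^{hU})^{h(G/U)}$ by transitivity of fixed points. When $G$ is open this is finite Galois descent applied to $E_n^{hU}\to E_n^{hG}$, which is a finite $G/U$-Galois extension by part (2). For $G$ closed, write both sides as $K(n)$-local filtered colimits over cofinal systems of open subgroups and commute $(-)^{h(G/U)}$ past the colimit defining $E_n^{hU}$; this uses $K(n)$-local ambidexterity, so that for the finite group $G/U$ the homotopy fixed points agree $K(n)$-locally with the homotopy orbits, and the latter, being a colimit, commute with filtered colimits.

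I expect the genuine obstacle throughout to be the same in all three parts: the convergence bookkeeping in part (1) and the honest identification in part (2) of the ``continuous'' homotopy fixed points with a limit-then-colimit that one can actually compute. This is the technical heart of \cite{fixedpt}, and it rests squarely on the Hovey--Strickland Moore-spectrum tower of \Cref{thm:genMoore}: without it neither the convergence of the $E_n$-Adams tower nor the ``continuity'' of the $\Gbb_n$-action on $E_n$ would be tractable, and every subsequent reduction (finite Galois descent, ambidexterity) presupposes it.
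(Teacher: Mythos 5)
The paper does not prove this theorem; it is quoted verbatim from Devinatz--Hopkins \cite{fixedpt} as background, so there is no in-paper argument to compare against. Judged on its own terms, your sketch is a reasonable modern reorganization of their results (descendability, Galois theory in the sense of Rognes, and $K(n)$-local ambidexterity for part (3) in place of their explicit spectral-sequence comparisons), and you correctly locate the technical heart in the convergence and in the identification of the $E_2$-term with continuous cohomology.

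One justification as written is too weak. The equivalence $S^0_{K(n)}\simeq\Tot\bigl[E_n^{\hsmash\bullet+1}\bigr]$, even after smashing with a finite type-$n$ complex $M_j$, is \emph{not} a consequence of ``Landweber exactness of $E_n$ plus finiteness of $M_j$.'' Convergence of the $E_n$-based Adams spectral sequence for $L_nM_j$ requires a horizontal vanishing line at some finite page, which comes from the smash product theorem, equivalently from the finite virtual cohomological dimension of $\Gbb_n$; this is exactly the descendability of $L_nS^0\to E_n$ that the paper itself later invokes from \cite[Chapter 8]{orange} in \Cref{subsec:descent}. Relatedly, the step you describe as ``commuting the limit past the totalization via vanishing of $\lim^1$-terms'' is not where the difficulty sits: $\holim_j$ and $\Tot$ are both limits and commute for free, and $-\Smash M_j$ passes through the totalization precisely because of the pro-constancy supplied by descendability. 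With the descendability input cited correctly, the rest of your outline (including the non-circular use of the unramifiedness condition $E_n\hsmash_{E_n^{hG}}E_n\simeq\map(G,E_n)$ plus descendability to recover $E_n^{hG}$ as the naive fixed points in part (2), and Greenlees--Sadofsky ambidexterity to commute finite homotopy fixed points past filtered colimits in part (3)) goes through.
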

	\begin{thm}[Rognes, {\cite[Theorem 5.4.4]{galext}}] \label{thm:Rognes_Gal}For inclusions $U\trianglelefteq G\le \Gbb_n$ of closed groups such that $U\trianglelefteq G$ is open (finite index) and normal, the extension $E_n^{hG}\simeq \left(E_n^{hU}\right)^{h(G/U)}\to E_n^{hU}$ is a faithful $K(n)$-local $G/U$-Galois extension. In particular:
		\begin{itemize}
			\item  For each finite subgroup $F$ of $\Gbb_n$, the map  $E_n^{hF}\to E_n$ is a faithful $F$-Galois extension.
			\item For each open normal subgroup $U$ of $\Gbb_n$, the map $S^0_{K(n)}\simeq E_n^{h\Gbb_n}\to E_n^{hU}$ is a faithful $(\Gbb_n/U)$-Galois extension.
		\end{itemize}
	\end{thm}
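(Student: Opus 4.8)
The plan is to verify the two axioms of \Cref{defn:fin_gal_extn} together with faithfulness, reducing every claim to a statement about profinite sets by base change along the faithful map $S^0_{K(n)}\to E_n$. Write $Q:=G/U$, a finite group, and set $A:=E_n^{hG}$ and $B:=E_n^{hU}$, with $Q$ acting on $B$ through the residual action on homotopy fixed points. The first Galois axiom $A\simeq B^{hQ}$ is exactly \Cref{thm:DH_fixedpt}(3), the outer fixed points there being the categorical fixed points for the finite group $Q$; so the content lies in the Galois condition $B\hsmash_A B\simeq\map(Q,B)$ and in faithfulness. The key reduction is that $S^0_{K(n)}\to E_n$ is faithful in $\Sp_{K(n)}$: for a $K(n)$-local spectrum $X$ one has $E_n\hsmash X\simeq 0\iff K(n)\wedge E_n\wedge X\simeq 0\iff K(n)\wedge X\simeq 0\iff X\simeq 0$, using that $K(n)$ is a field spectrum with $K(n)\wedge E_n\not\simeq 0$. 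Thus $E_n\hsmash_{S^0_{K(n)}}(-)$ detects both equivalences and zero objects, and I will check both remaining claims after applying it, using the projection formula $E_n\hsmash_{S^0_{K(n)}}M\simeq(E_n\hsmash E_n^{hH})\hsmash_{E_n^{hH}}M$ for $M\in\Mod_{K(n)}(E_n^{hH})$ together with the Devinatz--Hopkins computation $E_n\hsmash E_n^{hH}\simeq\map_c(\Gbb_n/H,E_n)$ for closed $H\le\Gbb_n$ (\Cref{const:EnhG}, \cite{fixedpt}).

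For the Galois condition, consider the descent map $\phi\colon B\hsmash_A B\to\map(Q,B)$, $b\otimes b'\mapsto\bigl(q\mapsto b\cdot q(b')\bigr)$. Smashing with $E_n$ over $S^0_{K(n)}$ and applying the formulas above, the source becomes $\map_c(\Gbb_n/U,E_n)\hsmash_{\map_c(\Gbb_n/G,E_n)}\map_c(\Gbb_n/U,E_n)$ and the target becomes $\map\bigl(Q,\map_c(\Gbb_n/U,E_n)\bigr)$. Now $\Gbb_n/U\to\Gbb_n/G$ is a $Q$-torsor of profinite sets (as $U\trianglelefteq G$), so $(\Gbb_n/U)\times_{\Gbb_n/G}(\Gbb_n/U)\cong(\Gbb_n/U)\times Q$; since $E_n$ is Landweber exact and even periodic, $\map_c(-,E_n)$ carries this pullback of profinite sets to the corresponding pushout of $E_n$-algebras, identifying $E_n\hsmash_{S^0_{K(n)}}\phi$ with the resulting equivalence. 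By faithfulness of $E_n$, $\phi$ is itself an equivalence.

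For faithfulness of $A\to B$: if $N\in\Mod_{K(n)}(A)$ has $B\hsmash_A N\simeq 0$, then applying $E_n\hsmash_{S^0_{K(n)}}(-)$ and the projection formula gives $\map_c(\Gbb_n/U,E_n)\hsmash_A N\simeq 0$, and it remains to deduce $E_n\hsmash_{S^0_{K(n)}}N\simeq\map_c(\Gbb_n/G,E_n)\hsmash_A N\simeq 0$, which forces $N\simeq 0$. This follows once one knows $\map_c(\Gbb_n/U,E_n)$ is a faithful module over $\map_c(\Gbb_n/G,E_n)$: the $Q$-torsor $\Gbb_n/U\to\Gbb_n/G$ is pulled back from a $Q$-torsor of finite sets, over which it splits, so $\map_c(\Gbb_n/U,E_n)$ is, locally on the profinite base, free of rank $|Q|$ over $\map_c(\Gbb_n/G,E_n)$, and hence its base-change functor kills no nonzero module. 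Finally, the two itemized consequences are the special cases $(U,G)=(\{e\},F)$ for a finite subgroup $F$ (using $E_n^{h\{e\}}\simeq E_n$) and $(U,G)=(U,\Gbb_n)$ for an open normal subgroup $U\trianglelefteq\Gbb_n$ (using $E_n^{h\Gbb_n}\simeq S^0_{K(n)}$), both from \Cref{thm:DH_fixedpt}.

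The main obstacle is the faithfulness step --- more precisely, controlling the $\map_c(-,E_n)$-modules attached to $Q$-torsors of profinite sets, and, underlying all of this, matching the Devinatz--Hopkins \emph{continuous} constructions ($E_n^{hH}$ and $E_n\hsmash E_n^{hH}\simeq\map_c(\Gbb_n/H,E_n)$) with honest relative smash products and finite-group homotopy fixed points in $\Sp_{K(n)}$. If one prefers to avoid this profinite bookkeeping, an alternative is to treat the finite-subgroup case $E_n^{hF}\to E_n$ and the open-normal case $S^0_{K(n)}\to E_n^{hU}$ first --- there Morava's computation of $K(n)^\vee_\ast E_n$ as continuous functions on $\Gbb_n$ reduces the Galois and faithfulness conditions to an essentially algebraic statement about Galois extensions of graded rings --- and then bootstrap the general $U\trianglelefteq G$ case from the finite Galois extensions appearing in \Cref{const:EnhG}, using stability of faithful Galois extensions under base change and filtered colimits.
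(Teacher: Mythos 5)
This statement is quoted from Rognes (\cite[Theorem 5.4.4]{galext}) and the paper supplies no proof of its own, so there is no internal argument to compare against; I can only measure your proposal against Rognes's original proof, and it is essentially a correct reconstruction of that argument. The overall strategy --- verify $A\simeq B^{hQ}$ via \Cref{thm:DH_fixedpt}(3), then check the Galois condition and faithfulness after base change along the conservative functor $E_n\hsmash_{S^0_{K(n)}}(-)$, using the Devinatz--Hopkins identification $E_n\hsmash E_n^{hH}\simeq\map_c(\Gbb_n/H,E_n)$ --- is exactly how this is done in \cite{galext}, and your reduction of conservativity of $E_n\hsmash(-)$ to the field property of $K(n)$ is correct. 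Two points deserve sharpening. First, the K\"unneth-type claim that $\map_c(-,E_n)$ turns the pullback $(\Gbb_n/U)\times_{\Gbb_n/G}(\Gbb_n/U)$ into the relative smash product has nothing to do with Landweber exactness or even periodicity: the right justification is that the $Q$-torsor $\Gbb_n/U\to\Gbb_n/G$ admits a continuous section (Ribes--Zalesskii, the same fact the paper invokes in the proof of \Cref{prop:BhH_descent}), hence is \emph{globally} trivial as a map of profinite sets, so $\map_c(\Gbb_n/U,E_n)\simeq\map\bigl(Q,\map_c(\Gbb_n/G,E_n)\bigr)$ is free of rank $|Q|$ and the identification $\map(Q,C)\hsmash_C\map(Q,C)\simeq\map(Q\times Q,C)$ is elementary because $Q$ is finite; this also upgrades your ``locally free'' to ``free'' and makes the faithfulness step immediate. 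Second, the genuine content you correctly flag as the main obstacle --- that the abstract equivalence so produced agrees with the specific descent map $\phi$, i.e.\ that the $Q$-action and the multiplicative structure on $E_n\hsmash E_n^{hU}$ match the translation action on $\map_c(\Gbb_n/U,E_n)$ --- is precisely the part that Rognes does not reprove but imports wholesale from the Devinatz--Hopkins construction in \cite{fixedpt} (\Cref{const:EnhG}), so deferring it there is the standard and acceptable move.
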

	\begin{cor}\label{cor:E_nhG_Gal}
		For any \emph{closed} subgroup $G\le \Gbb_n$, the extension $E_n^{hG}\to E_n$ is a $K(n)$-local profinite $G$-Galois extension. When $G\trianglelefteq \Gbb_n$ is a \emph{closed} normal subgroup, the extension $S_{K(n)}^0\simeq E_n^{h\Gbb_n}\to E_n^{hG}$ is a $K(n)$-local profinite $\Gbb_n/G$-Galois extension. 
	\end{cor}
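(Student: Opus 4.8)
The plan is to check the hypotheses of \Cref{defn:profin_gal_ext} directly, feeding in \Cref{thm:DH_fixedpt} and \Cref{thm:Rognes_Gal} at the finite levels. Fix the descending sequence of open normal subgroups $\Gbb_n=U_0\supsetneq U_1\supsetneq\cdots$ with $\bigcap_k U_k=\{e\}$ from \Cref{const:EnhG}. For the first assertion, set $V_k:=U_k\cap G$; since each $U_k$ is open and normal in $\Gbb_n$, the $V_k$ form a descending sequence of open normal subgroups of $G$ with $\bigcap_k V_k=\{e\}$, so $G=\lim_k G/V_k$ is the required cofiltered limit of finite groups.

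Next I would take $B_k:=E_n^{hV_k}$. By \Cref{thm:DH_fixedpt}(3) one has $E_n^{hG}\simeq\bigl(E_n^{hV_k}\bigr)^{h(G/V_k)}$, and by \Cref{thm:Rognes_Gal} applied to the open normal inclusion $V_k\trianglelefteq G$ the map $E_n^{hG}\to E_n^{hV_k}$ is a faithful $K(n)$-local $G/V_k$-Galois extension. For $j\ge k$ the subgroup $V_j$ is open in $V_k$, hence of finite index, and normal in it, so \Cref{thm:DH_fixedpt}(3) again yields the coherences $B_k=E_n^{hV_k}\simeq\bigl(E_n^{hV_j}\bigr)^{h(V_k/V_j)}=B_j^{h(V_k/V_j)}$ demanded in \Cref{defn:profin_gal_ext}. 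What remains --- and this is the only point that is not formal --- is to identify $L_{K(n)}\colim_k B_k$ with $E_n=E_n^{h\{e\}}$. Expanding each $E_n^{hV_k}$ via \Cref{const:EnhG} rewrites $L_{K(n)}\colim_k B_k$ as $L_{K(n)}\colim_{j,k}E_n^{h(U_jV_k)}$ over the product poset; the open subgroups $U_jV_k=U_j(U_k\cap G)$ satisfy $\bigcap_{j,k}U_j(U_k\cap G)=\{e\}$, so this colimit agrees with the Devinatz--Hopkins colimit computing $E_n^{h\{e\}}$, invoking that their construction of $E_n^{hH}$ is independent of the chosen filtration of open subgroups shrinking to $H$.

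For the second assertion, suppose $G\trianglelefteq\Gbb_n$ is closed normal. Then each $U_kG$ is an open normal subgroup of $\Gbb_n$ and $\bigcap_k U_kG=G$, so $\Gbb_n/G=\lim_k\Gbb_n/(U_kG)$. Taking $B_k:=E_n^{h(U_kG)}$, \Cref{thm:Rognes_Gal} makes $S^0_{K(n)}\simeq E_n^{h\Gbb_n}\to E_n^{h(U_kG)}$ a faithful $K(n)$-local $(\Gbb_n/U_kG)$-Galois extension, \Cref{thm:DH_fixedpt}(3) supplies the coherences between the $B_k$, and $L_{K(n)}\colim_k B_k\simeq E_n^{hG}$ holds by the very definition in \Cref{const:EnhG}; thus \Cref{defn:profin_gal_ext} applies verbatim. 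The main obstacle is the colimit comparison in the first assertion --- reconciling the colimit over the restricted filtration $\{U_k\cap G\}$ of $G$ with Devinatz--Hopkins's original colimit presentation of $E_n$; once that independence statement is in hand, both claims are bookkeeping against \Cref{defn:profin_gal_ext}.
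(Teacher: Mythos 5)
Your proof is correct and follows the same route the paper intends: the paper states this corollary without an explicit argument, as an immediate consequence of \Cref{thm:DH_fixedpt} and \Cref{thm:Rognes_Gal}, and your verification against \Cref{defn:profin_gal_ext} (taking $V_k=U_k\cap G$, resp. $U_kG$) supplies exactly the missing bookkeeping. Your treatment of the only non-formal point --- identifying $L_{K(n)}\colim_k E_n^{h(U_k\cap G)}$ with $E_n$ via cofinality of $\{U_j\}$ in the double family $\{U_j(U_k\cap G)\}$ --- is sound.
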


	\begin{exmp}
		Another example of profinite Galois extensions of ring spectra is $K(1)$-local algebraic $K$-theory of profinite Galois extensions of discrete rings. In \cite{Thomason_alg_k_et}, Thomason showed $K(1)$-local algebraic $K$-theory satisfies \'etale descent. This means if $R_1\to R_2$ is a $G$-Galois extension of discrete rings for a finite group $G$, then $L_{K(1)}(R_1)\to L_{K(1)}K(R_2)$ is a $G$-Galois extension of $K(1)$-local $\einf$-ring spectra. 
		
		Suppose if $R_1\to R_2$ is a $G$-Galois extension for a profinite group $G\cong \lim_k G/U_k$ such that $R_2\cong \colim_k R_1^{U_k}$.  By \cite[Theorem 1.5]{Clausen-Mathew_hyperdescent}, \'etale $K$-theory commutes with filtered colimits of rings. It follows that $L_{K(1)}(R_1)\to L_{K(1)}(R_2)$ is a $K(1)$-local  profinite $G$-Galois extension. 
		
		When a prime $p$ is invertible in a ring $R$, the extension $R\to R[\zeta_{p^{\infty}}]:=R\otimes \Z[\zeta_{p^{\infty}}]$ is a $\Zpx$-Galois extension of rings. By \cite[Theorem 1.4]{BCM_rmk_k1_alg_k}, there is a $\Zpx$-equivariant equivalence of $K(1)$-local $\einf$-ring spectra $L_{K(1)}K(R[\zeta_{p^{\infty}}])\simeq KU^\wedge_p\hsmash L_{K(1)}K(R)$ over $L_{K(1)} K(R)$. As a result, the profinite Galois extension $L_{K(1)}K(R)\to L_{K(1)}K(R[\zeta_{p^{\infty}}])$ is equivalent to the base change of the $K(1)$-local $\Zpx$-Galois extension $S_{K(1)}^0\to E_1\simeq KU^\wedge_p$ in \Cref{cor:E_nhG_Gal} by $L_{K(1)}K(R)$. The assumption $p\in R^\times$ can be dropped since $L_{K(1)}K(R)\simeq L_{K(1)}K(R[1/p])$ by \cite[Theorem 1.1]{BCM_rmk_k1_alg_k}.
	\end{exmp}
	\begin{exmp}
		Let $T(n)=v_n^{-1}F_n$ be the $v_n$-mapping telescope of a finite complex $F_n$ of type $n$. In \cite[Theorem A]{carmeli2021chromatic}, Carmeli--Schlank--Yanovski showed that every finite abelian Galois extension of $S^0_{K(n)}$ in $\Sp_{K(n)}$ lifts to the $T(n)$-local category $\Sp_{T(n)}$. This gives a family of examples of $T(n)$-local profinite Galois extensions. 
	\end{exmp}
	\subsection{Descent and Picard groups}\label{subsec:descent}
	The foundation of our construction of profinite descent spectral sequences is the descent theory for ring spectra in \cite{Mathew_Galois}. 
	\begin{defn}[{\cite[Definitions 3.17,3.18]{Mathew_Galois}}]\label{defn:descent}
		Let $\Ccal$ be a stable symmetric monoidal $\infty$-category. A full subcategory $\Dcal\subseteq \Ccal$ is thick if it is closed under finite limits and colimits, and under retracts. It is called a $\otimes$-ideal  if for any $d\in \Dcal$ and $c\in \Ccal$, we have $d\otimes c\in \Dcal$. 
		
		A commutative algebra $A\in \CAlg(\Ccal)$ is said to be \textbf{descendable} (or admits descent) if the thick $\otimes$-ideal generated by $A$ is all of $\Ccal$.  We say a ring map $A\to B$ is descendable if $B$ is descendable as an object in $\CAlg_A(\Ccal)$. 
	\end{defn}
	\begin{prop}[{\cite[Propositions 3.20, 3.22]{Mathew_Galois}}]\label{prop:descent_criterion}
		Let $A\in \CAlg(\Ccal)$. If $A$ admits descent, then there is an equivalence:
		\begin{equation*}
			1_\Ccal\simto \Tot\left[A^{\otimes \bullet+1}\right]. 
		\end{equation*}
		Moreover, the equivalence above lifts to module categories:
		\begin{equation*}
			\Ccal=\Mod(1_\Ccal)\simto \Tot\left[\Mod\left({A^{\otimes \bullet+1}}\right)\right].
		\end{equation*}
	\end{prop}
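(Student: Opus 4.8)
The plan is to first prove the unit-level equivalence $1_\Ccal\xrightarrow{\sim}\Tot[A^{\otimes\bullet+1}]$ for the Amitsur (cobar) cosimplicial object $[n]\mapsto A^{\otimes n+1}$, and then upgrade to module categories by comonadic descent. Write $\overline{A}:=\mathrm{fib}(\eta\colon 1_\Ccal\to A)$, equipped with its canonical map $\overline{A}\to 1_\Ccal$; smashing copies gives composites $\overline{A}^{\otimes n}\to 1_\Ccal$ for all $n$. The first step is the dictionary: $A$ admits descent if and only if $\overline{A}^{\otimes n}\to 1_\Ccal$ is nullhomotopic for some finite $n$. In the direction we need, the full subcategory of objects $X$ for which $\overline{A}^{\otimes n}\otimes X\to X$ is null for some $n$ is a thick $\otimes$-ideal --- closed under retracts and finite (co)limits because $\overline{A}^{\otimes n}\otimes(-)$ is exact, and a $\otimes$-ideal by construction --- and it contains $A$, since the composite $\overline{A}\to 1_\Ccal\xrightarrow{\eta}A$ is null by the defining fiber sequence (so $n=1$ works for $X=A$). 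Hence this ideal contains the thick $\otimes$-ideal generated by $A$, which by hypothesis is all of $\Ccal$; applying it to $X=1_\Ccal$ produces the nullhomotopy.

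Next I would feed this into the skeletal filtration of the cobar complex, using the standard identification of its Tot tower with the $A$-based Adams tower: $\mathrm{fib}\bigl(1_\Ccal\to\Tot_s[A^{\otimes\bullet+1}]\bigr)\simeq\overline{A}^{\otimes(s+1)}$, with transition maps $\overline{A}^{\otimes(s+1)}\to\overline{A}^{\otimes s}$ induced by $\overline{A}\to 1_\Ccal$. Thus $\mathrm{fib}\bigl(1_\Ccal\to\Tot[A^{\otimes\bullet+1}]\bigr)\simeq\lim_s\overline{A}^{\otimes(s+1)}$, and the length-$n$ composite $\overline{A}^{\otimes(s+n)}\to\overline{A}^{\otimes s}$ is $\overline{A}^{\otimes s}\otimes(\overline{A}^{\otimes n}\to 1_\Ccal)$; with the $n$ from the previous paragraph this is null for every $s$. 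A tower in which all length-$n$ composites vanish is pro-zero with vanishing $\lim^1$, so $\lim_s\overline{A}^{\otimes(s+1)}\simeq 0$ and $1_\Ccal\xrightarrow{\sim}\Tot[A^{\otimes\bullet+1}]$. Running the identical argument with an arbitrary $Y\in\Ccal$ tensored throughout (the partial totalizations $\Tot_s$ are finite limits, hence commute with $(-)\otimes Y$) gives the stronger statement $\Tot[A^{\otimes\bullet+1}\otimes Y]\simeq Y$ for all $Y$; equivalently, the class of such $Y$ is a thick $\otimes$-ideal containing $A$, since for $Y=A$ the cosimplicial object $A^{\otimes\bullet+1}\otimes A$ is split.

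For the module-category statement I would invoke comonadic descent. The base-change functor $f^*\colon\Ccal=\Mod(1_\Ccal)\to\Mod_\Ccal(A)$ is symmetric monoidal and colimit-preserving, it preserves the limits of $f^*$-split cosimplicial diagrams required by the Barr--Beck--Lurie criterion, and it is conservative: if $A\otimes X\simeq 0$, then tensoring $X$ into the expression of $1_\Ccal$ as a finite iterate of (co)limits and retracts of objects of the form $A\otimes(-)$ shows $X\simeq 1_\Ccal\otimes X\simeq 0$. The comonadic Barr--Beck--Lurie theorem then identifies $\Ccal$ with the $\infty$-category of comodules over the comonad $A\otimes_{1_\Ccal}(-)$ on $\Mod_\Ccal(A)$; resolving this comonad by its cobar construction rewrites the comodule category as $\lim_{\Delta}\Mod_\Ccal(A^{\otimes\bullet+1})=\Tot[\Mod_\Ccal(A^{\otimes\bullet+1})]$, and since every functor in the chain is symmetric monoidal the equivalence is one of symmetric monoidal $\infty$-categories. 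Alternatively one argues directly: the comparison functor $\Ccal\to\Tot[\Mod_\Ccal(A^{\otimes\bullet+1})]$ is fully faithful because its effect on mapping spaces is $\map_\Ccal\bigl(X,\Tot[A^{\otimes\bullet+1}\otimes Y]\bigr)\simeq\map_\Ccal(X,Y)$ by the last line of the previous paragraph, and essentially surjective because its right adjoint, being a totalization composed with forgetful functors, is conservative once $f^*$ is.

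The step I expect to be the main obstacle is the quantitative heart of the first two paragraphs: converting the purely existential hypothesis that the thick $\otimes$-ideal generated by $A$ is all of $\Ccal$ into a \emph{uniform} nilpotence bound --- a single $n$ with $\overline{A}^{\otimes n}\to 1_\Ccal$ null --- and then tracking the Adams-tower identification carefully enough that this bound forces both $\lim$ and $\lim^1$ of the tower to vanish. Once that input is secured, the passage to module categories is a formal application of (co)monadic descent, and the symmetric monoidal refinement is routine bookkeeping.
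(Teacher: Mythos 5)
The paper does not prove this proposition; it is imported verbatim from Mathew's \emph{Galois group of a stable homotopy theory} (Propositions 3.20 and 3.22 there), so there is no in-paper argument to compare against. Your reconstruction follows Mathew's own proof essentially step for step: the thick $\otimes$-ideal of objects $X$ with $\overline{A}^{\otimes n}\otimes X\to X$ null converts descendability into a uniform nilpotence bound, the Adams-tower identification of the $\Tot$-tower fibers with $\overline{A}^{\otimes(s+1)}$ kills $\lim$ and $\lim^1$, and the module-level statement is dual Barr--Beck--Lurie. The one place you assert rather than argue is that $f^{*}=A\otimes(-)$ preserves totalizations of $f^{*}$-split cosimplicial objects: this is a genuine hypothesis of the comonadicity theorem, it is not automatic, and in Mathew's proof it is secured by a second thick-$\otimes$-ideal argument showing that descendability forces the relevant $\Tot$-towers to be uniformly pro-constant (so that $A\otimes(-)$, being exact, commutes with them). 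The closing remark about essential surjectivity via conservativity of the right adjoint is likewise compressed past the point of being a proof, but the underlying claim is correct and is exactly what the cited propositions establish.
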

	\begin{rem}
		The equivalence $1_\Ccal\simto \Tot\left[A^{\otimes \bullet+1}\right]$ does \emph{not} imply that $A$ admits descent. Instead, $A$ is descendable if the totalization on the right hand side defines a constant object in $\Pro(\Ccal)$. Equivalently, this happens iff $1_\Ccal$ is equivalent to a retract of a partial totalization $\Tot_m\left[A^{\otimes \bullet+1}\right]$ for some natural number $m$. 
	\end{rem}	
	Galois extensions are related to descent theory by:
	\begin{prop}[{\cite[Proposition 3.21]{Mathew_desc_nilp}}]
		Let $G$ be a finite group. A faithful $G$-Galois extension of ring spectra $A\to B$ admits descent. 
	\end{prop}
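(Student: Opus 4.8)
The plan is to verify the defining condition directly: show that the thick $\otimes$-ideal of $\Mod(A)$ generated by $B$ is all of $\Mod(A)$. By the discussion around \Cref{defn:descent} and \Cref{prop:descent_criterion}, this is equivalent to asking that the unit $A$ be a retract of some finite partial totalization $\Tot_m$ of the cobar complex $\bigl[B^{\otimes_A\bullet+1}\bigr]$, or equivalently that the $A$-module $I:=\mathrm{fib}(A\to B)$ satisfies: the $(m{+}1)$-fold composite $I^{\otimes_A(m+1)}\to A$ is null for some $m$. I would detect this after base change along the base-change functor $B\otimes_A-\colon\Mod(A)\to\Mod(B)$, which is conservative by faithfulness of the Galois extension. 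The relevant formal input is that base change commutes with the cobar complex: since $B\otimes_A B^{\otimes_A\bullet+1}\simeq(B\otimes_A B)^{\otimes_B\bullet+1}$, applying $B\otimes_A-$ to the cobar complex of $A\to B$ produces the cobar complex of the $B$-algebra $B\otimes_A B$; in particular $B\otimes_A I\simeq\mathrm{fib}\!\bigl(B\to B\otimes_A B\bigr)$ and, using $(B\otimes_A I)^{\otimes_B(m+1)}\simeq B\otimes_A I^{\otimes_A(m+1)}$, we get $B\otimes_A\!\bigl(I^{\otimes_A(m+1)}\to A\bigr)\simeq\bigl((B\otimes_A I)^{\otimes_B(m+1)}\to B\bigr)$.

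Next I would carry out the computation over $B$, where everything trivializes. The Galois condition of \Cref{defn:fin_gal_extn} gives $B\otimes_A B\simeq\map(G,B)=\prod_{g\in G}B$ (with $G$ finite), under which the unit $B\to B\otimes_A B$ corresponds to a split monomorphism of $B$-modules (for instance the diagonal $\Delta\colon B\to\prod_{g\in G}B$, split by evaluation at $e\in G$). Hence the fiber sequence $\mathrm{fib}(\Delta)\to B\xrightarrow{\ \Delta\ }\prod_{g\in G}B$ is a split fiber sequence, so $B\otimes_A I\simeq\mathrm{fib}(\Delta)$ and, crucially, the map $B\otimes_A I\to B$ is null. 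Tensoring over $B$, the composite $(B\otimes_A I)^{\otimes_B(m+1)}\to B$ is null for every $m\ge 0$; equivalently $B\otimes_A\!\bigl(I^{\otimes_A(m+1)}\to A\bigr)$ is null for every $m$. (From the same splitting one also sees directly that $B\otimes_A B=\prod_{g\in G}B$ is descendable over $B$, as it has the unit $B$ as a retract.)

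The final step — which I expect to be the main obstacle — is to pass from ``$I^{\otimes_A(m+1)}\to A$ is null after applying the conservative functor $B\otimes_A-$'' to the same statement inside $\Mod(A)$, i.e.\ to conclude that $B$ admits descent over $A$. This is precisely the assertion that descendability of a ring map can be checked after the faithful cover $A\to B$: the failure of descendability of $B$ over $A$ is measured by the pro-object $\{I^{\otimes_A(m+1)}\}_m$, the tower of fibers of the maps $A\to\Tot_m$, which is pro-zero exactly when $B$ is descendable; since $B\otimes_A-$ is conservative, symmetric monoidal, and colimit-preserving, it detects pro-zeroness of this (multiplicative) tower, and the second step shows the base-changed tower is even termwise zero. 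The genuine work lies in this locality statement, whose proof exploits that the tower arises from an augmented algebra rather than a bare map rather than from conservativity alone. Granting it, $I^{\otimes_A(m+1)}\to A$ is null for $m\gg 0$, so $B$ is descendable over $A$.
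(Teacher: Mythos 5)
The paper does not actually prove this proposition; it quotes it from Mathew, so I am evaluating your argument on its own terms. Your setup is correct: descendability of $A\to B$ is equivalent to the nullity of some composite $I^{\otimes_A (m+1)}\to A$ for $I=\mathrm{fib}(A\to B)$, and after base change the map $B\otimes_A I\to B$ is null because the unit $B\to B\otimes_A B$ is a split monomorphism. But notice that this second step uses nothing about Galois extensions: for \emph{any} map of commutative ring spectra the unit $B\to B\otimes_AB$ is split by the multiplication, so $B\otimes_AI\to B$ is always null. Hence, if your final step were valid, every conservative commutative $A$-algebra would be descendable, and that is false. A concrete counterexample to the final step in exactly the form you invoke it: let $\Ccal$ be $p$-complete $\mathbb{Z}_p$-module spectra and $B=\mathbb{F}_p$. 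Then $\mathbb{F}_p\otimes_{\mathbb{Z}_p}-$ is conservative, $I\simeq \mathbb{Z}_p$ with $I^{\otimes n}\to \mathbb{Z}_p$ equal to multiplication by $p^n$, so the base-changed tower has null transition maps; yet no power of $p$ is null and $\mathbb{F}_p$ is not descendable. So the ``locality statement'' you defer is not merely the hard remaining step --- it is false as stated: a conservative, symmetric monoidal, colimit-preserving functor detects zero objects, but it does not detect null maps or pro-zero towers.

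The missing ingredient is the finiteness of $G$, which your argument never uses in an essential way (only to write $\map(G,B)=\prod_GB$, which is not where the content lies). The proof in the cited reference first establishes that a finite $G$-Galois extension makes $B$ a dualizable (perfect, self-dual) $A$-module, and then combines dualizability with faithfulness; for instance, faithfulness together with the splitting $B\otimes_AB\simeq\prod_GB$ forces the Tate construction $B^{tG}$ to vanish, so the norm $B_{hG}\to B^{hG}\simeq A$ is an equivalence, and for a \emph{finite} group an object whose self-tensor-square is induced is built from induced objects in finitely many cells, placing $A$ in the thick $\otimes$-ideal generated by $B$. One can also argue with the separability idempotent in $\pi_0(B\otimes_AB)$. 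Either way, dualizability of $B$ over $A$ is where the finite Galois hypothesis does its work, and an argument that never invokes it cannot close the gap; this is also consistent with the fact that the profinite analogue fails in general, which is why the paper has to work separately (\Cref{prop:BhH_descent}) to get descendability in the profinite setting.
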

	Following the Devinatz--Hopkins' \Cref{const:EnhG} of $E_n^{hG}$ in \cite{fixedpt}, we define:
	\begin{const}\label{const:B_hH}
		Let $A\to B$ be a descendable $K(n)$-local profinite $G$-Galois extension, where $G=\lim_k G/U_k$ is a cofiltered limit of its quotients by a directed system of open normal subgroups $\{U_k\}$ of $G$ such that $\bigcap U_k=\{e\}$. 
		\begin{itemize}
			\item For an open subgroup $U\le G$, set \begin{equation*}
				B^{hU}:=\Tot\left[\map\left(G/U, \bighsmash_A^{\bullet+1}B\right)\right].
			\end{equation*}
			\item For an closed subgroup $H\le G$, set \begin{equation*}
				B^{hH}:=L_{K(n)}\colim_k B^{h(HU_k)}. 
			\end{equation*}
		\end{itemize} 
	\end{const}
	\begin{prop}\label{prop:BhH_descent}
		Let $A\to B$ be a descendable $K(n)$-local profinite $G$-Galois extension. Then for any closed subgroup $H\le G$, the map $B^{hH}\to B$ is a descendable $K(n)$-local profinite $H$-Galois extension. If $H$ is a closed normal subgroup, then $A\to B^{hH}$ is a descendable $K(n)$-local profinite $G/H$-Galois extension.
	\end{prop}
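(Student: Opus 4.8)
The plan is to reduce both claims to the \emph{finite} Galois layers of the tower defining $B^{hH}$. The point will be that, although descendability of the infinite extension $A\to B$ does not transfer to an arbitrary infinite sub-extension, every reduction step we need only base-changes along a finite Galois extension, along which the relevant relative smash product collapses to a finite product of copies of $B$ and descendability transfers formally.

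\emph{The Galois property.} Fix the open normal subgroups $U_k\trianglelefteq G$ with $\bigcap_k U_k=\{e\}$ from \Cref{const:B_hH}, and write $B_k:=B^{hU_k}$, the faithful finite $(G/U_k)$-Galois stage of $A\to B$. For a closed subgroup $H\le G$ set $H_k:=HU_k$, an open subgroup of $G$; since $\{U_k\}$ is a neighbourhood basis of $e$ and $H$ is closed, $\bigcap_k H_k=H$, so $B^{hH}=L_{K(n)}\colim_k B^{hH_k}$ is the correct fixed-point object of \Cref{const:B_hH}. As $U_k\trianglelefteq G$ we have $U_k\trianglelefteq H_k$ with $H_k/U_k\cong H/(H\cap U_k)$, so by the Galois correspondence for the finite extension $A\to B_k$ (\Cref{thm:Rognes_Gal} and \cite{galext}, together with a check that the two constructions of $B^{hH_k}$ agree) the intermediate extension $B^{hH_k}\to B_k$ is a faithful finite $H_k/U_k$-Galois extension. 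Running the $K(n)$-local filtered colimit over $k$ of the compatible diagrams $\{B^{hH_k}\to B_k\}$ — by the same bookkeeping as in Devinatz--Hopkins' \Cref{const:EnhG}, using that $L_{K(n)}$ is symmetric monoidal and the finiteness of the $M_\ell$ to commute the colimit past the relevant limits — shows that $B^{hH}\to B$ is a $K(n)$-local profinite $H$-Galois extension in the sense of \Cref{defn:profin_gal_ext}. When $H\trianglelefteq G$ the same colimit, now presenting $A\to B^{hH_k}$ as the finite $(G/H_k)$-Galois stages, exhibits $A\to B^{hH}$ as a $K(n)$-local profinite $G/H$-Galois extension.

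\emph{Descendability.} That $A\to B^{hH}$ is descendable is formal: $A\to B^{hH}\to B$ factors the descendable map $A\to B$, so $B$ is a retract of $B\hsmash_A B^{hH}$ as an $A$-algebra and the thick $\otimes$-ideal it generates is contained in that generated by $B^{hH}$ — this is the cancellation lemma of \cite[\S3]{Mathew_Galois}. For $B^{hH}\to B$ I would use the following elementary observation repeatedly: if $I$ is a finite nonempty set and $R\to\map(I,T)$ is descendable, then $R\to T$ is descendable, since $\map(I,T)\simeq T^{\oplus I}$ in $\Mod(R)$, so $T$ is an $R$-linear retract of $\map(I,T)$ and the two objects generate the same thick $\otimes$-ideal. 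The argument then has two bootstraps. First, for an open normal $N\trianglelefteq G$: the extension $A\to B^{hN}$ is finite $(G/N)$-Galois, hence descendable, so base-changing the descendable extension $A\to B$ along it shows $B^{hN}\to B^{hN}\hsmash_A B$ is descendable; and the Galois base-change formula gives $B^{hN}\hsmash_A B\simeq\map(G/N,B)$, a finite product of copies of $B$ since $[G:N]<\infty$, so the observation yields that $B^{hN}\to B$ is descendable. Second, for a general closed $H$ and each $k$: $B^{hH_k}\to B_k$ is finite Galois, hence descendable, and $B_k=B^{hU_k}\to B$ is descendable by the first bootstrap, so the composite $B^{hH_k}\to B$ is descendable; base-changing along $B^{hH_k}\to B^{hH}$ shows $B^{hH}\to B^{hH}\hsmash_{B^{hH_k}}B$ is descendable, and taking $H$-fixed points of one tensor factor in the profinite $H_k$-Galois extension $B^{hH_k}\to B$ identifies $B^{hH}\hsmash_{B^{hH_k}}B\simeq\map(H\backslash H_k,B)$, again a finite product of copies of $B$ since $[H_k:H]=[U_k:U_k\cap H]<\infty$; so the observation yields that $B^{hH}\to B$ is descendable.

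The main obstacle, as signalled above, is precisely that one may \emph{not} base-change descendability directly along the infinite extension $A\to B^{hH}$: the relative smash product $B^{hH}\hsmash_A B$ is then an infinite product of copies of $B$, and an infinite coproduct of copies of $B$ need not lie in the thick $\otimes$-ideal generated by $B$. The non-formal content is therefore the bookkeeping that routes every base change through a \emph{finite} Galois layer $B^{hH_k}\to B^{hH}$ — which is descendable, being a composite of finite Galois extensions — so that only finite products ever appear; the supporting technical input is the fixed-point and base-change calculus for $K(n)$-local profinite Galois extensions used to identify $B^{hN}\hsmash_A B$ and $B^{hH}\hsmash_{B^{hH_k}}B$, which itself reduces to the finite layers.
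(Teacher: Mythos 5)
Your treatment of the Galois property and of the descendability of $A\to B^{hH}$ (via the cancellation property \cite[Proposition 3.24]{Mathew_Galois}) is correct and agrees with the paper, and your first bootstrap is also sound: for an open normal $N\trianglelefteq G$ the quotient $G/N$ is finite, so $B^{hN}\hsmash_A B\simeq \map(G/N,B)$ really is a finite product and the retract observation applies. The gap is in the second bootstrap, at the claim that $[H_k:H]=[U_k:U_k\cap H]<\infty$. The identity $[HU_k:H]=[U_k:U_k\cap H]$ is correct, but this is the index of the \emph{closed} subgroup $U_k\cap H$ inside the open --- hence, for infinite $G$, infinite profinite --- group $U_k$, and it is finite only when $H$ is itself open in $G$. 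For a genuinely closed, non-open $H$ (for instance $H=\{\pm1\}\le G=\Z_2^\times$, where $H_k=\{\pm1\}\times(1+2^k\Z_2)$) the coset space $H\backslash H_k$ is an infinite profinite set, so $B^{hH}\hsmash_{B^{hH_k}}B\simeq \map_c(H\backslash H_k,B)$ is an infinite profinite product, not a finite one.

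With $X=H\backslash H_k$ infinite, your key observation no longer closes the argument. You know $B^{hH}$ lies in the thick $\otimes$-ideal generated by $\map_c(X,B)$, and you would need $\map_c(X,B)$ to lie in the thick $\otimes$-ideal generated by $B$; but a profinite product is a limit of finite products smashed with Moore spectra, and thick ideals are not closed under such limits. (The retract $B\to\map_c(X,B)\to B$ via constant functions and evaluation only gives the inclusion of thick ideals in the useless direction.) This is precisely the obstacle you flag in your closing paragraph; routing through the layers $H_k=HU_k$ does not avoid it, because $H$ has infinite index in every $H_k$. The paper closes the gap with one additional, non-formal input: by \cite[Proposition 2.2.2]{Ribes-Zalesskii} the quotient $q\colon G\to G/H$ admits a continuous section $s$, so $s^*\circ q^*=\id$ exhibits $B^{hH}\hsmash_A B\simeq\map_c(G/H,B)$ as a retract of $B\hsmash_A B\simeq\map_c(G,B)$ over itself. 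Hence $f\hsmash 1\colon B^{hH}\hsmash_A B\to B\hsmash_A B$ is descendable, the composite $B^{hH}\to B^{hH}\hsmash_A B\to B\hsmash_A B$ is descendable, and since this composite factors through $B$, cancellation gives that $B^{hH}\to B$ is descendable. The same continuous-section trick applied to $H\backslash H_k\to H_k$ would repair your second bootstrap, but it cannot be replaced by finiteness bookkeeping.
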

	\begin{rem}
		If $A\to B\to C$ admits descent, it is not always true that $B\to C$ admits descent. One counterexample is $R\hookrightarrow R[x]\twoheadrightarrow R[x]/(x)\cong R$ for any ring $R$.
	\end{rem}
	\begin{rem}
		In \cite{galext}, Rognes also defined profinite Galois extensions in $\Sp_L$, the Bousfield localization of $\Sp$ at a spectrum $L$. We note that all the definitions and constructions above go through $\Sp_L$. In particular, \Cref{prop:BhH_descent} holds for profinite Galois extensions in $\Sp_L$. 
	\end{rem}
	\begin{proof}
		Similar to \Cref{cor:E_nhG_Gal}, we can show that:
		\begin{itemize}
			\item  When $H\le G$ is closed, $B^{hH}\to B$ is a $K(n)$-local profinite $H$-Galois extension.
			\item When $H\trianglelefteq G$ is a closed normal subgroup, $A\to B^{hH} $ is a $K(n)$-local profinite $G/H$-Galois extension.
		\end{itemize}
		It remains to prove both maps are descendable. As the composition $A\to B^{hH}\to B$ is descendable by assumption, the first map $A\to B^{hH}$ is descendable by \cite[Proposition 3.24]{Mathew_Galois}. To show $F\colon B^{hH}\to B$ is also descendable, consider the pushout diagram of ring spectra:
		\begin{equation*}
			\begin{tikzcd}
				B^{hH}\rar["f"]\dar & B\dar \\
				B^{hH}\hsmash_A B\rar["f\hsmash 1"'] & \ar[from=ul, phantom,"\ulcorner" very near end] B\hsmash_AB.
			\end{tikzcd}
		\end{equation*}
		The left vertical map $B^{hH}\to B^{hH}\hsmash_A B$ is a base change of a descendable map $A\to B$ and hence admits descent by \cite[Corollary 3.21]{Mathew_Galois}. We now show that $B^{hH}\hsmash_A B$ is a retract of $B\hsmash_A B$, which implies that $f\hsmash 1\colon B^{hH}\hsmash_A B\to B\hsmash_A B$ admits descent by \Cref{defn:descent}. Then $f$ itself admits descent again by  \cite[Proposition 3.24]{Mathew_Galois}. 
		
		Similar to the computation in \eqref{eqn:B_otimes_s+1}, we can identify $f\hsmash 1$ with the natural  map $\map_c(G/H,B)\to \map_c(G,B)$ induced by the quotient $q\colon G\to G/H$.  By  \cite[Proposition 2.2.2, Exercise 2.2.3]{Ribes-Zalesskii}, there is a continuous section $s\colon G/H\to G$ of $q$ such that $q\circ s=\id$. This implies that the composition:
		\begin{equation*}
			\map_c(G/H,B)\xrightarrow{q^*} \map_c(G,B)\xrightarrow{s^*} \map_c(G/H,B)
		\end{equation*}
		is the identity. Hence $B^{hH}\hsmash_A B\simeq \map_c(G/H,B)$ is a retract of $B\hsmash_A B\simeq \map_c(G,B)$.
	\end{proof}
	Having set up the general theory, we apply it to study descent between continuous homotopy fixed points of $E_n$. 
	\begin{thm}[{\cite[Proposition 10.10]{Mathew_Galois}}]
		The $K(n)$-local unit map $L_{K(n)}S^0\to E_n$ admits descent. 
	\end{thm}
	\Cref{prop:BhH_descent} then implies: 
	\begin{cor}\label{cor:En_hG_descendable}
		Let $G_1\le G_2\le \Gbb_n$ be two closed subgroups such that $G_1$ is normal. Then $E_n^{hG_2}\to E_n^{hG_1}$ is a descendable $K(n)$-local profinite $G_2/G_1$-Galois extension. In particular, 
		\begin{enumerate}
			\item For any closed subgroup $G\le \Gbb_n$, $E_n^{hG}\to E_n$ is a descendable $K(n)$-local profinite $G$-Galois extension. 
			\item For any closed normal subgroup $G\trianglelefteq \Gbb_n$, $S^0_{K(n)}\to E_n^{hG}$ is a descendable $K(n)$-local profinite $\Gbb_n/G$-Galois extension. 
		\end{enumerate}
	\end{cor}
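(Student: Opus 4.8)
The plan is to bootstrap the whole statement from the single descent input $L_nS^0\to E_n$ of \cite{orange} via two applications of \Cref{prop:BhH_descent}. I would begin with the base case. Since the localization $L_{K(n)}\colon\Sp\to\Sp_{K(n)}$ is strong symmetric monoidal, \cite[Corollary 3.21]{Mathew_Galois} propagates descendability of $L_nS^0\to E_n$ to descendability of $L_{K(n)}S^0\to L_{K(n)}E_n\simeq E_n$. Combining this with the identification $S^0_{K(n)}\simeq E_n^{h\Gbb_n}$ from \Cref{thm:DH_fixedpt} and with \Cref{cor:E_nhG_Gal}, which provides the $K(n)$-local profinite $\Gbb_n$-Galois structure on $S^0_{K(n)}\to E_n$, we obtain exactly the input hypothesis of \Cref{prop:BhH_descent}: the map $S^0_{K(n)}\to E_n$ is a \emph{descendable} $K(n)$-local profinite $\Gbb_n$-Galois extension.

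For parts (1) and (2) I would apply \Cref{prop:BhH_descent} to this extension, with $A=S^0_{K(n)}$, $B=E_n$, $G=\Gbb_n$. For a closed subgroup $H=G\le\Gbb_n$ the first conclusion gives that $B^{hG}\to E_n$ is a descendable $K(n)$-local profinite $G$-Galois extension, while for $H=G\trianglelefteq\Gbb_n$ closed normal the second conclusion gives that $S^0_{K(n)}\to B^{hG}$ is a descendable $K(n)$-local profinite $\Gbb_n/G$-Galois extension. To translate these into the stated claims one identifies the relative fixed points $B^{hG}$ of \Cref{const:B_hH} with the Devinatz--Hopkins spectrum $E_n^{hG}$ of \Cref{const:EnhG}; this is essentially immediate, since \Cref{const:B_hH} is defined to mirror \Cref{const:EnhG}, both being the totalization (resp.\ colimit over the tower of open subgroups) built from the cobar tower $\bighsmash_{S^0_{K(n)}}^{\bullet+1}E_n\simeq\map_c(\Gbb_n^{\times\bullet},E_n)$ of the $\Gbb_n$-Galois extension.

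For the general statement, given $G_1\trianglelefteq G_2\le\Gbb_n$ closed, I would feed the output of the previous step --- namely that $E_n^{hG_2}\to E_n$ is a descendable $K(n)$-local profinite $G_2$-Galois extension --- back into \Cref{prop:BhH_descent}, this time with $A=E_n^{hG_2}$, $B=E_n$, $G=G_2$, and closed normal subgroup $H=G_1$. The second conclusion then reads: $E_n^{hG_2}\to B^{hG_1}$ is a descendable $K(n)$-local profinite $G_2/G_1$-Galois extension. Again $B^{hG_1}\simeq E_n^{hG_1}$, because $E_n^{hG_1}$ depends only on the $G_1$-action, which factors through $G_1\le G_2\le\Gbb_n$, so forming $G_1$-fixed points inside the $G_2$-cobar tower $\bighsmash_{E_n^{hG_2}}^{\bullet+1}E_n\simeq\map_c(G_2^{\times\bullet},E_n)$ recovers \Cref{const:EnhG} for $G_1$.

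I expect the only genuinely non-formal point --- everything else being a mechanical two-step iteration of \Cref{prop:BhH_descent} off the orange-book descent theorem --- to be this compatibility bookkeeping: verifying that the relative continuous homotopy fixed points of \Cref{const:B_hH} agree with the absolute Devinatz--Hopkins construction $E_n^{hG}$. This is built into the way \Cref{const:B_hH} was set up to mimic \Cref{const:EnhG} and into Rognes' functorial framework for profinite Galois extensions in \cite{galext}, so it should be routine, but it is the one place where one cannot simply quote \Cref{prop:BhH_descent} verbatim.
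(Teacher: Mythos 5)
Your proposal is correct and follows essentially the same route as the paper, which derives the corollary in one line from the descendability of $L_nS^0\to E_n$ (transported to $S^0_{K(n)}\to E_n$ via strong monoidality of $L_{K(n)}$) followed by an application of \Cref{prop:BhH_descent}; your two-step iteration and the identification of the relative fixed points $B^{hG}$ with the Devinatz--Hopkins $E_n^{hG}$ simply make explicit the bookkeeping the paper leaves implicit.
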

	\subsection{Profinite descent spectral sequences}\label{subsec:desc_ss}
	Our goal in this subsection is to extend the two spectral sequences \eqref{eqn:HFPSS_fin_E1} and \eqref{eqn:Picard_desc_fin} to $K(n)$-local profinite Galois extensions. Before stating the spectral sequences, we need one final technical condition: 
	\begin{cond}\label{cond:ML}
		Fix a tower of generalized Moore spectra $\{M_j\}$ of type $n$ as in \Cref{thm:genMoore} and \Cref{prop:genMoore_mult}. We say a $K(n)$-local spectrum $X$ satisfies ML if the inverse system $\{\pi_t(X\Smash M_j)\}_{j\ge1}$ satisfies the Mittag-Leffler condition for any $t\in \Z$.
	\end{cond}
	\begin{rem}
		For a $K(n)$-local spectrum $X$, we have $X\simeq \lim_j X\Smash M_j$ by  \eqref{eqn:Moore_limit}. Then the Milnor sequence yields short exact sequences of homotopy groups:
		\begin{equation*}		
			0\to \lim_j\!^1 \pi_{t+1}(X\Smash M_j)\longrightarrow \pi_t(X)\longrightarrow \lim_j\pi_{t}(X\Smash M_j)\to0.
		\end{equation*}
		This means the homotopy groups of $X$ are \emph{$L$-complete} (see \cite[Definition A.5]{Hovey-Strickland_1999}), but not necessarily \emph{complete}. The ML \Cref{cond:ML} implies the vanishing of the $\lim^1$-terms in the Milnor sequence. As a result, if $X\in \Sp_{K(n)}$ satisfies the ML condition, then $\pi_t(X)\cong \lim_j \pi_t(X\Smash M_j)$ has a natural  inverse limit  topology as a pro-abelian group.
	\end{rem}
	\begin{mainthm}\label{thm:pic_ss}
		Let $A\to B$ be a descendable $K(n)$-local profinite $G$-Galois extension of $\einf$-ring spectra and $B$ satisfies the ML \Cref{cond:ML}. Then there are profinite descent spectral sequences:
		\begin{align*}
			\!^{\HFP} E_1^{s,t}&=\map_c(G^{\times s}, \pi_t(B)),& \!^{\HFP} E_2^{s,t}&=H_c^s(G; \pi_t(B)) &&\Longrightarrow \pi_{t-s}(A);\\
			\!^{\pic} E_1^{s,t}&=\map_c(G^{\times s}, \pi_t\pic_{K(n)}(B)),&\!^{\pic} E_2^{s,t}&=H_c^s(G; \pi_t\pic_{K(n)}(B))&& \Longrightarrow \pi_{t-s}\pic_{K(n)}(A), \qquad t-s\ge0. 
		\end{align*}
		In particular, the second spectral sequence abuts to $\Pic_{K(n)}\left(A\right)$ when $t = s$. 
		The differentials in both spectral sequences are the form $d_r^{s,t}\colon E_r^{s,t} \to  E_r^{s+r,t+r-1}$. When either $t-s>0$ and $s>0$, or $2\le r\le t-1$, we have $^{\pic}d_r^{s,t}=\!^{\HFP}d_r^{s,t-1}$.
	\end{mainthm}
	\begin{proof}
		By \Cref{prop:descent_criterion}, the descendability assumption on the Galois extension $A\to B$ implies that there are equivalences:
		\begin{align}
			\label{eqn:profin_tot}A&\simto \Tot\left[\bighsmash_A^{\bullet+1}B\right],\\
			\label{eqn:profin_mod_tot}\Mod_{K(n)}A&\simto \Tot\left[\Mod_{K(n)}\left(\bighsmash_A^{\bullet+1}B\right)\right].
		\end{align}
		We first need to identify the ring spectra $B^{\hsmash_A s +1}$ for $s\ge 0$. Recall that $B\simeq L_{K(n)}\colim_kB_k$, where each $B_k$ is a finite $G/U_k$-Galois extension of $A$.  Similar to \cite[Equation 8.1.2]{galext}, we have equivalences:
		\begin{align}
			\bighsmash_A^{s+1} B&\simeq L_{K(n)}\colim_k B\hsmash_A \left(\bighsmash_A^sB_k\right)&&\nonumber\\
			& \simeq L_{K(n)}\colim_k \bighsmash_B^s \left(B\hsmash_A B_k\right)&&\nonumber\\
			& \simeq L_{K(n)}\colim_k \bighsmash_B^s\left(B\hsmash_{B_k} B_k\hsmash_A B_k\right)&&\nonumber\\
			&\simeq  L_{K(n)}\colim_k\bighsmash_B^s\left[B \hsmash_{B_k}\map(G/U_k, B_k) \right]&&\textup{\Cref{defn:fin_gal_extn}}\nonumber\\
			&\simeq  L_{K(n)}\colim_k\bighsmash_B^s\left[\map(G/U_k, B)\right]&&\nonumber\\
			&\simeq L_{K(n)}\colim_k\map((G/U_k)^{\times s}, B)&&\nonumber\\
			\label{eqn:B_otimes_s+1}&= \map_c(G^{\times s},B)&&\textup{\Cref{defn:profin_prod_Kn}}\\
			&\simeq \lim_j\colim_k\map((G/U_k)^{\times s}, B\Smash M_j).&&\nonumber
		\end{align}
		It follows that $E_1$-page of the BKSS associated to the cosimplicial spectrum \eqref{eqn:profin_tot} has the form:
		\begin{equation*}
			^\HFP E_1^{s,t}=\pi_t\left(\bighsmash_A^{s+1} B\right)\cong \pi_t\left(\lim_j\colim_k\map((G/U_k)^{\times s}, B\Smash M_j)\right).
		\end{equation*}
		The Mittag-Leffler assumption  on $\{\pi_t(B\Smash M_j)\}$ implies that the inverse system
		\begin{align*}
			\left\{\pi_t\left[\colim_k\map((G/U_k)^{\times s}, B\Smash M_j)\right]\right\}&\cong \left\{\colim_k\map\left((G/U_k)^{\times s}, \pi_t(B\Smash M_j)\right)\right\}\\
			& \cong \left\{\left[\colim_k\map\left((G/U_k)^{\times s}, \Z\right)\right]\otimes_\Z \pi_t(B\Smash M_j)\right\}
		\end{align*}
		also satisfies Mittag-Leffler since the condition is preserved under base change by \cite{Emmanouil_ML_lim1}. It follows that
		\begin{align}
			\nonumber ^\HFP E_1^{s,t}&\cong \pi_t\left[\lim_j\colim_k\map\left((G/U_k)^{\times s}, B\Smash M_j\right)\right]\\
			\nonumber&\cong \lim_j\pi_t\left[\colim_k\map\left((G/U_k)^{\times s}, B\Smash M_j\right)\right]\\
			\nonumber&\cong \lim_j\colim_k\map\left((G/U_k)^{\times s}, \pi_t(B\Smash M_j)\right)\\
			& =\map_c\left(G^{\times s},\pi_t(B)\right). \label{eqn:HFPSS_profin_E1}
		\end{align}
		This identifies the $E_1$-page of the first spectral sequence. For $K(n)$-local Picard groups, applying the functor $\pic$ to the equivalence \eqref{eqn:profin_mod_tot}, we have:
		\begin{align*}
			\pic_{K(n)}(A)&\simeq \tau_{\ge0}\Tot\left[\pic_{K(n)}\left(\bighsmash_A^{\bullet+1}B\right)\right]&& \textup{\Cref{prop:pic_lim_colim} (\cite[Proposition 2.2.3]{MS_Picard}) }\\
			& \simeq \tau_{\ge0}\Tot\left[\pic_{K(n)}\map_c\left(G^{\times \bullet},B\right)\right]&& \textup{\eqref{eqn:B_otimes_s+1}}\\
			&\simeq \tau_{\ge0}\Tot\left[\map_c\left(G^{\times \bullet},\pic_{K(n)}B\right)\right]. && \textup{\Cref{prop:Pic_Kn_profin_prod}}
		\end{align*}
		The $E_1$-page of this BKSS is:
		\begin{equation*}
			^\pic E_1^{s,t}=\pi_t\pic_{K(n)}\map_c\left(G^{\times s},B\right)=\begin{cases}
				\Pic_{K(n)}\map_c\left(G^{\times s},B\right),& t=0;\\
				\left[\pi_0\map_c\left(G^{\times s},B\right)\right]^\times, &t=1;\\
				\pi_{t-1}\map_c\left(G^{\times s},B\right), &t\ge 2.
			\end{cases}
		\end{equation*}
		When $t\ge 2$, this is the same as in \eqref{eqn:HFPSS_profin_E1}:
		\begin{equation*}
			^\pic E_1^{s,t}=\pi_{t-1}\map_c\left(G^{\times s},B\right)\cong \map_c\left(G^{\times s},\pi_{t-1}(B)\right).
		\end{equation*}
		When $t=1$, again by  \eqref{eqn:HFPSS_profin_E1}, we have an isomorphism of topological rings: 
		\begin{equation*}
			\pi_{0}\map_c\left(G^{\times s},B\right)\cong \map_c\left(G^{\times s},\pi_{0}(B)\right).
		\end{equation*}
		The unit of the continuous mapping ring  sits in a pullback diagram of topological spaces:
		\begin{equation*}
			\begin{tikzcd}
				\map_c\left(G^{\times s}, \pi_0(B)\right)^\times\rar \dar\ar[dr,phantom,very near start,"\lrcorner"]&\dar \map\left(G^{\times s}, \pi_0(B)\right)^\times\dar\rar["\cong"]& \map\left(G^{\times s}, \pi_0(B)^\times\right)\\
				\map_c\left(G^{\times s}, \pi_0(B)\right)\rar & \map\left(G^{\times s}, \pi_0(B)\right)&
			\end{tikzcd}
		\end{equation*}
		Notice that $(-)^\times \cong \hom_{\CAlg_\Z}(\Z[x^{\pm 1}],-)$ is a right adjoint. This implies that taking units commutes with arbitrary products of commutative rings.  
		Furthermore, the set of units in a topological commutative ring is a topological abelian group with subspace topology. This implies that a unit in $\map_c\left(G^{\times s},\pi_{0}(B)\right)$ is a continuous map $G^{\times s}\to \pi_0(B)$ whose image is in $\pi_0(B)^\times$.  As a result, the $t=1$ line on the $E_1$-page of the BKSS is:
		\begin{equation*}
			^\pic E_1^{s,1}=\map_c\left(G^{\times s}, \pi_0(B)\right)^\times\cong  \map_c\left(G^{\times s}, \pi_0(B)^\times\right).
		\end{equation*}
		When $t=0$, the computation follows from \Cref{prop:Pic_Kn_profin_prod} and \eqref{eqn:B_otimes_s+1}:
		\begin{equation*}
			^\pic E_1^{s,0}=\Pic_{K(n)}\map_c\left(G^{\times s},B\right)\cong \map_c\left(G^{\times s},\Pic_{K(n)}(B)\right).
		\end{equation*}
		So far, we have computed the $E_1$-pages of the BKSS's. They are isomorphic to the cobar resolutions for $\pi_t(B)$ and $\pi_t\left(\pic_{K(n)}(B)\right)$ as continuous $G$-modules, respectively. We can further prove that the differentials also agree with the cobar differentials to compute continuous group cohomology. This implies that the $E_2$-pages of the two spectral sequences are indeed as claimed:
		\begin{align*}
			^{\HFP} E_2^{s,t}=H_c^s(G; \pi_t(B))& \Longrightarrow \pi_{t-s}(A);\\
			^{\pic} E_2^{s,t}=H_c^s\left(G; \pi_t\pic_{K(n)}(B)\right) &\Longrightarrow \pi_{t-s}\pic_{K(n)}(A),\qquad t-s\ge 0. 
		\end{align*}
		At last, we need to compare differentials in the two BKSS's.  By \Cref{prop:pic_kn_truncation}, we have equivalences
		\begin{equation*}
			\tau_{\ge1}\pic_{K(n)}\left(\bighsmash_A^{s+1} B\right)\simeq \tau_{\ge1}\pic\left(\bighsmash_A^{s+1} B\right)\simeq \Sigma \gl_1\left(\bighsmash_A^{s+1} B\right).
		\end{equation*}
		The comparison of differentials then follows exactly the same way as in \Cref{thm:compare_diff} (\cite[Comparison Tool 5.2.4]{MS_Picard}).
	\end{proof}
	\begin{rem}
		The purpose of the ML \Cref{cond:ML} is  to identify the $E_1$ and $E_2$-pages of the spectral sequences. In the proof above, it is \emph{not} enough to just assume $\pi_t(B)\cong \lim_j\pi_t(B\Smash M_j)$, or equivalently $\lim_j^1\pi_{t+1}(B\Smash M_j)=0$. This is because vanishing of $\lim^1$ is \emph{not} preserved under base change. As is explained in \cite{Emmanouil_ML_lim1}, "the Mittag-Leffler condition is equivalent to the universal vanishing of $\varprojlim^1$" under base change.
	\end{rem}		
	Following \Cref{cor:En_hG_descendable},  we now apply \Cref{thm:pic_ss} to study descent spectral sequences between continuous homotopy fixed points of $E_n$ and their Picard groups. From the construction in \Cref{prop:genMoore_mult}, we have $\pi_*(E_n\Smash M_j)\cong \pi_*(E_n)/J_j$ for some decreasing sequence of  invariant open ideals $J_1\supsetneq J_2\supsetneq J_3\supsetneq\cdots$ of $\pi_0(E_n)$.  Hence $\pi_t(E_n)/J_j$ are all finite groups and $E_n$ satisfies the ML \Cref{cond:ML}. At this point, we have verified that the extension $E_n^{hG}\to E_n$ satisfies the two assumptions in \Cref{thm:pic_ss}.   From the associated cobar complex:
	\begin{equation}\label{eqn:Tot_EnhG}
		\Tot\left[ \bighsmash_{E_n^{hG}}^{\bullet+1}E_n\right]\simeq \Tot\left[ \map_c(G^{\times \bullet}, E_n)\right],
	\end{equation}
	we recover the continuous homotopy fixed point spectral sequence in \cite{fixedpt}:
	\begin{equation}\label{eqn:HFPSS_EnhG}
		^{\HFP}E_2^{s,t}\left(E_n^{hG}\right)= H_c^s(G;\pi_t(E_n))\Longrightarrow\pi_{t-s}\left(E_n^{hG}\right). 
	\end{equation}
	We note that the HFPSS in \cite{fixedpt} is obtained from the cobar complex of a descendable extension $E_n^{hG}\to E_n^{hG}\hsmash E_n$:		       \begin{equation}\label{eqn:Tot_EnhG_DH}
		\Tot\left[\bighsmash_{E_n^{hG}}^{\bullet+1}\left(E_n^{hG}\hsmash E_n\right)\right]\simeq \Tot\left[E_n^{hG}\hsmash E_n^{\hsmash \bullet+1}\right]\simeq \Tot\left[ \map_c\left((\Gbb_n/G)\times \Gbb_n^{\times \bullet}, E_n\right)\right].
	\end{equation}
	
	\begin{prop}
		The two BKSS's associated to the cosimplicial spectra in \eqref{eqn:Tot_EnhG} and \eqref{eqn:Tot_EnhG_DH} are isomorphic starting from their $E_2$-pages. 
	\end{prop}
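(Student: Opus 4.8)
The plan is to produce a morphism of cosimplicial $\einf$-$E_n^{hG}$-algebras from the Devinatz--Hopkins resolution \eqref{eqn:Tot_EnhG_DH} to the cobar resolution \eqref{eqn:Tot_EnhG}, to check that the induced morphism of Bousfield--Kan spectral sequences is an isomorphism on $E_2$-pages, and then to invoke the standard comparison theorem for spectral sequences, which upgrades an isomorphism on $E_2$ to an isomorphism of $E_r$-pages for all $r\ge 2$ together with the induced filtrations on the common abutment $\pi_*\big(E_n^{hG}\big)$.

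Write $A=E_n^{hG}$ and $B=E_n$. By \Cref{const:EnhG} (with the defining colimit extending the statement from open to closed $G$) there is an equivalence of $K(n)$-local $\einf$-$A$-algebras $A\hsmash B\simeq\map_c(\Gbb_n/G,E_n)$, and under this identification evaluation at the trivial coset, $f\mapsto f(eG)$, defines an $A$-algebra map $\varepsilon\colon A\hsmash B\to B$ which carries the unit $A\to A\hsmash B$ to the canonical map $A\to B$. Since the cobar construction $\bighsmash_A^{\bullet+1}(-)$ is functorial in $A$-algebras, $\varepsilon$ induces a map of cosimplicial $\einf$-$A$-algebras
\[
\Phi^\bullet\colon\ \bighsmash_A^{\bullet+1}\!\big(A\hsmash B\big)\ \longrightarrow\ \bighsmash_A^{\bullet+1}B ,
\]
and hence a morphism of the associated Bousfield--Kan spectral sequences, compatible with the augmentations. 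Both spectral sequences converge to $\pi_*(A)$: for \eqref{eqn:Tot_EnhG} this is \Cref{cor:En_hG_descendable}(1) together with \Cref{prop:descent_criterion}, while for \eqref{eqn:Tot_EnhG_DH} it follows from the descendability of the base change $A\to A\hsmash B$ of $S^0_{K(n)}\to E_n$ (recalled before \Cref{cor:En_hG_descendable}).

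It remains to identify $\Phi$ on $E_2$-pages. Using \Cref{cond:ML} for $E_n$ and for $\map_c(\Gbb_n/G,E_n)$, together with the computations \eqref{eqn:B_otimes_s+1} and \Cref{const:EnhG}, the two $E_1$-pages are the inhomogeneous continuous cochain complexes $\map_c\big((\Gbb_n/G)\times\Gbb_n^{\times\bullet},\pi_tE_n\big)$ and $\map_c\big(G^{\times\bullet},\pi_tE_n\big)$, which compute $H_c^*\big(\Gbb_n;\mathrm{Coind}_G^{\Gbb_n}\pi_tE_n\big)$ and $H_c^*\big(G;\pi_tE_n\big)$ respectively, where $\mathrm{Coind}_G^{\Gbb_n}\pi_tE_n=\map_c(\Gbb_n/G,\pi_tE_n)$ carries the conjugation $\Gbb_n$-action. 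Tracing the equivalences, $\Phi$ induces on $E_1$-pages the cochain map that restricts a cochain along $G^{\times\bullet}\hookrightarrow\Gbb_n^{\times\bullet}$ and evaluates its coefficient at the trivial coset; this is the standard cochain model underlying Shapiro's lemma in continuous group cohomology, hence a quasi-isomorphism, so $\Phi$ is an isomorphism $H_c^s\big(\Gbb_n;\mathrm{Coind}_G^{\Gbb_n}\pi_tE_n\big)\xrightarrow{\ \cong\ }H_c^s\big(G;\pi_tE_n\big)$ on $E_2$-pages. Since a morphism of spectral sequences that is an isomorphism on $E_2$ is an isomorphism on every $E_r$ with $r\ge 2$ and induces an isomorphism of the associated graded abutments, the claim follows; applying the same argument to $\pic_{K(n)}(-)$ in place of the underlying spectra, together with \Cref{thm:pic_ss}, yields the analogous statement for the two Picard descent spectral sequences.

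The construction of $\Phi$ and the final comparison are formal; the main obstacle is the penultimate step — verifying that $\Phi$ induces \emph{precisely} the Shapiro cochain map on $E_1$-pages, rather than merely some map inducing an abstract isomorphism on cohomology. This requires matching the cosimplicial structure maps of $\bighsmash_A^{\bullet+1}(A\hsmash B)$ with the bar resolution of $\Gbb_n/G$ (equivalently, keeping track of which $\Gbb_n$- and $G$-factors in $\map_c\big((\Gbb_n/G)\times\Gbb_n^{\times\bullet},E_n\big)$ come from which smash factor under \eqref{eqn:B_otimes_s+1}) and checking that $\varepsilon=\mathrm{ev}_{eG}$ induces restriction-and-evaluate; once this bookkeeping is in place the quasi-isomorphism is classical.
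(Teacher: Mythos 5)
Your proposal is correct and follows essentially the same route as the paper: both construct the comparison map of cosimplicial spectra from the $E_n^{hG}$-algebra map $E_n^{hG}\hsmash E_n\to E_n$ (evaluation at the trivial coset), identify the induced map on $E_1$-pages with restriction of continuous cochains along $G^{\times s}\subseteq (\Gbb_n/G)\times\Gbb_n^{\times s}$, invoke Shapiro's lemma for the $E_2$-isomorphism, and conclude via the standard comparison theorem (the paper cites Boardman). The bookkeeping you flag as the main obstacle is exactly the step the paper also treats as the identification of $f_*$ on $E_1$-pages.
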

	\begin{proof}
		Denote the entries in the BKSS for \eqref{eqn:Tot_EnhG_DH} by $^\mathrm{DH}E_r^{s,t}\left(E_n^{hG}\right)$. The $E_n^{hG}$-algebra map $E_n^{hG}\hsmash E_n\to E_n\hsmash E_n\to E_n$ induces a map between the cobar complexes
		\begin{equation*}
			f\colon    \Tot\left[\bighsmash_{E_n^{hG}}^{\bullet+1}\left(E_n^{hG}\hsmash E_n\right)\right]\longrightarrow \Tot\left[ \bighsmash_{E_n^{hG}}^{\bullet+1}E_n\right].
		\end{equation*}
		It follows that $f$ induces a map between the two spectral sequences $f_*\colon ~^\mathrm{DH}E_*^{*,*}\left(E_n^{hG}\right)\to\!^\HFP E_*^{*,*}\left(E_n^{hG}\right)$. On $E_1$-pages, the map \begin{equation*}
			f_*\colon~^\mathrm{DH}E_1^{s,t}\left(E_n^{hG}\right)=\map_c\left((\Gbb_n/G)\times \Gbb_n^{\times s}, \pi_t(E_n)\right)\to\!^\HFP E_1^{s,t}\left(E_n^{hG}\right)=\map_c\left(G^{\times s}, \pi_t(E_n)\right)
		\end{equation*} 
		is induced by the inclusion $G^{\times s}\subseteq (\Gbb_n/G)\times \Gbb_n^{\times s}$ of subspaces. By Shapiro's Lemma, $f_*$ is an isomorphism on $E_2$-pages:
		\begin{equation*}
			f_*\colon ~^\mathrm{DH}E_2^{s,t}\left(E_n^{hG}\right)=	H^s_c\left(\Gbb_n;\map_c\left(\Gbb_n/G,\pi_t(E_n)\right)\right)\simto ~^\HFP E_2^{s,t}\left(E_n^{hG}\right)=H^s_c\left(G;\pi_t(E_n)\right).
		\end{equation*}
		Then $f_*$ is an isomorphism on $E_r$-pages for all $r\ge 2$ by \cite[Theorem 5.3]{Boardman_ccss}. 
	\end{proof}
	On $K(n)$-local Picard groups,  we obtain a descent spectral  sequence:
	\begin{equation}\label{eqn:PicSS_EnhG}
		^{\pic}E_2^{s,t}\left(E_n^{hG}\right)= H_c^s(G;\pi_t\pic_{K(n)}(E_n))\Longrightarrow\pi_{t-s}\pic_{K(n)}\left(E_n^{hG}\right), \qquad t-s\ge 0. 
	\end{equation}
	\begin{rem}
		When $G=\Gbb_n$, $s=t=0$ or $t\ge 1$, Heard has identified the $E_2$-page of \eqref{eqn:PicSS_EnhG} in \cite[Example 6.18]{Heard_2021Sp_kn-local}. More recently, Mor uses the pro\'etale site to construct a model for the continuous action of $\Gbb_n$ on Morava $E$-theory, and identifies the entire $E_2$-page of the descent spectral sequence to compute Picard groups for $S^0_{K(n)}\simeq E_n^{h\Gbb_n}$ in \cite{mor2023picard}. 
	\end{rem}
	For computational purposes, we might need to compute $\Pic_{K(n)}\left(E_n^{hG}\right)$ by descending from $E_n^{hH}$ for some closed normal subgroup $H\trianglelefteq \Gbb_n$ contained in $G$.
	\begin{cor}\label{cor:desc_EnhG1G2}
		Let $G_1\le G_2\le \Gbb_n$ be two closed subgroups such that $G_1$ is normal in $G_2$. There are descent spectral sequences:
		\begin{align*}
			^\HFP E_2^{s,t}=H_c^s\left(G_2/G_1;\pi_t\left(E_n^{hG_1}\right)\right)
			&\Longrightarrow\pi_{t-s}\left(E_n^{hG_2}\right),&&\textup{\cite{Devinatz_LHS}}\\
			^\pic E_2^{s,t}=H_c^s\left(G_2/G_1;\pi_t\pic_{K(n)}\left(E_n^{hG_1}\right)\right)
			&\Longrightarrow\pi_{t-s}\pic_{K(n)}\left(E_n^{hG_2}\right), ~t-s\ge 0.&&
		\end{align*}
		Furthermore, when $t-s>0$ and $s\ge 0$, or $2\le r\le t-1$,  the two spectral sequences above are related by $^{\pic}d_r^{s,t-1}=\!^{\HFP}d_r^{s,t}$.  
	\end{cor}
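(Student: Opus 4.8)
The plan is to obtain \Cref{cor:desc_EnhG1G2} by applying \Cref{thm:pic_ss} to the $\einf$-ring map $A=E_n^{hG_2}\to E_n^{hG_1}=B$, whose associated profinite Galois group is the quotient $G=G_2/G_1$. Two hypotheses must be verified: that $A\to B$ is a descendable $K(n)$-local profinite $G_2/G_1$-Galois extension of $\einf$-rings, and that $B=E_n^{hG_1}$ satisfies the ML \Cref{cond:ML}. Once both are in hand, the two spectral sequences, their $E_1$- and $E_2$-pages, the abutments, and the differential comparison all come directly out of \Cref{thm:pic_ss}.

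The first hypothesis is exactly \Cref{cor:En_hG_descendable}: since $G_1\trianglelefteq G_2$, the map $E_n^{hG_2}\to E_n^{hG_1}$ is a descendable $K(n)$-local profinite $G_2/G_1$-Galois extension, and all spectra in sight are $K(n)$-local $\einf$-rings by \Cref{thm:DH_fixedpt}. The second hypothesis is the part requiring real work, and I expect it to be the main obstacle. Since $M_j$ is a finite complex, smashing with $M_j$ commutes with the homotopy limits and filtered colimits used to build $E_n^{hG_1}$ in \Cref{const:EnhG}, and with $L_{K(n)}$ (as $M_j$ is finite), so $E_n^{hG_1}\wedge M_j$ is the continuous homotopy fixed point spectrum of the $G_1$-action on $E_n\wedge M_j$ in the sense of Devinatz--Hopkins. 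The associated homotopy fixed point spectral sequence has $E_2$-page $H_c^s(G_1;\pi_t(E_n\wedge M_j))=H_c^s(G_1;\pi_0(E_n)/J_j)$, concentrated in even $t$, and each term is a finite group because $G_1$ is a compact $p$-adic analytic group and the coefficients are finite.

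To conclude ML for $E_n^{hG_1}$, I would pass to the tower of these spectral sequences indexed by $j$: the transition maps are induced by the surjections $\pi_0(E_n)/J_{j+1}\twoheadrightarrow\pi_0(E_n)/J_j$, and the resulting tower $\{H_c^s(G_1;\pi_0(E_n)/J_j)\}_j$ is Mittag-Leffler---equivalently $\lim^1_j H_c^s(G_1;\pi_0(E_n)/J_j)=0$ and $H_c^s(G_1;\pi_0 E_n)\cong\lim_j H_c^s(G_1;\pi_0(E_n)/J_j)$---this being a standard continuity property of the cohomology of the Morava module along the $J_j$-adic tower. Feeding this into the convergent (conditionally convergent) spectral sequences, one gets that $\{\pi_t(E_n^{hG_1}\wedge M_j)\}_j$ is itself Mittag-Leffler, so \Cref{cond:ML} holds for $B=E_n^{hG_1}$; in particular $\pi_t(E_n^{hG_1})\cong\lim_j\pi_t(E_n^{hG_1}\wedge M_j)$ acquires an inverse limit topology as a continuous $G_2/G_1$-module, and likewise for $\pi_t\pic_{K(n)}(E_n^{hG_1})$ via \Cref{thm:Kn_Pic_limit}. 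For the specific closed subgroups appearing in \Cref{sec:Pic_Mack}, this ML condition can alternatively be checked by hand.

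With both hypotheses verified, \Cref{thm:pic_ss} applied to $E_n^{hG_2}\to E_n^{hG_1}$ immediately yields the two spectral sequences with $E_1$-pages $\map_c((G_2/G_1)^{\times s},\pi_t(E_n^{hG_1}))$ and $\map_c((G_2/G_1)^{\times s},\pi_t\pic_{K(n)}(E_n^{hG_1}))$ and $E_2$-pages $H_c^s(G_2/G_1;\pi_t(E_n^{hG_1}))$ and $H_c^s(G_2/G_1;\pi_t\pic_{K(n)}(E_n^{hG_1}))$, converging to $\pi_{t-s}(E_n^{hG_2})$ and to $\pi_{t-s}\pic_{K(n)}(E_n^{hG_2})$ for $t-s\ge0$ respectively; the first is the Lyndon--Hochschild--Serre homotopy fixed point spectral sequence of Devinatz \cite{Devinatz_LHS}. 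Finally, the differential comparison $^{\pic}d_r^{s,t-1}=\!^{\HFP}d_r^{s,t}$ in the range ``$t-s>0$ and $s>0$, or $2\le r\le t-1$'' is precisely the last clause of \Cref{thm:pic_ss} (itself a version of the Mathew--Stojanoska comparison tool \Cref{thm:compare_diff}), applied verbatim to this extension, since the fibration $\tau_{\ge1}\pic_{K(n)}(\bighsmash_{E_n^{hG_2}}^{s+1}E_n^{hG_1})\simeq\Sigma\gl_1(\bighsmash_{E_n^{hG_2}}^{s+1}E_n^{hG_1})$ identifies the relevant truncations of the Picard cosimplicial object with those of the unit cosimplicial object in the stated range.
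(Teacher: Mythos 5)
Your overall strategy is exactly the paper's: apply \Cref{thm:pic_ss} to $E_n^{hG_2}\to E_n^{hG_1}$ with Galois group $G_2/G_1$, get descendability from \Cref{cor:En_hG_descendable}, and verify the ML \Cref{cond:ML} for $E_n^{hG_1}$. The only divergence is in the ML verification, which you correctly identify as the point needing work but then make harder than it is. The paper simply cites Devinatz's result \cite[Lemma 3.5]{Devinatz_LHS} that $\pi_t\left(E_n^{hG_1}\Smash M\right)$ is a \emph{finite} abelian group for any closed $G_1\le\Gbb_n$ and any generalized Moore spectrum $M$ of type $n$; a tower of finite groups automatically satisfies Mittag-Leffler (the images form a descending chain of subgroups of a finite group), and that is the whole argument. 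You actually have the key ingredient in hand --- you observe that the $E_2$-terms $H_c^s(G_1;\pi_t(E_n\Smash M_j))$ are finite because $G_1$ is compact $p$-adic analytic with finite coefficients --- but instead of combining this with the horizontal vanishing line to conclude finiteness of each $\pi_t(E_n^{hG_1}\Smash M_j)$ and stopping there, you route through $\lim^1$-vanishing and "continuity of cohomology along the $J_j$-adic tower" and then try to propagate ML from the $E_2$-pages to the abutments; that propagation step needs a uniform finite filtration to be justified and is left vague. One genuine imprecision: you assert that the tower being Mittag-Leffler is "equivalently" $\lim^1_j=0$; the paper explicitly warns (in the remark after \Cref{thm:pic_ss}) that these are \emph{not} equivalent in the relevant sense, since $\lim^1$-vanishing is not preserved under base change while ML is --- this distinction is precisely why \Cref{cond:ML} is stated as it is. None of this invalidates your conclusion, but the finiteness shortcut is both cleaner and what the paper intends. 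The remainder of your argument (the identification of the $E_1$- and $E_2$-pages, the abutments, and the differential comparison) is a verbatim application of \Cref{thm:pic_ss} and matches the paper.
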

	\begin{proof}
		By \cite[Lemma 3.5]{Devinatz_LHS}, $\pi_t\left(E_n^{hG}\Smash M\right)$ is a finite abelian group for any closed subgroup $G$ of $\Gbb_n$ and generalized Moore spectrum $M$ of type $n$. This means the system $\{\pi_t\left(E_n^{hG}\wedge M_j\right)\}$ satisfies the ML \Cref{cond:ML}. By \Cref{cor:En_hG_descendable}, the extension $E_n^{hG_2}\to E_n^{hG_1}$ is descendable. The claims then follow from \Cref{thm:pic_ss}. 
	\end{proof}	
	\subsection{The descent filtration on $K(n)$-local Picard groups}\label{subsec:desc_fil}
	The inverse limit  topology on $K(n)$-local Picard groups in \Cref{thm:Kn_Pic_limit} is a filtration on $\Pic_{K(n)}\left(E_n^{hG}\right)$. Another important filtration on $K(n)$-local Picard groups is the descent filtration \cite[\S3.3]{BBGHPS_exotic_h2_p2}\cite[\S1.3]{CZ_exotic_Picard}, defined through the Devinatz--Hopkins homotopy fixed spectral sequence \eqref{eqn:HFPSS_EnhG} to compute $\pi_*\left(S^0_{K(n)}\right)$. We now use the descent spectral sequence for $K(n)$-local Picard groups in \Cref{thm:pic_ss} to study this descent filtration on $\Pic_{K(n)}\left(E_n^{hG}\right)$. Similar to the HFPSS in \Cref{thm:pic_ss}, we have more generally
	\begin{prop}\label{prop:DSS_mod}
		Let $A\to B$ be a descendable $K(n)$-local profinite $G$-Galois extension. For a $K(n)$-local $A$-module $X$, if the inverse system $B\hsmash_A X$ satisfies the ML \Cref{cond:ML}, then there is a homotopy fixed point spectral sequence: 
		\begin{equation*}
			^{\HFP}E_1^{s,t}(X)=\map_c\left(G^{\times s}, \pi_t\left(B\hsmash_A X\right)\right),\qquad ^{\HFP}E_2^{s,t}(X)=H_c^s\left(G; \pi_t\left(B\hsmash_A X\right)\right) \Longrightarrow \pi_{t-s}(X).
		\end{equation*}
	\end{prop}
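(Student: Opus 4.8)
The plan is to run the proof of \Cref{thm:pic_ss} essentially verbatim, with the $A$-module $X$ playing the role of the unit. First I would feed the descendability of $A\to B$ into \Cref{prop:descent_criterion}. The equivalence of symmetric monoidal $\infty$-categories \eqref{eqn:profin_mod_tot}, $\Mod_{K(n)}(A)\simto \Tot[\Mod_{K(n)}(\bighsmash_A^{\bullet+1}B)]$, carries $X$ to the cosimplicial $K(n)$-local module $[\bighsmash_A^{\bullet+1}B\hsmash_A X]$ of its base changes, and applying the inverse equivalence back to $X$ yields a natural equivalence $X\simto \Tot[\bighsmash_A^{\bullet+1}B\hsmash_A X]$. (Equivalently, $-\hsmash_A X$ is exact and hence commutes with the rapidly converging cobar tower of the descendable extension, so one may base change \eqref{eqn:profin_tot} and \eqref{eqn:profin_mod_tot} directly.) The Bousfield--Kan spectral sequence of this cosimplicial spectrum is the spectral sequence we want; it abuts to $\pi_{t-s}(X)$, with strong convergence supplied by descendability exactly as in the proof of \Cref{thm:pic_ss}.

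Next I would identify the cosimplicial terms. Exactly as in the computation \eqref{eqn:B_otimes_s+1}, one has $\bighsmash_A^{s+1}B\simeq \map_c(G^{\times s},B)$ as $B$-algebras via the first smash factor, and base changing along $-\hsmash_B(B\hsmash_A X)$ gives $\bighsmash_A^{s+1}B\hsmash_A X\simeq \map_c(G^{\times s},B)\hsmash_B(B\hsmash_A X)$. Writing $\map_c(G^{\times s},B)=L_{K(n)}\colim_k\map((G/U_k)^{\times s},B)$ and using that $-\hsmash_B(B\hsmash_A X)$ absorbs the outer $K(n)$-localization, commutes with filtered colimits, and commutes with the finite products $\map((G/U_k)^{\times s},B)$ of copies of $B$, this collapses to $L_{K(n)}\colim_k\map((G/U_k)^{\times s},B\hsmash_A X)=\map_c(G^{\times s},B\hsmash_A X)$. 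So the cobar complex of $X$ has the form $[\map_c(G^{\times\bullet},B\hsmash_A X)]$, and the $E_1$-page of its BKSS is $^{\HFP}E_1^{s,t}(X)=\pi_t\map_c(G^{\times s},B\hsmash_A X)$, with $d_1$ the alternating sum of the coface maps.

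To put the $E_1$-page in the claimed form I would run the same Mittag-Leffler bookkeeping as in \eqref{eqn:HFPSS_profin_E1}. Unwinding \eqref{eqn:Moore_limit} gives $\map_c(G^{\times s},B\hsmash_A X)\simeq \lim_j\colim_k\map((G/U_k)^{\times s},(B\hsmash_A X)\Smash M_j)$; commuting $\pi_t$ past the filtered colimit identifies the $j$-th term of the resulting tower with $\map_c(G^{\times s},\Z)\otimes_\Z\pi_t((B\hsmash_A X)\Smash M_j)$, a base change of the tower $\{\pi_t((B\hsmash_A X)\Smash M_j)\}_j$ by the fixed group $\map_c(G^{\times s},\Z)$. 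The ML hypothesis on $B\hsmash_A X$ together with preservation of Mittag-Leffler under base change (\cite{Emmanouil_ML_lim1}, cf. the argument in \Cref{thm:pic_ss}) makes this tower ML, so the relevant $\lim^1$ vanishes and $^{\HFP}E_1^{s,t}(X)\cong\map_c(G^{\times s},\pi_t(B\hsmash_A X))$. The $d_1$-differential is then the cobar differential computing the continuous cohomology of the profinite group $G$ with coefficients in the topological module $\pi_t(B\hsmash_A X)$, so $^{\HFP}E_2^{s,t}(X)=H_c^s(G;\pi_t(B\hsmash_A X))$.

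The only genuinely non-formal point — the main obstacle — is this ML bookkeeping: one must ensure the Mittag-Leffler condition persists through each cosimplicial level, so that both the $E_1$-identification above and the passage $\pi_t\lim_j=\lim_j\pi_t$ are justified; this is exactly why the hypothesis is imposed on $B\hsmash_A X$ rather than merely on $X$, and it is handled by the same preservation-under-base-change argument as in the proof of \Cref{thm:pic_ss}. Everything else — the identification of cobar differentials with continuous-cohomology differentials, and strong convergence to $\pi_{t-s}(X)$ — is identical to that proof and requires no new input beyond \Cref{prop:descent_criterion} and the profinite-product identity \eqref{eqn:B_otimes_s+1}.
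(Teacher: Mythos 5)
Your proposal is correct and follows essentially the same route as the paper's proof: apply the descent equivalence \eqref{eqn:profin_mod_tot} to $X$ to obtain $X\simto \Tot\left[\map_c\left(G^{\times\bullet},B\hsmash_A X\right)\right]$, then use the Mittag--Leffler hypothesis on $B\hsmash_A X$ to identify the $E_1$-page as $\map_c\left(G^{\times s},\pi_t(B\hsmash_A X)\right)$ and the $d_1$-differentials with the cobar differentials. The extra detail you supply on base-changing \eqref{eqn:B_otimes_s+1} and on the persistence of the ML condition is exactly the bookkeeping the paper delegates to the proof of \Cref{thm:pic_ss}.
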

	\begin{proof}
		By \eqref{eqn:profin_mod_tot}, there is an equivalence of $A$-modules:
		\begin{equation*}
			X\simto \Tot\left[\left(\bighsmash_A^{\bullet+1}B\right)\hsmash_AX\right]\simeq \Tot \left[\map_c\left(G^{\times \bullet},B\Smash_A X\right)\right].
		\end{equation*}
		The Mittag-Leffler condition on $\{\pi_t(B\Smash_A X\Smash M_j)\}$ implies that the $E_1$-page of the BKSS associated to this cosimplicial spectra is:
		\begin{equation*}
			^{\HFP}E_1^{s,t}(X)=\pi_t\left[\map_c\left(G^{\times \bullet},B\Smash_A X\right)\right]\cong \map_c\left(G^{\times \bullet},\pi_t(B\Smash_A X)\right).
		\end{equation*}
		This identifies the $E_1$-page of the spectral sequence. Similar to \Cref{thm:pic_ss}, the $d_1$-differentials are the same as those in the cobar resolution of $\pi_t(B\Smash_A X)$ as a continuous $G$-module. It follows that the terms on the $E_2$-page are continuous group cohomology of $G$. 
	\end{proof}
	\begin{rem}
		Let $\mfrak=(p,u_1,\cdots,u_{n-1})$ be the maximal ideal of $\pi_0(E_n)$. In \cite[Theorem 4.3]{BH_E2term}, Barthel--Heard showed that if $\pi_t(E_n\hsmash X)$ is either pro-free and finitely generated as an $\pi_0(E_n)$-module or has bounded $\mfrak$-torsion, then $E_2$-terms of the HFPSS for $X$ are also continuous group cohomology of $G$. Note that both assumptions imply that $E_n\hsmash X$ satisfies the ML \Cref{cond:ML}. 
	\end{rem}
	\begin{const}\label{const:desc_fil}
		A descent filtration on $\Pic_{K(n)}\left(E_n^{hG}\right)$ can be constructed following \cite[Definition 3.27]{BBGHPS_exotic_h2_p2} and \cite[Equation 1.22]{CZ_exotic_Picard} as below:
		\begin{equation}\label{eqn:Pic_desc_filtration}
			\Pic_{K(n)}\left(E_n^{hG}\right)\supseteq \Pic^0_{K(n)}\left(E_n^{hG}\right)\supseteq\kappa\left(E_n^{hG}\right)\supseteq \kappa^{(2)}\left(E_n^{hG}\right)\supseteq \kappa^{(3)}\left(E_n^{hG}\right)\supseteq\cdots.
		\end{equation}
		\begin{itemize}
			\item By \Cref{prop:Pic_top}, the map of $K(n)$-local $\einf$-ring spectra $E_n^{hG}\to E_n$ induces a continuous group homomorphism $\phi_0\colon \Pic_{K(n)}\left(E_n^{hG}\right)\to \Pic_{K(n)}\left(E_n\right)$.  By \eqref{eqn:Pic_En}, we have $\Pic_{K(n)}\left(E_n\right)\cong \Pic\left(E_n\right)\cong \Z/2$, generated by $\Sigma E_n$. As a result, $\phi_0$ is surjective. Set
			\begin{equation*}
				\Pic^0_{K(n)}\left(E_n^{hG}\right):=\ker\phi_0=\left\{X\in \Pic_{K(n)}\left(E_n^{hG}\right)\mid E_n\hsmash_{E_n^{hG}}X\simeq E_n \right\}.
			\end{equation*}
			\item For any $X\in  \Pic^0_{K(n)}\left(E_n^{hG}\right)$, we note $E_n\hsmash_{E_n^{hG}} X\simeq E_n$ satisfies the ML \Cref{cond:ML}.\footnote{This is also true for any $X\in  \Pic_{K(n)}\left(E_n^{hG}\right)$, since $ E_n\hsmash_{E_n^{hG}}X\simeq \Sigma E_n$ if $X\notin \Pic^0_{K(n)}\left(E_n^{hG}\right)$. } Then \Cref{prop:DSS_mod} yields a  homotopy fixed spectral sequence with $E_2$-page
			\begin{equation}\label{eqn:HFPSS_Kn_local}
				E_2^{s,t}(X)=H^s_c\left(G;\pi_t\left(E_n\hsmash_{E_n^{hG}}X\right)\right)\Longrightarrow \pi_{t-s}(X). 
			\end{equation}			
			The homotopy groups  $\pi_t\left(E_n\hsmash_{E_n^{hG}}X\right)$ have a natural continuous $G$-action, which is adjoint to 
			\begin{equation*}
				\pi_t\left(E_n\hsmash_{E_n^{hG}}X\right)\to \pi_t\left(E_n\hsmash_{E_n^{hG}}E_n\hsmash_{E_n^{hG}}X\right)\cong \map_c\left(G, \pi_t\left(E_n\hsmash_{E_n^{hG}}X\right)\right). 
			\end{equation*}
			Notice we have a $G$-equivariant isomorphism $\pi_t\left(E_n\hsmash_{E_n^{hG}}X\right)\cong \pi_0\left(E_n\hsmash_{E_n^{hG}}X\right)\hotimes_{\pi_0(E_n)} \pi_t(E_n)$, where $G$ acts on $\pi_t(E_n)$ as a subgroup of $\Gbb_n$.  This implies that the $G$-action on $\pi_t\left(E_n\hsmash_{E_n^{hG}}X\right)$ is determined by that on $\pi_0\left(E_n\hsmash_{E_n^{hG}}X\right)$, which is non-equivariantly isomorphic to $\pi_0(E_n)$.  All such $G$-actions on $\pi_0(E_n)$ is then classified by  the (even) \textbf{algebraic $K(n)$-local Picard group} of $E_n^{hG}$:
			\begin{equation*}
				\Pic^{0,alg}_{K(n)}\left(E_n^{hG}\right):= H^1_c\left(G;\pi_0(E_n)^\times \right).
			\end{equation*}
			The assignment $X\mapsto\pi_0\left(E_n\hsmash_{E_n^{hG}}X\right) $ as a continuous $G$-module then defines a map $\phi_1\colon \Pic^0_{K(n)}\left(E_n^{hG}\right)\to \Pic^{0,alg}_{K(n)}\left(E_n^{hG}\right)$.  Set the \textbf{exotic $K(n)$-local Picard group} of $E_n^{hG}$ to be
			\begin{equation*}
				\kappa\left(E_n^{hG}\right):= \phi_1^{-1}\left(E_n^{hG}\right)=\left\{X\in \Pic^0_{K(n)}\left| ~\pi_0\left(E_n\hsmash_{E_n^{hG}} X\right)\cong \pi_0(E_n) \text{ as $G$-modules}\right.\right\} .
			\end{equation*}
			It is conventional to write $\kappa_n$ for $\kappa\left(E_n^{h\Gbb_n}\right)$. 
			\item When $X\in \kappa\left(E_n^{hG}\right)$, we have isomorphisms $E_2^{s,t}\left(E_n^{hG}\right)\cong E_2^{s,t}(X)$ for all $s,t$. Let $\iota_X\in E_2^{0,0}(X)$ be the image of the unit $1\in  H_c^0(G; \pi_0(E_n))=E_2^{0,0}\left(E_n^{hG}\right)$ under this isomorphism. Define
			\begin{equation*}
				\kappa^{(r)}\left(E_n^{hG}\right):= \left\{ X\in \kappa\left(E_n^{hG}\right)\mid \iota_X \text{ is an $r$-cycle in \eqref{eqn:HFPSS_Kn_local}}\right\}.
			\end{equation*}
			If $X\in \kappa^{(r)}\left(E_n^{hG}\right)$, then $\iota_X$ defines an isomorphism $E_m^{s,t}\left(E_n^{hG}\right)\simto E_m^{s,t}(X)$ for  $2\le m\le r+1$ and all $s,t$. For each $r$, the assignment $X\mapsto d_{r+1}(\iota_X)$ defines a map $\phi_{r+1}\colon \kappa^{(r)}\left(E_n^{hG}\right)\to E^{r+1,r}_{r+1}(X)\cong E^{r+1,r}_{r+1}\left(E_n^{hG}\right)$. 
		\end{itemize}
	\end{const}
	\begin{rem}
		For any inclusion of closed subgroups $G_1\le G_2$ of $\Gbb_n$, the filtration above is preserved under the base change maps $\Pic_{K(n)}\left(E_n^{hG_2}\right)\to \Pic_{K(n)}\left(E_n^{hG_2}\right)$.
	\end{rem}
	When $G=\Gbb_n$, we have $\Pic_{K(n)}\left(E_n^{h\Gbb_n}\right)=\Pic\left(\Sp_{K(n)}\right)$. This Picard group has been studied extensively using the filtration above.  At height $1$ and all primes, $\Pic_{K(1)}$ were computed in \cite{HMS_picard, Strickland_interpolation}:
	\begin{equation*}
		\Pic_{K(1)}=\begin{cases}
			\Zp\oplus \Z/2(p-1), & p>2;\\
			\Z_2\oplus\Z/4\oplus\Z/2, &p=2.
		\end{cases}
	\end{equation*}
	At height $2$, the algebraic $K(2)$-local Picard groups are computed by Hopkins when $p\ge 5$ (see \cite{S_E2, Hopkins_AWS2019,Lader_thesis}) and by Karamanov \cite{Karamanov_Picard} when $p=3$. In both cases, we have
	\begin{equation*}
		\Pic^{alg,0}_{K(2)}=H^1_c\left(\Gbb_2;\pi_0(E_2)^\times\right)\cong H^1_c\left(\Gbb_2;\W\Fbb_{p^2}^\times\right)\cong\Zp\oplus\Zp\oplus \Z/(p^2-1),\qquad  p\ge 3. 
	\end{equation*}
	The $K(2)$-local exotic Picard group $\kappa_2$ vanishes when $p\ge 5$ \cite{HMS_picard,Strickland_interpolation}. It was computed at $p=3$ by Goerss--Henn--Mahowald--Rezk  in \cite{GHMR_Picard}, and more recently at $p=2$ by Beaudry--Bobkova--Goerss--Henn--Pham--Stojanoska in \cite{BBGHPS_exotic_h2_p2}. 
	\begin{equation*}
		\kappa_2=\begin{cases}
			(\Z/8)^2\oplus(\Z/2)^3, & p=2;\\
			\Z/3\oplus\Z/3, &p=3;\\
			0, & p\ge 5.
		\end{cases}
	\end{equation*}
	\begin{rem}\label{rem:kappa_comparison}
		In \cite[Definition 3.3]{BBGHPS_exotic_h2_p2}, the authors used $\kappa(G)$ to denote
		\begin{equation*}
			\kappa(G)=\left\{\begin{array}{c|>{\raggedright\arraybackslash}m{0.475\textwidth}}
				X\in\kappa_n& 	There exists a $z\in \pi_0\left(E_n^{hG}\hsmash X\right)$ such that its image in $\pi_0\left(E_n\hsmash X\right)$ is a $\Gbb_n$-equivariant generator
			\end{array}\right\}.
		\end{equation*}
		By \cite[Proposition 3.7]{BBGHPS_exotic_h2_p2}, we have $\kappa(G)$ is contained in $\ker\left(\kappa_n\to \kappa\left(E_n^{hG}\right)\right)$. \cite[Lemma 3.9]{BBGHPS_exotic_h2_p2} says the  inclusion in the other direction is also true if the edge homomorphism $\pi_0\left(E_n^{hG}\right)\to H^0_c(G;\pi_0(E_n))$ in the HFPSS \eqref{eqn:HFPSS_EnhG} is surjective. This happens in all cases with complete computations (\cite[Example 3.10]{BBGHPS_exotic_h2_p2}) . 
	\end{rem}
	\begin{lem}\label{lem:desc_fil_fin}
		\begin{enumerate}
			\item The descending filtration \eqref{eqn:Pic_desc_filtration} is Hausdorff in the sense that $\bigcap_{r} \kappa^{(r)}\left(E_n^{hG}\right)=\left\{E_n^{hG}\right\}$.
			\item The maps $\phi_0,\phi_1,\phi_2,\cdots$ in \Cref{const:desc_fil} are all group homomorphisms such that $\kappa^{(r)}\left(E_n^{hG}\right)=\ker \phi_r$ for all $r\ge 2$. 
			\item For each fixed prime $p$, height $n$, and closed subgroup $G\le \Gbb_n$, the filtration \eqref{eqn:Pic_desc_filtration} is finite in the sense $\kappa^{(r)}\left(E_n^{hG}\right)=0$ when $r\gg 0$. 
		\end{enumerate}
	\end{lem}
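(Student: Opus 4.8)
The plan is to handle the three parts using the descent spectral sequence \eqref{eqn:HFPSS_Kn_local} from \Cref{prop:DSS_mod} together with the descendability of $E_n^{hG}\to E_n$ from \Cref{cor:En_hG_descendable}; for $G=\Gbb_n$ all three statements appear in \cite[\S3.3]{BBGHPS_exotic_h2_p2} and \cite[\S1.3]{CZ_exotic_Picard}, and the same arguments work for a general closed $G\le\Gbb_n$, so I will only sketch them. For part (1): an invertible module $X$ lies in $\bigcap_r\kappa^{(r)}\left(E_n^{hG}\right)$ precisely when $\iota_X$ is a permanent cycle in \eqref{eqn:HFPSS_Kn_local}. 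Since no differential can hit the edge term $E_r^{0,0}$ (the cosimplicial filtration degree $s$ is non-negative), $\iota_X$ survives to a nonzero class of filtration $0$ in $E_\infty^{0,0}(X)$; strong convergence, which is a consequence of descendability, then forces $\iota_X$ to be detected by some $f\in\pi_0(X)$. The $E_n^{hG}$-module map $\psi\colon E_n^{hG}\to X$ classifying $f$ base changes along $E_n^{hG}\to E_n$ to an $E_n$-linear map $E_n\to E_n\hsmash_{E_n^{hG}}X$ sending $1$ to the image of $f$, which under $E_2^{0,0}(X)=H^0_c\left(G;\pi_0\left(E_n\hsmash_{E_n^{hG}}X\right)\right)\cong\pi_0(E_n)^G$ (valid since $X\in\kappa\subseteq\Pic^0_{K(n)}$) is the unit $1\in\pi_0(E_n)$. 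As $E_n\hsmash_{E_n^{hG}}X\simeq E_n$, this base-changed map is an equivalence, and since base change along a descendable extension is conservative, $\psi$ itself is an equivalence; thus $X$ is the unit of $\Pic_{K(n)}\left(E_n^{hG}\right)$.

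For part (2): $\phi_0$ is a homomorphism because it is induced by the symmetric monoidal base-change functor, with $\ker\phi_0=\Pic^0_{K(n)}\left(E_n^{hG}\right)$ by definition. For $\phi_1$ I would combine the projection formula $E_n\hsmash_{E_n^{hG}}\left(X\hsmash_{E_n^{hG}}Y\right)\simeq\left(E_n\hsmash_{E_n^{hG}}X\right)\hsmash_{E_n}\left(E_n\hsmash_{E_n^{hG}}Y\right)$ with the evenness of the invertible $E_n$-modules $E_n\hsmash_{E_n^{hG}}X$ and $E_n\hsmash_{E_n^{hG}}Y$ to identify $\pi_0$ of the left-hand side with the $\pi_0(E_n)$-tensor product of the rank-one semilinear $G$-modules classified by $\phi_1(X)$ and $\phi_1(Y)$; as the classifying $1$-cocycle of a tensor product of rank-one $G$-modules is the product of the cocycles, $\phi_1$ is additive, and $\ker\phi_1=\kappa\left(E_n^{hG}\right)$ holds by construction. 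For $r\ge2$, the spectral sequence \eqref{eqn:HFPSS_Kn_local} is lax monoidal in the module variable, so $\iota_{X\hsmash_{E_n^{hG}}Y}=\iota_X\cdot\iota_Y$; this shows $\kappa^{(r-1)}$ is closed under $\hsmash_{E_n^{hG}}$, and the Leibniz rule gives $d_r\left(\iota_{X\hsmash_{E_n^{hG}}Y}\right)=d_r(\iota_X)\cdot\iota_Y\pm\iota_X\cdot d_r(\iota_Y)$, which equals $\phi_r(X)+\phi_r(Y)$ once one identifies $E_r^{*,*}(X)\cong E_r^{*,*}\left(E_n^{hG}\right)\cong E_r^{*,*}(Y)$ via the units $\iota_X,\iota_Y$. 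Hence each $\phi_r$ is a homomorphism, and $\kappa^{(r)}=\{X\in\kappa^{(r-1)}:d_r(\iota_X)=0\}=\ker\phi_r$ is then immediate from \Cref{const:desc_fil}.

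For part (3): the homotopy fixed point spectral sequence \eqref{eqn:HFPSS_EnhG} for $E_n^{hG}$ admits a horizontal vanishing line at a finite page, i.e.\ there exist $R_0,N_0$ with ${}^{\HFP}E_{R_0}^{s,t}\left(E_n^{hG}\right)=0$ for all $s>N_0$ and all $t$; when $\cd_p\Gbb_n<\infty$ this already holds on the $E_2$-page, and in general it follows from descendability of $E_n^{hG}\to E_n$ (cf.\ \cite{Mathew_Galois}, or directly \cite{fixedpt,Devinatz_LHS}). For $X\in\kappa^{(r-1)}$ the isomorphism induced by $\iota_X$ places $\phi_r(X)$ in $E_r^{r,r-1}(X)\cong{}^{\HFP}E_r^{r,r-1}\left(E_n^{hG}\right)$, which is a subquotient of ${}^{\HFP}E_{R_0}^{r,r-1}$ and hence vanishes once $r\ge\max(R_0,N_0+1)$. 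So $\phi_r=0$ and $\kappa^{(r)}=\kappa^{(r-1)}$ for all such $r$, making the filtration eventually constant; by part (1) its stable value is $\bigcap_r\kappa^{(r)}\left(E_n^{hG}\right)=\left\{E_n^{hG}\right\}$, so $\kappa^{(r)}\left(E_n^{hG}\right)=0$ for $r\gg0$. The main obstacle is exactly this horizontal vanishing line: at small primes with $p\mid|G|$ one has $\cd_p G=\infty$, so the $E_2$-page gives no bound, and one must appeal to the nontrivial fact that the descent spectral sequence of a descendable extension has a horizontal vanishing line at a finite page.
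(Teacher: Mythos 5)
Your proposal is correct and follows essentially the same route as the paper's proof: the permanent cycle $\iota_X$ produces a map $E_n^{hG}\to X$ that is shown to be an equivalence for (1), the K\"unneth/projection formula and the Leibniz rule give additivity of the $\phi_r$ for (2), and the horizontal vanishing line at a finite page of the homotopy fixed point spectral sequence gives (3). The only minor variation is in part (1), where you conclude that $\psi$ is an equivalence via conservativity of base change along the descendable extension $E_n^{hG}\to E_n$, whereas the paper applies Boardman's theorem to the induced isomorphism of $E_2$-pages; both arguments are valid.
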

	\begin{proof}
		\begin{enumerate}
			\item As a invertible module over itself, $E_n^{hG}\in \kappa\left(E_n^{hG}\right)$ by construction. The unit $1\in E_2^{0,0}\left(E_n^{hG}\right)$ is a permanent cycle, since it converges to the unit map $S^0\to E_n^{hG}$ of $E_n^{hG}$ as a ring spectra. This implies that $E_n^{hG}\in \bigcap_{r} \kappa^{(r)}\left(E_n^{hG}\right)$. 
			
			Conversely, let $X\in \bigcap_{r} \kappa^{(r)}\left(E_n^{hG}\right)$. Then $\iota_X$ is a permanent cycle, representing a map of $K(n)$-local spectra $f\colon S_{K(n)}^0\to X$. As $X$ is an $E_n^{hG}$-module, the map $f$ is adjoint to an $E_n^{hG}$-module map $\wt{f}\colon E_n^{hG}\to X$. One can then check $\wt{f}$ induces on isomorphism on the $E_2$-pages of the HFPSS's $E_2^{*,*}\left( E_n^{hG}\right)\to E_2^{*,*}\left(X\right)$ sending $1$ to $\iota_X$. By \cite[Theorem 5.3]{Boardman_ccss}, $\wt{f}$ is a weak equivalence. 
			\item  We need to show all the $\phi_r$'s are additive. The map $\phi_0\colon \Pic_{K(n)}\left(E_n^{hG}\right)\to \Pic_{K(n)}(E_n)$ is a group homomorphism by \Cref{prop:Pic_top}. 
			For $\phi_1$, notice that any $X, Y\in \Pic_{K(n)}^0\left(E_n^{hG}\right)$, we have a  K\"unneth isomorphism  
			\begin{equation*}
				\pi_0\left(E_n\hsmash_{E_n^{hG}} X\hsmash_{E_n^{hG}} Y\right)\cong \pi_0\left(E_n\hsmash_{E_n^{hG}} X\right)\hotimes_{\pi_0(E_n)}\pi_0\left(E_n\hsmash_{E_n^{hG}} Y\right).
			\end{equation*}			
			This implies that $\phi_1\colon  \Pic_{K(n)}^0\left(E_n^{hG}\right)\to  \Pic_{K(n)}^{alg,0}\left(E_n^{hG}\right)$ is a group homomorphisms. When $r\ge 2$, the Leibniz rule on differentials implies that $\phi_r$ is a group homomorphism. 
			\item 	If $X\in \kappa^{(r)}$, then $E_{r+1}^{s,t}(X)\cong E_{r+1}^{s,t}\left(E_n^{hG}\right)$ for all $s,t$. By \cite[Section 2.3]{BGH22} \cite[Theorem 5.3]{fixedpt} \cite[Corollary 15]{Str00}, there is a horizontal vanishing line at $s=N$ on $E_r^{*,*}\left(E_n^{hG}\right)$ when $r\gg 0$. This means if $X\in \kappa^{(N)}\left(E_n^{hG}\right)$, then $\iota_X$ is automatically an $m$-cycle for all $m\ge N$.  Similar to the second half of the proof of part (1), it follows that $X\simeq E_n^{hG}$. Hence $\kappa^{(m)}\left(E_n^{hG}\right)=\{E_n^{hG}\}$ when $m\ge N$. \qedhere
		\end{enumerate}
	\end{proof}
	\begin{prop}\label{prop:desc_fil_comparison}
		The descent filtration \eqref{eqn:Pic_desc_filtration} on $\Pic_{K(n)}\left(E_n^{hG}\right)$ is  same as the filtration associated to the $t-s=0$ stem of the descent spectral sequence \eqref{eqn:PicSS_EnhG} for $\Pic_{K(n)}\left(E_n^{hG}\right)$. 
	\end{prop}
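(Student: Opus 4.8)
The plan is to match the descent filtration of \Cref{const:desc_fil} with the filtration on $\Pic_{K(n)}(E_n^{hG})$ induced by the $t-s=0$ stem of \eqref{eqn:PicSS_EnhG}, one stage at a time. Write $\Pic_{K(n)}(E_n^{hG})=F^0\supseteq F^1\supseteq F^2\supseteq\cdots$ for the latter, so that $F^s/F^{s+1}\cong {}^{\pic}E_\infty^{s,s}$, a subquotient of $^{\pic}E_2^{s,s}=H_c^s(G;\pi_s\pic_{K(n)}(E_n))$. By \Cref{cor:pic_E2_ring} and \eqref{eqn:Pic_En} this $E_2$-term is $\Z/2$ for $s=0$, is $H^1_c(G;\pi_0(E_n)^\times)=\Pic^{0,alg}_{K(n)}(E_n^{hG})$ for $s=1$, and is $H^s_c(G;\pi_{s-1}(E_n))$ for $s\ge 2$; the last group is precisely the $E_2$-target $E_2^{s,s-1}$ of $\phi_s$ in the homotopy fixed point spectral sequence \eqref{eqn:HFPSS_EnhG} for $E_n^{hG}$, under the bidegree shift already recorded in \Cref{thm:pic_ss}. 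So it is enough to show $F^1=\Pic^0_{K(n)}(E_n^{hG})$, $F^{r+1}=\kappa^{(r)}(E_n^{hG})$ for $r\ge 1$, and that the maps induced on associated gradeds are $\phi_0,\phi_1,\phi_2,\dots$; the Hausdorff and finiteness statements of \Cref{lem:desc_fil_fin} ensure both filtrations are separated and exhaustive, so a stage-by-stage comparison suffices.

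First I would treat filtration $0$. The edge homomorphism of \eqref{eqn:PicSS_EnhG} in total degree $0$ is the composite $\Pic_{K(n)}(E_n^{hG})\twoheadrightarrow {}^{\pic}E_\infty^{0,0}\hookrightarrow {}^{\pic}E_2^{0,0}=H^0_c(G;\Pic_{K(n)}(E_n))=\Z/2$, and by the standard description of the edge map of a (pro)finite descent spectral sequence it is induced by base change along $E_n^{hG}\to E_n$, which is exactly $\phi_0$. As $\phi_0$ is surjective (the class $\Sigma E_n^{hG}$ base changes to the generator $\Sigma E_n$), we get $^{\pic}E_\infty^{0,0}={}^{\pic}E_2^{0,0}=\Z/2$ and $F^1=\ker\phi_0=\Pic^0_{K(n)}(E_n^{hG})$.

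For the positive-filtration part I would use the truncation $\tau_{\ge1}\pic_{K(n)}(-)\simeq B\gl_1(-)$ of \Cref{prop:pic_kn_truncation}, which identifies the positive-filtration part of the $0$-stem of \eqref{eqn:PicSS_EnhG} with the $0$-stem of the descent spectral sequence for $B\gl_1(E_n^{hG})$, together with the twisted homotopy fixed point spectral sequence \eqref{eqn:HFPSS_Kn_local} of \Cref{prop:DSS_mod} attached to an $E_n^{hG}$-module $X\in\Pic^0_{K(n)}(E_n^{hG})$ (note $E_n\hsmash_{E_n^{hG}}X\simeq E_n$ satisfies the ML condition). At filtration $1$: under the module-category descent equivalence $\Mod_{K(n)}(E_n^{hG})\simeq\Tot[\Mod(\bighsmash_{E_n^{hG}}^{\bullet+1}E_n)]$ of \Cref{prop:descent_criterion}, such an $X$ is a descent datum on the invertible $E_n$-module $E_n$, and its first-order invariant is the class of the $G$-action on $\pi_0(E_n\hsmash_{E_n^{hG}}X)$ in $H^1_c(G;\pi_0(E_n)^\times)={}^{\pic}E_2^{1,1}$, which is $\phi_1(X)$; hence $F^2=\ker\phi_1=\kappa(E_n^{hG})=\kappa^{(1)}(E_n^{hG})$. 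For filtration $r+1$ with $r\ge 1$ and $X\in\kappa(E_n^{hG})$, the hypothesis $X\in\kappa$ identifies the $E_2$-pages of \eqref{eqn:HFPSS_Kn_local} and \eqref{eqn:HFPSS_EnhG} with $\iota_X$ corresponding to the unit, and I would show that $[X]$ lies in $F^{r+1}$ exactly when $\iota_X$ is an $r$-cycle (i.e.\ $X\in\kappa^{(r)}$), with associated graded $\phi_{r+1}(X)=d_{r+1}(\iota_X)$, by transporting the Picard differentials supported on the path component of $[X]$ to differentials on $\iota_X$ in \eqref{eqn:HFPSS_Kn_local}; this uses the comparison of differentials in \Cref{thm:pic_ss} together with the fact that translating the trivial basepoint of the cosimplicial space $\map_c(G^{\times\bullet},\pic_{K(n)}(E_n))$ to the one representing the descent datum of $X$ is an equivalence on $\tau_{\ge1}$, since Picard spaces are group-like.

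The hard part is precisely this last identification. The comparison tool in \Cref{thm:pic_ss} (following \Cref{thm:compare_diff}) compares differentials out of \emph{positive} filtration, whereas $\iota_X$ and the permanent cycle $1$ sit in filtration $0$; one must therefore bootstrap, handling $\phi_0$ and $\phi_1$ by hand as above and then, for $r\ge 1$, using multiplicativity — once $X\in\kappa^{(r)}$ the class $\iota_X$ generates the twisted $E_{r+1}$-page over the untwisted one — to force the obstruction $d_{r+1}$ on the component of $[X]$ in the Picard spectral sequence to coincide with the module-side differential $d_{r+1}(\iota_X)$, exactly as in the proof of \Cref{thm:compare_diff}. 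A secondary point is the bidegree bookkeeping relating $^{\pic}E_r^{s,s}$, the $\phi_{r+1}$-target $E_{r+1}^{r+1,r}$ in the homotopy fixed point spectral sequence for $E_n^{hG}$, and the comparison shift of \Cref{thm:pic_ss}; this is routine but must be carried out consistently.
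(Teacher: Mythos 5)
Your overall strategy --- a stage-by-stage comparison with $F^1=\Pic^0_{K(n)}(E_n^{hG})$, $F^2=\kappa(E_n^{hG})$, and $F^{m}=\kappa^{(m-1)}(E_n^{hG})$ for $m\ge 3$ --- is the same as the paper's, and your treatment of $F^1$ via the edge homomorphism and of $F^2$ via the descent-datum interpretation of the class in $H^1_c(G;\pi_0(E_n)^\times)$ matches what the paper does. The gap is in the step you yourself flag as ``the hard part'': for $m\ge 3$ you assert that $[X]\in F^{m}$ if and only if $\iota_X$ is an $(m-1)$-cycle, and propose to prove it by transporting differentials on the component of $[X]$ using the comparison tool together with group-like translation of basepoints. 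This is not yet an argument. The classes in question sit on the $t-s=0$ stem and the relevant differentials land in the $-1$ stem, exactly the fringe where \Cref{thm:compare_diff} gives no information; ``multiplicativity'' does not by itself identify the obstruction to lifting $[X]$ up the Tot tower with a differential in the module spectral sequence \eqref{eqn:HFPSS_Kn_local}. Moreover, your mechanism at best addresses the forward implication ($[X]\in F^m\Rightarrow d_{\le m-1}(\iota_X)=0$); the converse --- that vanishing of these differentials forces $[X]$ into filtration $\ge m$ --- requires actually producing a point of $\pi_0\Tot_{m-1}$ in the basepoint component, which translating components does not provide.

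The missing idea is the concrete description of the Tot-tower filtration on $\pi_0$ of the Picard space: $X\in\Fil^{\ge m}$ precisely when its image in $\Pic\bigl(\Tot_{m-1}\bigl[\Mod_{K(n)}\bigl(\bighsmash_{E_n^{hG}}^{\bullet+1}E_n\bigr)\bigr]\bigr)$ agrees with that of the unit, i.e.\ when the $(m-1)$-truncated $K(n)$-local $E_n$-Adams towers of $X$ and of $E_n^{hG}$ are equivalent (via \cite[Proposition 2.25]{Mathew_desc_nilp}). With this in hand, one direction is immediate: $d_r([\iota_X])$ depends only on the first $r$ stages of the Adams tower, so the truncated equivalence kills $d_2,\dots,d_{m-1}$. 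The converse follows because the Bousfield--Kan spectral sequence of the \emph{partial} totalization collapses at $E_m$, so an $(m-1)$-cycle $\iota_X$ becomes a permanent cycle there, converges to a class $\bar\iota_X$, and $\bar\iota_X$ is adjoint to an equivalence of partial totalizations by the same Boardman-type argument as in \Cref{lem:desc_fil_fin}(1). Without this (or some equivalent handle on the fringe), your identification $F^m=\kappa^{(m-1)}$ for $m\ge 3$ remains unproven.
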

	\begin{proof}
		Recall that the descent spectral sequence \eqref{eqn:PicSS_EnhG} is the Bousfield--Kan spectral sequence of the cosimplicial diagram:
		\begin{equation*}
			\pic_{K(n)}\left({E^{hG}_{n}}\right)\simeq \Tot\left[\pic_{K(n)}\left(\bighsmash_{E_n^{hG}}^{\bullet+1}E_n\right) \right].
		\end{equation*}
		The filtration on $\Pic_{K(n)}\left({E^{hG}_{n}}\right)$ from this spectral sequence is defined to be:
		\begin{align*}
			\Fil^{\ge m} \pi_0\pic_{K(n)}\left({E^{hG}_{n}}\right)&:=\ker\left(\pi_0\pic_{K(n)}\left({E^{hG}_{n}}\right)\longrightarrow \pi_0\Tot_{ m-1}\left[\pic_{K(n)}\left(\bighsmash_{E_n^{hG}}^{\bullet+1}E_n\right) \right]\right)\\
			&~\cong \ker\left(\Pic\left(\Mod_{K(n)} {E^{hG}_{n}}\right)\longrightarrow \Pic\left(\Tot_{ m-1}\left[\Mod_{K(n)}\left({\bighsmash_{E_n^{hG}}^{\bullet+1}E_n}\right) \right]\right)\right),
		\end{align*}
		where 
		\begin{itemize}
			\item $\Tot_{ m-1}$ is the partial totalization of the cosimplicial diagram for $0\le \bullet\le m-1$; 
			\item the functor $\Mod_{K(n)} {E^{hG}_{n}}\to \Tot_{ m-1}\left[\Mod_{K(n)}\left({\bighsmash_{E_n^{hG}}^{\bullet+1}E_n}\right)\right]$ sends a $K(n)$-local $E_n^{hG}$-module spectrum $X$ to its $(m-1)$-truncated $K(n)$-local $E_n$-Adams tower $\Tot_{ m-1}\left[\left(\bighsmash_{E_n^{hG}}^{\bullet +1}E_n\right)\hsmash_{E_n^{hG}} X\right]$.
		\end{itemize}	
		As a result, $X\in \Fil^{\ge m} \Pic_{K(n)}\left({E^{hG}_{n}}\right)$ iff its image in $\Tot_{ m-1}\left[\Mod_{K(n)}\left({\bighsmash_{E_n^{hG}}^{\bullet+1}E_n}\right)\right]$ is equivalent to that of $E_n^{hG}$, i.e. there is an equivalence of $(m-1)$-partial totalizations:
		\begin{equation}\label{eqn:Tot_HFPSS_equiv}
			\Tot_{ m-1}\left[\bighsmash_{E_n^{hG}}^{\bullet+1}E_n \right]\simeq \Tot_{ m-1}\left[\left(\bighsmash_{E_n^{hG}}^{\bullet+1}E_n\right) \hsmash_{E_n^{hG}} X\right].
		\end{equation}
		By \cite[Proposition 2.25]{Mathew_desc_nilp}, this partial totalization is equivalent to the $m-1$-truncated $K(n)$-local $E_n$-Adams tower of $E_n^{hG}$-modules.  
		
		When $m=1$, \eqref{eqn:Tot_HFPSS_equiv} is an equivalence of spectra $E_n\simeq E_n\hsmash_{E_{hG}} X$. It follows that $\Fil^{\ge 1}\Pic_{K(n)}\left({E^{hG}_{n}}\right)=\Pic^0_{K(n)}\left({E^{hG}_{n}}\right)$.
		
		When $m=2$, the equivalence of truncated Adams towers becomes:
		\begin{equation*}
			\begin{tikzcd}
				E_n\rar["\simeq "] \dar[shift left=1.5 ex] \dar[shift right=1.5ex]&E_n\hsmash _{E_n^{hG}}  X \dar[shift left=1.5ex] \dar[shift right=1.5ex]\\
				E_n\hsmash _{E_n^{hG}}E_n\rar["\simeq "] \uar& E_n\hsmash _{E_n^{hG}}E_n\hsmash _{E_n^{hG}} X\uar.
			\end{tikzcd}
		\end{equation*}
		Taking $\pi_0$, the diagram translates to a $G$-equivariant isomorphism $\iota_X\colon \pi_0(E_n)\simto \pi_0\left(E_n\hsmash_{E_{hG}} X\right)$. From this, we obtain $\Fil^{\ge 2}\Pic_{K(n)}\left({E^{hG}_{n}}\right)=\kappa\left({E^{hG}_{n}}\right)$.
		
		When $m\ge 3$, set $[\iota_X]:=\iota_X(1)\in E_2^{0,0}\left(E_n^{hG}\right)$. Notice that $d_r([\iota_X])$ is defined only using the first $r$-terms in the $K(n)$-local $E_n$-Adams tower of $X$ as an $E_n^{hG}$-module.   Then the equivalence \eqref{eqn:Tot_HFPSS_equiv} implies that the $d_r$-differentials supported the $s=0$  lines of  the HFPSS's for $X$ and $E_n^{hG}$ are the same for $r\le m-1$. In particular, we have $d_2([\iota_X])=\cdots =d_{m-1}([\iota_X])=0$. This yields the inclusion $\Fil^{\ge m}\Pic_{K(n)}\left({E^{hG}_{n}}\right)\subseteq \kappa^{(m-1)}\left({E^{hG}_{n}}\right)$. 
		
		Conversely, if $[\iota_X]$ is an $(m-1)$-cycle in the BKSS for $\Tot\left[\bighsmash_{E_n^{hG}}^{\bullet+1}E_n \hsmash_{E_n^{hG}} X\right]$, then it is also an $(m-1)$-cycle for the BKSS associated to the partial totalization $\Tot_{ m-1}\left[\bighsmash_{E_n^{hG}}^{\bullet+1}E_n \hsmash_{E_n^{hG}} X\right]$. Notice the latter BKSS collapses on the $E_m$-page by construction, $[\iota_X]$ is a permanent cycle there and converges to an element $\bar{\iota}_X\in \pi_0\Tot_{ m-1}\left[\bighsmash_{E_n^{hG}}^{\bullet+1}E_n \hsmash_{E_n^{hG}} X\right]$. Similar to the proof of part (1) of  \Cref{lem:desc_fil_fin}, we can show that  $\bar{\iota}_X$ is adjoint to a weak equivalence \begin{equation*}
			f_X\colon \Tot_{ m-1}\left[\bighsmash_{E_n^{hG}}^{\bullet+1}E_n \right]\longrightarrow\Tot_{ m-1}\left[\left(\bighsmash_{E_n^{hG}}^{\bullet+1}E_n \right)\hsmash_{E_n^{hG}} X\right].
		\end{equation*} This proves the inclusion in the other direction $\Fil^{\ge m}\Pic_{K(n)}\left({E^{hG}_{n}}\right)\supseteq \kappa^{(m-1)}\left({E^{hG}_{n}}\right)$ when $m\ge 3$. 
	\end{proof}
	\begin{quest}
		\Cref{prop:desc_fil_comparison} only identifies the descent filtration in \Cref{const:desc_fil} with the one arising from the descent spectral sequence for Picard groups \eqref{eqn:PicSS_EnhG}. When $r\ge 2$, one can further wonder if the detection maps $\phi_r$ in \Cref{const:desc_fil} are identified with the maps $\Fil^{\ge r}\Pic_{K(n)}\left(E_n^{hG}\right)\to ~^\pic E_r^{r,r}\left(E_n^{hG}\right)$ in the descent spectral sequence \eqref{eqn:PicSS_EnhG} for $\pic_{K(n)}\left(E_n^{hG}\right)$. (\Cref{prop:desc_fil_comparison} only implies that their kernels are the same.)
		
		More precisely, for any $X\in \Fil^{\ge r}\Pic_{K(n)}\left(E_n^{hG}\right), r\ge 1$, pick an equivalence $f_X\colon E_n\simto E_n\hsmash_{E_{hG}} X$ and let $\iota_X\in \pi_0\left(E_n\hsmash_{E_{hG}} X\right)$ be the image of $1\in \pi_0(E_n)$ under $(f_X)_*$. Does associated graded map of the filtration \begin{equation*}
			\begin{tikzcd}
				\Fil^{\ge r}\Pic_{K(n)}\left(E_n^{hG}\right)\rar[->>]&^\pic E_\infty^{r,r}\left(E_n^{hG}\right)\subseteq~^\pic E_r^{r,r}\left(E_n^{hG}\right)\cong\!^\HFP E_r^{r,r-1}\left(E_n^{hG}\right) 
			\end{tikzcd}
		\end{equation*}
		send $X$ to $\left(f_X^{-1}\right)_*(d_r(\iota_X))$?
	\end{quest}
	By analyzing the HFPSS for the $K(n)$-local sphere, Hopkins-Mahowald-Sadofsky proved in \cite[Proposition 7.5]{HMS_picard} that the exotic $K(n)$-local Picard group $\kappa_n=\kappa\left(S^0_{K(n)}\right)$ vanishes when $(p-1)\nmid n$ and $2p-1>n^2$. According to \Cref{prop:desc_fil_comparison}, we can study the descent filtration on $\Pic_{K(n)}\left(E_n^{hG}\right)$ from the descent spectral sequence \eqref{eqn:PicSS_EnhG} for Picard groups.  This leads to the following algebraicity result:
	\begin{mainthm}\label{thm:descent_Pic_EnhG}
		Fix a prime $p>2$.  Let $G\le \Gbb_n$ be a closed subgroup such that  $G\cap \zpx$ is cyclic of order $m$, where $\zpx\le \Zpx=Z(\Gbb_n)$ is the torsion subgroup of the center $\Zpx$ of $\Gbb_n$. Denote the $p$-adic cohomological dimension of $G$ by $\cd_p G$. 
		\begin{enumerate}
			\item When $2m+1>\cd_p G$, the exotic Picard group $\kappa\left(E_n^{hG}\right)$ vanishes and the descent filtration on $K(n)$-local Picard group $\Pic_{K(n)}\left(E_n^{hG}\right)$ is:
			\begin{equation*}
				\begin{tikzcd}
					0\rar& H_c^{1}(G;\pi_{0}(E_n)^\times)\rar & \Pic_{K(n)}\left(E_n^{hG}\right)\rar["\phi_0"] &\Z/2\rar & 0.
				\end{tikzcd}	 	
			\end{equation*}
			\item When $2m+1=\cd_p G$, the map $\phi_1\colon\Pic_{K(n)}^0\left(E_n^{hG}\right)\to \Pic_{K(n)}^{alg,0}\left(E_n^{hG}\right)=H_c^{1}(G;\pi_{0}(E_n)^\times)$ is surjective. 
		\end{enumerate}
	\end{mainthm}
	\begin{rem}
		There is no guarantee that the extension above splits or not. We will see examples of both scenarios in \Cref{thm:Mackey_Pic_k1_p=odd} and \Cref{thm:Mackey_Pic_k1_p=2}. 
	\end{rem}
	\begin{rem}
		When $G=\Gbb_n$ and $(p-1)\nmid n$, the bound in (1) becomes $2p-1>\cd_p \Gbb_n=n^2$. Pstr\k{a}gowski proved a slightly weaker bound $2p-2>n^2+n$ in \cite[Theorem 1.1]{Piotr_2018} using the algebraicity of the category of $E_n$-local spectra $\Sp_{E_n}$ in \cite{Pstragowski_chromatic_algebraic}. 
		
		This case has also been discussed in \cite[Remark 2.6]{Piotr_2018} and \cite[Proposition 1.25]{CZ_exotic_Picard} as a consequence of Heard's partial identification of the $E_2$-page of the descent spectral sequence \eqref{eqn:PicSS_EnhG} in \cite[Example 6.18]{Heard_2021Sp_kn-local}.   
	\end{rem}
	\begin{proof}
		Consider the Hochschild-Lyndon-Serre spectral sequence to compute the $E_2$-page of the descent spectral sequence \eqref{eqn:PicSS_EnhG} of $\pic_{K(n)}\left(E_n^{hG}\right)$. 
		\begin{equation*}
			E_2^{r,s}=H^r_c\left(G/(G\cap \zpx) ;H^s\left(G\cap \zpx;\pi_{2k}(E_n)\right)\right)\Longrightarrow H_c^{r+s}\left(G;\pi_{2k}(E_n)\right).
		\end{equation*}
		The finite group $G\cap \zpx$ has an order $m$ coprime to $p$ since $p>2$. This means $E_2^{r,s}=0$ unless $s=0$. The center $Z(\Gbb_n)=\Zpx$ of the Morava stabilizer group $\Gbb_n$ acts on $\pi_{2k}(E_n)$ by multiplication by the $k$-th power. Since the $G\cap \zpx$ is cyclic of order $m$, its action on $\pi_{2k}(E_n)$ has no fixed points unless $m$ divides $k$ (it is trivial then).  As a result, the HLSSS collapses and $H_c^s\left(G;\pi_{2k}(E_n)\right)=0$ unless $m\mid k$.   The $p$-adic cohomological dimension $G$ then gives a horizontal vanishing line on the $E_2$-page of the descent spectral sequence \eqref{eqn:PicSS_EnhG} for $\pic_{K(n)}\left(E_n^{hG}\right)$. The claim follows by analyzing the $t-s=-1,0,1$ stems on the $E_2$-page of the descent spectral sequence in \Cref{fig:DSS_Pic_EnhG}.
		\begin{figure}[ht]
			\begin{tikzpicture}[xscale=4]
				\node at (-1,0) {$0$};
				\node at (0,0) {$\Z/2$};
				\node at (1,0) (G) {$H^0_c(G;\pi_0(E_n)^\times)$};
				\node at (-1,1) {$H^1_c(G;\Z/2)$};
				\node at (0,1) (A) {$H^1_c(G;\pi_0(E_n)^\times)$};
				\node at (1,1) {$0$};
				\node at (-1,2) {$H^2_c(G;\pi_0(E_n)^\times)$};
				\node at (0,2) {$0$};
				\node at (1,2) {$\cdots$};
				\node at (-1,3) {$0$}; 
				\node at (0,3) {$\cdots$};
				\node at (1,3) {$0$};
				\node at (-1,4) {$\cdots$};
				\node at (0,4) {$0$};
				\node at (-1,5) {$0$};
				\node at (1,4) (E) {$H^{2m}_c(G;\pi_{2m}(E_n))$};
				\node at (0,5) (C) {$H^{2m+1}_c(G;\pi_{2m}(E_n))$};
				\node at (1,5) {$0$};
				\node at (-1,6) (B) {$H^{2m+2}_c(G;\pi_{2m}(E_n))$};
				\node at (0,6) {$0$};
				\draw[color=red,thick,dashed, ->] (A) -- node[below left,color=red] {$d_{2m+1}$?} (B);
				\draw[color=red,thick,dashed, ->] (G) -- node[below left,color=red] {$d_{2m+1}$?} (C);
				\node at (1,6) {$\cdots$};
				\node at (-1,7) {$0$}; 
				\node at (0,7) {$\cdots$};
				\node at (1,7) {$\cdots$};
				\draw[thick, ->] (-1.5,-0.5) -- (1.5,-0.5);
				\draw[thick, ->] (-1.5,-0.5) -- (-1.5,7.5);
				\node[below] at (-1.125,-0.5) {$t-s=-1$};
				\node[below] at (0,-0.5) {$0$};
				\node[below] at (1,-0.5) {$1$};
				\node[left] at (-1.5,0) {$s=0$};
				\node[left] at (-1.5,1) {$1$};
				\node[left] at (-1.5,2) {$2$};
				\node[left] at (-1.5,3) {$3$};
				\node[left] at (-1.5,4) {$\cdots$};
				\node[left] at (-1.5,5) {$2m+1$};
				\node[left] at (-1.5,6) {$2m+2$};
				\node[left] at (-1.5,7) {$2m+3$};
			\end{tikzpicture}
			\caption{The $-1,0,1$ stems of the DSS for $\pic_{K(n)}\left(E_n^{hG}\right)$\\ {\small (adapted from \cite[\S1.3]{CZ_exotic_Picard})}}\label{fig:DSS_Pic_EnhG}
		\end{figure}
		\begin{itemize}
			\item When $2m+1\ge \cd_p(G)$, the first two nonzero terms on the $0$-stem are  $E_2^{0,0}=\Z/2$ and $E_2^{1,1}=\Pic_{K(n)}^{alg,0}=H^1_c(G;\pi_0(E_n)^\times)$. They are both permanent cycles, since the targets of all potential non-zero differentials supported by them are above the horizontal vanishing line at $s=\cd_p(G)$. This implies that $\phi_1\colon\Pic_{K(n)}^0\left(E_n^{hG}\right)=\Fil^{\ge 1}\Pic_{K(n)}\left(E_n^{hG}\right)\twoheadrightarrow \!^\pic E_\infty^{1,1}=E_2^{1,1}= \Pic_{K(n)}^{alg,0}\left(E_n^{hG}\right)=H_c^{1}(G;\pi_{0}(E_n)^\times)$ is a surjection.
			\item When $2m+1>\cd_p(G)$, the two terms $E_2^{0,0}$ and $E_2^{1,1}$ are the \emph{only} non-zero permanent cycles in the $0$-stem, since the next term $E_2^{2m+1,2m+1}$ is above the horizontal vanishing line at $s=\cd_p(G)$. As a result, $\phi_1$ is also injective in this case and we obtain the descent filtration. \qedhere		
		\end{itemize}
	\end{proof}
	\begin{rem}
		Similar to  \Cref{thm:descent_Pic_EnhG}, Culver and the second author  proved in \cite{CZ_exotic_Picard} that when $(p-1)\nmid n$, the map \begin{equation*}
			\phi_{2p-1}\colon \kappa_n=\kappa^{(2p-2)}_n=\Fil^{\ge 2p-1}\Pic_{K(n)} \to  \!^\pic E_{2p-1}^{2p-1,2p-1}\left(E_n^{h\Gbb_n}\right)= \!^\pic E_{2}^{2p-1,2p-1}\left(E_n^{h\Gbb_n}\right)=H_c^{2p-1}(G;\pi_{2p-2}(E_n))
		\end{equation*}
		is an isomorphism when $4p-3>n^2$, and is a surjection when $4p-3=n^2$. This is because the term $^\pic E_2^{0,1}=H^0_c(\Gbb_n;\pi_0(E_n)^\times)=\Zpx$ does not support any differential in the DSS \eqref{eqn:PicSS_EnhG}, as any $\alpha\in \Zp\cong H^0_c(\Gbb_n;\pi_0(E_n))$ is a permanent cycle in the HFPSS for the ring spectrum $E_n^{h\Gbb_n}\simeq S^0_{K(n)}$.  When $(p-1)\nmid n$, we also have $\cd_p(\Gbb_n)=n^2$.
		
		This claim could be generalized to the exotic Picard group $\kappa\left(E_n^{hG}\right)$ for closed subgroups $G$ of $\Gbb_n$ if the term $^\pic E_2^{0,1}=H^0_c(G;\pi_0(E_n)^\times)$, or equivalently $^\HFP E_2^{0,0}=H^0_c(G;\pi_0(E_n))$ does not support any differentials. As is mentioned in \Cref{rem:kappa_comparison} \cite[Example 3.10]{BBGHPS_exotic_h2_p2}, this happens in all cases with complete computations. 
	\end{rem} 
	\section{Computations of $K(1)$-local Picard Mackey functors}\label{sec:Pic_Mack}
	As an application to the profinite descent spectral sequences for Picard spaces in \Cref{cor:desc_EnhG1G2}, we compute $K(1)$-local Picard groups of $E_1^{hG}$ for all closed subgroups of $\Gbb_1=\Zpx$ at all primes in the remainder of the paper. When $G\le \Zpx$ is a pro-cyclic open subgroup, the homotopy fixed point $E_1^{hG}$ is equivalent to the $K(1)$-local algebraic $K$-theory spectrum of a finite field (see \Cref{prop:algK_finfld_p} and \Cref{rem:algK_finfld_2}).
	\subsection{$K(n)$-local Picard groups and Mackey functors} The Picard groups $\Pic_{K(n)}\left(E_n^{hG}\right)$ are not just a collection of (pro-)abelian groups. When we vary the closed subgroups $G$, those individual Picard groups form a \emph{Mackey functor}, i.e. there are restriction and transfer maps between them. 
	\begin{exmps}
		Let $G$ be a finite group and $A$ be an abelian group. 
		\begin{enumerate}
			\item The \textbf{constant $G$-Mackey functor} with value $A$ is defined by $\underline{A}(G/H):=A$ for all subgroups $H\le G$. The restriction maps are all identities and the transfer maps are determined by the double coset formula.
			\item The constant Mackey functor $\underline{A}$ has an \emph{opposite} $\underline{A^\op}$ whose underlying groups are still $A$, but the restriction and transfer maps are switched. 
			\item When $A$ has  a $G$-action, the \textbf{cohomological $G$-Mackey functor} of $A$ is defined by 
			\begin{equation*}
				\underline{H^s(-;A)}(G/H):= H^s(H;A).
			\end{equation*}
			where a subgroup $H\le G$ acts on $A$ by restricting the $G$-action. Given subgroups $H_1\le H_2$ of $G$, the restriction and transfer maps of the Mackey functor is given the corresponding map in group cohomology. See definitions in  \cite{Brown_cohomology}.
			\item When $G$ acts trivially on $A$, we have natural isomorphisms $H^1(G;A)\cong \hom(G,A)$.  This implies that $\underline{\hom(-;A)}$ is a cohomological $G$-Mackey functor. This is an example of \emph{globally-defined} Mackey functor in \cite[829]{Webb_guide_Mackey}.
			\item When a group $G$ acts trivially on $A=\Q/\Z$, we have $H^1(G;\Q/\Z)\cong \hom(G,\Q/\Z)$ is the Pontryagin dual of $G$, which is non-canonically isomorphic to $G$ if $G$ is finite abelian. We will denote the cohomological Mackey functor $\underline{\hom(-;\Q/\Z)}$ by $\underline{(-)^\vee}$. 
		\end{enumerate}
		The examples above (except for $\underline{A^\op}$) can be generalized to profinite groups if we only consider transfer maps between closed subgroups $H_1\le H_2$ such that $[H_2:H_1]<\infty$. 
	\end{exmps}
	\begin{prop}[{\cite[Proposition 3.1 and Corollary 3.12]{BBHS_PicE2hC4}}]
		Let $G$ be a finite group and $X$ be an $\einf$-ring spectrum with a $G$-action. Suppose $X^{hG}\to X$ is a faithful $G$-Galois extension, then the assignment $G/H\mapsto \Pic\left(X^{hH}\right)$ is a $G$-Mackey functor. More precisely, let $H_1\le H_2\le G$ be subgroups. 
		\begin{itemize}
			\item The restriction map $\res\colon \Pic\left(X^{hH_2}\right)\to \Pic\left(X^{hH_1}\right)$ is induced by the base change of invertible $X^{hH_2}$-modules $A\mapsto A\Smash_{X^{hH_2}}X^{hH_1}$.
			\item The transfer map $\tr\colon \Pic\left(X^{hH_1}\right)\to \Pic_{H_2}\left(X^{hH_1}\right)\cong \Pic\left(X^{hH_2}\right)$ is induced by the multiplicative norm $A\mapsto \mathrm{Norm}_{H_1}^{H_2} A$. 
		\end{itemize}
	\end{prop}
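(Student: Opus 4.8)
The plan is to realize the assignment $G/H\mapsto \Mod\left(X^{hH}\right)$ as a genuine $G$-equivariant symmetric monoidal $\infty$-category, apply the Picard space functor to it, and then read off the Mackey functor structure on $\pi_0$.

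First I would collect the inputs from Galois theory. Restricting the $G$-action along any inclusion $H\le G$ exhibits $X^{hH}\to X$ as a faithful $H$-Galois extension (a standard consequence of \Cref{defn:fin_gal_extn}), and for $H_1\le H_2\le G$ the canonical map $X^{hH_2}\to X^{hH_1}$ makes $X^{hH_1}$ a dualizable, descendable $X^{hH_2}$-algebra; when $H_1\trianglelefteq H_2$ it is even an $(H_2/H_1)$-Galois extension with $X^{hH_1}\Smash_{X^{hH_2}}X^{hH_1}\simeq \map\left(H_2/H_1, X^{hH_1}\right)$, and in general $X^{hH_1}\Smash_{X^{hG}}X^{hH_2}$ splits as a product of terms $X^{h(H_1\cap gH_2g^{-1})}$ indexed by double cosets. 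By faithful Galois descent (Mathew-Stojanoska, \cite{MS_Picard}), $\Mod\left(X^{hH_2}\right)\simeq \Mod\left(X^{hH_1}\right)^{h(H_2/H_1)}$ whenever $H_1\trianglelefteq H_2$, and base change $-\Smash_{X^{hH_2}}X^{hH_1}$ admits both adjoints, which agree up to a canonical twist by ambidexterity for the finite group $H_2/H_1$. These Beck-Chevalley and ambidexterity properties are precisely what is needed to extend the orbit-category presheaf $G/H\mapsto \Mod\left(X^{hH}\right)$---with its restriction (base-change) functors---to a product-preserving functor $\underline{\Mod}(X)\colon \mathrm{Span}(\mathbb{F}_G)\to \Cat_\infty^{\mathrm{SymMon}}$ on the effective Burnside $2$-category of finite $G$-sets, i.e.\ a genuine $G$-symmetric monoidal $\infty$-category, whose covariant (wrong-way) arrows along $G/H_1\to G/H_2$ are the induction/norm functors. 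This packaging is the content of \cite[Proposition 3.1]{BBHS_PicE2hC4}; alternatively one can build $\underline{\Mod}(X)$ as modules over the genuine commutative $G$-ring spectrum encoded by the $G$-action on $X$ together with its Hill-Hopkins-Ravenel norms, or else verify the Mackey axioms (functoriality of $\res$, functoriality of $\tr$, and the double-coset formula) by hand from the base-change squares above.

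Next I would apply the Picard space functor. By \Cref{prop:pic_lim_colim} (\cite[Proposition 2.2.3]{MS_Picard}), $\pic$ is a lax symmetric monoidal, limit-preserving functor valued in grouplike $\einf$-spaces, so postcomposing $\underline{\Mod}(X)$ with it produces a product-preserving functor $\underline{\pic}(X)\colon \mathrm{Span}(\mathbb{F}_G)\to \Alg^{\mathrm{gp}}_{\einf}(\Top)$---a connective genuine $G$-spectrum, i.e.\ a spectral Mackey functor---with $\underline{\pic}(X)(G/H)\simeq \pic\left(X^{hH}\right)$. Taking $\underline{\pi}_0$ then yields the $G$-Mackey functor $G/H\mapsto \pi_0\pic\left(X^{hH}\right)=\Pic\left(X^{hH}\right)$ (using \Cref{cor:pic_E2_ring}), the double-coset formula being inherited from the Burnside-category functoriality.

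Finally I would identify the two structure maps. The restriction is, by construction, the effect on $\pi_0\pic$ of the contravariant functor $\Mod\left(X^{hH_2}\right)\to \Mod\left(X^{hH_1}\right)$, namely base change $A\mapsto A\Smash_{X^{hH_2}}X^{hH_1}$. The transfer is the effect of the covariant arrow in $\mathrm{Span}(\mathbb{F}_G)$; since $\pic$ only detects invertible objects and is merely \emph{lax} symmetric monoidal, this covariant map is not restriction of scalars (which destroys invertibility) but the multiplicative norm $\mathrm{Norm}_{H_1}^{H_2}$, whose restriction to invertible objects factors through $\Pic_{H_2}\left(X^{hH_1}\right)\cong \Pic\left(X^{hH_2}\right)$ and supplies $\tr\colon A\mapsto \mathrm{Norm}_{H_1}^{H_2}A$ as in \cite[Corollary 3.12]{BBHS_PicE2hC4}. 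The hard part is the first step: verifying the ambidexterity and Beck-Chevalley conditions that upgrade the naive $G$-action on $\Mod(X)$ to a genuine $G$-symmetric monoidal structure equipped with norms---this is exactly where faithfulness of the Galois extension and the relation $X\Smash_{X^{hG}}X\simeq \map(G,X)$ are used essentially, after which everything else is formal.
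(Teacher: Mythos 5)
This proposition is imported by the paper from Beaudry--Bobkova--Hill--Stojanoska and is not proved here, so there is no internal argument to compare against; your sketch is, in outline, a faithful reconstruction of the cited proof. The strategy---upgrade $G/H\mapsto \Mod\left(X^{hH}\right)$ to a product-preserving functor on the span category of finite $G$-sets using faithful Galois descent, dualizability of $X^{hH_1}$ over $X^{hH_2}$, and the double-coset splitting of $X^{hH_1}\Smash_{X^{hG}}X^{hH_2}$; then apply the limit- and product-preserving functor $\pic$ and take $\pi_0$---is exactly the right one, and you correctly isolate both the genuinely nontrivial step (producing the norms and verifying Beck--Chevalley/ambidexterity from faithfulness and the Galois condition $X\Smash_{X^{hG}}X\simeq\map(G,X)$) and the reason the transfer must be the multiplicative norm rather than restriction of scalars (the additive transfer does not preserve invertibility, while $\pic$ converts the multiplicative norms into the additive transfers of the resulting spectral Mackey functor). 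The only caveat is that this hard step is asserted rather than carried out, which is acceptable for a result the paper itself only cites.
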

	We note that the restriction maps can be defined for any pairs of subgroups $H_1\le H_2$, whereas this definition of transfer maps only applies when $[H_2:H_1]$ is finite. 
	As $\Gbb_1=\Zpx$ is abelian, all of its (closed) subgroups are normal. This means for any $H_1\le H_2\le \Zpx$ with $[H_2:H_1]<\infty$, the map $E_1^{hH_2}\to E_1^{hH_1}$ is a faithful $K(1)$-local $(H_2/H_1)$-Galois extensions. Hence there are restriction and transfer maps between their Picard groups. 
	\subsection{Computations at height $1$ and odd primes}\label{subsec:mackey_odd}
	Let $p$ be an odd prime. Next we compute the $K(1)$-local Picard group at $p$ as a Mackey functor for the profinite group $\Zpx$. Let's first recall some basic facts about the structure of this group. The first step in the computation of the $\Zpx$-Picard Mackey functor is to determine all of its \emph{closed} subgroups. The group $\Zpx$ is pro-cyclic, since it is the limit of cyclic groups $\zx{p^v}\cong C_{(p-1)p^{v-1}}$.  For any element $\alpha\in \Zpx$, let $\langle\alpha\rangle$ be the closed subgroup of $\Zpx$ generated by $\alpha$. 
	\begin{lem}\label{lem:Zpx} When $p$ is an odd prime, the profinite group $\Zpx$ has the following properties:
		\begin{enumerate}
			\item The group $\Zpx$ is procyclic. 
			\item  For $1\le k\le \infty$, the subset $1+p^k\Zp=\{a\in \Zpx\mid a\equiv 1\mod p^k\}$ is a closed subgroup of $\Zpx$, where $1+p^\infty\Zp:=\{1\}$. Moreover, for any element $a\equiv 1\mod p$, the closed subgroup of $\Zpx$ generated by $a$ is $1+p^k\Zp$ where $k=v_p(a-1)$. 
			\item The maximal finite subgroup of $\Zpx$ is the group of $(p-1)$-st roots of unity in $\Zp$.
			\item The profinite group $\Zpx$ decomposes as a direct product of its closed subgroups: $\Zpx\cong \zpx\times(1+p\Zp)$.
			\item Closed subgroups $G$ of $\Zpx$ are of the form $G = G_\fin \times G_\pro$ where $G_\fin$ is a subgroup of $\zpx$ and $G_\pro \cong 1+p^k\Zp$ for $1\leqslant k \leqslant \infty$ is a pro-$p$-group.
		\end{enumerate}
	\end{lem}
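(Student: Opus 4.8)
The plan is to reduce everything to the classical structure theory of $\Zpx$ for odd $p$, which rests on two ingredients: the Teichm\"uller splitting and the $p$-adic logarithm. First I would recall that reduction modulo $p$ fits into a short exact sequence of topological groups $1\to 1+p\Zp\to \Zpx\to \Fp^\times\to 1$, that the subgroup $\zpx\subset\Zpx$ of $(p-1)$-st roots of unity (the Teichm\"uller representatives) maps isomorphically onto $\Fp^\times$, and that this furnishes a splitting $\Zpx\cong \zpx\times(1+p\Zp)$; this proves (4), and $\zpx$ is visibly cyclic of order $p-1$. Since $p$ is odd, the $p$-adic logarithm and exponential restrict to mutually inverse isomorphisms of topological groups $\log\colon 1+p\Zp\xrightarrow{\sim}p\Zp$ and $\exp\colon p\Zp\xrightarrow{\sim}1+p\Zp$, satisfying $v_p(\log(1+x))=v_p(x)$ for $v_p(x)\ge1$; in particular $\log$ carries $1+p^k\Zp$ isomorphically onto $p^k\Zp$ for every $1\le k\le\infty$, so $1+p\Zp\cong\Zp$ as topological groups.

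Given this, (1) follows because $\Zpx\cong C_{p-1}\times\Zp$ is procyclic: $C_{p-1}$ has order prime to $p$ and $\Zp$ is pro-$p$, so a topological generator is the product of a generator of $\zpx$ with a topological generator of $1+p\Zp$ (alternatively one notes $(\Z/p^v)^\times$ is cyclic of order $(p-1)p^{v-1}$ for odd $p$ and $\Zpx=\lim_v (\Z/p^v)^\times$). For (2), the set $1+p^k\Zp$ is a closed subgroup since it is the kernel of $\Zpx\to (\Z/p^k)^\times$ (and directly: $(1+p^ka)(1+p^kb)^{-1}\in 1+p^k\Zp$); moreover, if $v_p(a-1)=k$ then $\log a\in p^k\Zp\setminus p^{k+1}\Zp$ topologically generates $p^k\Zp$, so $a$ topologically generates $1+p^k\Zp$. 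For (3), any finite subgroup of $\zpx\times\Zp$ projects trivially to the torsion-free group $\Zp$, hence lies in $\zpx$, so $\zpx$ is the maximal finite subgroup.

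The only part requiring real work is (5). Given a closed subgroup $G\le \Zpx=\zpx\times(1+p\Zp)$, let $G_\fin$ be the image of $G$ under the projection to $\zpx$ and $G_\pro:=G\cap(1+p\Zp)$. Since $G_\pro$ is a closed subgroup of $1+p\Zp$, transporting through $\log$ and using that the closed subgroups of $p\Zp$ are $\{0\}$ and $p^k\Zp$ for $k\ge1$, we get $G_\pro=1+p^k\Zp$ for some $1\le k\le\infty$, a pro-$p$-group. It remains to see $G=G_\fin\times G_\pro$, and the crux is that the Teichm\"uller lift of $G_\fin$ is contained in $G$. To show this I would take $g\in G$ with image $\zeta\in G_\fin$, write $g=\zeta u$ with $u\in1+p\Zp$, and pick a sequence $j_i\to\infty$ with $p^{j_i}\equiv1\pmod{|G_\fin|}$ (possible since $\gcd(p,|G_\fin|)=1$). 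Then $g^{p^{j_i}}=\zeta\,u^{p^{j_i}}\to\zeta$, because $u^{p^{j}}\to1$ in $1+p\Zp$, and since $G$ is closed this forces $\zeta\in G$. Hence $G_\fin\subseteq G$, so any $g=\zeta u\in G$ has $u=\zeta^{-1}g\in G\cap(1+p\Zp)=G_\pro$, giving $G=G_\fin\cdot G_\pro$; the product is direct since $\zpx\cap(1+p\Zp)=\{1\}$. I expect this closure/approximation argument, together with the harmless bookkeeping for the degenerate case $k=\infty$ (where $G_\pro$ is trivial), to be the main obstacle; the rest is a direct translation of standard $p$-adic facts.
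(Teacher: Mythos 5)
Your proof is correct, and for parts (1)--(4) it follows essentially the same route as the paper; the paper computes the Teichm\"uller character explicitly as $\omega(a)=\lim_{n}a^{p^n}$ and argues with residues modulo $p^v$ where you invoke the $p$-adic logarithm, but this is cosmetic. The one genuine difference is in (5): the paper first observes that a closed subgroup $G$ of the procyclic group $\Zpx$ is itself procyclic, picks a topological generator $\alpha$, and uses closedness to conclude $\omega(\alpha)=\lim_n\alpha^{p^n}\in G$, so that $G=\langle\omega(\alpha)\rangle\times(1+p^k\Zp)$ with $k=v_p(\alpha\omega(\alpha)^{-1}-1)$; you instead decompose $G$ through its image in $\zpx$ and its intersection with $1+p\Zp$, and run the same limit argument $g^{p^{j}}\to\zeta$ element by element to show that the Teichm\"uller lift of the image is contained in $G$. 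Your version avoids relying on the (standard but not completely free) fact that closed subgroups of procyclic groups are procyclic, at the cost of a slightly longer splitting argument; the two are otherwise interchangeable, and your auxiliary choice of exponents with $p^{j_i}\equiv 1\pmod{|G_\fin|}$ is in fact automatic, since $\zeta^{p}=\zeta$ for any $(p-1)$-st root of unity.
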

	\begin{proof}
		\begin{enumerate}
			\item By definition, $\Zpx= \lim\limits_{k}\zx{p^k}$. Each of the finite groups  $\zx{p^k}$ in the inverse system is cyclic and we can pick a compatible system of generators $\alpha_k$ of $\zx{p^k}$ such that $\alpha_m\equiv \alpha_k\mod p^k$ for any $m\ge k$. The sequence $\{\alpha_k\}$ converges to an element $\alpha\in\Zpx$. It is a pro-generator, since the smallest closed group containing $\alpha$ is $\Zpx$ itself. 
			\item  When $k<\infty$, the subset $1+p^k\Zp$ is the kernel of the projection map $\Zpx\to \zx{p^k}$.  This implies that it is a closed (and also open) subgroup of  $\Zpx$.  When $v_p(a-1)=k>0$ and $v\ge k$, the residue class of $a$ in $\zx{p^v}$ generates the subgroup $\{b\in \zx{p^v}\mid b\equiv 1 \mod p^k\}$. It follows that the smallest closed subgroup containing $a$ is $1+p^k\Zp$. 
			\item This follows from Hensel's Lemma. 
			\item Consider the Teichm\"uller character $\omega\colon \Zpx\to \zpx \to \Zpx$, which sends $a\in \Zpx$ to the unique element $\omega(a)\in \Zpx$ such that $\omega(a)\equiv a\mod p$ and $ \omega(a)^{p-1}=1$. Explicitly, the character is given by the formula $\omega(a)=\lim_{n\to \infty} a^{p^n}$. The direct product decomposition $f\colon  \Zpx\to \zpx\times (1+p\Zp)$ is then given by $f(a)=(\omega(a), a\cdot \omega(a)^{-1})$. One can check this map is a continuous isomorphism of profinite groups.  
			\item A closed subgroup $G\le \Zpx$ is necessarily (pro-)cyclic since $\Zpx$ is. Pick a (pro-)generator $\alpha\in G$. The explicit formula above implies that  $\omega(\alpha)\in G$, which generates a finite subgroup $\langle \omega(\alpha)\rangle=G\cap \zpx$. The element $\alpha\cdot \omega(\alpha)^{-1}$ is then congruent to $1$ modulo $p$. Set $k=v_p(\alpha\cdot \omega(\alpha)^{-1}-1)$. We then have $G=\langle \omega(\alpha)\rangle \times (1+p^k\Zp) \le \Zpx$ as claimed. \qedhere
		\end{enumerate}
	\end{proof}
	Let $\Fq$ be a finite field with $q$ elements. Quillen's computation  \cite{Quillen_finfld} of algebraic $K$ theory of finite fields implies that when $p\nmid q$, we have an equivalence of $K(1)$-local $\einf$-ring spectra $L_{K(1)}K(\Fq)\simeq \left(KU^\wedge_p\right)^{h\langle q\rangle}$. The converse is also true. 
	\begin{prop}\label{prop:algK_finfld_p}
		Any finite Galois extension of $S^0_{K(1)}$ is equivalent to the $K(1)$-local algebraic $K$-theory spectrum for some finite field $\Fq$ when $p>2$. 
	\end{prop}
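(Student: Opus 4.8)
The plan is to reduce the statement to a group-theoretic fact about open subgroups of $\Zpx$, using the classification of finite Galois extensions of the $K(1)$-local sphere at odd primes together with the identification $E_1\simeq KU^\wedge_p$ recorded above.

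First I would invoke Rognes's Galois theory \cite{galext} together with the known description of the Galois group of $\Sp_{K(1)}$ (see e.g.\ \cite{Mathew_Galois}) and the Devinatz--Hopkins construction \cite{fixedpt} (see \Cref{thm:Rognes_Gal}, \Cref{cor:E_nhG_Gal}): for $p>2$, $S^0_{K(1)}=E_1^{h\Gbb_1}\to E_1\simeq KU^\wedge_p$ is the maximal connected profinite Galois extension of $S^0_{K(1)}$, with group $\Gbb_1=\Zpx$, so that the connected finite Galois extensions of $S^0_{K(1)}$ are, up to equivalence of $\einf$-rings under $S^0_{K(1)}$, exactly the $E_1^{hU}$ for $U\le\Zpx$ an \emph{open} subgroup, with Galois group $\Zpx/U$. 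Since $L_{K(1)}K(\Fq)$ is connected ($\pi_0 L_{K(1)}K(\Fq)\cong\Zp$), it is enough to treat the connected case. (That $E_1^{hU}$ is itself connected also follows directly from the HFPSS \eqref{eqn:HFPSS_EnhG}: $\Gbb_1$ acts trivially on $\pi_0 E_1=\Zp$ and $\cd_p U\le 1$, so $\pi_0 E_1^{hU}\cong\Zp$.) Granting this, it suffices to prove that every open subgroup $U\le\Zpx$ equals the closed subgroup $\langle q\rangle$ generated by some prime $q\ne p$; then $E_1^{hU}=\bigl(KU^\wedge_p\bigr)^{h\langle q\rangle}\simeq L_{K(1)}K(\Fq)$ by Quillen's computation quoted just before the Proposition.

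To produce such a $q$ I would use the structure theory of \Cref{lem:Zpx}. Since $\Zpx$ is procyclic, so is $U$; writing $U=(\Z/d)\times(1+p^k\Zp)$ as in \Cref{lem:Zpx}(5), openness forces $k<\infty$, so $1+p^k\Zp\cong\Zp$ and $U\cong(\Z/d)\times\Zp$ with $d\mid(p-1)$, in particular $\gcd(d,p)=1$. A Schur--Zassenhaus argument --- valid because $\Z/d$ is finite of order prime to $p$ while $\Zp$ is a torsion-free pro-$p$ group ($p$ odd) --- shows that every closed subgroup of $(\Z/d)\times\Zp$ is the product of its intersections with the two factors; hence an element $(\zeta,x)$ topologically generates $U$ exactly when $\zeta$ generates $\Z/d$ and $x\in\Zp^\times$. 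Therefore the set $T(U)$ of topological generators of $U$ is a nonempty open subset of $\Zpx$, so it contains a coset $a(1+p^N\Zp)$ with $a\in\Zpx$. By Dirichlet's theorem on primes in arithmetic progressions there is a prime $q\equiv a\pmod{p^N}$ (necessarily $q\ne p$), and then $q\in T(U)$, i.e.\ $\langle q\rangle=U$, which completes the proof.

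I expect the main obstacle to be the input of the first step --- that for $p>2$ every connected finite Galois extension of $S^0_{K(1)}$ is an $E_1^{hU}$, equivalently that $KU^\wedge_p$ realizes the maximal connected profinite Galois extension of the $K(1)$-local sphere. This is exactly where the hypothesis $p>2$ enters (at $p=2$ the group $\Z_2^\times$ is not procyclic and $KO$-theoretic phenomena intervene), and it rests on \cite{galext,Mathew_Galois,fixedpt} and the structure of $\Sp_{K(1)}$ rather than on anything proved in the present paper, so I would cite it rather than reprove it; the remaining group theory is routine once \Cref{lem:Zpx} is in hand.
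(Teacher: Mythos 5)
Your proof is correct and follows essentially the same route as the paper's: the paper also reduces to showing that every open subgroup of $\Zpx$ is topologically generated by a prime, which it gets from procyclicity (\Cref{lem:Zpx}) plus Dirichlet's theorem on primes in arithmetic progressions -- your second paragraph is just a more detailed version of that two-line argument, and the reduction to open subgroups of $\Gbb_1=\Zpx$ is left implicit in the paper exactly as you suspected. One caveat on your first paragraph: by Mathew's computation the Galois group of $\Sp_{K(1)}$ is the \emph{extended} stabilizer group $\Zpx\times\widehat{\mathbb{Z}}$, the extra $\widehat{\mathbb{Z}}$ coming from unramified extensions with $\pi_0\cong \W\Fbb_{p^m}$, so $E_1\simeq KU^\wedge_p$ is not literally the maximal connected profinite Galois extension and there exist connected finite Galois extensions of $S^0_{K(1)}$ that are not of the form $E_1^{hU}$ (and not $K(1)$-local $K$-theories of finite fields, since their $\pi_0$ is not $\Zp$). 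This means the proposition should really be read as a statement about finite subextensions of $S^0_{K(1)}\to E_1$; the paper's own proof makes the same silent restriction, so this is an imprecision in the statement rather than a defect specific to your argument.
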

	\begin{proof}
		By \Cref{lem:Zpx}, any open subgroup $G$ of $\Zpx$ is pro-cyclic. It suffices to find a topological generator $q\in G$ that is (a power of) a prime number. Pick a topological generator $\alpha\in G$ and write $G= \langle\omega(\alpha)\rangle\times 1+p^k\Zp$ as in the proof of \Cref{lem:Zpx}. Then any integer $q$ satisfying $q\equiv \alpha\mod p^{k+1}$ is a topological generator of $G$. This $q$ can be chosen to be a prime number by Dirichlet's theorem on arithmetic progressions. 
	\end{proof}
	For a closed subgroup $G\le \Zpx$, we compute $\Pic\left(E_1^{hG}\right)$ in two steps. First, we compute $\Pic_{K(1)}\left(E_1^{hG_\fin}\right)$ by the descent spectral sequence \cite{MS_Picard}:
	\begin{equation}\label{eqn:fin_PicSS}
		E^{s,t}_2=H^s(G_\fin; \pi_t (\pic_{K(1)}(E_1))) \Longrightarrow \pi_{t-s} \pic_{K(1)}\left(E_1^{hG_\fin}\right).
	\end{equation}
	For our purpose, we only need to compute $\pi_0 \pic_{K(1)} \left(E_1^{hG_\fin}\right) = \Pic_{K(1)}\left(E_1^{hG_\fin}\right)$.	At height $1$, the Morava stabilizer group $\Zpx$ acts trivially on $\pi_0\left(\pic_{K(1)}(E_1)\right)\cong \Pic_{K(1)}(E_1)\cong \Z/2$ and $\pi_1 \left(\pic_{K(1)}(E_1)\right)\cong \pi_0(E_1)^\times \cong \Zpx$.  
	\begin{prop}\label{prop:pic_E1hG_odd}
		There is a non-split extension of $\zpx$-Mackey functors: 
		\begin{equation*}
			\begin{tikzcd}
				0\rar &\underline{(-)^\vee}\rar&\Pic_{K(1)}(E_1^{h-})\rar& \underline{\Z/2}\rar&0,
			\end{tikzcd}
		\end{equation*}
		In particular, the group $\Pic_{K(1)}\left(E_1^{hG}\right)$ is cyclic of order $2|G|$ for each subgroup $G \leq \zpx$.
	\end{prop}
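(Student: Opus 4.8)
The plan is to feed each finite subgroup $G_\fin\le\zpx$ into \Cref{thm:descent_Pic_EnhG}, read off the resulting two-step descent filtration as the asserted short exact sequence, and then determine the Mackey-functor structure and non-splitness by exhibiting $\Sigma E_1^{hG_\fin}$ as an explicit cyclic generator of order $2\lvert G_\fin\rvert$.

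First I would check the hypotheses of \Cref{thm:descent_Pic_EnhG}(1) at height $n=1$ for $G=G_\fin$. Here $G_\fin\cap\zpx=G_\fin$ is cyclic of order $m\mid(p-1)$, and since $p\nmid m$ we have $\cd_p(G_\fin)=0<2m+1$. Hence $\kappa\bigl(E_1^{hG_\fin}\bigr)=0$ and there is a short exact sequence
\begin{equation*}
	0\longrightarrow H_c^1\bigl(G_\fin;\pi_0(E_1)^\times\bigr)\longrightarrow\Pic_{K(1)}\bigl(E_1^{hG_\fin}\bigr)\xrightarrow{\ \phi_0\ }\Z/2\longrightarrow 0,
\end{equation*}
identifying $\Pic^0_{K(1)}\bigl(E_1^{hG_\fin}\bigr):=\ker\phi_0$ with $H_c^1\bigl(G_\fin;\pi_0(E_1)^\times\bigr)$ via $\phi_1$. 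Since $\pi_0(E_1)^\times\cong\Zpx$ with trivial $\zpx$-action and $\Zpx\cong\zpx\times(1+p\Zp)$ by \Cref{lem:Zpx} with $1+p\Zp$ torsion-free, we obtain $H^1(G_\fin;\Zpx)=\hom(G_\fin,\zpx)\cong G_\fin^\vee$, cyclic of order $m$. This already yields the short exact sequence of abelian groups at each level and shows $\bigl\lvert\Pic_{K(1)}(E_1^{hG_\fin})\bigr\rvert=2m$.

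Next I would show $\Pic_{K(1)}\bigl(E_1^{hG_\fin}\bigr)$ is cyclic, generated by $\bigl[\Sigma E_1^{hG_\fin}\bigr]$. Base change along the $G_\fin$-Galois extension $E_1^{hG_\fin}\to E_1$ (\Cref{thm:Rognes_Gal}) sends $\bigl[\Sigma E_1^{hG_\fin}\bigr]$ to $[\Sigma E_1]$, the generator of $\Z/2=\Pic_{K(1)}(E_1)$, so this class has even order. As $E_1$ is $2$-periodic, $\bigl[\Sigma^2 E_1^{hG_\fin}\bigr]$ lies in $\Pic^0_{K(1)}\bigl(E_1^{hG_\fin}\bigr)$, and by the construction of $\phi_1$ in \Cref{const:desc_fil} its image is the class in $H^1(G_\fin;\Zpx)$ of the $G_\fin$-module $\pi_0\bigl(E_1\hsmash_{E_1^{hG_\fin}}\Sigma^2 E_1^{hG_\fin}\bigr)\cong\pi_2(E_1)$. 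As a subgroup of $Z(\Gbb_1)=\Zpx$, $G_\fin$ acts on $\pi_2(E_1)\cong\Zp$ by multiplication by the element itself, i.e. via the tautological inclusion $G_\fin\hookrightarrow\zpx$, whose class generates $\hom(G_\fin,\zpx)\cong G_\fin^\vee$. Since $\phi_1$ is an isomorphism, $\bigl[\Sigma^2 E_1^{hG_\fin}\bigr]$ generates $\Pic^0_{K(1)}\bigl(E_1^{hG_\fin}\bigr)$; combined with the preceding sentence, $\bigl[\Sigma E_1^{hG_\fin}\bigr]$ generates $\Pic_{K(1)}\bigl(E_1^{hG_\fin}\bigr)$, whence $\Pic_{K(1)}\bigl(E_1^{hG_\fin}\bigr)\cong\Z/(2m)$ by the order count above.

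Finally, for the Mackey-functor structure I would argue as follows. Using $E_1^{h\{e\}}\simeq E_1$, the map $\phi_0$ at level $H$ is the restriction $\res^H_{\{e\}}$, so the functoriality $\phi_0^{H_1}\res^{H_2}_{H_1}=\phi_0^{H_2}$ together with the double-coset formula (the $\zpx$-action on $\Pic_{K(1)}(E_1)=\Z/2$ being trivial) shows that the subgroups $\Pic^0_{K(1)}\bigl(E_1^{hH}\bigr)=\ker\phi_0$ are preserved by all restrictions and transfers, hence form a sub-Mackey functor $\underline{\Pic^0_{K(1)}}\bigl(E_1^{h-}\bigr)$; the quotient has underlying groups $\Z/2$ with identity restrictions and transfers equal to multiplication by the index, so it is the constant Mackey functor $\underline{\Z/2}$. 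On the other side, the restriction $\bigl[\Sigma E_1^{hH_2}\bigr]\mapsto\bigl[\Sigma E_1^{hH_1}\bigr]$ carries generator to generator, so every restriction in $\underline{\Pic_{K(1)}}\bigl(E_1^{h-}\bigr)$ is the canonical surjection $\Z/(2\lvert H_2\rvert)\twoheadrightarrow\Z/(2\lvert H_1\rvert)$; and since the Picard Mackey functor of a Galois extension is cohomological ($\tr^{H_2}_{H_1}\res^{H_2}_{H_1}(X)$ is the $[H_2:H_1]$-fold relative smash power of $X$ over $E_1^{hH_2}$), the identity $\tr\res=[H_2:H_1]$ determines all transfers, and one identifies $\underline{\Pic^0_{K(1)}}\bigl(E_1^{h-}\bigr)$ with the cohomological Mackey functor $\underline{H^1(-;\Zpx)}\cong\underline{(-)^\vee}$. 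This produces the short exact sequence of $\zpx$-Mackey functors $0\to\underline{(-)^\vee}\to\underline{\Pic_{K(1)}}\bigl(E_1^{h-}\bigr)\to\underline{\Z/2}\to0$, which is non-split already at the level $\zpx$, where $m=p-1$ is even so that $\Z/(2(p-1))\not\cong\Z/(p-1)\oplus\Z/2$. The main obstacle is the third step: correctly tracking $\bigl[\Sigma^2 E_1^{hG_\fin}\bigr]$ through $\phi_1$ and verifying it hits a generator of $H^1(G_\fin;\Zpx)$, which requires unwinding both the construction of the $G_\fin$-action on $\pi_*\bigl(E_1\hsmash_{E_1^{hG_\fin}}X\bigr)$ in \Cref{const:desc_fil} and the fact that $Z(\Gbb_1)$ acts on $\pi_{2k}(E_1)$ through the $k$-th power character; the remaining Mackey-functor bookkeeping is routine once the restriction maps are pinned down.
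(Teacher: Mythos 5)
Your proof is correct, but it resolves the key extension problem by a genuinely different mechanism than the paper. For the short exact sequence itself the two arguments essentially coincide: you invoke \Cref{thm:descent_Pic_EnhG}(1) with $\cd_p(G_\fin)=0$, whereas the paper reruns the descent spectral sequence \eqref{eqn:fin_PicSS} directly and observes that the $0$-stem of the $E_2$-page consists only of $E_2^{0,0}=\Z/2$ and $E_2^{1,1}=H^1(G_\fin;\Zpx)\cong G_\fin^\vee$; both reduce to the same sparseness coming from $p\nmid |G_\fin|$. Where you genuinely diverge is in proving cyclicity. The paper computes $\pi_*\left(E_1^{hG_\fin}\right)$ via the homotopy fixed point spectral sequence, concludes that $E_1^{hG_\fin}$ has minimal periodicity $2|G_\fin|$, and deduces that $\left[\Sigma E_1^{hG_\fin}\right]$ already has order $2|G_\fin|$. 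You instead stay inside the Picard formalism: you push $\left[\Sigma^2 E_1^{hG_\fin}\right]$ through the algebraic detection map $\phi_1$ of \Cref{const:desc_fil} and observe that the resulting class is the tautological character $G_\fin\hookrightarrow\zpx$ up to inversion --- note that $\pi_0(\Sigma^2E_1)=\pi_{-2}(E_1)$ rather than $\pi_2(E_1)$, though inverting the character is harmless --- which is injective and hence generates $\hom(G_\fin,\zpx)$ for $G_\fin$ cyclic. Both arguments ultimately rest on the same input, namely that the central $\zpx$ acts on $\pi_{2k}(E_1)$ through $\omega^k$, but yours avoids computing $\pi_*\left(E_1^{hG_\fin}\right)$ and makes explicit that the algebraic invariant of $\Sigma^2 E_1^{hG_\fin}$ generates $\Pic^{0,alg}_{K(1)}$, which is a clean way to see why the extension cannot split. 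Your Mackey-functor bookkeeping (restrictions carry $\Sigma E_1^{hH_2}$ to $\Sigma E_1^{hH_1}$, transfers pinned down by the cohomological relation $\tr\circ\res=[H_2:H_1]$) goes beyond what the paper proves at this point --- the explicit restriction and transfer formulas are deferred to \Cref{thm:Mackey_Pic_k1_p=odd} --- and is sound, though the assertion that the Picard Mackey functor is cohomological, i.e.\ the norm-of-restriction formula $\mathrm{Norm}_{H_1}^{H_2}\left(\res X\right)\simeq X^{\otimes[H_2:H_1]}$, deserves a citation to \cite{BBHS_PicE2hC4} or a short independent argument.
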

	\begin{proof}
		Let $G\le \Zpx$ be a finite subgroup. It is contained in $\zpx$ by \Cref{lem:Zpx}. The $E_2$-page of the descent spectral sequence \eqref{eqn:fin_PicSS} is illustrated in \Cref{fig:DSS_pic_E1hG}.
		\begin{figure}[ht]
			\begin{tikzpicture}[xscale=2.5]		
				\fill[color=gray!30] (-2.5, 3.5) -- (-1.5, 3.5) -- (-1.5, 2.5) -- (-0.5,2.5) -- (-0.5,1.5) -- (0.5, 1.5) -- (0.5, 0.5) -- (2.5, 0.5) -- (2.5,4.5) -- (-2.5, 4.5) -- cycle;
				\node at (1,3) {Vanishing Regions: $\left\{\begin{array}{l}
						t\ge2\text{ and }s\ge 1;\\
						t <0.
					\end{array}\right.$};
				\fill[color=gray!30] (-0.5,-0.5) -- (-0.5,0.5) -- (-1.5,0.5) -- (-1.5, 1.5) -- (-2.5, 1.5) -- (-2.5, 2.5) -- (-2.5, -0.5) -- cycle;
				\node at (0,0) (A) {$\Z/2$};
				\node at (-1,1) (C1) {$\hom(G,\Z/2)$};
				\node at (-2,2) (D1){$H^2(G;\Z/2)$};	
				\node at (1,0) {$\Zpx$};
				\node at (0,1) (B) {$G^\vee$};
				\node at (-1,2) {$H^2(G;\Zpx)$};	
				\node at (-2,3) (C2) {$H^3(G;\Zpx)$};	
				\draw[thick] (A) -- (B);
				\draw[->,thick] (-2.5,-0.5) -- (2.5,-0.5);
				\draw[->,thick] (-0.5,-0.5) -- (-0.5,4.5);
				\node[left] at (-2.5,0) {$s=0$};
				\node[left] at (-2.5,1) {$1$};
				\node[left] at (-2.5,2) {$2$};
				\node[left] at (-2.5,3) (D2){$3$};
				\node[left] at (-2.5,4) {$4$};
				\node[below] at (-2.25,-0.5) {$t-s=-2$};  
				\node[below] at (-1,-0.5) {$-1$};
				\node[below] at (0,-0.5) {$0$};
				\node[below] at (1,-0.5) {$1$};
				\node[below] at (2,-0.5) {$2$};
			\end{tikzpicture}
			\caption{DSS for $\pic_{K(1)}\left(E_1^{hG}\right)$} \label{fig:DSS_pic_E1hG}
		\end{figure}
		
		On this page of the spectral sequence:
		\begin{itemize}
			\item All elements with bigrading $(s,t)$ satisfying $s\ge 1$ and $t\ge2$ are trivial. This is because $\pi_t (\pic_{K(1)}(E_1))\cong \pi_{t-1}(E_1)$ is $p$-complete when $t\ge 2$, and $G$ is a subgroup of $\zpx$ whose order $p-1$ is coprime to $p$. 
			\item $E_2^{0,0}=\Pic_{K(1)}((E_1))^G\cong \mathbb{Z}/2$. The generator of this group is a permanent cycle since it detects $\Sigma E^{hG}\in \Pic_{K(1)}\left(E^{hG}\right)$. Notice the $\zpx$-action on $\Pic_{K(1)}((E_1))=\Z/2$ is trivial, the restriction maps on this Mackey functor are identity and the transfer maps are multiplication by the subgroup index. As a result, $\underline{E_2^{0,0}}=\underline{\Z/2}$ is a constant $\zpx$-Mackey functor.  
			\item As the order of $G$ is coprime to $p$ and the group acts trivially on $\pi_1 \left(\pic_{K(1)}(E_1)\right)\cong \Zpx$, we have isomorphisms of $\zpx$-Mackey functors: \begin{equation*}
				\underline{E_2^{1,1}}=H^1(-;\Zpx)\cong \hom_c(-,\Zpx) \cong \hom(-,\zpx)\cong \underline{(-)^\vee}.
			\end{equation*} As $G\le \zpx$ is a finite group, the last term is non-canonically isomorphic to $G$ itself. We write $\underline{(-)^\vee}$ to stress the Mackey functor structure. Elements in $E_2^{1,1}$ are permanent cycles in the spectral sequence for degree reasons.
		\end{itemize}
		As a result, elements on the $0$-stem of the $E_2$-page are all permanent cycles. Passing to the $E_\infty$-page,  we need to solve an extension problem: 
		\[
		0\longrightarrow G^\vee\cong G \longrightarrow \Pic_{K(1)}\left(E_1^{hG}\right) \longrightarrow \Z/2 \longrightarrow 0.
		\]
		Consider the homotopy fixed point spectral sequence:
		\begin{equation*}
			E_2^{s,t}=H^s(G;\pi_t(E_1))\Longrightarrow \pi_{t-s}\left(E_1^{hG}\right).
		\end{equation*}
		The group $G\le \zpx$ acts on $\pi_{2k}(E_1)\cong \Zp$ by the character $G\hookrightarrow \zpx\xrightarrow{\omega}\Zpx\xrightarrow{(-)^k}\Zpx$, where $\omega$ is the Teichm\"uller character. Then we can compute
		\begin{equation*}
			H^s(G;\pi_t(E_1))=\left\{\begin{array}{ll}
				\pi_{t}(E_1), & s=0\text{ and }2|G|\text{ divides } t;\\
				0, & \text{else}.
			\end{array}\right.
		\end{equation*}
		As a result, the spectrum $E_1^{hG}$ has minimal periodicity $2|G|$, which implies that $\Pic_{K(1)}\left(E_1^{hG}\right)$ has a cyclic subgroup of order $2|G|$. Together with the extension above, we conclude that $\Pic_{K(1)}\left(E_1^{hG}\right)$ is indeed cyclic of order $2|G|$. 
	\end{proof}
	\begin{rem}
		This computation of $\Pic_{K(1)}\left(E_1^{hG}\right)$ also follows from \cite[Corollary 2.4.7]{MS_Picard}, since $\pi_*\left(E_1^{hG}\right)\cong \Zp[u^{\pm m}]$ with $|u^m|=-2m$ for a finite subgroup $G=\Z/m\le \Zpx$.  
	\end{rem}
	Write $G=G_\fin\times \left(1+p^k\Zp\right)$ as in \Cref{lem:Zpx} and assume $1\le k<\infty$.  By \Cref{cor:desc_EnhG1G2}, we compute $\Pic_{K(1)}\left(E_1^{hG}\right)$ via the profinite descent spectral sequence:
	\begin{equation*}
		E_2^{s,t}= H^s_c\left(1+p^k\Zp;\pi_t\pic_{K(1)} \left(E_1^{hG_\fin}\right)\right)\Longrightarrow \pi_{t-s}\left(\pic_{K(1)}\left(E_1^{hG}\right)\right).
	\end{equation*}
	Note that $1+ p^k \Zp$ is pro-$p$ and the order of $G_\fin$ is coprime to $p$. The homotopy fixed point spectral sequence vanishes from filtration $2$ and collapses on the $E_2$-page. At stem $0$, we have $\Pic\left(E_1^{G_\fin}\right)$ at filtration $0$ and $\hom_c (G_\pro, 1+p\Zp) \cong \hom_c(G, 1+p\Zp)$ at filtration $1$. 
	\begin{mainthm}
		\label{thm:Mackey_Pic_k1_p=odd}
		Let $p>2$ be an odd prime number,  $k\ge 1$ and $m\mid (p-1)$ be some positive integers. The Picard groups of $E_1^{hG}$ for all closed subgroups $G\le \Zpx$ are listed below:
		\begin{equation*}
			\Pic_{K(1)}\left(E_1^{hG}\right)=\left\{\begin{array}{lll}
				\Z/(2m)\oplus \Zp, & G=\Z/m\times (1+p^k\Zp);& \textup{\cite{HMS_picard,Strickland_interpolation}}\text{ for $G=\Zpx$}\\
				\Z/(2m), & G=\Z/m. & \textup{\cite{MS_Picard, Baker-Richter_invertible_modules}}
			\end{array}\right.
		\end{equation*}
		We have isomorphisms of $\Zpx$-Mackey functors:
		\[
		\Pic_{K(1)}\left(E_1^{h(-)}\right) \cong \hom_c(-,1+p\mathbb{Z}_p) \times \Pic_{K(1)}\left(E_1^{h(-)_\fin}\right).
		\]
		More precisely, let $m_1\mid m_2\mid (p-1)$ and $k_2\le k_1$ be some positive integers. The formulas for restriction and transfer maps between subgroups of finite indices are:
		\begin{equation*}
			\begin{tikzcd}[column sep=tiny]
				\Pic_{K(1)}\left(E_1^{h\left(\Z/m_2\times  (1+p^{k_2}\Zp)\right)}\right)\rar[symbol=\cong]&\Zp\oplus \Z/2m_2\vResAr[p^{k_1-k_2}\oplus 1]&~&~&\Pic_{K(1)}\left(E_1^{h\Z/m_2}\right)\rar[symbol=\cong]& \Z/2m_2\vResAr[1]\\
				\Pic_{K(1)}\left(E_1^{h\left(\Z/m_1\times (1+p^{k_1}\Zp)\right)}\right)\rar[symbol=\cong]& \Zp\oplus \Z/2m_1,\vTrAr[1\oplus (m_2/m_1)]&~&~&
				\Pic_{K(1)}\left(E_1^{h\Z/m_1}\right)\rar[symbol=\cong]&  \Z/2m_1\vTrAr[m_2/m_1],
			\end{tikzcd}				
		\end{equation*}
		where the numbers $d$ denote multiplication by $d$ on the corresponding summands. 
	\end{mainthm}
	\begin{proof}
		It remains to compute the explicit formulas of the restriction and transfer maps.  The finite subgroup cases were discussed in \Cref{prop:pic_E1hG_odd}. For profinite subgroups. For the pro-$p$ part, the induced restriction and transfers maps on the $\hom$-groups are
		\begin{equation*}
			\begin{tikzcd}[column sep=tiny]
				\hom_c(1+p^{k_2}\Zp,\Zpx)\rar[symbol=\cong]&\Zp\vResAr[p^{k_1-k_2}]\\
				\hom_c(1+p^{k_1}\Zp,\Zpx)\rar[symbol=\cong]& \Zp.\vTrAr[1]
			\end{tikzcd}				
		\end{equation*}
		Combining the restriction and transfer maps  from the finite subgroups and pro-$p$ subgroups of $\Zpx$, we obtain   the Mackey functor structure on $\Pic_{K(1)}\left(E_1^{h(-)}\right)$.
	\end{proof}
	\begin{rem}\label{rem:Pic_E1hG_topgen}
		When $G=\Zpx$,  the group $\Pic_{K(1)}=\Zp\oplus\Z/(2p-2)$ is topologically generated by $S^1_{K(1)}$, which corresponds to $(1,1)$ under the identification. The computation in \Cref{thm:Mackey_Pic_k1_p=odd}  implies that for $G=\Z/m\times (1+p^k\Zp)$ with $k<\infty$, the image of $S^1_{K(1)}$ in $\Pic_{K(1)}\left(E_1^{hG}\right)=\Zp\oplus \Z/(2m)$ is $(p^{k-1},1)$. As this element is uniquely divisible by $p^{k-1}$ in the Picard group, we obtain a unique $K(n)$-local spectrum $X=\Sigma^{1/p^{k-1}}E_1^{hG}$, whose $p^{k-1}$-st smash power over $E_1^{hG}$ is $\Sigma E_1^{hG}$. This element is a topological generator of $\Pic_{K(1)}\left(E_1^{hG}\right)$. 
	\end{rem}
	The computations in \Cref{thm:Mackey_Pic_k1_p=odd} allows us to verify  \Cref{prop:Pic_Kn_colim} in the example below: 
	\begin{cor}\label{cor:p_odd_colim_isom}
		Let $\Z/m\le \Zpx$ be a finite subgroup of order $m$. Then we have  colimits in the category of pro-abelian groups:
		\begin{equation*}
			\colim_{\res} \Pic_{K(1)}\left(E_1^{h\left(\Z/m\times (1+p^k\Zp)\right)}\right)\cong \Z/(2m)\cong \Pic_{K(1)}\left(E_1^{h\Z/m}\right).
		\end{equation*}
	\end{cor}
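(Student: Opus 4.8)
The plan is to obtain both isomorphisms from \Cref{prop:Pic_Kn_colim} once $E_1^{h\Z/m}$ has been written as a $K(1)$-local filtered colimit; the second isomorphism $\Z/(2m)\cong\Pic_{K(1)}(E_1^{h\Z/m})$ is already contained in \Cref{thm:Mackey_Pic_k1_p=odd}, so the substance is the first one, and the key point is that it must be interpreted in $\Pro(\Ab)$ rather than in $\Ab$.

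First I would realize the colimit. The subgroups $U_k:=1+p^k\Zp$ of $\Gbb_1=\Zpx$ are open, normal, decreasing, with $\bigcap_k U_k=\{1\}$; since $\Z/m\le\zpx$ and $\zpx\cap U_k=\{1\}$ by \Cref{lem:Zpx}, the subgroup $U_k\cdot(\Z/m)$ is the internal direct product $\Z/m\times(1+p^k\Zp)$. Hence \Cref{const:EnhG} provides an equivalence of $K(1)$-local $\einf$-rings
\[
E_1^{h\Z/m}\;\simeq\;L_{K(1)}\colim_k E_1^{h(\Z/m\times(1+p^k\Zp))},
\]
whose transition maps are the ring maps induced by the inclusions of subgroups, and these induce the restriction maps of the Picard Mackey functor after applying $\Pic_{K(1)}$. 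Invoking \Cref{prop:Pic_Kn_colim}, which says $\Pic_{K(1)}\colon\CAlg(\Sp_{K(1)})\to\Pro(\Ab)$ preserves filtered colimits, then gives
\[
\colim_{\res}\Pic_{K(1)}\!\left(E_1^{h(\Z/m\times(1+p^k\Zp))}\right)\;\cong\;\Pic_{K(1)}\!\left(E_1^{h\Z/m}\right)\qquad\text{in }\Pro(\Ab).
\]
It remains to identify the right-hand side with the constant pro-abelian group $\Z/(2m)$: since $\pi_*(E_1^{h\Z/m})\cong\Zp[u^{\pm m}]$ with $|u^m|=-2m$, each $E_1^{h\Z/m}\Smash M_j$ is a $2m$-periodic even ring spectrum whose $\pi_0$ is a finite local ring, so $\Pic(E_1^{h\Z/m}\Smash M_j)\cong\Z/(2m)$ for every $j$ (as in \cite[Corollary 2.4.7]{MS_Picard}) with the transition maps isomorphisms; thus the tower of \Cref{thm:Kn_Pic_limit} defining $\Pic_{K(1)}(E_1^{h\Z/m})$ is pro-isomorphic to the constant tower $\Z/(2m)$.

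The step that deserves care — and the reason this is worth recording as a check on \Cref{prop:Pic_Kn_colim} — is that the colimit is genuinely not the one in $\Ab$. Under the identifications of \Cref{thm:Mackey_Pic_k1_p=odd} one has $\Pic_{K(1)}(E_1^{h(\Z/m\times(1+p^k\Zp))})\cong\Zp\oplus\Z/(2m)$ with restriction maps $p\oplus 1$, and the colimit of the underlying abelian groups is $\Qp\oplus\Z/(2m)\ne\Z/(2m)$, in line with \Cref{rem:Pic_Kn_colim}. What saves the statement is the pro-structure from \Cref{thm:Kn_Pic_limit}: the $\Zp$-summand is presented by a tower of finite $p$-groups coming from the Moore spectra $M_j$, and on this tower multiplication by $p$ is pro-nilpotent in the colimit, so $\colim_{\res}(\Zp\xrightarrow{p}\Zp\xrightarrow{p}\cdots)=0$ in $\Pro(\Ab)$ while the $\Z/(2m)$-summand survives unchanged. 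I expect this vanishing — which one can prove directly by computing maps out of the colimit in $\Pro(\Ab)$ using the Mittag--Leffler condition on these towers of finite groups, or simply read off from \Cref{prop:Pic_Kn_colim} — to be the only part of the argument that is not purely formal bookkeeping.
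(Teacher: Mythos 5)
Your proposal is correct and follows essentially the same route as the paper: realize $E_1^{h\Z/m}$ as the $K(1)$-local filtered colimit of the $E_1^{h(\Z/m\times(1+p^k\Zp))}$ via \Cref{const:EnhG}, invoke \Cref{prop:Pic_Kn_colim}, and identify $\Pic_{K(1)}(E_1^{h\Z/m})$ with the constant pro-group $\Z/(2m)$, with the $\Ab$-versus-$\Pro(\Ab)$ discrepancy being exactly the content of \Cref{rem:Pic_colim_p_odd}. The only difference is one of emphasis: the paper presents the corollary as a consistency check of \Cref{prop:Pic_Kn_colim} against the explicit restriction maps $p^{k_1-k_2}\oplus 1$ of \Cref{thm:Mackey_Pic_k1_p=odd}, while you run the implication in the forward direction, but the content is identical.
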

	\begin{rem}\label{rem:Pic_colim_p_odd}
		Note that the claim fails if we took the colimit in $\Ab$, as the colimit will have an extra $\Qp$ summand. This is an example of \Cref{rem:Pic_Kn_colim}  where the discrete Picard functor $\Pic_{K(1)}\colon \CAlg\left(\Sp_{K(1)}\right)\to \Ab$ does not preserve filtered colimits unless we lift it to $\Pro(\Ab)$ as in \Cref{thm:Kn_Pic_limit}.
	\end{rem}
	\subsection{Computations at height $1$ and prime $2$} 	\label{subsec:mackey_2}
	The $p=2$ case is more complicated than the odd prime cases, largely due to the extension problems. It is necessary to use the Mackey functor perspective (restriction maps) to resolve this issue. 
	\begin{lem}\label{lem:Z2x_lattice}
		Denote the closed subgroup generated by $\alpha\in \Z_2^\times$ by $\langle \alpha\rangle$. Then the closed subgroup lattice of $\Z_2^\times= \{\pm 1\} \times (1+4\Z_2)$ is:		
		\begin{equation*}
			\begin{tikzcd}[every arrow/.append style={dash,thick},column sep =small]
				\Z_2^\times \ar[dr]\rar \ar[ddr] &\{\pm 1\}\times \langle 9\rangle \rar\ar[dr]\ar[ddr] &  \{\pm 1\}\times \langle 17\rangle \rar\ar[dr]\ar[ddr] &\cdots \rar\ar[dr]\ar[ddr]& \{\pm 1\}\times \langle 2^{k}+1\rangle  \rar\ar[dr]\ar[ddr]& \{\pm 1\}\times \langle 2^{k+1}+1\rangle  \rar\ar[dr]\ar[ddr]& \cdots\rar &\{\pm 1\}\ar[ddr] & \\
				& \langle3\rangle \ar[dr] & \langle7\rangle \ar[dr]& \cdots\ar[dr]& \langle 2^{k-1}-1\rangle \ar[dr]& \langle 2^{k}-1\rangle \ar[dr]&\cdots\ar[drr] && \\
				&  \langle5\rangle \rar &  \langle9\rangle\rar & \cdots\rar &  \langle 2^{k-1}+1\rangle \rar & \langle 2^{k}+1\rangle \rar & \cdots\ar[rr] &&\{1\}.
			\end{tikzcd}
		\end{equation*}
		In the diagram above,
		\begin{itemize}
			\item When $\alpha\equiv 1\mod 4$, we have $\langle \alpha\rangle=1+2^{v_2(\alpha-1)}\Z_2$.
			\item $\{\pm 1\}\times \langle 2^k+1\rangle = \{\pm 1\}\times \langle 2^k-1\rangle$ as closed subgroups of $\Z_2^\times$.
			\item Each dash, except for the ones from $\{1\}$ and $\{\pm 1\}$ to profinite groups, indicates a subgroup of index $2$.  Subgroups in the $m$-th column have index $2^{m}$ in $\Z_2^\times$. 
		\end{itemize}
	\end{lem}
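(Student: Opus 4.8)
\subsection*{Proof proposal}

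The plan is to reduce the statement to a classification of closed subgroups of $\Z/2\times\Z_2$ and then translate the answer back into multiplicative notation. First I would fix the standard topological isomorphism $\Z_2^\times\cong\{\pm1\}\times(1+4\Z_2)\cong\Z/2\times\Z_2$, where the identification of the second factor sends a chosen topological generator of $1+4\Z_2$ (say $5$) to $1\in\Z_2$. This uses that $1+4\Z_2$ is a free procyclic pro-$2$ group, together with the $2$-adic valuation identity $v_2(5^c-1)=v_2(c)+2$ for $c\in\Z_2$, which also shows that any $\alpha\equiv 1\bmod 4$ generates $1+2^{v_2(\alpha-1)}\Z_2$; in particular $\langle 2^j+1\rangle=1+2^j\Z_2$ for $j\ge2$. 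I would also recall that the closed subgroups of $\Z_2$ are exactly $\{0\}$ and $2^k\Z_2$ for $k\ge0$, all open except $\{0\}$.

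Next I would classify the closed subgroups $H\le\Z/2\times\Z_2$ by splitting on whether the torsion element $(1,0)$ (corresponding to $-1\in\Z_2^\times$) lies in $H$. If $(1,0)\in H$, a short argument using $(1,b)+(1,0)=(0,b)$ shows $H=\Z/2\times\pi_2(H)$, so $H$ is $\{\pm1\}$ or $\{\pm1\}\times(1+2^{k+2}\Z_2)=\{\pm1\}\times\langle 2^{k+2}+1\rangle$ for some $k\ge0$ (with $k=0$ giving $\Z_2^\times$); this is the top row. If $(1,0)\notin H$, set $N=H\cap(\{0\}\times\Z_2)$, a closed subgroup of $\Z_2$. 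When $\pi_1(H)$ is trivial, $H=\{0\}\times N$, i.e.\ $\{1\}$ or $1+2^{k+2}\Z_2=\langle 2^{k+2}+1\rangle$: the bottom row. When $\pi_1(H)=\Z/2$, the nontrivial coset of $N$ in $H$ is $\{1\}\times(a_0+N)$ with $a_0\notin N$ but $2a_0\in N$ (because $(1,a_0)+(1,a_0)=(0,2a_0)\in H$); this forces $N=2^k\Z_2$ with $k\ge1$ and $v_2(a_0)=k-1$, so $H=\overline{\langle(1,2^{k-1})\rangle}=\bigl(\{0\}\times2^k\Z_2\bigr)\cup\bigl(\{1\}\times(2^{k-1}+2^k\Z_2)\bigr)$, a torsion-free procyclic ``twisted diagonal'' ($\cong\Z_2$): the middle row.

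It then remains to translate and to read off the edges. Writing $2^{k+1}-1=-(1-2^{k+1})$ with $v_2((1-2^{k+1})-1)=k+1$ and using the valuation formula above, one identifies $\overline{\langle(1,2^{k-1})\rangle}$ with $\langle 2^{k+1}-1\rangle$, and its unique index-$2$ subgroup with $\{0\}\times2^k\Z_2=\langle 2^{k+2}+1\rangle$. The identity $\{\pm1\}\times\langle 2^k+1\rangle=\{\pm1\}\times\langle 2^k-1\rangle$ follows because adjoining $-1=(1,0)$ to the twisted diagonal $\langle 2^k-1\rangle$ ``un-twists'' it, producing $\Z/2\times2^{k-2}\Z_2=\{\pm1\}\times(1+2^k\Z_2)$. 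Finally, indices and covering relations are routine from the classification: every closed subgroup $\ne\Z_2^\times$ has a well-defined index $2^m$, $m\ge1$, and lies in exactly one of the three rows; the three index-$2$ subgroups of $\Z/2\times2^k\Z_2$ are $\{0\}\times2^k\Z_2$, $\Z/2\times2^{k+1}\Z_2$ and $\overline{\langle(1,2^{k})\rangle}$; a twisted diagonal $\overline{\langle(1,2^{k-1})\rangle}\cong\Z_2$ has unique index-$2$ subgroup $\{0\}\times2^k\Z_2$; and $1+2^{k+2}\Z_2$ has unique index-$2$ subgroup $1+2^{k+3}\Z_2$. These reproduce exactly the horizontal and diagonal dashes drawn, with $\{\pm1\}$ and $\{1\}$ occurring as the (infinite-index) intersections of the top row and of the remaining rows, respectively.

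The main obstacle is the case $(1,0)\notin H$, $\pi_1(H)=\Z/2$: one must recognize that these $H$ are precisely the procyclic twisted diagonals and match each to the concrete generator $2^k-1$ through the $2$-adic valuation bookkeeping --- this is also the source of the slightly surprising identity $\{\pm1\}\times\langle 2^k+1\rangle=\{\pm1\}\times\langle 2^k-1\rangle$. The two remaining cases of the classification, and the extraction of the Hasse diagram, are elementary.
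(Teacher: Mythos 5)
Your proposal is correct and follows essentially the same route as the paper: both exploit the splitting $\Z_2^\times\cong\{\pm 1\}\times(1+4\Z_2)$ and classify closed subgroups by an elementary case analysis, differing only in bookkeeping (you work additively in $\Z/2\times\Z_2$ and split on whether $-1\in H$, identifying the ``twisted diagonals'' $\langle 2^k-1\rangle$ directly, whereas the paper first pins down $H\cap(1+4\Z_2)$ and finishes via the subgroup lattice of a Klein-four subquotient). Your write-up additionally makes explicit the valuation identity $v_2(\alpha^c-1)=v_2(\alpha-1)+v_2(c)$ and the verification of the covering relations and indices, which the paper leaves largely to the reader.
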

	\begin{proof}
		The direct product decomposition $\Z_2^\times= \{\pm1 \}\times (1+4\Z_2)$ preserves the profinite topology on both sides. As such, a subgroup $H\le \Z_2^\times$ is closed iff $H\cap (1+4\Z_2)$ is a closed subgroup of $1+4\Z_2$, which is pro-cyclic. This implies that $H\cap (1+4\Z_2)=1+2^k\Z_2$ for some $2\le k\le \infty$.
		
		When $k=2$, the closed subgroup $H$ contains $1+4\Z_2$. Such subgroups are in one-to-one correspondence to those of the quotient $\Z_2^\times /(1+4\Z_2)\cong C_2$. As a result, $H=\Z_2^\times$ or $1+4\Z_2$. 
		
		When $2<k< \infty$, we claim that $H$ is a subgroup of $\{\pm 1\}\times  (1+2^{k-1}\Z_2)$. Otherwise, $H$ would contain an element $\alpha$ such that $v_2(\pm\alpha-1)<k-1$, which implies that $v_2(\alpha^2-1)\le k-1$. The closed subgroup generated by $\alpha^2$ then contains $1+2^{k-1}\Z_2$, which contradicts the assumption that $H\cap( 1+4\Z_2) =1+2^k\Z_2$. It follows that $H$ corresponds to a subgroup of the subquotient $\left[\{\pm 1\}\times (1+2^{k-1}\Z_2)\right]/ (1+2^k\Z_2)$ of $\Z_2^\times$. This subquotient is isomorphic to the Klein group $C_2\times C_2$. From its subgroup lattice, we conclude $H$ is either $\langle2^k+1\rangle=1+2^k\Z_2$, $\langle 2^{k-1}-1\rangle$, or $\{\pm1 \}\times (1+2^k\Z_2)=\{\pm1 \}\times \langle2^k+1\rangle$. 
		
		When $k=\infty$, we have $1+2^\infty\Z_2=\{1\}$ and $H=\{1\}$ or $\{\pm1 \}$.  Otherwise, pick an element $\alpha\in H$ not equal to $\pm 1$. Let $m=v_2(\alpha^2-1)< \infty$. Then we have $H\supseteq \langle \alpha^2\rangle = 1+2^{m}\Z_2$, contradicting the assumption that $H\cap (1+4\Z_2)=\{1\}$. 
	\end{proof}
	\begin{rem}\label{rem:algK_finfld_2}
		Similar to \Cref{prop:algK_finfld_p},  if $G\le \Z_2^\times$ is pro-cyclic and open, then the homotopy fixed point $\left(KU^\wedge_2\right)^{hG}$ is equivalent to $L_{K(1)}K(\Fq)$ for some finite field $\Fq$. By \Cref{lem:Z2x_lattice}, $G=\langle\alpha\rangle$ where $\alpha=2^k\pm 1$. Any integer $q$ satisfying $q\equiv \alpha \mod 2^{k+1}$ is a topological generator of $G$. We can choose $q$ to be a prime number by Dirichlet's theorem on arithmetic progressions. 
	\end{rem}
	Before we start the discussion of $\Pic_{K(1)}(E_1^{h-})$ at the prime $2$ as $\Z_2^\times$-Mackey functor, let's recall the computation of $\pi_*(E_1^{h\Z_2^\times})$. One approach is to separate the finite group and pro $2$-group descent processes:  we first compute homotopy groups of $E_1^{hC_2}=KO^\wedge_2$ from $\pi_*(E_1)=\pi_*(KU^\wedge_2)$ using the Bott periodicity and then compute the HFPSS for $S_{K(1)}^0\simeq \left(KO^\wedge_2\right)^{h(1+4\Z_2)}$.  In particular, $E_1=KU_2^\wedge$ is equivalent to the $2$-completion of Aityah's Real $K$-theory $\KR$ as a $C_2$-equivariant commutative spectrum. 
	\begin{mainthm}
		\label{thm:Mackey_Pic_k1_p=2}
		Let $G\le \Z_2^\times$ be a closed subgroup. Then 
		\begin{equation*}
			\Pic_{K(1)}\left(E_1^{hG}\right)= \left\{\begin{array}{ll>{\raggedright\arraybackslash}p{0.28\textwidth}}
				\Z_2\oplus\Z/4\oplus \Z/2, &G=\Z_2^\times;& \textup{\cite[Theorem 3.3]{HMS_picard}\quad \cite[50]{Strickland_interpolation}}\\
				\Z_2\oplus\Z/2,& G=\langle 5\rangle \text{ or } \langle 3\rangle;&\\
				\Z_2\oplus \Z/8\oplus \Z/2, &G=\{\pm 1\}\times \langle 2^k+1\rangle, k\ge 3;&\\
				\Z_2\oplus \Z/2\oplus \Z/2, &G=\langle 2^k+1\rangle \text{ or }\langle2^k-1\rangle, k\ge 3;&\\
				\Z/8, & G=\{\pm 1\};&\textup{\cite[Theorem 7.1.2]{MS_Picard}}\quad \textup{\cite[Proposition 7.15]{Gepner-Lawson_Brauer}}\\
				\Z/2, & G=\{1\}.& \textup{\cite[Theorem 43 ff.]{Baker-Richter_invertible_modules}}
			\end{array}\right.
		\end{equation*}
		The restriction and transfer maps between any two closed subgroups $G_1\le G_2\le \Z_2^\x$ with $[G_2:G_1]=2$ are given by the following seven cases ($k\ge 3$ in Cases IV, V, VI):
		\renewcommand{\arraystretch}{3}
		\begin{equation*}
			\begin{tikzcd}
				\Pic_{K(1)}\left[E_1^{hG_2}\right]\vResAr[\res]\\ \Pic_{K(1)}\left[E_1^{hG_1}\right]\vTrAr[\tr]
			\end{tikzcd}=\begin{cases}
				\begin{tikzcd}
					\Z_2\langle1,\sigma\rangle/(4(\sigma-1))\vResAr[1,\sigma\mapsto 1]\\\Z_2\vTrAr[1\mapsto 1]
				\end{tikzcd}\bigoplus\underline{\Z/2},& \textup{Case I: } \langle 3\rangle\text{ or }\langle 5\rangle \le  \Z_2^\times;\\
				\begin{tikzcd}
					\Z_2\langle1,\sigma\rangle/(4(\sigma-1))\vResAr[\substack{1\mapsto (1,1)\\ \sigma\mapsto (1,-1)}]\\\Z_2\oplus \Z/8\vTrAr[\substack{(1,0)\mapsto 1+\sigma\\(0,1)\mapsto 1-\sigma }] 
				\end{tikzcd}\bigoplus\underline{\Z/2}^\op,&\textup{Case II: }  \{\pm 1\}\times\langle 9\rangle\le  \Z_2^\times;\\
				\begin{tikzcd}
					\Z_2\vResAr[1\mapsto (1,1)]\\\Z_2\oplus \Z/2 \vTrAr[\substack{(1,0)\mapsto 2\\ (0,1)\mapsto 0}]
				\end{tikzcd}\bigoplus\underline{\Z/2},& \textup{Case III: } \langle 9\rangle \le  \langle 3\rangle\text{ or }\langle 5\rangle;\\
				\underline{(\Z_2\bigoplus\Z/2)}\bigoplus \underline{\Pic}(\KR), &  \textup{Case IV: }  \langle 2^k-1\rangle\text{ or }\langle 2^k+1\rangle \le  \{\pm 1\}\times \langle 2^k+1\rangle;\\
				\underline{(\Z_2\bigoplus \Z/2)}^\op\bigoplus \underline{\Z/8}, &  \textup{Case V: } \{\pm 1\}\times \langle 2^{k+1}+1\rangle \le  \{\pm 1\}\times \langle 2^k+1\rangle;\\
				\underline{(\Z_2\bigoplus \Z/2)}^\op\bigoplus \underline{\Z/2}, &\textup{Case VI: }   \langle 1+2^{k+1}\rangle \le  \langle 2^k-1\rangle \text{ or } \langle 2^k+1\rangle;\\
				\underline{\Pic}(\KR)= \begin{tikzcd}
					\Z/8 \vResAr[1]\\ \Z/2\vTrAr[0]
				\end{tikzcd}, & \textup{Case VII: } \{1\}\le  \{\pm 1\}. 
			\end{cases}
		\end{equation*}
		\begin{equation*}
			\begin{tikzcd}[every arrow/.append style={dash,thick}
				,column sep =tiny, row sep =large
				]
				\Z_2^\times \ar[ddr,"\textup{I}"']\ar[r,"\textup{II}"] \ar[dr,"\textup{I}"description] &[5 pt]\{\pm 1\}\times \langle 9\rangle \ar[r,"\textup{V}"]\ar[dr,"\textup{IV}"description]\ar[ddr,"\textup{IV}"description] & [10 pt] \{\pm 1\}\times \langle 17\rangle \ar[r,"\textup{V}"]\ar[dr,"\textup{IV}"description]\ar[ddr,"\textup{IV}"description] &[5 pt]\cdots \ar[r,"\textup{V}"]\ar[dr,"\textup{IV}"description]\ar[ddr,"\textup{IV}"description]& [5 pt]\{\pm 1\}\times \langle 2^{k}+1\rangle  \ar[r,"\textup{V}"]\ar[dr,"\textup{IV}"description]\ar[ddr,"\textup{IV}"description]&[10 pt] \{\pm 1\}\times \langle 2^{k+1}+1\rangle  \ar[r,"\textup{V}"]\ar[dr,"\textup{IV}"description]\ar[ddr,"\textup{IV}"description]& \cdots\rar &\{\pm 1\}\ar[ddr,"\textup{VII}"] & [-10pt]\\ 
				& \langle3\rangle \ar[dr,"\textup{III}"description] & \langle7\rangle \ar[dr,"\textup{VI}"description]& \cdots\ar[dr,"\textup{VI}"description]& \langle 2^{k-1}-1\rangle \ar[dr,"\textup{VI}"description]& \langle 2^{k}-1\rangle \ar[dr,"\textup{VI}"description]&\cdots\ar[drr] && \\
				&  \langle5\rangle \ar[r,"\textup{III}"'] &  \langle9\rangle\ar[r,"\textup{VI}"'] & \cdots\ar[r,"\textup{VI}"'] &  \langle 2^{k-1}+1\rangle \ar[r,"\textup{VI}"'] & \langle 2^{k}+1\rangle \ar[r,"\textup{VI}"'] & \cdots\ar[rr] &&\{1\}.
			\end{tikzcd}
		\end{equation*}
	\end{mainthm} 
	\begin{proof}
		We start with the known Mackey functor $\underline{\Pic}(\KR)$ in Case VII and use the descent spectral sequence in \Cref{cor:desc_EnhG1G2}. Set $G=\langle 2^k+1\rangle$ or $\langle 2^k-1\rangle$ for $k\ge 2$. Recall that $\{\pm1\}\times \langle2^k-1\rangle= \{\pm1\}\times \langle2^k+1\rangle$ as subgroups of $\Z_2^\times$, we have 
		\begin{equation*}
			\left(KO^\wedge_2\right)^{h\langle2^k-1\rangle} \simeq \left(KU^\wedge_2\right)^{h(\{\pm1\}\times \langle2^k-1\rangle)}\simeq \left(KU^\wedge_2\right)^{h(\{\pm1\}\times \langle2^k+1\rangle)}\simeq \left(KO^\wedge_2\right)^{h\langle2^k+1\rangle}.
		\end{equation*} 
		Notice that $\cd_2(G)=\cd_2(\Z_2)=1$. This implies that the descent spectral sequences
		\begin{align*}
			H_c^s\left(G; \pi_t\pic_{K(1)}\left(KO^\wedge_2\right)\right)&\Longrightarrow \pi_{t-s}\pic_{K(1)}\left(\left(KO^\wedge_2\right)^{hG}\right)\cong\pi_{t-s}\pic_{K(1)}\left(E_1^{h(\{\pm 1\}\times G)}\right)\\
			H_c^s\left(G;\pi_t\pic_{K(1)}\left(KU^\wedge_2\right)\right)&\Longrightarrow \pi_{t-s}\pic_{K(1)}\left(\left(KU^\wedge_2\right)^{hG}\right)\cong\pi_{t-s}\pic_{K(1)}\left(E_1^{hG}\right)
		\end{align*} 
		collapse on the $E_2$-pages,  yielding extension problems:
		\begin{equation*}
			\begin{tikzcd}[row sep =0]
				0\rar &H^1_c\left(G;\pi_0\left(KO^\wedge_2\right)^\times\right)\rar& \Pic_{K(1)}\left[\left(KO^\wedge_2\right)^{hG}\right]\rar& \left[\Pic_{K(1)}\left(KO^\wedge_2\right)\right]^{G}\rar&0,\\
				0\rar &H^1_c\left(G;\pi_0\left(KU^\wedge_2\right)^\times\right)\rar& \Pic_{K(1)}\left[\left(KU^\wedge_2\right)^{hG}\right]\rar& \left[\Pic_{K(1)}\left(KU^\wedge_2\right)\right]^{G}\rar&0.
			\end{tikzcd}
		\end{equation*}
		The group $G$ acts trivially on $\pi_0$ and $\pi_1$ of the Picard spectra $\pic_{K(1)}\left(KO^\wedge_2\right)$ and $\pic_{K(1)}\left(KU^\wedge_2\right)$. It follows that the $H^1_c$ in the short exact sequences above is isomorphic to $\hom_c$, and every element in the Picard groups is fixed. As the group $G\cong \Z_2$ is a $2$-complete pro-cyclic group and $\pi_0\left(KO^\wedge_2\right)^\times\cong \pi_0\left(KU^\wedge_2\right)^\times\cong \Z_2^\times$ is also $2$-complete, we have isomorphisms:
		\begin{align*}
			H^1_c\left(G;\pi_0\left(KO^\wedge_2\right)^\times\right)&\cong \hom_c\left(G,\pi_0\left(KO^\wedge_2\right)^\times\right)\cong \Z_2^\times,\\ H^1_c\left(G;\pi_0\left(KU^\wedge_2\right)^\times\right)&\cong\hom_c\left(G,\pi_0\left(KU^\wedge_2\right)^\times\right)\cong \Z_2^\times .
		\end{align*}	
		In the end, the extension problems are:
		\begin{equation*}
			\begin{tikzcd}[row sep =0]
				0\rar &\Z_2^\times \rar& \Pic_{K(1)}\left[\left(KO^\wedge_2\right)^{hG}\right]\rar& \Z/8\rar&0,\\
				0\rar &\Z_2^\times\rar& \Pic_{K(1)}\left[\left(KU^\wedge_2\right)^{hG}\right]\rar& \Z/2\rar&0.
			\end{tikzcd}
		\end{equation*}
		To solve them, our starting point is the base case computed in \cite[Theorem 3.3]{HMS_picard}: 
		\begin{equation*}
			\Pic_{K(1)}\left(E_1^{h\Z_2^\times}\right)\cong \Z_2\langle1,\sigma\rangle/(4(\sigma-1))\oplus \Z/2\cong \Z_2\oplus\Z/4 \oplus\Z/2.
		\end{equation*}
		We will bootstrap from there using restriction maps in group cohomology. Fix $G=\langle3 \rangle$ or $\langle5\rangle$. To compute $\Pic_{K(1)}\left(E_1^{hG}\right)$, consider the following diagram between extensions:
		\begin{equation}\label{eqn:caseI_proof}
			\begin{tikzcd}[column sep =small]
				0\rar &\hom_c\left(G,\pi_0\left(KO^\wedge_2\right)^\x\right)\rar\dar[symbol=\cong]& \Pic_{K(1)}\left[\left(KO^\wedge_2\right)^{hG}\right]\rar\dar[symbol=\cong]& \left[\Pic_{K(1)}\left(KO^\wedge_2\right)\right]^{G}\rar\dar[symbol=\cong]&0\\ [-2.5 ex]
				0\rar &\Z_2^\times \rar["{g\mapsto (1+\sigma,0)}","{-1\mapsto (0,1)}"']\vResAr[1]& \Z_2\langle1,\sigma\rangle/(4(\sigma-1))\oplus \Z/2\rar\vResAr[\res]& \Z/8\rar\vResAr[1] &0\\ [2.5 ex]
				0\rar & \Z_2^\times \rar["i"]\dar[symbol=\cong] \vTrAr[(-)^2] & {\color{red}??} 
				\rar\dar[symbol=\cong] \vTrAr[\tr ]
				& \Z/2 \rar\dar[symbol=\cong] \vTrAr[0]&0\\ [-2.5 ex]
				0\rar &\hom_c\left(G,\pi_0\left(KU^\wedge_2\right)^\x\right)\rar& \Pic_{K(1)}\left[\left(KU^\wedge_2\right)^{hG}\right]\rar& \left[\Pic_{K(1)}\left(KU^\wedge_2\right)\right]^{G}\rar&0
			\end{tikzcd}
		\end{equation}
		The left column in this diagram is obtained by applying the functor $\hom_c(G,(-)^\times)$ to the Tambara functor:
		\begin{equation*}
			\begin{tikzcd}[column sep=tiny]
				\pi_0(KO^\wedge_2)\rar[symbol=\cong]&\Z_2\vResAr[1]\\
				\pi_0(KU^\wedge_2)\rar[symbol=\cong]& \Z_2.\vTrAr[2]
			\end{tikzcd}
		\end{equation*}
		The right column is obtained by applying the $G$-fixed point functor to the Mackey functor $\underline{\Pic}(\KR)$ in Case VII. An $\ext$-group computation shows $\Pic_{K(1)}\left(E_1^{hG}\right)$ is either $\Z_2\oplus \Z/2$ or $\Z_2\oplus \Z/2\oplus \Z/2$. Consider the image of the generator $g\in\Z_2^\times$ under the map $i\colon \Z_2^\times \to \Pic_{K(1)}\left[\left(KU^\wedge_2\right)^{hG}\right]$. For the left square in \eqref{eqn:caseI_proof} to commute under restriction maps, the projection $i(g)$ onto the free summand must be a multiple of $2$. This forces an isomorphism:
		\begin{equation*}
			\Pic_{K(1)}\left(E_1^{hG}\right)\cong \Z_2\oplus\Z/2.
		\end{equation*} 
		The restriction and transfer maps can then be computed via diagram chase. This finishes the computation of Case I.  All other cases can be proved in similar ways.  
		\begin{enumerate}[label=\textup{\Roman*.}]
			\item $G=\langle 3\rangle$ or $\langle 5\rangle\le \Z_2^\x$. 
			\begin{equation*}
				\begin{tikzcd}[column sep =small]
					0\rar &\hom_c\left(G,\pi_0\left(KO^\wedge_2\right)^\x\right)\rar\dar[symbol=\cong]& \Pic_{K(1)}\left[\left(KO^\wedge_2\right)^{hG}\right]\rar\dar[symbol=\cong]& \left[\Pic_{K(1)}\left(KO^\wedge_2\right)\right]^{G}\rar\dar[symbol=\cong]&0\\ [-2.5 ex]
					0\rar &\Z_2^\times \rar["{g\mapsto (1+\sigma,0)}","{-1\mapsto (0,1)}"']\vResAr[1]& \Z_2\langle1,\sigma\rangle/(4(\sigma-1))\oplus \Z/2\rar\vResAr[\substack{(1,0),(\sigma,0)\mapsto (1,0)\\(0,1)\mapsto (0,1)}]& \Z/8\rar\vResAr[1] &0\\ [2.5 ex]
					0\rar & \Z_2^\times \rar["{g\mapsto (2,0)}","{-1\mapsto (0,1)}"'] \dar[symbol=\cong] \vTrAr[(-)^2] & \Z_2\oplus \Z/2\rar\dar[symbol=\cong] \vTrAr[\substack{(1,0)\mapsto (1+\sigma,0) \\(0,1)\mapsto(0,0)} ]& \Z/2 \rar\dar[symbol=\cong] \vTrAr[0]&0\\ [-2.5 ex]
					0\rar &\hom_c\left(G,\pi_0\left(KU^\wedge_2\right)^\x\right)\rar& \Pic_{K(1)}\left[\left(KU^\wedge_2\right)^{hG}\right]\rar& \left[\Pic_{K(1)}\left(KU^\wedge_2\right)\right]^{G}\rar&0
				\end{tikzcd}
			\end{equation*}
			\item $\{\pm 1\}\times(1+8\Z_2)\le \Z_2^\x$.
			\begin{equation*}
				\begin{tikzcd}[column sep =small]
					0\rar &\hom_c\left(1+4\Z_2,\pi_0\left(KO^\wedge_2\right)^\x\right)\rar\dar[symbol=\cong]& \Pic_{K(1)}\left[\left(KO^\wedge_2\right)^{h(1+4\Z_2)}\right]\rar\dar[symbol=\cong]& \left[\Pic_{K(1)}\left(KO^\wedge_2\right)\right]^{1+4\Z_2}\rar\dar[symbol=\cong]&0\\ [-2.5 ex]
					0\rar &\Z_2^\times \rar["{g\mapsto (1+\sigma,0)}","{-1\mapsto (0,1)}"']\vResAr[(-)^2]& \Z_2\langle1,\sigma\rangle/(4(\sigma-1))\oplus \Z/2\rar\vResAr[\substack{(1,0)\mapsto (1,1,0)\\ (\sigma,0)\mapsto (1,-1,0)\\ (0,1)\mapsto (0,0,0)}]& \Z/8\rar\vResAr[1] &0\\ [2.5 ex]
					0\rar & \Z_2^\times \rar["{g\mapsto (1,0,0)}", "{-1\mapsto (0,0,1)}"'] \dar[symbol=\cong] \vTrAr[1] & \Z_2\oplus \Z/8\oplus \Z/2\rar\dar[symbol=\cong] \vTrAr[\substack{(1,0,0)\mapsto(1+\sigma,0)\\ (0,1,0)\mapsto (1-\sigma, 0)\\ (0,0,1)\mapsto (0,0,1)}]& \Z/8 \rar\dar[symbol=\cong] \vTrAr[2]&0\\ [-2.5 ex]
					0\rar &\hom_c\left(1+8\Z_2,\pi_0\left(KO^\wedge_2\right)^\x\right)\rar& \Pic_{K(1)}\left[\left(KO^\wedge_2\right)^{h(1+8\Z_2)}\right]\rar& \left[\Pic_{K(1)}\left(KO^\wedge_2\right)\right]^{1+8\Z_2}\rar&0
				\end{tikzcd}
			\end{equation*}
			\item $\langle 9\rangle\le G=\langle 3\rangle$ or $\langle 5\rangle$.
			\begin{equation*}
				\begin{tikzcd}[column sep =small]
					0\rar &\hom_c\left(G,\pi_0\left(KU^\wedge_2\right)^\x\right)\rar\dar[symbol=\cong]& \Pic_{K(1)}\left[\left(KU^\wedge_2\right)^{hG}\right]\rar\dar[symbol=\cong]& \left[\Pic_{K(1)}\left(KU^\wedge_2\right)\right]^{G}\rar\dar[symbol=\cong]&0\\ [-2.5 ex]
					0\rar &\Z_2^\times \rar["{g\mapsto (2,0)}","{-1\mapsto (0,1)}"']\vResAr[(-)^2]& \Z_2\oplus \Z/2\rar\vResAr[\substack{(1,0)\mapsto (1,0,1)\\ (0,1)\mapsto (0,0,0)}]& \Z/2\rar\vResAr[1] &0\\ [2.5 ex]
					0\rar & \Z_2^\times \rar["{g\mapsto (1,0,0)}","{-1\mapsto(0,1,0)}"'] \dar[symbol=\cong] \vTrAr[1] & \Z_2\oplus \Z/2\oplus\Z/2\rar\dar[symbol=\cong] \vTrAr[\substack{(1,0,0) \mapsto (2,0)\\ (0,1,0)\mapsto(0,1)\\ (0,0,1)\mapsto (0,0)}]& \Z/2 \rar\dar[symbol=\cong] \vTrAr[0]&0\\ [-2.5 ex]
					0\rar &\hom_c\left(1+8\Z_2,\pi_0\left(KU^\wedge_2\right)^\x\right)\rar& \Pic_{K(1)}\left[\left(KU^\wedge_2\right)^{h(1+8\Z_2)}\right]\rar& \left[\Pic_{K(1)}\left(KU^\wedge_2\right)\right]^{1+8\Z_2}\rar&0
				\end{tikzcd}
			\end{equation*}
			\item $G=\langle 2^k-1\rangle$ or $\langle 2^k+1\rangle\le \{\pm 1\}\times (1+2^k\Z_2)$, where $k\ge 3$.
			\begin{equation*}
				\begin{tikzcd}[column sep =small, ampersand replacement = \&]
					0\rar \&\hom_c\left(G,\pi_0\left(KO^\wedge_2\right)^\x\right)\rar\dar[symbol=\cong]\& \Pic_{K(1)}\left[\left(KO^\wedge_2\right)^{hG}\right]\rar\dar[symbol=\cong]\& \left[\Pic_{K(1)}\left(KO^\wedge_2\right)\right]^{G}\rar\dar[symbol=\cong]\&0\\ [-2.5 ex]
					0\rar \&\Z_2^\times \rar\vResAr[1]\& \Z_2\oplus \Z/2\oplus \Z/8\rar\vResAr[\tiny{\begin{pmatrix}
							1&&\\&1&\\&&1
					\end{pmatrix}}]\& \Z/8\rar\vResAr[1] \&0\\ [2.5 ex]
					0\rar \& \Z_2^\times \rar["{g\mapsto (1,0,0)}","{-1\mapsto(0,1,0)}"'] \dar[symbol=\cong] \vTrAr[(-)^2] \& \Z_2\oplus \Z/2\oplus \Z/2\rar\dar[symbol=\cong] \vTrAr[\tiny{\begin{pmatrix}
							2&&\\&0&\\&&0
					\end{pmatrix}}]\& \Z/2 \rar\dar[symbol=\cong] \vTrAr[0]\&0\\ [-2.5 ex]
					0\rar \&\hom_c\left(G,\pi_0\left(KU^\wedge_2\right)^\x\right)\rar\& \Pic_{K(1)}\left[\left(KU^\wedge_2\right)^{hG}\right]\rar\& \left[\Pic_{K(1)}\left(KU^\wedge_2\right)\right]^{G}\rar\&0
				\end{tikzcd}
			\end{equation*}
			\item $\{\pm 1\}\times(1+2^{k+1}\Z_2)\le \{\pm 1\}\times(1+2^k\Z_2)$, where $k\ge 3$.
			\begin{equation*}
				\begin{tikzcd}[column sep =small,ampersand replacement=\&]
					0\rar \&\hom_c\left(1+2^k\Z_2,\pi_0\left(KO^\wedge_2\right)^\x\right)\rar\dar[symbol=\cong]\& \Pic_{K(1)}\left[\left(KO^\wedge_2\right)^{h(1+2^k\Z_2)}\right]\rar\dar[symbol=\cong]\& \left[\Pic_{K(1)}\left(KO^\wedge_2\right)\right]^{1+2^k\Z_2}\rar\dar[symbol=\cong]\&0\\ [-2.5 ex]
					0\rar \&\Z_2^\times \rar\vResAr[(-)^2]\& \Z_2\oplus \Z/2\oplus\Z/8\rar\vResAr[\tiny{\begin{pmatrix}
							2&&\\&0&\\&&1
					\end{pmatrix}}]\& \Z/8\rar\vResAr[1] \&0\\ [2.5 ex]
					0\rar \& \Z_2^\times \rar \dar[symbol=\cong] \vTrAr[1] \& \Z_2\oplus \Z/2\oplus \Z/8\rar\dar[symbol=\cong] \vTrAr[\tiny{\begin{pmatrix}
							1&&\\&1&\\&&2
					\end{pmatrix}}]\& \Z/8 \rar\dar[symbol=\cong] \vTrAr[2]\&0\\ [-2.5 ex]
					0\rar \&\hom_c\left(1+2^{k+1}\Z_2,\pi_0\left(KO^\wedge_2\right)^\x\right)\rar\& \Pic_{K(1)}\left[\left(KO^\wedge_2\right)^{h(1+2^{k+1}\Z_2)}\right]\rar\& \left[\Pic_{K(1)}\left(KO^\wedge_2\right)\right]^{1+2^{k+1}\Z_2}\rar\&0
				\end{tikzcd}
			\end{equation*}
			\item $ \langle 2^{k+1}+1 \rangle \le G= \langle 2^k+1 \rangle$ or $ \langle 2^k-1 \rangle $, where $k\ge 3$.
			\begin{equation*}
				\begin{tikzcd}[column sep =small,ampersand replacement=\&]
					0\rar \&\hom_c\left(G,\pi_0\left(KU^\wedge_2\right)^\x\right)\rar\dar[symbol=\cong]\& \Pic_{K(1)}\left[\left(KU^\wedge_2\right)^{hG}\right]\rar\dar[symbol=\cong]\& \left[\Pic_{K(1)}\left(KU^\wedge_2\right)\right]^{G}\rar\dar[symbol=\cong]\&0\\ [-2.5 ex]
					0\rar \&\Z_2^\times \rar\vResAr[(-)^2]\& \Z_2\oplus \Z/2\oplus\Z/2\rar\vResAr[\tiny{\begin{pmatrix}
							2&&\\&0&\\&&1
					\end{pmatrix}}]\& \Z/2\rar\vResAr[1] \&0\\ [2.5 ex]
					0\rar \& \Z_2^\times \rar \dar[symbol=\cong] \vTrAr[1] \& \Z_2\oplus \Z/2\oplus \Z/2\rar\dar[symbol=\cong] \vTrAr[\tiny{\begin{pmatrix}
							1&&\\&1&\\&&0
					\end{pmatrix}}]\& \Z/2 \rar\dar[symbol=\cong] \vTrAr[0]\&0\\ [-2.5 ex]
					0\rar \&\hom_c\left(1+2^{k+1}\Z_2,\pi_0\left(KU^\wedge_2\right)^\x\right)\rar\& \Pic_{K(1)}\left[\left(KU^\wedge_2\right)^{h(1+2^{k+1}\Z_2)}\right]\rar\& \left[\Pic_{K(1)}\left(KU^\wedge_2\right)\right]^{1+2^{k+1}\Z_2}\rar\&0
				\end{tikzcd}
			\end{equation*}\qedhere
		\end{enumerate}
	\end{proof}
	\begin{rem}\label{rem:Pic_E1hG_topgen_p=2}
		When $p=2$, the group $\Pic_{K(1)}\cong \mathrm{RO}(C_2)^\wedge_2/(4(1-\sigma))\oplus\Z/2$ is not topologically cyclic. The element $S^1_{K(1)}$ corresponds to $(1,0)$ under the identification. From the formulas in \Cref{thm:Mackey_Pic_k1_p=2}, we observe that:
		\begin{equation*}
			\left(\Sigma E_1^{hG} \in \Pic_{K(1)}\left(E_1^{hG}\right)\right) =\begin{cases}
				(1,0)\in \mathrm{RO}(C_2)^\wedge_2/(4(1-\sigma))\oplus \Z/2, & G=\Z_2^\times;\\
				(1,0)\in \Z_2\oplus \Z/2, & G=\langle 3\rangle \text{ or }\langle 5\rangle;\\
				(2^{k-3},1,0)\in \Z_2\oplus\Z/8\oplus \Z/2, &  G=\{\pm 1\}\times \langle 2^k+1\rangle, k\ge 3;\\
				(2^{k-3},1,0)\in \Z_2\oplus\Z/2\oplus \Z/2, & G=\langle 2^k-1\rangle \text{ or }\langle 2^k+1\rangle, k\ge 3;\\
				1\in \Z/8, &G=\{\pm 1\};\\
				1\in \Z/2, & G=\{1\}. 
			\end{cases}
		\end{equation*}
		As a result, the element $\Sigma E_1^{hG}$ is \emph{not} divisible by $2$ (or its powers) in the Picard group $\Pic_{K(1)}\left(E_1^{hG}\right)$ at prime $2$ for any closed subgroup $G$.  This is very different from the odd prime case in  \Cref{rem:Pic_E1hG_topgen}. 
	\end{rem}
	From the computations in \Cref{thm:Mackey_Pic_k1_p=2}, we verify \Cref{prop:Pic_Kn_colim} in an example below: 
	\begin{cor}\label{cor:colim_res_p=2}
		The colimits of $\Pic_{K(1)}\left(E_1^{h-}\right)$ as pro-abelian groups under transfinite compositions of restriction maps are:
		\begin{align*}
			\colim_{\res} \Pic_{K(1)}\left[\left(KO^\wedge_2\right)^{h(1+2^k\Z_2)}\right]&\cong \Z/8\cong \Pic_{K(1)}\left(KO^\wedge_2\right),\\ 
			\colim_{\res} \Pic_{K(1)}\left[\left(KU^\wedge_2\right)^{h(1+2^k\Z_2)}\right]&\cong \Z/2\cong \Pic_{K(1)}\left(KU^\wedge_2\right).
		\end{align*}
	\end{cor}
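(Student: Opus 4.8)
The plan is to deduce both equivalences directly from \Cref{prop:Pic_Kn_colim}, after recognizing the two towers of ring spectra in the statement as cofinal systems of Devinatz--Hopkins homotopy fixed points of $E_1$.

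First I would translate everything into homotopy fixed points of $E_1$: at the prime $2$ one has $KU^\wedge_2\simeq E_1$ and $KO^\wedge_2\simeq E_1^{h\{\pm 1\}}$, and since $\Z_2^\times$ is abelian (so $\{\pm 1\}$ is normal and meets $1+2^k\Z_2$ trivially for $k\ge 2$), iterating homotopy fixed points as in \Cref{thm:DH_fixedpt} and \Cref{const:EnhG} gives, compatibly with the restriction (i.e.\ base change) maps,
\begin{align*}
\left(KU^\wedge_2\right)^{h(1+2^k\Z_2)}&\simeq E_1^{h(1+2^k\Z_2)}, & \left(KO^\wedge_2\right)^{h(1+2^k\Z_2)}&\simeq E_1^{h(\{\pm 1\}\times(1+2^k\Z_2))}.
\end{align*}
Next I would note that $\{1+2^k\Z_2\}_{k\ge 2}$ is a cofinal decreasing sequence of open normal subgroups of $\Z_2^\times$ with trivial intersection --- any open $V\le\Z_2^\times$ contains $V\cap(1+4\Z_2)=1+2^k\Z_2$ for some $k$ --- and likewise $\{\{\pm 1\}\times(1+2^k\Z_2)\}_{k\ge 2}$ is cofinal among the open subgroups containing $\{\pm 1\}$, with intersection $\{\pm 1\}$. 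Feeding these choices into \Cref{const:EnhG} identifies the relevant filtered colimits in $\CAlg(\Sp_{K(1)})$ as
\begin{align*}
L_{K(1)}\colim_k E_1^{h(1+2^k\Z_2)}&\simeq E_1^{h\{1\}}\simeq KU^\wedge_2, & L_{K(1)}\colim_k E_1^{h(\{\pm 1\}\times(1+2^k\Z_2))}&\simeq E_1^{h\{\pm 1\}}\simeq KO^\wedge_2.
\end{align*}

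Applying \Cref{prop:Pic_Kn_colim}, which asserts that $\Pic_{K(1)}\colon\CAlg(\Sp_{K(1)})\to\Pro(\Ab)$ preserves filtered colimits, then gives $\colim_{\res}\Pic_{K(1)}[(KU^\wedge_2)^{h(1+2^k\Z_2)}]\cong\Pic_{K(1)}(KU^\wedge_2)$ and $\colim_{\res}\Pic_{K(1)}[(KO^\wedge_2)^{h(1+2^k\Z_2)}]\cong\Pic_{K(1)}(KO^\wedge_2)$ in $\Pro(\Ab)$, and by \Cref{thm:Mackey_Pic_k1_p=2} the right-hand sides are $\Z/2=\Pic_{K(1)}(E_1^{h\{1\}})$ and $\Z/8=\Pic_{K(1)}(E_1^{h\{\pm 1\}})$, both finite and hence constant as pro-abelian groups.

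I expect the main obstacle to be the cofinality/Devinatz--Hopkins identification of the colimit rings: checking that the stated towers of subgroups really are admissible for \Cref{const:EnhG} and that the colimit ring is $KU^\wedge_2$ (resp.\ $KO^\wedge_2$); the rest is bookkeeping with the subgroup lattice of \Cref{lem:Z2x_lattice}. I would also add a remark explaining why the colimit must be formed in $\Pro(\Ab)$ rather than $\Ab$: reading off the transition maps from \Cref{thm:Mackey_Pic_k1_p=2} (Case~II followed by infinitely many copies of Case~V on the $KO$-tower, Case~III followed by infinitely many copies of Case~VI on the $KU$-tower), the free $\Z_2$-summand is eventually carried by multiplication by $2$, so the naive colimit computed in $\Ab$ acquires a spurious $\Q_2$-summand; it is precisely the passage to $\Pro(\Ab)$ through the inverse limit topology of \Cref{thm:Kn_Pic_limit} that removes it, exactly as in \Cref{rem:Pic_colim_p_odd}. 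Since this is already built into \Cref{prop:Pic_Kn_colim}, it need not be re-derived by hand.
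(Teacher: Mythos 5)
Your proof is correct, but it runs in the opposite logical direction from the paper's. The paper proves this corollary by brute force from the explicit Mackey functor data of \Cref{thm:Mackey_Pic_k1_p=2}: it observes that the restriction maps down to $\Pic_{K(1)}(KO^\wedge_2)$ and $\Pic_{K(1)}(KU^\wedge_2)$ are projections onto summands for $k\ge 3$, and then computes the colimit in $\Pro(\Ab)$ by reading off the transition maps in Cases II, V (resp.\ I, III) --- the torsion summand is carried isomorphically while the free $\Z_2$-summand is carried by multiplication by $2$ and therefore dies in the pro-category. You instead identify the two towers as the Devinatz--Hopkins colimit systems computing $E_1^{h\{1\}}\simeq KU^\wedge_2$ and $E_1^{h\{\pm 1\}}\simeq KO^\wedge_2$ and then invoke \Cref{prop:Pic_Kn_colim}. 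All the individual steps you use (the identifications $(KO^\wedge_2)^{h(1+2^k\Z_2)}\simeq E_1^{h(\{\pm 1\}\times(1+2^k\Z_2))}$ via \Cref{thm:DH_fixedpt}, the admissibility of $\{1+2^k\Z_2\}$ in \Cref{const:EnhG}, and the preservation of filtered colimits) are available and correctly applied, so your argument is logically sound and not circular, since \Cref{prop:Pic_Kn_colim} is proved independently. What it misses is the intent of the corollary: the paper explicitly frames it as a \emph{verification} of \Cref{prop:Pic_Kn_colim} in a fully computed example (``From the computations in \Cref{thm:Mackey_Pic_k1_p=2}, we verify \Cref{prop:Pic_Kn_colim} in an example below''), so deriving it from that proposition turns a consistency check into a tautology. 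Your closing remark about the spurious $\Q_2$-summand in $\Ab$ is exactly the content of \Cref{rem:Pic_colim_p2}, and your explicit tracking of which Cases supply the transition maps is in effect the paper's proof; if you made that the main argument rather than an aside, you would recover the paper's reasoning.
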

	\begin{proof}
		By \Cref{thm:Mackey_Pic_k1_p=2},  the restriction maps 
		\begin{align*}
			\Pic_{K(1)}\left[\left(KO^\wedge_2\right)^{h(1+2^k\Z_2)}\right]\longrightarrow \Pic_{K(1)}\left(KO^\wedge_2\right)&&\Pic_{K(1)}\left[\left(KU^\wedge_2\right)^{h(1+2^k\Z_2)}\right]\longrightarrow \Pic_{K(1)}\left(KU^\wedge_2\right)
		\end{align*}
		are projections onto summands when $k\ge 3$. The claim now follows by reading off the restriction maps in Cases II, V and Cases I, III, respectively. 
	\end{proof}
	\begin{rem}\label{rem:Pic_colim_p2}
		Similar to \Cref{rem:Pic_colim_p_odd}, the claim above fails when we take colimits in $\Ab$. There will be an extra $\Q_2$ summand in the colimit then. This gives another example of \Cref{rem:Pic_Kn_colim}.
	\end{rem}
	One important class in $\Pic_{K(1)}\left(S^{0}_{K(1)}\right)$ at $p=2$ is the exotic element $\Ecal_{K(1)}$.  Under the isomorphism $\Pic_{K(1)}\left(S^{0}_{K(1)}\right)\cong \mathrm{RO}(C_2)^\wedge_2/(4(1-\sigma))\oplus \Z/2$, the exotic element corresponds to $2-2\sigma$. Our computation above implies:
	\begin{cor}\label{cor:exotic_res}
		The exotic element in $\Pic_{K(1)}\left(S^{0}_{K(1)}\right)$ is detected by a closed subgroup $G\le \Z_2^\times$ iff $\{\pm 1\}\le G$. Equivalently, $E_1^{hG}\hsmash \Ecal_{K(1)}\simeq E_1^{hG}$ iff $-1\notin G$.  
	\end{cor}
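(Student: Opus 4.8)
The plan is to read everything off the Picard Mackey functor of \Cref{thm:Mackey_Pic_k1_p=2}, together with the identification $\Ecal_{K(1)}=2-2\sigma$ inside $\Pic_{K(1)}\cong \mathrm{RO}(C_2)^\wedge_2/(4(1-\sigma))\oplus\Z/2$ and the classification of closed subgroups of $\Z_2^\times$ from \Cref{lem:Z2x_lattice}. First I would note that the two displayed assertions are the same: for a closed $G$, the invertible $E_1^{hG}$-module $E_1^{hG}\hsmash\Ecal_{K(1)}$ is by definition the class $\res^{\Z_2^\times}_G(\Ecal_{K(1)})\in\Pic_{K(1)}(E_1^{hG})$, which equals the unit $E_1^{hG}$ iff $\res^{\Z_2^\times}_G(\Ecal_{K(1)})=0$, and "$\Ecal_{K(1)}$ detected by $G$" means exactly $\res^{\Z_2^\times}_G(\Ecal_{K(1)})\ne 0$. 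Since restriction maps compose, detection is monotone in the subgroup lattice: for $G_1\le G_2$, detection by $G_1$ forces detection by $G_2$, equivalently vanishing of the restriction at $G_2$ forces vanishing at $G_1$. So it suffices to test detection at one minimal subgroup containing $-1$ and vanishing at the maximal subgroups not containing $-1$, except that the latter do not dominate every such subgroup, which is what forces the extra case below.

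For the backward direction, assume $-1\in G$, so $\{\pm1\}\le G$; by monotonicity it is enough to show $\Ecal_{K(1)}$ is detected by $\{\pm1\}$, i.e.\ restricts nontrivially to $\Pic_{K(1)}(E_1^{h\{\pm1\}})=\Pic_{K(1)}(KO^\wedge_2)=\Z/8$. I would factor this restriction as $\Pic_{K(1)}\to \Pic_{K(1)}(E_1^{h(\{\pm1\}\times\langle 9\rangle)})\to\Pic_{K(1)}(KO^\wedge_2)$, where $\{\pm1\}\times\langle 9\rangle=\{\pm1\}\times(1+8\Z_2)$. The first map is the Case II restriction of \Cref{thm:Mackey_Pic_k1_p=2}, under which $2-2\sigma\mapsto 2(1,1)-2(1,-1)=(0,4)\in\Z_2\oplus\Z/8$; thus $\Ecal_{K(1)}$ lands in the $\Z/8$-summand (the ``$\underline{\Pic}(\KR)$''-part) as its unique order-$2$ element. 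The second map is the transfinite composite of Case V restrictions followed by passage to $\Pic_{K(1)}(KO^\wedge_2)$, which by \Cref{cor:colim_res_p=2} (and its proof) is projection onto that $\Z/8$-summand, so $\Ecal_{K(1)}\mapsto 4\ne 0$. Hence $\Ecal_{K(1)}$ is detected by $\{\pm1\}$, and so by every $G\ni -1$.

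For the forward direction, assume $-1\notin G$; I must show $\res^{\Z_2^\times}_G(\Ecal_{K(1)})=0$. By \Cref{lem:Z2x_lattice} such a $G$ is $\{1\}$, or $\langle 2^k+1\rangle=1+2^k\Z_2$ with $k\ge 2$, or $\langle 2^j-1\rangle$ with $j\ge 2$. If $G=\{1\}$, the restriction is $\phi_0$ and $\Ecal_{K(1)}\in\kappa_1\subseteq\Pic^0_{K(1)}=\ker\phi_0$, so it vanishes. If $G=\langle 2^k+1\rangle\le 1+4\Z_2=\langle 5\rangle$, the restriction factors through $\Pic_{K(1)}(E_1^{h\langle 5\rangle})$; since $\{\pm1\}\times\langle 5\rangle=\Z_2^\times$ this is the Case I restriction $1,\sigma\mapsto 1$, under which $2-2\sigma\mapsto 0$; the same applies to $j=2$, $G=\langle 3\rangle$, because $\{\pm1\}\times\langle 3\rangle=\Z_2^\times$ as well. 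Finally, if $G=\langle 2^j-1\rangle$ with $j\ge 3$, then (using $\{\pm1\}\times\langle 2^j-1\rangle=\{\pm1\}\times\langle 2^j+1\rangle$) one has $G\le\{\pm1\}\times\langle 2^j+1\rangle$ of index $2$, so the restriction factors through $\Pic_{K(1)}(E_1^{h(\{\pm1\}\times\langle 2^j+1\rangle)})$; by the computation in the backward direction, $\Ecal_{K(1)}$ maps there to the order-$2$ element of the $\Z/8$-summand, and the remaining step is a Case IV restriction, which on that summand is the surjection $\Z/8\twoheadrightarrow\Z/2$ and annihilates it. In every case $\res^{\Z_2^\times}_G(\Ecal_{K(1)})=0$.

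I expect the real work to be organizational rather than conceptual: one must keep track, through Cases I, II, IV and V of \Cref{thm:Mackey_Pic_k1_p=2}, of exactly which direct summand the image of $2-2\sigma$ occupies at each stage --- in particular that it always sits in the ``$\underline{\Pic}(\KR)$''-type $\Z/8$-summand as the element of order $2$ --- and keep the various reorderings of $\Z_2\oplus\Z/8\oplus\Z/2$ consistent with the tables in the theorem. The structural inputs one leans on are light: the identifications $(KO^\wedge_2)^{hG'}\simeq E_1^{h(\{\pm1\}\times G')}$, the coincidences $\{\pm1\}\times\langle 2^k+1\rangle=\{\pm1\}\times\langle 2^k-1\rangle$ and $\{\pm1\}\times\langle 3\rangle=\{\pm1\}\times\langle 5\rangle=\Z_2^\times$ from \Cref{lem:Z2x_lattice}, and the fact that $\Ecal_{K(1)}$ is exotic (hence trivial over $E_1$), all of which serve only to recognize the relevant restriction maps as the labelled cases of the theorem.
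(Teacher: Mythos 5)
Your proof is correct and takes essentially the same route as the paper: both arguments simply trace the image of $2-2\sigma$ through the restriction maps of the Picard Mackey functor in \Cref{thm:Mackey_Pic_k1_p=2} (Case I kills it on $\langle 3\rangle$ and $\langle 5\rangle$, Case II sends it to the order-two element $4$ of the $\Z/8$-summand, Case V preserves that summand, and the nonvanishing over $KO^\wedge_2$ handles $\{\pm 1\}$ and everything above it). If anything you are more careful than the paper on one point: the subgroups $\langle 2^j-1\rangle$ for $j\ge 3$ are neither contained in $\langle 3\rangle$ or $\langle 5\rangle$ nor of the form $\{\pm 1\}\times(1+2^k\Z_2)$, so they are not covered by the paper's case division as written, and your explicit appeal to the Case IV restriction $\Z/8\to\Z/2$ (which annihilates $4$) is exactly the step needed to dispose of them.
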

	\begin{proof}
		We will compute the images of $\Ecal_{K(1)}$ under the restriction maps in the $K(1)$-local Picard Mackey functor. By \Cref{lem:Z2x_lattice}, the index $2$ subgroups of $\Z_2^\times$ are $\{\pm 1\}\times 1+8\Z_2$, $\langle 3\rangle$, and $\langle5\rangle$. Reading off the restriction maps in \Cref{thm:Mackey_Pic_k1_p=2}, we can see:
		\begin{itemize}
			\item Case I  implies that  images of $\Ecal_{K(1)}$ in $\Pic_{K(1)}\left(E_1^{h\langle3\rangle}\right)\cong \Pic_{K(1)}\left(E_1^{h\langle5\rangle}\right)\cong \Z_2\oplus \Z/2$ are both zero. 
			\item  Case II implies that its image n $\Pic_{K(1)}\left(E_1^{h(\{\pm 1\}\times 1+8\Z_2)}\right)\cong \Z_2\oplus\Z/8\oplus \Z/2$ is $(0,4,0)$. 
			\item Subgroups of $\Z_2^\times$ not contained in $\langle 3\rangle$, and $\langle5\rangle$ are of the form $\{\pm 1\}\times 1+2^k\Z_2$. Case V states that the restriction map  $\Pic_{K(1)}\left(E_1^{h(\{\pm 1\}\times 1+2^k\Z_2)}\right)\to \Pic_{K(1)}\left(E_1^{h(\{\pm 1\}\times 1+2^{k+1}\Z_2)}\right)$ restricts to identity on the $\Z/8$-summands. Hence images of $\Ecal_{K(1)}$ in them are all no trivial.
			\item It is well-known that $KO^\wedge_2\hsmash \Ecal_{K(1)}\simeq \Sigma^4 KO^\wedge_2$ is nontrivial. This can also be obtained from the transfinite restriction computation in \Cref{cor:colim_res_p=2}. \qedhere
		\end{itemize}
	\end{proof}
	\emergencystretch=1em
	\printbibliography
\end{document}